\pgfplotsset{compat=1.10}
\@date \else {\vskip3ex \centering\footnotesize\@date\par\vskip1ex}\fi
\else \@footnotetext{\@setdate}\fi}
 \author{Olivier Pierre}
\address{Laboratoire de Math\'ematiques Jean Leray, Universit\'e de Nantes, UMR CNRS 6629, \newline
2, rue de la Houssini\`ere, 44322 Nantes Cedex 03, France}
\email{olivier.pierre@univ-nantes.fr}
\date{\today}
\title[Analytic current-vortex sheets in incompressible MHD]
{Analytic current-vortex sheets in incompressible magnetohydrodynamics}
\newtheorem{thm}{Theorem}[section]
\newtheorem{definition}[thm]{Definition}
\newtheorem{prop}[thm]{Proposition}
\newtheorem{lemme}[thm]{Lemma}
\newtheorem{coro}[thm]{Corollary}
\theoremstyle{definition}
\newtheorem{rmq}[thm]{Remark}
\renewenvironment{proof}{\paragraph{\textit{Proof.}}}{\begin{flushright}$\square$\end{flushright}}
\numberwithin{equation}{section}
\newcommand{\bE}{\mathbb{E}}
\newcommand{\bF}{\mathbb{F}}
\newcommand{\bG}{\mathbb{G}}
\newcommand{\bN}{\mathbb{N}}
\newcommand{\bR}{\mathbb{R}}
\newcommand{\bT}{\mathbb{T}}
\newcommand{\bZ}{\mathbb{Z}}
\newcommand{\cC}{\mathcal{C}}
\newcommand{\cF}{\mathcal{F}}
\newcommand{\cG}{\mathcal{G}}
\newcommand{\cT}{\mathcal{T}}
\newcommand{\sT}{\mathscr{T}}
\newcommand{\tildv}{\widetilde{v}}
\newcommand{\tildB}{\widetilde{B}}
\newcommand{\tildA}{\widetilde{A}}
\newcommand{\tildrho}{\widetilde{\rho}}
\newcommand{\tildcF}{\widetilde{\mathcal{F}}}
\newcommand{\tildcG}{\widetilde{\mathcal{G}}}
\newcommand{\uu}{\underline{u}}
\newcommand{\ub}{\underline{b}}
\newcommand{\uf}{\underline{f}}
\newcommand{\uQ}{\underline{Q}}
\newcommand{\uA}{\underline{A}}
\newcommand{\upsi}{\underline{\psi}}
\newcommand{\ucF}{\underline{\mathcal{F}}}
\newcommand{\ucG}{\underline{\mathcal{G}}}
\newcommand{\uJ}{\underline{J}}
\newcommand{\uC}{\underline{C}}
\newcommand{\ds}{\displaystyle}
\newcommand{\sg}{\cdot\nabla}
\newcommand{\dv}{\nabla\cdot}
\newcommand{\p}{\partial}
\newcommand{\bp}{\bar{\p}}
\newsavebox\myboxA
\newsavebox\myboxB
\newlength\mylenA
\newcommand*\xoverline[2][0.75]{%
    \sbox{\myboxA}{$\m@th#2$}%
    \setbox\myboxB\null
    \ht\myboxB=\ht\myboxA%
    \dp\myboxB=\dp\myboxA%
    \wd\myboxB=#1\wd\myboxA
    \sbox\myboxB{$\m@th\overline{\copy\myboxB}$}
    \setlength\mylenA{\the\wd\myboxA}
    \addtolength\mylenA{-\the\wd\myboxB}%
    \ifdim\wd\myboxB<\wd\myboxA%
       \rlap{\hskip 0.5\mylenA\usebox\myboxB}{\usebox\myboxA}%
    \else
        \hskip -0.5\mylenA\rlap{\usebox\myboxA}{\hskip 0.5\mylenA\usebox\myboxB}%
    \fi}
\newcommand{\nt}[1]{{\big\vert\kern-0.25ex\big\vert\kern-0.25ex\big\vert #1 \big\vert\kern-0.25ex\big\vert\kern-0.25ex\big\vert}}
\newcommand{\sbt}{\ \begin{picture}(-1,1)(-1,-3)\circle*{2}\end{picture}\ \, }
\begin{document}

\begin{abstract}
In this paper, we address the problem of current-vortex sheets in ideal incompressible magnetohydrodynamics. More precisely, we prove a local-in-time existence and uniqueness result for analytic initial data using a Cauchy-Kowalevskaya theorem.
\end{abstract}


\maketitle

\section{Introduction}

\subsection{Motivation}

In this article, we are interested in a free boundary problem arising in magnetohydrodynamics (MHD), namely the current-vortex sheets problem. We consider a homogeneous plasma (the density in constant), assumed to be perfectly conducting, inviscid and incompressible. The equations of ideal incompressible MHD thus read:
\begin{equation}\label{int-equations_MHD}
\left\{
\begin{array}{l}
\p_t u + \dv(u\otimes u - H\otimes H) + \nabla q = 0, \\
\p_t H - \nabla\times(u\times H) = 0, \\
\dv u = \dv H = 0,
\end{array}
\right.
\end{equation}
where $u\in\bR^3$ and $H\in\bR^3$ stand for the velocity and the magnetic field of the plasma respectively. The unknown $q$ defined by $q:= p + \frac{|H|^2}{2}$ is called the ``total pressure'', $p$ being the physical pressure.

We are looking for a special class of (weak) solutions of \eqref{int-equations_MHD}: we want $(u,H,q)$ to be smooth on either side of a hypersurface $\Gamma(t)$ ($t$ is the time-variable), and to give rise to a \textit{tangential} discontinuity across $\Gamma(t)$. We shall assume that the hypersurface $\Gamma(t)$ can be parametrized by $\Gamma(t) := \{ x_3 = f(t,x') \}$, where $x'$ is the tangential variable living in the $2-$dimensional torus $\bT^2 = \raisebox{0.7mm}{$\bR^2$} \big/ \raisebox{-0.7mm}{$\bZ^2$}$, and $x_3$ denotes the normal variable. The unknown $f$ is called the ``front'' of the discontinuity later on. For all $t\in [0,T]$, we shall consider the MHD system \eqref{int-equations_MHD} in the time-dependent domain
\begin{align*}
\Omega(t) := \Omega^+(t) \sqcup \Omega^-(t), \quad \text{ where } \quad \Omega^\pm(t) := \{ x_3 \gtrless f(t,x') \},
\end{align*}
with appropriate boundary conditions on $\Gamma(t)$ :
\begin{align}\label{int-cond_bord}
\p_t f = u^{\pm}\cdot N, \quad H^{\pm}\cdot N = 0, \quad [q] = 0, \quad \forall \, t\in [0,T].
\end{align}
The notation $[q]$ in \eqref{int-cond_bord} stands for the jump of $q$ across $\Gamma(t)$:
\begin{align*}
[q] := \left.q^{+}\right|_{\Gamma(t)} - \left.q^{-}\right|_{\Gamma(t)},
\end{align*}
and $N$ is a normal vector to $\Gamma(t)$, chosen as follows:
\begin{align*}
N := (-\p_1 f, -\p_2 f, 1).
\end{align*}

The boundary conditions \eqref{int-cond_bord} correspond to a tangential discontinuity: the velocity $\p_t f$ of the front is given by the normal component of $u$, the normal magnetic field $H\cdot N$ as well as the total pressure $q$ are continuous across $\Gamma(t)$, and $H\cdot N|_{\Gamma(t)} = 0$ on either side of $\Gamma(t)$. Such boundary conditions model a plasma which does not flow through the hypersurface $\Gamma(t)$, called current-vortex sheet (see Figure \ref{s2-figure_nappe} below). We refer to \cite{BT} for other types of tangential discontinuities in MHD.

For simplicity, we assume that the normal variable $x_3$ belongs to $(-1,1)$. Thus we shall assume that for all $t$ and $x'$ we have $-1<f(t,x')<1$, and impose an additional condition on the ``exterior'' boundaries
\begin{align*}
\Gamma_\pm := \{ (x', \pm 1) \, , \, x'\in\bT^2 \}.
\end{align*}
The system of current-vortex sheets eventually reads as follows:
\begin{equation}\label{int-equations_nappes_MHD}
\left\{
\begin{array}{r l}
\p_t u^\pm \, + \, (u^\pm\sg)u^\pm \, - \, (H^\pm\sg)H^\pm \, + \, \nabla q^\pm \, = \, 0, & \text{ in } \Omega^\pm(t), \quad t\in [0,T], \\[0.5ex]
\p_t H^\pm \, + \, (u^\pm\sg)H^\pm \, - \, (H^\pm\sg)u^\pm \, = \, 0, & \text{ in } \Omega^\pm(t), \quad t\in [0,T], \\[0.5ex]
\dv u^\pm(t) \, = \, \dv H^\pm(t) \, = \, 0, & \text{ in } \Omega^\pm(t), \quad t\in [0,T], \\[0.5ex]
\p_t f \, = \, u^\pm\cdot N, \quad H^\pm\cdot N \, = \, 0, \quad [q] \, = \, 0, & \text{ on } \Gamma(t), \quad t\in [0,T],\\[0.5ex]
u_3^\pm \, = \, H_3^\pm \, = \, 0, & \text{ on } [0,T]\times\Gamma_\pm.
\end{array}
\right.
\end{equation}
The superscript $\,^\pm$ in \eqref{int-equations_nappes_MHD} denotes the unknown $u$, $H$ and $q$ restricted to $\Omega^\pm(t)$. 

In order to solve problem \eqref{int-equations_nappes_MHD}, a common procedure will be to reduce this free boundary problem into the \textbf{fixed} domains $\Omega^+ := \bT^2\times (0,1)$ and $\Omega^- := \bT^2\times (-1,0)$, using a suitable change of variables (see Paragraph \ref{s2-redressement} below). Consequently, the reference domain we shall consider is $\Omega := \bT^2\times (-1,1)$. Besides, the (fixed) boundaries will be noted as follows:
\begin{align}\label{int-def_bords}
\Gamma \, := \, \bT^2\times \{x_3 = 0\} \quad \text{ and } \quad \Gamma_\pm \, := \, \bT^2\times \{x_3=\pm 1\}.
\end{align}
For other free boundary problems, other strategies can be used. For instance, in the water waves theory \cite{Lannes2,Lannes} or for incompressible vortex sheets \cite{SSBF}, the strategy followed by the authors was to reduce the problem on the free surface only. This procedure has been adopted by Sun, Wang and Zhang \cite{SWZ} in their proof of local existence and uniqueness of incompressible current-vortex sheets in the Sobolev regularity scale. Here, the method we advocate is different, since we keep an ``eulerian'' approach considering the equations of \eqref{int-equations_nappes_MHD} in the domains $\Omega^+$ and $\Omega^-$ (up to a diffeomorphism).
 
System \eqref{int-equations_nappes_MHD} is also supplemented with initial data $(u_0^\pm, H_0^\pm, f_0)$ satisfying the constraints:
\begin{equation*}
\left\{
\begin{array}{r l}
\dv u_0^\pm \, = \, \dv H_0^\pm \, = \, 0, & \text{ in } \Omega^\pm(0), \\[0.5ex]
\left.(H^\pm\cdot N)\right|_{t=0} \, = \, 0, & \text{ on } \Gamma(0), \\[0.5ex]
\left. v^+\cdot N \right|_{t=0} \, = \, \left. v^-\cdot N \right|_{t=0}, & \text{ on } \Gamma(0).
\end{array}
\right.
\end{equation*}
\begin{figure}[H]
\begin{center}
\includegraphics[scale=0.25]{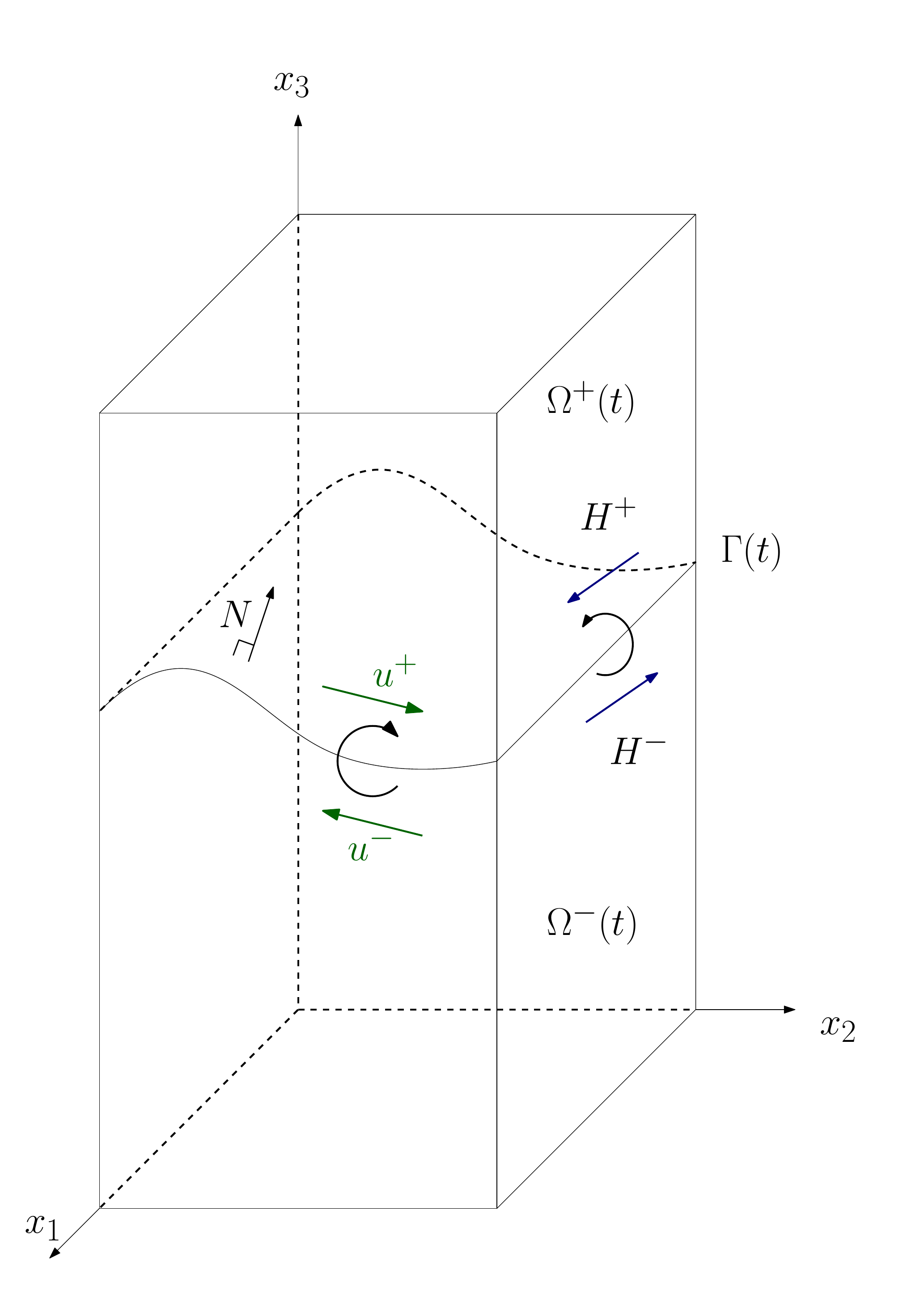}
\caption{Scheme of a current-vortex sheet.}
\label{s2-figure_nappe}
\end{center}
\end{figure}

\subsection{Background}

Current-vortex sheets in ideal incompressible MHD have been a well-known free boundary problem since the 1950's. It has been addressed for instance by Syrovatski\u\i \cite{Syro}, Axford \cite{Axford} or Chandrasekhar \cite{Chandra}. These references in particular deal with the linear stability of \textit{planar} current-vortex sheets using the so-called normal modes analysis. The linear stability criterion derived in these references reads as follows:
\begin{subequations}\label{int-KL_deux_equations}
\begin{align}
& \left| [u] \right|^2 \leq 2 \left( |H^+|^2 + |H^-|^2 \right), \label{int-KL1} \\
& \left| [u]\times H^- \right|^2 + \left| [u]\times H^+ \right|^2 \leq 2 \left| H^+\times H^- \right|^2. \label{int-KL2}
\end{align}
\end{subequations}
In \eqref{int-KL_deux_equations}, $[\cdot]$ denotes the jump across the planar sheet $\{x_3 = 0\}$. Besides, if we assume that $H^+\times H^- \neq 0$ and
\begin{align}
\left| [u]\times H^- \right|^2 + \left| [u]\times H^+ \right|^2 < 2 \left| H^+\times H^- \right|^2, \label{int-KL}
\end{align}
then \eqref{int-KL1} follows from \eqref{int-KL} with a strict inequality. However, if \eqref{int-KL_deux_equations} is not satisfied, then it leads to the so-called \textit{Kelvin-Helmholtz instabilites}. For instance, we refer to \cite{Chandra,Trakhinin}, \cite[p.251-252]{CMST} and references therein for a more detailed discussion about this stability condition. The latter turns out to be a \textit{weak} linear stability condition, \textit{i.e.} the \textit{uniform Kreiss-Lopatinski\u\i\, condition} is not satisfied, leading to \textit{weakly nonlinear surface waves} \cite{AH}. Without the magnetic field, system \eqref{int-equations_nappes_MHD} is reduced to the \textit{vortex sheets problem} for the Euler equations. The tangential discontinuity given by \eqref{int-cond_bord} in this case gives rise to the well-known Kelvin-Helmholtz instabilities in hydrodynamics. This problem is well-posed only in the \textit{analytic} scale. We refer for example to \cite{Chandra,SSBF,Delort,Lebeau} for more details. Nevertheless, when surface tension is involved, the vortex sheets problem is well-posed in the \textit{Sobolev} scale (see e.g. \cite{Ambrose-Masmoudi}).

Under a more restrictive stability condition, namely
\begin{align}\label{int-KL_max}
\max\left( \left| [u]\times H^+ \right| , \left| [u]\times H^- \right| \right) < \left| H^+\times H^-\right|, 
\end{align}
Coulombel, Morando, Secchi and Trebeschi proved in \cite{CMST} an \textit{a priori} estimate without loss of derivatives for the \textit{nonlinear} problem \eqref{int-equations_nappes_MHD}, based on a symmetry argument introduced by Trakhinin \cite{Trakhinin}. Their approach thus gives some hope to state an existence and uniqueness result for system \eqref{int-equations_nappes_MHD}, without using a Nash-Moser iteration.

In that spirit, the aim of this article it to construct \textit{analytic solutions} to \eqref{int-equations_nappes_MHD} using a Cauchy-Kowalevskaya theorem. In other words, we shall use a fixed-point argument in an appropriate \textit{scale of Banach spaces}, adapting the ideas of the proof given by Baouendi and Goulaouic \cite{BG77}. In a future work, we plan to use the main result of \cite{CMST} to exhibit solutions of fixed \textit{Sobolev regularity} to \eqref{int-equations_nappes_MHD} \textit{via} a compactness argument, by approximating Sobolev data by analytic data.

Meanwhile, Sun \textit{et al.} \cite{SWZ} have used a completely different approach to solve the incompressible current-vortex sheets problem. The main idea of their proof is to reduce the whole problem on the free surface $\Gamma(t)$ only, as is common in water waves theory for instance \cite{Lannes}. Then, using some elliptic arguments, they can reconstruct the ``whole'' solution inside both domains $\Omega^\pm(t)$. The advantage of their method is that they can prove a local-in-time existence and uniqueness theorem in the \textit{whole} domain of stability given by \eqref{int-KL}, solving some div-curl systems. Therefore the arguments in \cite{SWZ} require the velocity to be divergence-free. On the opposite, the method we advocate in this paper is based on \textit{a priori} estimates only and could therefore apply to other hyperbolic systems. For instance, as a future work, we might avoid using the Nash-Moser theorem used by Trakhinin \cite{Trakhinin_comp} and by Chen and Wang \cite{Chen-Wang} to prove local-in-time existence and uniqueness of \textit{compressible} current-vortex sheets.

\subsection{Reformulation on a fixed domain, lifting of the front \texorpdfstring{$f$}{f}}\label{s2-redressement}

We begin by recalling Lemma 1 of \cite{CMST}, which shows how to lift the front $f$ defined on $[0,T]\times\Gamma$ into a function $\psi$ defined on $[0,T]\times\Omega$, in order to gain half a derivative in the Sobolev regularity scale. The strategy is inspired from Lannes \cite{Lannes2}.
\begin{lemme}\label{s2-lemme_redressement_CMST}
Let $r\in\bN$ such that $r\geq 2$. Then there exists a continuous linear map
$$ f \in H^{r-\frac{1}{2}}(\bT^2) \mapsto \psi \in H^r(\Omega) $$
such that, for all $x'\in\bT^2$, we have:
\begin{equation}\label{s2-cond_bord_psi}
\psi(x',0) = f(x'), \quad \psi(x',\pm 1) = 0, \quad \p_3 \psi(x',0) = 0.
\end{equation}
\end{lemme}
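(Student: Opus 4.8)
The plan is to write $\psi$ down explicitly as a Fourier series in the tangential variable, using a frequency-dependent dilation in $x_3$ that is tuned to absorb exactly the half tangential derivative separating $H^{r-\frac12}(\bT^2)$ from $H^r(\Omega)$. Fix once and for all a cutoff $\theta\in C_c^\infty(\bR)$ with $\operatorname{supp}\theta\subset(-1,1)$ and $\theta\equiv 1$ near $0$; then $\theta(0)=1$, $\theta'(0)=0$, and, since $\langle\xi\rangle:=(1+|\xi|^2)^{1/2}\geq 1$ for every $\xi\in\bZ^2$, also $\theta(\pm\langle\xi\rangle)=0$. Expanding $f=\sum_{\xi\in\bZ^2}\hat f(\xi)\,e^{2i\pi\xi\cdot x'}$, define
$$\psi(x',x_3):=\sum_{\xi\in\bZ^2}\hat f(\xi)\,\theta\big(\langle\xi\rangle\,x_3\big)\,e^{2i\pi\xi\cdot x'}.$$
The map $f\mapsto\psi$ is manifestly linear, so it remains to verify the trace conditions \eqref{s2-cond_bord_psi} and the bound $\|\psi\|_{H^r(\Omega)}\leq C_r\|f\|_{H^{r-\frac12}(\bT^2)}$.

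The conditions \eqref{s2-cond_bord_psi} can be read off the Fourier side at once: $\hat\psi(\xi,0)=\hat f(\xi)\theta(0)=\hat f(\xi)$, so $\psi(\cdot,0)=f$; $\partial_3\hat\psi(\xi,0)=\langle\xi\rangle\,\hat f(\xi)\,\theta'(0)=0$; and $\hat\psi(\xi,\pm1)=\hat f(\xi)\,\theta(\pm\langle\xi\rangle)=0$. Since $r\geq2$, these are genuine traces in $H^{r-\frac12}(\bT^2)$, $H^{r-\frac32}(\bT^2)$ and $H^{r-\frac12}(\bT^2)$ respectively, once $\psi\in H^r(\Omega)$ is established. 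For the estimate, recall that the norm $\|\cdot\|_{H^r(\Omega)}$ is equivalent, by Parseval in $x'$, to $\big(\sum_{\xi}\sum_{k=0}^r\langle\xi\rangle^{2(r-k)}\int_{-1}^1|\partial_3^k\hat\psi(\xi,x_3)|^2\,dx_3\big)^{1/2}$. Here $\partial_3^k\hat\psi(\xi,x_3)=\hat f(\xi)\langle\xi\rangle^k\theta^{(k)}(\langle\xi\rangle x_3)$, so the substitution $s=\langle\xi\rangle x_3$ gives
$$\int_{-1}^1\big|\partial_3^k\hat\psi(\xi,x_3)\big|^2\,dx_3=|\hat f(\xi)|^2\langle\xi\rangle^{2k-1}\!\!\int_{-\langle\xi\rangle}^{\langle\xi\rangle}\!\!|\theta^{(k)}(s)|^2\,ds\leq \|\theta^{(k)}\|_{L^2(\bR)}^2\,|\hat f(\xi)|^2\,\langle\xi\rangle^{2k-1}.$$
Multiplying by $\langle\xi\rangle^{2(r-k)}$ produces $\langle\xi\rangle^{2r-1}$ in every term, so summing over $k\in\{0,\dots,r\}$ and over $\xi$ yields $\|\psi\|_{H^r(\Omega)}^2\leq C_r\sum_\xi\langle\xi\rangle^{2r-1}|\hat f(\xi)|^2=C_r\|f\|_{H^{r-\frac12}(\bT^2)}^2$. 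In particular the series defining $\psi$ converges in $H^r(\Omega)$, which gives both well-definedness and continuity of the map.

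I do not expect a genuine obstacle here; the only substantive point is the choice of the dilation $\theta(\langle\xi\rangle x_3)$, which is precisely what makes the scaling work: each $\partial_3$ costs a factor $\langle\xi\rangle$, but the integration in $x_3$ refunds a factor $\langle\xi\rangle^{-1/2}$, so one gains half a derivative at every order — a naive $\xi$-independent interpolation in $x_3$ would instead require $f\in H^r(\bT^2)$. The only points needing mild care are the zero Fourier mode, handled automatically by $\langle0\rangle=1\geq1$ together with $\operatorname{supp}\theta\subset(-1,1)$, the latter also guaranteeing that $\psi$ vanishes near $\Gamma_\pm$, so the exterior boundary condition $\psi(x',\pm1)=0$ in \eqref{s2-cond_bord_psi} indeed holds.
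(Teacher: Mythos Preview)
Your proof is correct and follows essentially the same idea as the paper's construction (given in the proof of Proposition~\ref{s2-prop_redressement_front}): a frequency-by-frequency dilation in $x_3$ that trades the $L^2_{x_3}$ integration for exactly half a tangential derivative. The only differences are cosmetic. The paper uses the scaling $\varphi(x_3|D|)$ with $\varphi=\mathrm{sinc}$ and then multiplies by $(1-x_3^2)$ to enforce the vanishing at $x_3=\pm1$, whereas you take a compactly supported $\theta$ and scale by $\langle\xi\rangle$, so the boundary condition at $\Gamma_\pm$ and the zero Fourier mode are handled in one stroke. For Lemma~\ref{s2-lemme_redressement_CMST} alone your variant is arguably tidier; the paper's particular choice $\varphi=\mathrm{sinc}$ is made with Proposition~\ref{s2-prop_redressement_front} in mind, where one needs $\|\varphi^{(j)}\|_{H^r(\bR)}$ bounded uniformly in $j$ for the analytic-norm estimate --- a property your $\theta\in C_c^\infty$ would not have.
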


Lemma \ref{s2-lemme_redressement_CMST} allows to define the map $\Psi : (t,x) \mapsto (x', x_{3} + \psi(t,x))$, where $\psi(t,\cdot)$ is given by the previous lemma applied to the function $f(t,\cdot)$. According to \cite{CMST}, if we impose a smallness condition on the front $f(t,\cdot)$ in the space $H^{\frac{5}{2}}(\bT^2)$, then $\Psi(t,\cdot)$ is a $H^3$-diffeomorphism of $\Omega$. Indeed, the Jacobian $1+\p_3\psi$ of the change of variables satisfies for example $1+\p_3\psi\geq\frac{1}{2}$. We will let $J$ denote the Jacobian of $\Psi$, $A$ the inverse of the Jacobian matrix of $\Psi$ and $a$ the comatrix $JA$ :
\begin{equation}\label{s2-def_J_A_a}
J := 1 + \p_3 \psi, \quad
A := \begin{pmatrix}
1 & 0 & 0 \\
0 & 1 & 0 \\
-\ds\frac{\p_1 \psi}{J} & -\ds\frac{\p_2 \psi}{J} & \ds\frac{1}{J}
\end{pmatrix} \quad \text{ and } \quad 
a := JA = \begin{pmatrix}
J & 0 & 0 \\
0 & J & 0 \\
-\ds\p_1\psi & -\ds\p_2\psi & 1
\end{pmatrix}.
\end{equation}
Let us notice that the comatrix $a$ satisfies the so-called \textit{Piola identities} : the divergence of each column of $a$ vanishes, namely:
\begin{equation}\label{s2-id_piola}
\forall \, j\in\{1,2,3\}, \quad \sum_{i=1}^3 \p_i a_{ij} \, = \, 0,
\end{equation}
where $a_{ij}$ are the coefficients of the matrix $a$. These identities will be useful later on, in order to simplify some expressions.

We also define, for $x$ in the \textit{fixed domains} $\Omega^\pm$, the following new unknowns:
$$ v^{\pm}(t,x) := u^{\pm}(t,\Psi(t,x)), \quad B^{\pm}(t,x) := H^{\pm}(t,\Psi(t,x)), \quad Q^{\pm}(t,x) := q^{\pm}(t,\Psi(t,x)). $$
With these new unknowns, system \eqref{int-equations_nappes_MHD} can be rewritten as follows:
\begin{equation}\label{s2-equations_nappes_MHD_fixe}
\left\{
\begin{array}{r l}
\p_t v^\pm \, + \, (\tildv^\pm\sg)v^\pm \, - \, (\tildB^\pm\sg)B^\pm \, + \, A^T\nabla Q^\pm \, = \, 0, & \text{ in } [0,T]\times\Omega^\pm, \\[0.5ex]
\p_t B^\pm \, + \, (\tildv^\pm\sg)B^\pm \, - \, (\tildB^\pm\sg)v^\pm \, = \, 0, & \text{ in } [0,T]\times\Omega^\pm, \\[0.5ex]
(A^T\nabla)\cdot v^\pm \, = \, (A^T\nabla)\cdot B^\pm \, = \, 0, & \text{ in } [0,T]\times\Omega^\pm, \\[0.5ex]
\p_t f \, = \, v^\pm\cdot N, \quad B^\pm\cdot N \, = \, 0, \quad [Q] \, = \, 0, & \text{ on } [0,T]\times\Gamma, \\[0.5ex]
v_3^\pm \, = \, B_3^\pm \, = \, 0, & \text{ on } [0,T]\times\Gamma_\pm,
\end{array}
\right.
\end{equation}
where we have set:
\begin{equation}\label{s2-def_tildv_tildB}
 N := (-\p_1 \psi, -\p_2 \psi, 1), \quad \tildv^{\pm} := \left( v_{1}^{\pm}, v_{2}^{\pm}, \frac{v^{\pm}\cdot N - \p_t \psi}{J}\right), \quad \tildB^{\pm} := \left( B_{1}^{\pm}, B_{2}^{\pm}, \frac{B^{\pm}\cdot N}{J}\right).
\end{equation}
Here, the vector $N$ is defined on the whole domain $\Omega$, and not only on the interface $\Gamma$. From now on, the notation $[\cdot]$ stands for the jump across the \textbf{fixed} interface $\Gamma$:
$$ [Q] := \left.Q^{+}\right|_{\Gamma} - \left.Q^{-}\right|_{\Gamma}.$$
The total pressure $Q^\pm$ in \eqref{s2-equations_nappes_MHD_fixe} can be implicitly expressed as a function of the unknowns $(v^\pm,B^\pm,f)$, using the ``divergence-free'' constraints on the velocity and the magnetic field. We can show that the couple $(Q^+,Q^-)$ has to satisfy the following elliptic problem (obtained by applying the operator $(A^T\nabla)\cdot$ to the first equation of \eqref{s2-equations_nappes_MHD_fixe}, see \cite[p.266]{CMST} for the full details):
\begin{equation}\label{s2-pb_ellip_pression}
\left\{
\begin{array}{r l}
- \, A_{ji}\p_j (A_{ki}\p_k Q^{\pm}) \, = \, \cF^{\pm}, & \text{ in } [0,T]\times\Omega^\pm, \\[0.5ex]
\, [Q] = 0, & \text{ on } [0,T]\times\Gamma, \\[0.5ex]
\, (1+|\nabla' f|^2)[\p_3 Q] \, = \, \cG, & \text{ on } [0,T]\times\Gamma, \\[0.5ex]
\p_3 Q^{\pm} \, = \, 0, & \text{ on } [0,T]\times\Gamma_\pm,
\end{array}
\right.
\end{equation}
where we have used Einstein's summation convention for repeated indices $i,j,k\in\{1,2,3\}$. The source terms $\cF^\pm$ are defined by:
\begin{align}\label{s2-terme_source_cF^pm}
\cF^\pm :=  -\p_t A_{ki} \, \p_k v_i^\pm + A_{ki} \, \p_k \tildv^\pm\cdot\nabla v_i^\pm - \tildv^\pm\cdot\nabla A_{ki} \, \p_k v_i^\pm - A_{ki} \, \p_k \tildB^\pm\cdot\nabla B_i^\pm + \tildB^\pm\cdot\nabla A_{ki} \, \p_k B_i^\pm, 
\end{align}
and the source term $\cG$ in \eqref{s2-pb_ellip_pression} is defined as follows:
\begin{equation}\label{s2-terme_source_cG}
\cG := - \big[ \, 2v'\cdot\nabla' \p_t f + (v'\cdot\nabla')\nabla' f\cdot v' - (B'\cdot\nabla')\nabla' f\cdot B' \, \big],
\end{equation}
where ``$\, ' \,$'' stands for the tangential parts:
$$ \nabla' := (\p_1, \p_2), \quad v' := (v_1 , v_2 ), \quad B' := (B_1 , B_2 ). $$
In order to solve system \eqref{s2-pb_ellip_pression} (where $t\in [0,T]$ is fixed), the source terms $\cF^\pm$ and $\cG$ need to satisfy the following \textit{necessary compatibility condition} (it suffices to integrate by parts the first equation of \eqref{s2-pb_ellip_pression}):
\begin{equation}\label{s2-cond_nec_comp_cF_cG}
\sum_\pm \int_{\Omega^\pm} J \, \cF^\pm \, dx \, = \, \int_\Gamma \cG \, dx'.
\end{equation}
Using definitions \eqref{s2-terme_source_cF^pm} and \eqref{s2-terme_source_cG}, together with the Piola identities \eqref{s2-id_piola}, we can show after some algebra that the compatibility condition \eqref{s2-cond_nec_comp_cF_cG} can be rewritten as follows:
\begin{equation}\label{s2-cond_nec_comp_cF_cG_2}
\begin{aligned}
\sum_\pm \int_{\Omega^\pm} J \, \Big( \p_t\big( (A^T\nabla)\cdot v^\pm \big) \, & + \,  \tildv^\pm \cdot \nabla \big( (A^T\nabla)\cdot v^\pm \big) \, - \, \tildB \cdot \nabla \big( (A^T\nabla)\cdot B^\pm \big) \Big) \, dx \\
& \, = \, - \, \int_\Gamma \Big[ \big( v\cdot N - \p_t f\big) \, \big( \p_3 v\cdot N\big) \, - \, \big( B\cdot N \big) \, \big( \p_3 B\cdot N \big) \Big] \, dx'.
\end{aligned}
\end{equation}
We can easily see that the compatibility condition \eqref{s2-cond_nec_comp_cF_cG_2} is fulfilled as soon as both the velocity $v^\pm$ and the magnetic field $B^\pm$ are ``divergence-free'', namely $(A^T\nabla)\cdot v^\pm \, = \, (A^T\nabla)\cdot B^\pm \, = \, 0$, and if they satisfy the jump conditions $\p_t f = v^+ \cdot N = v^- \cdot N$ and $B^+ \cdot N = B^- \cdot N = 0$ on the fixed boundary $\Gamma$. Thus, we will need to ensure these \textit{nonlinear} constraints \textit{for all time} $t\in [0,T]$ in order to define the pressure $(Q^+,Q^-)$. 
To overcome this difficulty, we stand out of the works of Coulombel \textit{et al.} \cite{CMST} and Sun \textit{et al.} \cite{SWZ}, giving a new formulation of problem \eqref{s2-equations_nappes_MHD_fixe} inspired by Trakhinin \cite{Trakhinin,Trakhinin_comp}.

\subsection{A new formulation of the problem}\label{s2-sec_nouvelle_formulation}

Let us come back to the formulation \eqref{s2-equations_nappes_MHD_fixe} of the current-vortex sheets problem written in \textit{fixed domains}, whose unknowns are $(v^\pm,B^\pm,f)$ (we omit the pressure $Q^\pm$ since it can be implicitly obtained in function of $(v^\pm,B^\pm,f)$ through the resolution of the Laplace problem \eqref{s2-pb_ellip_pression}). We define new unknowns $(u^\pm,b^\pm,f)$\footnote{Let us be careful, the notation $u^\pm$ no longer refers to the velocity in the original formulation \eqref{int-equations_nappes_MHD}.} as follows:
\begin{equation}\label{s2-def_u_b}
u^\pm \, := \, a \, v^\pm \, = \, (J \, v_1^\pm, J \, v_2^\pm , v^\pm\cdot N) \quad \text{ and } \quad b^\pm \, := \, a \, B^\pm \, = \, (J \, B_1^\pm, J \, B_2^\pm, B^\pm\cdot N),
\end{equation}
where we recall the definition \eqref{s2-def_J_A_a} of the comatrix $a$. We can remark that the terms $\tildv^\pm$ and $\tildB^\pm$ in \eqref{s2-equations_nappes_MHD_fixe} can be easily rewritten in function of $(u^\pm,b^\pm,f)$, since we have:
\begin{align*}
\tildv^\pm \, = \, \frac{u^\pm - \p_t\psi \, e_3}{J}, \quad \tildB^\pm \, = \, \frac{b^\pm}{J},
\end{align*}
where $e_3$ denotes the third vector of the canonical basis of $\bR^3$. With these new unknowns, we will be able to rewrite quite easily both jump conditions on $\Gamma$ and boundary conditions on $\Gamma_\pm$, as well as the ``divergence-free'' constraints in system \eqref{s2-equations_nappes_MHD_fixe}.

\medskip

\noindent
\textbf{The ``divergence-free'' constraints.} Using Piola identities \eqref{s2-id_piola}, we get:
\begin{equation}\label{s2-div_u_div_b}
(A^T\nabla)\cdot v^\pm \, = \, \frac{1}{J} \, \dv u^\pm \quad \text{ and } \quad (A^T\nabla)\cdot B^\pm \, = \, \frac{1}{J} \, \dv b^\pm.
\end{equation}
Thus, we are reduced to the original divergence operator (which is a \textit{linear} and \textit{constant-coefficients} operator). Therefore, the new unknowns $u^\pm$ and $b^\pm$ will have to be divergence-free for all time:
\begin{subequations}\label{s2-contraintes_div_cond_bord_u_b}
\begin{equation}\label{s2-contraintes_div_u_b}
\forall \, t\in [0,T], \quad \dv u^\pm(t) \, = \, \dv b^\pm(t) \, = \, 0 \quad \text{ in } \quad \Omega^\pm. 
\end{equation}

\medskip

\noindent
\textbf{The jump conditions.} We rewrite the jump conditions in \eqref{s2-equations_nappes_MHD_fixe} as follows:
\begin{equation}\label{s2-cond_saut_u_b}
\p_t f \, = \, u_3^+ \, = \, u_3^- \quad \text{ and } \quad b_3^+ \, = \, b_3^- \, = \, 0 \quad \text{ on } \quad \Gamma. 
\end{equation}
In particular, we notice that both conditions on the jump of $u_3$ across $\Gamma$ and on the trace of $b_3^\pm$ on $\Gamma$ are \textit{linear}.

\medskip

\noindent
\textbf{The boundary conditions on $\Gamma_\pm$.} Similarly, using \eqref{s2-cond_bord_psi}, the boundary conditions on $\Gamma_\pm$ in \eqref{s2-equations_nappes_MHD_fixe} can be rewritten as \textit{linear} conditions, namely:
\begin{equation}\label{s2-cond_bord_fixe_u_b}
u_3^+ \, = \, u_3^- \, = \, 0 \quad \text{ and } \quad b_3^+ \, = \, b_3^- \, = \, 0 \quad \text{ on } \quad \Gamma_\pm.
\end{equation}
\end{subequations}

\medskip

\noindent
\textbf{The equations satisfied by $u^\pm$ and $b^\pm$.} Using definition \eqref{s2-def_u_b}, we can deduce the evolution equations satisfied by $u^\pm$ and $b^\pm$ in the domains $\Omega^\pm$. Within this paragraph, we use Einstein's summation convention for repeated indices: we let $i$ denote a tangential index in $\{1,2\}$, and $k$ any index in $\{1,2,3\}$.

Let us begin with the equations satisfied by the velocity $u^\pm$, and let us omit the exponents $\,^\pm$ for the time being. For $i=1,2$, we have:
\begin{subequations}\label{s2-eq_evo_u_b}
\begin{align}\label{s2-eq_evo_u_i}
\p_t u_i \, + \, \frac{u\sg u_i}{J} \, - \, \frac{b\sg b_i}{J} \, - \, \p_3 \Big( \frac{\p_t\psi \, u_i}{J} \Big) \, - \, \big( u_i \, u_k \, - \,  b_i \, b_k \big) \frac{\p_k J}{J^2} \, + \, J \, \p_i Q \, - \, \p_i\psi \, \p_3 Q \, = \, 0,
\end{align}
and for the normal component $u_3$:
\begin{align}\label{s2-eq_evo_u_3}
\p_t u_3 \, + \, \frac{u\sg u_3}{J} \, - \, \frac{b\sg b_3}{J} \, + \, \p_i \Big( \frac{\p_t\psi \, u_i}{J} \Big) \, + \, \big( u_i \, u_k \, - \,  b_i \, b_k \big) \frac{\p_i\p_k\psi}{J^2} \, - \, \p_i\psi \, \p_i Q \, + \, \frac{1 + \p_i\psi\p_i\psi}{J} \, \p_3 Q \, = \, 0.
\end{align}
In the same way, the equations for the magnetic field $b$ read:
\begin{align}
\forall \, i=1,2, \quad & \p_t b_i \, + \, \frac{u\sg b_i}{J} \, - \, \frac{b\sg u_i}{J} \, - \, \p_3 \Big( \frac{\p_t\psi \, b_i}{J} \Big) \, - \, \big( b_i \, u_k \, - \,  u_i \, b_k \big) \frac{\p_k J}{J^2} \, = \, 0, \label{s2-eq_evo_b_i} \\[1ex]
& \p_t b_3 \, + \, \frac{u\sg b_3}{J} \, - \, \frac{b\sg u_3}{J} \, + \, \p_i \Big( \frac{\p_t\psi \, b_i}{J} \Big) \, + \, \big( b_i \, u_3 \, - \,  u_i \, b_3 \big) \frac{\p_i J}{J^2} \, = \, 0. \label{s2-eq_evo_b_3}
\end{align}
\end{subequations}
System \eqref{s2-eq_evo_u_b} will be supplemented with an initial datum $(u_0^\pm,b_0^\pm)$ satisfying the constraints \eqref{s2-contraintes_div_cond_bord_u_b}, together with an initial front $f_0$.

\medskip

\noindent
\textbf{The Laplace problem satisfied by the pressure $Q^\pm$.} The elliptic problem associated with the pressure is the same as \eqref{s2-pb_ellip_pression}; we just rewrite it in terms of the new unknowns $u^\pm$ and $b^\pm$, and we use Piola identities \eqref{s2-id_piola} to make the classical divergence operator appear. We thus have:
\begin{equation}\label{s2-pb_ellip_pression_réécrit}
\left\{
\begin{array}{r l}
- \, \dv \big( a \, A^T \, \nabla Q^\pm \big) \, = \, J \, \cF^\pm, & \text{ in } [0,T]\times\Omega^\pm, \\[0.5ex]
\, [Q] \, = \, 0, & \text{ on } [0,T]\times\Gamma, \\[0.5ex]
\big( 1 + |\nabla' f|^2 \big) \, \big[\p_3 Q] \, = \, \cG, & \text{ on } [0,T]\times\Gamma, \\[0.5ex]
\p_3 Q^\pm \, = \, 0, & \text{ on } [0,T]\times\Gamma_\pm.
\end{array}
\right.
\end{equation}
The source terms $J\,\cF^\pm$ and $\cG$ in \eqref{s2-pb_ellip_pression_réécrit}, defined by \eqref{s2-terme_source_cF^pm}, \eqref{s2-terme_source_cG}, can be expressed in function of $u^\pm$ and $b^\pm$ as follows (we omit the exponents $\,^\pm$ for the sake of clarity):
\begin{subequations}\label{s2-termes_sources_cF_cG_réécrits}
\begin{equation}\label{s2-terme_source_JcF_réécrit}
\begin{aligned}
J \, \cF & \, = \, \frac{1}{J} \big( \p_k u_\ell \, \p_\ell u_k \, - \, \p_k b_\ell \, \p_\ell b_k \big) \, - \, \frac{2 \, \p_k J}{J^2} \big( u_i \, \p_i u_k \, - \, b_i \, \p_i b_k \big) \, + \, \frac{2 \, \p_i J \, \p_k J}{J^3} \big( u_i \, u_k \, - \, b_i \, b_k \big) \\
& \hspace*{0.5cm} + \, \frac{2 \, \p_i \p_j \psi}{J^2} \big( u_i \, \p_3 u_j \, - \, b_i \, \p_3 b_j \big) \, - \, \frac{2 \, \p_j J}{J^2} \big( u_i \, \p_j u_j \, - \, b_i \, \p_j b_j \big).
\end{aligned}
\end{equation}
In \eqref{s2-terme_source_JcF_réécrit}, we have used Einstein's summation convention for repeated indices, where $i, j$ stand for \textit{tangential} indices belonging to $\{1,2\}$ and $k, \ell$ denote any indices belonging to $\{1,2,3\}$. Finally, the source term $\cG$ reads:
\begin{equation}\label{s2-terme_source_cG_réécrit}
\cG = - \big[ \, 2 \, u'\cdot\nabla' \p_t f + (u'\cdot\nabla')\nabla' f\cdot u' - (b'\cdot\nabla')\nabla' f\cdot b' \, \big].
\end{equation}
\end{subequations}
Using the constraints \eqref{s2-contraintes_div_cond_bord_u_b}, we can easily show that the source terms $J\,\cF^\pm$ and $\cG$ in \eqref{s2-pb_ellip_pression_réécrit} are compatible, as stated below.
\begin{lemme}
Assume that $(u^\pm,b^\pm,f)$ satisfy the constraints \eqref{s2-contraintes_div_u_b}, \eqref{s2-cond_saut_u_b}, \eqref{s2-cond_bord_fixe_u_b} and are sufficiently smooth. Then the compatibility condition \eqref{s2-cond_nec_comp_cF_cG} is fulfilled by the source terms $J \, \cF^\pm$ and $\cG$ defined by \eqref{s2-termes_sources_cF_cG_réécrits}.
\end{lemme}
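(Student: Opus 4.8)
The plan is to transfer to the new unknowns $(u^\pm,b^\pm,f)$ the observation made just after \eqref{s2-cond_nec_comp_cF_cG_2}: the compatibility condition holds as soon as the velocity and the magnetic field are ``divergence-free'' and satisfy the tangential jump conditions on $\Gamma$. The point is that the source terms $J\,\cF^\pm$ and $\cG$ of the rewritten elliptic problem are nothing but the source terms \eqref{s2-terme_source_cF^pm}--\eqref{s2-terme_source_cG} expressed through the change of unknowns \eqref{s2-def_u_b} (this is precisely how the rewritten Laplace problem was obtained). Consequently, condition \eqref{s2-cond_nec_comp_cF_cG} for these source terms is exactly \eqref{s2-cond_nec_comp_cF_cG} written with $v^\pm := A\,u^\pm$ and $B^\pm := A\,b^\pm$, and by the algebra already carried out it is equivalent to the reformulation \eqref{s2-cond_nec_comp_cF_cG_2} for those same $v^\pm$, $B^\pm$.

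So the actual work is to translate the hypotheses. By \eqref{s2-div_u_div_b} and the positivity of the Jacobian ($J = 1 + \p_3\psi \geq \frac{1}{2}$), the divergence-free constraints \eqref{s2-contraintes_div_u_b} are equivalent to $(A^T\nabla)\cdot v^\pm = (A^T\nabla)\cdot B^\pm = 0$ in $\Omega^\pm$, for \emph{all} $t\in[0,T]$. By the third components of \eqref{s2-def_u_b}, which read $u_3^\pm = v^\pm\cdot N$ and $b_3^\pm = B^\pm\cdot N$, the jump conditions \eqref{s2-cond_saut_u_b} become $\p_t f = v^+\cdot N = v^-\cdot N$ and $B^+\cdot N = B^-\cdot N = 0$ on $\Gamma$, while \eqref{s2-cond_bord_fixe_u_b} becomes $v_3^\pm = B_3^\pm = 0$ on $\Gamma_\pm$ (note that \eqref{s2-cond_bord_psi} forces $\nabla'\psi = 0$, hence $N = e_3$, on $\Gamma_\pm$). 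The hypotheses of the lemma thus furnish exactly the conditions under which \eqref{s2-cond_nec_comp_cF_cG_2} was seen to hold.

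It remains to check that plugging these relations into \eqref{s2-cond_nec_comp_cF_cG_2} makes both members vanish. In the volume integral, each of the three terms in the integrand is a derivative — in $t$, along $\tildv^\pm$, or along $\tildB^\pm$ — of $(A^T\nabla)\cdot v^\pm$ or of $(A^T\nabla)\cdot B^\pm$, hence vanishes identically on $[0,T]\times\Omega^\pm$; for the $\p_t$ term this is exactly where one uses that the divergence-free constraint holds at every time and not merely initially. In the boundary integral over $\Gamma$, the factors $v^\pm\cdot N - \p_t f$ and $B^\pm\cdot N$ vanish on each side of $\Gamma$, so the two products inside the jump bracket are zero before the jump is taken. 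Hence \eqref{s2-cond_nec_comp_cF_cG_2} reduces to $0 = 0$, which is \eqref{s2-cond_nec_comp_cF_cG}.

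The main difficulty was already absorbed upstream, in deriving the reformulation \eqref{s2-cond_nec_comp_cF_cG_2} from \eqref{s2-cond_nec_comp_cF_cG} via the Piola identities \eqref{s2-id_piola} and the explicit shape of $\cF^\pm$ and $\cG$; at the present level there is no real obstacle, only the bookkeeping between the two sets of unknowns and the small point about the time derivative. As a cross-check one may instead test the first equation of the rewritten elliptic problem for $Q^\pm$ against the constant function $1$ over $\Omega^+\sqcup\Omega^-$ and integrate by parts — which is how \eqref{s2-cond_nec_comp_cF_cG} arose in the first place — but going through \eqref{s2-cond_nec_comp_cF_cG_2} is shorter and keeps the role of the constraints transparent.
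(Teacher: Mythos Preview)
Your proposal is correct and follows exactly the route the paper intends: the paper does not give a standalone proof of this lemma but only says ``we can easily show'' it using the constraints, pointing back to the discussion around \eqref{s2-cond_nec_comp_cF_cG_2}; your argument fills in precisely that step by translating the constraints on $(u^\pm,b^\pm)$ back to $(v^\pm,B^\pm)$ via \eqref{s2-div_u_div_b} and the third components of \eqref{s2-def_u_b}, and then invoking the already established equivalence with \eqref{s2-cond_nec_comp_cF_cG_2}. One cosmetic slip: the inverse relation is $v^\pm = a^{-1}u^\pm$, not $v^\pm = A\,u^\pm$ (the matrices $a^{-1}$ and $A$ differ), but you never actually use that formula, so the argument stands.
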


From now on, we focus on system \eqref{s2-eq_evo_u_b}, \eqref{s2-pb_ellip_pression_réécrit}, \eqref{s2-termes_sources_cF_cG_réécrits} together with the divergence-free constraints and the boundary conditions \eqref{s2-contraintes_div_cond_bord_u_b}. Given an initial datum $(u_0^\pm,b_0^\pm,f_0)$, the purpose is to construct an analytic solution $(u^\pm,b^\pm,f)$ to this system, using a fixed-point method of Cauchy-Kowalevskaya type. The functional framework will be defined in the next paragraph.

The main advantage of this formulation is the linearity of the constraints \eqref{s2-contraintes_div_cond_bord_u_b}. Let us recall that we need these constraints to be satisfied \textit{for all time}, in order to solve the Laplace problem \eqref{s2-pb_ellip_pression_réécrit}. It turns out that the ideas of Baouendi and Goulaouic \cite{BG77} seem to be appropriate to manage the constraints \eqref{s2-contraintes_div_cond_bord_u_b}. Indeed, we will be able to include them in the functional spaces defined below.

\subsection{Scales of analytic Banach spaces}\label{s0-sec_analytic_spaces}

First of all, let us recall the basic properties of Sobolev spaces that we shall use later on in order to define some analytic spaces. We refer for instance to \cite{Benzoni-Serre,Brezis,Evans,Triebel,Zuily}.

\subsubsection*{Periodic Sobolev spaces}

Let us denote the $1-$periodic torus by $\ds \bT := \raisebox{0.7mm}{$\bR$} \big/ \raisebox{-0.7mm}{$\bZ$}$. For $s\in\bR_+$, we define the $d-$dimensional periodic Sobolev space by (see \textit{e.g.} \cite{Triebel}):
\begin{align}\label{s0-def_esp_sob_per}
H^s(\bT^d) := \Big\{ u\in L^2(\bT^d) \, \Big| \, \sum_{n\in\bZ^d} (1 + |n|^2)^s \, |c_n(u)|^2 \, < \, +\infty \Big\},
\end{align}
equipped with the norm $\|\cdot\|_{H^s(\bT^d)}$ defined by
\begin{align}\label{s0-def_norm_esp_sob_T^d}
\|u\|_{H^s(\bT^d)}^2 := \sum_{n\in\bZ^d} (1 + |n|^2)^s \, |c_n(u)|^2.
\end{align}
For $n\in\bZ^d$, the quantity $c_n(u)$ corresponds to the $n^{th}$ Fourier coefficient of $u$. Therefore, we have a convenient characterization of the spaces $H^s(\bT^d)$ by means of the (discrete) Fourier transform. The Sobolev space $H^s(\bT^d)$ is a Hilbert space.

\subsubsection*{Sobolev spaces on a bounded domain of \texorpdfstring{$\bR^3$}{Lg}}

Let us now consider $\Omega$, a bounded and smooth domain of $\bR^3$. For all $r\in\bN$, the Sobolev space $H^r(\Omega)$ is defined by:
\begin{align}\label{s0-def_esp_sob_Omega}
H^r(\Omega) := \left\{ u\in L^2(\Omega) \, | \, \forall \, \alpha\in\bN^3, \, |\alpha|\leq r, \, \p^\alpha u\in L^2(\Omega) \right\},
\end{align}
equipped with the norm $\|\cdot\|_{H^r(\Omega)}$ defined by
\begin{align}\label{s0-def_norm_esp_sob_Omega}
\| u \|_{H^r(\Omega)}^2 := \sum_{|\alpha|\leq r} \|\p^\alpha u\|_{L^2(\Omega)}^2.
\end{align}
The space $H^r(\Omega)$ is a Hilbert space.

\subsubsection*{A particular case: the Sobolev space \texorpdfstring{$H^r(\bT^2\times (0,1))$}{$Lg$}}

The equations of magnetohydrodynamics we study are set on a domain of the form $\bT^2\times (0,1)$, \textit{i.e.} a horizontal strip $\{0 < x_3 < 1\}$, with periodic boundary conditions with respect to the tangential variable $x' := (x_1,x_2)\in\bT^2$.
%
%
%
%
%
Using Fubini's theorem, we can write, for $u\in H^r(\bT^2\times (0,1))$,
\begin{align}\label{s0-norm_H^r_T^2_(0,1)}
\|u\|_{H^r(\bT^2\times (0,1))}^2 = \sum_{|\alpha|\leq r} \left\| \p^\alpha u\right\|_{L^2(\bT^2\times (0,1))}^2 
= \sum_{k=0}^r \left\| \p_3^k u \right\|_{L_{x_3}^2(H^{r-k}(\bT^2))}^2.
\end{align}
As a consequence, the space $\ds H^r(\bT^2\times (0,1))$ also takes the form
\begin{align*}
H^r(\bT^2\times (0,1)) = \bigcap_{k=0}^r H_{x_3}^k \left( H^{r-k}(\bT^2) \right).
\end{align*}
From a computational point a view, the norm given by \eqref{s0-norm_H^r_T^2_(0,1)} turns out to be useful. Indeed, for $k\in\{0,\dots,r\}$, we shall compute the quantities $\left\| \p_3^k u \right\|_{L_{x_3}^2(H^{r-k}(\bT^2))}$ thanks to Fourier series in $x'$ and integration with respect to $x_3$.

We eventually give a useful tame estimate for composite functions, in a very particular case (see \textit{e.g.} \cite[p.283]{Lannes}).

\begin{prop}\label{s0-prop_estim_1/1+u_H^r}
Let $r\in\bN$ and $\Omega$ be a smooth bounded domain of $\bR^d$. We consider $u\in H^r(\Omega)$ such that:

\begin{align*}
\exists \, c_0>0,  \quad \forall \, x\in\Omega, \quad c_0^{-1} \leq 1 + u(x) \leq c_0. 
\end{align*}
Then the function $\frac{1}{1+u}$ belongs to $H^r(\Omega)$ and satisfies the estimate:

\begin{align}\label{s0-estim_1/1+u_H^r}
\left\|\frac{1}{1+u}\right\|_{H^r(\Omega)} \, \leq \, C_{r,c_0} \, \big( 1 + \|u\|_{H^r(\Omega)} \big),
\end{align}
where $C_{r,c_0} >0$ depends only on $r$, $c_0$ and $\Omega$.
\end{prop}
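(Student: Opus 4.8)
The strategy is to argue by induction on $r\in\bN$. For $r=0$, the bound is trivial: $|1/(1+u)|\leq c_0$ pointwise, so $\|1/(1+u)\|_{L^2(\Omega)}\leq c_0\,|\Omega|^{1/2}$, which is of the required form. For the inductive step, assume the estimate holds up to order $r-1$ and let $u\in H^r(\Omega)$. Set $w:=1/(1+u)$. The key observation is the algebraic identity obtained by differentiating $(1+u)w=1$: for any multi-index $\alpha$ with $|\alpha|=r\geq 1$, writing $\partial^\alpha=\partial_j\partial^\beta$ with $|\beta|=r-1$, one has
\begin{equation*}
\partial^\alpha w \, = \, \partial^\beta\big(\partial_j w\big) \, = \, -\,\partial^\beta\big( w^2\,\partial_j u\big).
\end{equation*}
So it suffices to control $\|w^2\,\partial_j u\|_{H^{r-1}(\Omega)}$.

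The main tool is the standard tame product estimate in $H^{r-1}(\Omega)$ (for a bounded smooth domain, Moser-type inequality): for $\sigma\geq 0$ and $g,h\in H^\sigma(\Omega)\cap L^\infty(\Omega)$,
\begin{equation*}
\|g\,h\|_{H^\sigma(\Omega)} \, \leq \, C_\sigma\big( \|g\|_{L^\infty}\,\|h\|_{H^\sigma} + \|g\|_{H^\sigma}\,\|h\|_{L^\infty}\big).
\end{equation*}
Applying this with $\sigma=r-1$, $g=w^2$, $h=\partial_j u$, and using $\|w^2\|_{L^\infty}\leq c_0^2$, $\|\partial_j u\|_{L^\infty}$ — here one invokes the Sobolev embedding $H^{r-1}\hookrightarrow L^\infty$ when $r-1>d/2$; in the borderline/low cases one instead keeps the $L^\infty$ norm of $w^2$ and uses the product estimate in the form $\|gh\|_{H^\sigma}\leq C(\|g\|_{H^\sigma}\|h\|_{H^\sigma})$ valid once $\sigma>d/2$, together with a separate elementary treatment of the very low-regularity range — one obtains
\begin{equation*}
\|\partial^\alpha w\|_{L^2(\Omega)} \, \leq \, C\big( c_0^2\,\|\partial_j u\|_{H^{r-1}(\Omega)} + \|w^2\|_{H^{r-1}(\Omega)}\,\|\partial_j u\|_{L^\infty(\Omega)}\big).
\end{equation*}
The first summand is bounded by $C\,c_0^2\,\|u\|_{H^r(\Omega)}$. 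For the second, one writes $w^2=w\cdot w$ and again uses the tame product estimate plus the inductive hypothesis $\|w\|_{H^{r-1}(\Omega)}\leq C_{r-1,c_0}(1+\|u\|_{H^{r-1}(\Omega)})$ to get $\|w^2\|_{H^{r-1}(\Omega)}\leq C_{r-1,c_0}(1+\|u\|_{H^{r-1}(\Omega)})^2$; combined with $\|\partial_j u\|_{L^\infty(\Omega)}\leq C\|u\|_{H^r(\Omega)}$ this yields a bound of the shape $C(1+\|u\|_{H^r(\Omega)})^3$, which is worse than claimed.

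To recover the \emph{linear} dependence asserted in \eqref{s0-estim_1/1+u_H^r} one must be more careful: instead of crudely bounding $\|w^2\|_{H^{r-1}}$ and $\|\partial_j u\|_{L^\infty}$ separately, one should exploit that the top-order derivatives never multiply each other. Concretely, the right way is to interpolate: in the product $w^2\,\partial_j u$, only one factor carries the full $r-1$ derivatives while the others are estimated in $L^\infty$ (equivalently in $H^s$ for a fixed $s>d/2$, $s\leq r-1$, whose norm is controlled by the inductive hypothesis \emph{at a fixed order} and hence contributes a constant depending only on $r,c_0,\Omega$ and, crucially, \emph{not} growing with $\|u\|_{H^r}$ beyond the power already present). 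Carrying the Gagliardo–Nirenberg interpolation bookkeeping through, the cross terms all absorb into $C_{r,c_0}(1+\|u\|_{H^r(\Omega)})$. This interpolation bookkeeping — ensuring that the factor carrying the highest derivative appears to the first power and the remaining factors are measured in a norm that the induction controls by a pure constant — is the only delicate point; the rest is the routine Leibniz rule and Sobolev embedding. I expect this step, reconciling the naive Moser estimate with the sharp linear bound, to be the main obstacle. $\square$
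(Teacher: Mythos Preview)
The paper does not prove this proposition; it is stated as a standard tame composition estimate with a reference to \cite[p.~283]{Lannes}. So there is no ``paper's proof'' to match, and your task is simply to give a correct argument.

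Your inductive scheme has a genuine gap at exactly the point you flag. The inductive hypothesis gives
\[
\|w\|_{H^{r-1}}\;\leq\;C_{r-1,c_0}\bigl(1+\|u\|_{H^{r-1}}\bigr),
\]
which is \emph{not} a pure constant: it grows with $\|u\|_{H^{r-1}}$. Hence your claim that ``the remaining factors are measured in a norm that the induction controls by a pure constant'' is false, and the bookkeeping you allude to cannot close. Feeding the inductive bound back into the tame product estimate $\|w^2\,\partial_j u\|_{H^{r-1}}\lesssim c_0^2\|u\|_{H^r}+\|\partial_j u\|_{L^\infty}\|w^2\|_{H^{r-1}}$ yields a factor $(1+\|u\|_{H^{r-1}})\|u\|_{H^r}$, i.e.\ a quadratic bound at best. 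No amount of interpolation on $w$ alone fixes this, because the induction has already lost the information that $w$ is a \emph{specific} function of $u$.

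The cure is to abandon the induction on $r$ and instead expand $\partial^\alpha\bigl(F(u)\bigr)$ for $|\alpha|=r$ directly by Fa\`a di Bruno (here $F(t)=1/(1+t)$): one gets a finite sum of terms
\[
F^{(k)}(u)\,\partial^{\gamma_1}u\cdots\partial^{\gamma_k}u,\qquad |\gamma_1|+\cdots+|\gamma_k|=r,\quad |\gamma_i|\geq 1.
\]
The crucial point is that $|F^{(k)}(u)|\leq k!\,c_0^{k+1}$ \emph{pointwise}, so this factor contributes a constant depending only on $c_0$ and $r$. For the product of derivatives of $u$, apply H\"older with exponents $p_i=2r/|\gamma_i|$ and the Gagliardo--Nirenberg inequality $\|D^{j}u\|_{L^{2r/j}}\leq C\,\|u\|_{L^\infty}^{1-j/r}\|u\|_{H^r}^{j/r}$ (valid on a smooth bounded domain up to lower-order corrections). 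Since $\sum |\gamma_i|/r=1$, the exponents of $\|u\|_{H^r}$ add up to exactly~$1$, and the $\|u\|_{L^\infty}$ factors are bounded by a constant depending on $c_0$. This yields $\|\partial^\alpha w\|_{L^2}\leq C_{r,c_0}\|u\|_{H^r}$ and hence \eqref{s0-estim_1/1+u_H^r}. The linear dependence comes precisely from using the pointwise bound on $F^{(k)}(u)$ (rather than an $H^{r-1}$ bound on $w$) together with the multiplicative structure of Gagliardo--Nirenberg.
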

The fact that we can estimate the inverse of a given function turns out the be useful when, for instance, we perform a change of variables. More precisely, some computations of derivatives can make the Jacobian of this change of variables appear, as well as \textbf{its inverse}.

Now that we have recalled the main properties of Sobolev spaces, we are able to construct some spaces of analytic functions. Their definition relies on the spaces $H^r(\Omega)$.

\subsubsection*{Analytic spaces on \texorpdfstring{$\bT^2\times (-1,1)$}{T2x(-1,1)}}\label{s0-sec_esp_ana_T^2_(-1,1)}

From now on, $\Omega$ denotes the reference domain $\bT^2\times (-1,1)$. The norm of the Sobolev space $H^r(\Omega)$, for $r\in\bN$, is simply noted as $\|\cdot\|_{H^{r}}$. We also consider a parameter $\rho_0>0$, fixed once and for all. The following definitions and properties of the functional spaces below are inspired from \cite{Sedenko}.

\begin{definition}
Let $(r,k)\in\bN^{2}$ and $\rho\in (0,\rho_{0}]$. We define the space $B_{\rho,r}^{k}$ by
\begin{align*}
B_{\rho,r}^{k} := \Big\{ u\in H_{x_{3}}^{k+r}(H^\infty(\bT^{2})) \, \Big| \, \sum_{n\geq 0} \frac{\rho^{n}}{n!}\max_{\substack{|\alpha|=n \\ \alpha_{3}\leq k}} \|\p^{\alpha}u\|_{H^{r}} < +\infty \Big\},
\end{align*}
where $\alpha$ will always denote a multi-index of $\bN^3$ and $|\alpha|$ its length. We equip this space with the norm $\|\cdot\|_{\rho,r}^{k}$ given by
\begin{align*}
\| u \|_{\rho,r}^{k} := \sum_{n\geq 0} \frac{\rho^{n}}{n!}\max_{{\substack{|\alpha|=n \\ \alpha_{3}\leq k}}} \|\p^{\alpha}u\|_{H^{r}}.
\end{align*}
\end{definition}

\begin{prop}
Given $(r,k)\in\bN^2$, the sequence $(B_{\rho,r}^{k})_{0<\rho\leq\rho_{0}}$ is a scale of Banach spaces, in other words:
\begin{align*}
\forall \, 0<\rho'\leq\rho\leq\rho_{0}, \quad B_{\rho',r}^{k} \supset B_{\rho,r}^{k}, \quad \text{with} \quad \|\cdot\|_{\rho',r}^{k}\leq \|\cdot\|_{\rho,r}^{k}.
\end{align*}
\end{prop}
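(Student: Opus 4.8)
The plan is to verify the two defining properties of a scale of Banach spaces directly from the definition of $B_{\rho,r}^k$ and its norm. First I would fix $r,k\in\bN$ and take $0<\rho'\leq\rho\leq\rho_0$. The heart of the argument is the elementary monotonicity of the weights: since the coefficients $\frac{\rho^n}{n!}\max_{|\alpha|=n,\,\alpha_3\leq k}\|\p^\alpha u\|_{H^r}$ appearing in the series defining $\|u\|_{\rho,r}^k$ are nonnegative, and since $\rho'\leq\rho$ implies $(\rho')^n\leq\rho^n$ for every $n\geq 0$, one has termwise
\begin{align*}
\frac{(\rho')^n}{n!}\max_{\substack{|\alpha|=n\\ \alpha_3\leq k}}\|\p^\alpha u\|_{H^r} \, \leq \, \frac{\rho^n}{n!}\max_{\substack{|\alpha|=n\\ \alpha_3\leq k}}\|\p^\alpha u\|_{H^r}.
\end{align*}
Summing over $n\geq 0$ yields $\|u\|_{\rho',r}^k\leq\|u\|_{\rho,r}^k$ whenever the right-hand side is finite, which proves both the inclusion $B_{\rho,r}^k\subset B_{\rho',r}^k$ (if the larger series converges so does the smaller) and the norm inequality.

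The only remaining point, to justify the terminology ``scale of Banach spaces'', is that each $(B_{\rho,r}^k,\|\cdot\|_{\rho,r}^k)$ is indeed a Banach space, i.e. complete. I would argue this by viewing $B_{\rho,r}^k$ as a weighted $\ell^1$-type space: given a Cauchy sequence $(u^{(j)})_j$ in $B_{\rho,r}^k$, for each fixed multi-index $\alpha$ with $\alpha_3\leq k$ the sequence $(\p^\alpha u^{(j)})_j$ is Cauchy in $H^r(\Omega)$ (up to the fixed weight $\rho^{|\alpha|}/|\alpha|!$, which is a positive constant once $\alpha$ is fixed), hence converges in $H^r$; in particular $(u^{(j)})_j$ converges in $H^r$ to some limit $u$, and one identifies the $H^r$-limits of $\p^\alpha u^{(j)}$ with $\p^\alpha u$ by continuity of differentiation as a map into distributions. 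A standard Fatou-type argument on the series — for any truncation $N$, $\sum_{n\leq N}\frac{\rho^n}{n!}\max_{|\alpha|=n,\,\alpha_3\leq k}\|\p^\alpha u\|_{H^r}=\lim_j\sum_{n\leq N}\frac{\rho^n}{n!}\max_{|\alpha|=n,\,\alpha_3\leq k}\|\p^\alpha u^{(j)}\|_{H^r}\leq\sup_j\|u^{(j)}\|_{\rho,r}^k<\infty$ — then shows $u\in B_{\rho,r}^k$, and a similar estimate applied to $u^{(j)}-u$ gives $\|u^{(j)}-u\|_{\rho,r}^k\to 0$.

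I do not anticipate a genuine obstacle here: the inclusion and norm inequality are essentially one line, and completeness follows the textbook pattern for weighted sequence spaces valued in a complete space. The one point requiring a little care is the interchange of the supremum $\max_{|\alpha|=n,\alpha_3\leq k}$ with limits in $j$ — but since for each fixed $n$ the max is over a finite set of multi-indices, this is harmless — together with checking that the limit function genuinely lies in $H_{x_3}^{k+r}(H^\infty(\bT^2))$, which again follows from the $H^r$-convergence of all derivatives $\p^\alpha u^{(j)}$ with $\alpha_3\leq k$ and arbitrary tangential order. If the paper intends the statement to mean only the inclusion/norm-monotonicity property (as the displayed ``in other words'' suggests), then the proof reduces entirely to the termwise comparison above.
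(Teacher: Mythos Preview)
Your proposal is correct. The paper does not give its own proof of this proposition but simply refers to \cite{Sedenko}; your argument is exactly the natural one --- termwise comparison $(\rho')^n\leq\rho^n$ for the inclusion and norm inequality, and a standard Cauchy-sequence/Fatou argument for completeness --- and there is nothing to add.
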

We refer to \cite{Sedenko} for the proof. In order to apply a Cauchy-Kowalevskaya theorem, we will need algebra and differentiation properties in the spaces $B_{\rho,r}^k$. Both following propositions give such properties.
\begin{prop}\label{s1-prop_prod_B_rho,r^k}
Let $r\in\bN$ such that $r\geq 2$ and $u,v\in B_{\rho,r}^{k}$. Then $u\,v\in B_{\rho,r}^{k}$ and we have the following algebra properties:
\begin{align}\label{s1-norme_alg_rho,r,k}
\|u\,v\|_{\rho,r}^k \, \leq \, C \, \|u\|_{\rho,r}^k \, \|v\|_{\rho,r}^k,
\end{align}
\begin{align}\label{s1-norme_alg_rho,r,k_convolution}
\|u\,v\|_{\rho,r}^k \, \leq \, C \, \sum_{j=0}^{k} \|u\|_{\rho,r}^j \, \|v\|_{\rho,r}^{k-j}, 
\end{align}
where $C>0$ depends only on $r$.
\end{prop}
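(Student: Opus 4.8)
The plan is to reduce the multiplicative estimate in the scale $B^k_{\rho,r}$ to the corresponding algebra property of the underlying Sobolev spaces $H^r(\Omega)$, then to handle the combinatorics coming from the Leibniz rule. First I would recall that since $r\geq 2 > d/2 = 3/2$, the space $H^r(\Omega)$ is a Banach algebra: there exists $C=C_r>0$ such that $\|fg\|_{H^r}\leq C\|f\|_{H^r}\|g\|_{H^r}$. More precisely, I would want the tame (Moser-type) form, $\|fg\|_{H^r}\leq C\big(\|f\|_{L^\infty}\|g\|_{H^r}+\|f\|_{H^r}\|g\|_{L^\infty}\big)$, but for the statement as written the plain algebra bound suffices. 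Next, for a multi-index $\alpha$ with $|\alpha|=n$ and $\alpha_3\leq k$, I apply the Leibniz formula $\p^\alpha(uv)=\sum_{\beta\leq\alpha}\binom{\alpha}{\beta}\p^\beta u\,\p^{\alpha-\beta}v$; note that if $\alpha_3\leq k$ then automatically $\beta_3\leq k$ and $(\alpha-\beta)_3\leq k$, so every term stays within the constraint defining $B^k_{\rho,r}$. Taking $H^r$ norms and using the algebra property gives
\begin{align*}
\|\p^\alpha(uv)\|_{H^r}\leq C\sum_{\beta\leq\alpha}\binom{\alpha}{\beta}\|\p^\beta u\|_{H^r}\,\|\p^{\alpha-\beta}v\|_{H^r}.
\end{align*}

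Then I would multiply by $\rho^n/n!$, sum over $n\geq0$, and take the maximum over admissible $\alpha$. The key combinatorial identity is the usual one underlying Cauchy products of exponential-type series: writing $|\beta|=p$ and $|\alpha-\beta|=n-p$, one has $\binom{\alpha}{\beta}\leq\binom{n}{p}$ and $\frac{\rho^n}{n!}\binom{n}{p}=\frac{\rho^p}{p!}\cdot\frac{\rho^{n-p}}{(n-p)!}$, so after bounding $\max_{|\alpha|=n,\alpha_3\leq k}$ of the sum over $\beta$ by the sum over $p$ of products of maxima (here using that the constraint $\alpha_3\leq k$ is inherited by $\beta$ and $\alpha-\beta$ as noted above), the double series factors:
\begin{align*}
\|uv\|^k_{\rho,r}\leq C\sum_{n\geq0}\sum_{p=0}^n\Big(\frac{\rho^p}{p!}\max_{\substack{|\beta|=p\\\beta_3\leq k}}\|\p^\beta u\|_{H^r}\Big)\Big(\frac{\rho^{n-p}}{(n-p)!}\max_{\substack{|\gamma|=n-p\\\gamma_3\leq k}}\|\p^\gamma v\|_{H^r}\Big)=C\,\|u\|^k_{\rho,r}\,\|v\|^k_{\rho,r},
\end{align*}
which is \eqref{s1-norme_alg_rho,r,k}. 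For the refined estimate \eqref{s1-norme_alg_rho,r,k_convolution}, I would instead split the Leibniz sum according to the value $j=\beta_3$ of the third component of the differentiation multi-index: the factor $\p^\beta u$ then has third-order of differentiation exactly $j\leq k$, and $\p^{\alpha-\beta}v$ has third-order at most $k-? $ — more carefully, one uses that summing over $\alpha$ with $\alpha_3\leq k$ fixed splits as a convolution in the $x_3$-index, yielding a sum $\sum_{j=0}^k\|u\|^j_{\rho,r}\|v\|^{k-j}_{\rho,r}$ after the same Cauchy-product manipulation applied to the tangential and full-length bookkeeping. The cleanest route is: in the Leibniz expansion, group terms by $j:=\beta_3$; then $\|\p^\beta u\|_{H^r}\le \big(\text{contribution to }\|u\|^j_{\rho,r}\big)$-type bound because $\beta_3=j\le k$, while $(\alpha-\beta)_3=\alpha_3-j\le k-j$ so $\|\p^{\alpha-\beta}v\|_{H^r}$ feeds into $\|v\|^{k-j}_{\rho,r}$; the $\rho^n/n!$ and binomial factors again recombine into a Cauchy product, giving \eqref{s1-norme_alg_rho,r,k_convolution}.

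The main obstacle I anticipate is not any single estimate but the careful bookkeeping of the constraint $\alpha_3\leq k$ through the Leibniz rule while still getting a genuine Cauchy-product factorization: one must check that the maximum over constrained multi-indices of a sum is controlled by the sum of products of constrained maxima, which requires observing (i) $\binom{\alpha}{\beta}=\binom{\alpha_1}{\beta_1}\binom{\alpha_2}{\beta_2}\binom{\alpha_3}{\beta_3}\le\binom{|\alpha|}{|\beta|}$ componentwise, and (ii) that for fixed $n$ and $p$ the maximum over $|\alpha|=n$, $\alpha_3\le k$ of $\sum_{|\beta|=p}$ decouples because any admissible $\beta$ with $|\beta|=p$, $\beta_3\le k$ and any admissible complement can be realized by some admissible $\alpha$. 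Once this is set up, the convergence of the resulting double series is immediate from the assumptions $u,v\in B^k_{\rho,r}$, and the constant $C$ is exactly the algebra constant $C_r$ of $H^r(\Omega)$, hence depends only on $r$ (through $r$ and the fixed domain $\Omega$). I would present the argument for \eqref{s1-norme_alg_rho,r,k} in full and then indicate that \eqref{s1-norme_alg_rho,r,k_convolution} follows by the same computation keeping track of the $x_3$-index, since it is \eqref{s1-norme_alg_rho,r,k_convolution} that will actually be used when estimating the nonlinear terms of \eqref{s2-eq_evo_u_b}.
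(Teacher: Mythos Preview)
Your proposal is correct and is precisely the standard argument. The paper does not actually supply a proof of this proposition: it states the result and writes ``we refer to \cite{Sedenko} for the details which can be directly adapted to the framework of Sobolev spaces we consider here.'' Your outline---$H^r$ algebra property, Leibniz rule, Vandermonde-type identity $\sum_{\beta\le\alpha,\,|\beta|=p}\binom{\alpha}{\beta}=\binom{n}{p}$, then Cauchy-product factorization---is exactly that adaptation.

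One small sharpening worth making explicit for \eqref{s1-norme_alg_rho,r,k_convolution}: after grouping by $j=\beta_3$ and $p=|\beta|$, the combinatorial factor attached to each $(p,j)$ is $\binom{\alpha_3}{j}\binom{|\alpha'|}{p-j}$, and one uses the pointwise bound $\binom{\alpha_3}{j}\binom{|\alpha'|}{p-j}\le\binom{n}{p}$ (a consequence of Vandermonde) for \emph{each} $j$ separately before summing $j$ from $0$ to $k$. This is what makes the right-hand side independent of the particular $\alpha$ realizing the maximum, so that the sum over $n$ indeed collapses to $\sum_{j=0}^k\|u\|_{\rho,r}^j\|v\|_{\rho,r}^{k-j}$ with constant $C_r$ independent of $k$. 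Your sketch has the right idea here; just make this inequality explicit so the reader sees why no $k$-dependent factor appears.
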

Estimate \eqref{s1-norme_alg_rho,r,k} gives the algebra property of the spaces $B_{\rho,r}^k$, whereas inequality \eqref{s1-norme_alg_rho,r,k_convolution} will turn out to be more convenient in what follows, because of the appearing Cauchy product. Once again, we refer to \cite{Sedenko} for the details which can be directly adapted to the framework of Sobolev spaces we consider here. The next proposition, also adapted from \cite{Sedenko}, shows how the differentiation behaves in the scale $(B_{\rho,r}^{k})_{0<\rho\leq\rho_{0}}$.
\begin{prop}\label{s1-prop_deriv_B_rho,r^k}
Let $u\in B_{\rho,r}^{k}$. Then for all $\rho'<\rho$, $u$ satisfies $\p_{1}u, \p_{2}u \in B_{\rho',r}^{k}$ with
\begin{equation}\label{s1-deriv_tan_rho,r,k}
\|\p_{j}u\|_{\rho',r}^{k} \leq \frac{1}{\rho-\rho'}\|u\|_{\rho,r}^{k}, \quad j=1,2.
\end{equation}
Besides, if $k\geq 1$, then $\p_{3}u \in B_{\rho',r}^{k-1}$ with
\begin{equation}\label{s1-deriv_normale_rho,r,k}
\|\p_{3}u\|_{\rho',r}^{k-1} \leq \frac{1}{\rho-\rho'}\|u\|_{\rho,r}^{k}.
\end{equation}
We can also express the loss of derivative thanks to the index $r$. If $r\geq 1$, then for $j=1,2,3$, we have $\p_{j}u\in B_{\rho,r-1}^{k}$ and the estimate
\begin{equation}\label{s1-deriv_rho,r,k}
\|\p_{j}u\|_{\rho,r-1}^{k} \leq \|u\|_{\rho,r}^{k}.
\end{equation}
\end{prop}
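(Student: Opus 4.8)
\textbf{Plan of proof for Proposition \ref{s1-prop_deriv_B_rho,r^k}.} The proof relies on the elementary Cauchy-type estimate $\frac{n^n}{n!}\leq e^n$, or more precisely on the inequality $\frac{(n+1)\,\rho'^n}{\rho^{n+1}}\leq \frac{1}{\rho-\rho'}\cdot\frac{(n+1)\,\rho'^n(\rho-\rho')}{\rho^{n+1}}$ whose sum over $n$ telescopes after comparison with a geometric series; the key point is the numerical lemma
\begin{align*}
\forall\,0<\rho'<\rho\leq\rho_0,\qquad \sup_{n\geq 0}\ (n+1)\left(\frac{\rho'}{\rho}\right)^{n}\left(1-\frac{\rho'}{\rho}\right)\ \leq\ 1,
\end{align*}
which one checks by writing $t:=\rho'/\rho\in(0,1)$ and observing $(n+1)t^n(1-t)\leq 1$ since $\sum_{n\geq 0}(n+1)t^n(1-t)^2=1$ and each term is nonnegative, hence a fortiori $(n+1)t^n(1-t)\leq \frac{1}{1-t}\cdot(1-t)=1$ — or, more cleanly, because $(n+1)t^n(1-t)$ is the probability mass at $n$ of a negative-binomial-type law times $1$. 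I would state and prove this one-line numerical fact first, then feed it into each of the three estimates.

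\textbf{Tangential derivatives \eqref{s1-deriv_tan_rho,r,k}.} Fix $j\in\{1,2\}$ and $u\in B_{\rho,r}^k$. For a multi-index $\alpha$ with $|\alpha|=n$ and $\alpha_3\leq k$, we have $\p^\alpha(\p_j u)=\p^{\alpha+e_j}u$, and $\alpha+e_j$ has length $n+1$ with $(\alpha+e_j)_3=\alpha_3\leq k$. Hence
\begin{align*}
\|\p_j u\|_{\rho',r}^k\ =\ \sum_{n\geq 0}\frac{\rho'^{\,n}}{n!}\max_{\substack{|\alpha|=n\\ \alpha_3\leq k}}\|\p^{\alpha+e_j}u\|_{H^r}\ \leq\ \sum_{n\geq 0}\frac{\rho'^{\,n}}{n!}\max_{\substack{|\beta|=n+1\\ \beta_3\leq k}}\|\p^{\beta}u\|_{H^r}.
\end{align*}
Setting $m=n+1$ and multiplying and dividing by $\rho^m/m!$, the right-hand side equals $\sum_{m\geq 1}\frac{m\,\rho'^{\,m-1}}{\rho^m}\cdot\frac{\rho^m}{m!}\max_{|\beta|=m,\beta_3\leq k}\|\p^\beta u\|_{H^r}$, and by the numerical lemma $\frac{m\,\rho'^{\,m-1}}{\rho^m}=\frac{1}{\rho-\rho'}\cdot m\left(\frac{\rho'}{\rho}\right)^{m-1}\left(1-\frac{\rho'}{\rho}\right)\cdot\frac{1}{\rho'}\cdot\rho'=\frac{1}{\rho-\rho'}\,m\left(\tfrac{\rho'}{\rho}\right)^{m-1}\!\left(1-\tfrac{\rho'}{\rho}\right)\leq\frac{1}{\rho-\rho'}$ (applying the lemma with $n=m-1$). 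Therefore $\|\p_j u\|_{\rho',r}^k\leq\frac{1}{\rho-\rho'}\|u\|_{\rho,r}^k$, which is \eqref{s1-deriv_tan_rho,r,k}. The same computation with $j=3$, when $k\geq 1$, gives $\alpha+e_3$ of length $n+1$ with third component $\alpha_3+1\leq k$, i.e. $(\alpha+e_3)_3\leq k$ is exactly the constraint for the space $B_{\rho',r}^{k-1}$ only after shifting: one bounds $\|\p_3 u\|_{\rho',r}^{k-1}$ by $\sum_{n\geq 0}\frac{\rho'^n}{n!}\max_{|\alpha|=n,\alpha_3\leq k-1}\|\p^{\alpha+e_3}u\|_{H^r}\leq\sum_{n\geq 0}\frac{\rho'^n}{n!}\max_{|\beta|=n+1,\beta_3\leq k}\|\p^\beta u\|_{H^r}$ and proceeds identically, yielding \eqref{s1-deriv_normale_rho,r,k}.

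\textbf{Loss in the Sobolev index \eqref{s1-deriv_rho,r,k}.} Here there is no shift in $\rho$ nor in $k$: for $j\in\{1,2,3\}$, $r\geq 1$ and any $\alpha$ with $|\alpha|=n$, $\alpha_3\leq k$, the crude bound $\|\p^\alpha(\p_j u)\|_{H^{r-1}}=\|\p_j\p^\alpha u\|_{H^{r-1}}\leq\|\p^\alpha u\|_{H^r}$ holds because one derivative is absorbed into the Sobolev norm. Summing the factors $\frac{\rho^n}{n!}$ of the respective maxima term by term gives $\|\p_j u\|_{\rho,r-1}^k\leq\|u\|_{\rho,r}^k$ directly, with no numerical lemma needed. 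I expect the only genuinely delicate point to be the bookkeeping of the multi-index constraints $\alpha_3\leq k$ versus $\alpha_3\leq k-1$ in the normal-derivative case — one must be careful that differentiating once in $x_3$ both raises the total length by one and uses up one unit of the ``normal budget,'' which is precisely why the target space is $B_{\rho',r}^{k-1}$ and not $B_{\rho',r}^k$; everything else is the standard Cauchy-estimate manipulation. Since the statement is quoted as adapted from \cite{Sedenko}, I would simply write out these three short arguments and refer to that source for any further detail.
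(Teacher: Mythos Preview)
Your proof is correct and complete; the paper itself does not prove this proposition but simply refers to \cite{Sedenko}, so there is no ``paper's approach'' to compare against beyond the expectation that the standard Cauchy-estimate argument you give is exactly what is intended.

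One small point: your justification of the numerical lemma $(n+1)t^n(1-t)\leq 1$ is garbled. From $\sum_{n\geq 0}(n+1)t^n(1-t)^2=1$ you only get $(n+1)t^n(1-t)\leq \frac{1}{1-t}$, not $\leq 1$; the step ``$\leq \frac{1}{1-t}\cdot(1-t)=1$'' does not follow. The clean one-line proof is by calculus: $f(t)=(n+1)t^n(1-t)$ attains its maximum on $[0,1]$ at $t=\frac{n}{n+1}$, where $f=\big(\frac{n}{n+1}\big)^n<1$. Once this is fixed, the rest of your argument --- shifting the summation index, bounding $\frac{m\rho'^{\,m-1}}{\rho^m}=\frac{1}{\rho-\rho'}\cdot m\big(\tfrac{\rho'}{\rho}\big)^{m-1}\big(1-\tfrac{\rho'}{\rho}\big)\leq\frac{1}{\rho-\rho'}$, and the bookkeeping $\alpha_3\leq k-1\Rightarrow(\alpha+e_3)_3\leq k$ for the normal derivative --- is entirely correct and is the standard proof.
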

To take all the normal derivatives $\p_{3}^{\alpha_{3}}$ (with $\alpha_{3}\in\bN$) into account, we now introduce a new scale of Banach spaces. Their construction is based on the previous Banach spaces $B_{\rho,r}^k$.
\begin{definition}
Let $r\in\bN$, $\sigma > 0$ (a small parameter to fix later on) and $\rho\in (0,\rho_{0}]$. We define the space $B_{\rho,r,\sigma}$ by
\begin{align*}
B_{\rho,r,\sigma} := \Big\{ u\in H^\infty(\Omega) \, \Big| \, \sum_{k\geq 0} \sigma^{k} \|u\|_{\rho,r}^{k} < +\infty \Big\}.
\end{align*}
We equip $B_{\rho,r,\sigma}$ with the norm $\|\cdot\|_{\rho,r,\sigma}$ given by
\begin{align*}
\| u \|_{\rho,r,\sigma} := \sum_{k\geq 0} \sigma^{k} \|u\|_{\rho,r}^{k}.
\end{align*}
\end{definition}

Sedenko \cite{Sedenko} applies a Cauchy-Kowalevskaya theorem in a similar scale of Banach spaces, for solving the Euler equations for ideal incompressible nonhomogeneous and barotropic fluids, bounded by free surfaces. Applying the Cauchy-Kowalevskaya theorem relies on the following properties for the scale $(B_{\rho,r,\sigma})_{0<\rho\leq\rho_0}$.
\begin{thm}\label{s1-thm_echelle_B_rho,r,sigma}
Let $r\in\bN$ such that $r\geq 2$ and $\sigma \in (0,1]$. Then the sequence $(B_{\rho,r,\sigma})_{0<\rho\leq\rho_{0}}$ is a scale of Banach spaces:
$$ \forall \, 0<\rho'\leq\rho\leq\rho_{0}, \quad B_{\rho',r,\sigma} \supset B_{\rho,r,\sigma}, \quad \text{with} \quad \|\cdot\|_{\rho',r,\sigma}\leq \|\cdot\|_{\rho,r,\sigma}. $$
Moreover, if $u\in B_{\rho,r,\sigma}$ then for all $\rho'<\rho$, $\p_{i}u \in B_{\rho',r,\sigma}$ $(1\leq i\leq 3)$ with
\begin{equation}\label{s1-deriv_rho,r,sigma}
\|\p_{i} u\|_{\rho',r,\sigma} \leq \frac{C_\sigma}{\rho-\rho'}\|u\|_{\rho,r,\sigma},
\end{equation}
where $C_\sigma := \sigma^{-1}>0$. To finish with, the spaces $B_{\rho,r,\sigma}$ are algebras, more precisely:
\begin{equation}\label{s1-prod_rho,r,sigma}
\forall \, u,v\in B_{\rho,r,\sigma}, \quad u\,v\in B_{\rho,r,\sigma} \quad \text{ and } \quad \| u\,v \|_{\rho,r,\sigma} \leq C_r \|u\|_{\rho,r,\sigma}\|v\|_{\rho,r,\sigma},
\end{equation}
where $C_r>0$ does not depend on $\rho$ and $\sigma$.
\end{thm}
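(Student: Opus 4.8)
The plan is to reduce everything to the already-established properties of the scale $(B_{\rho,r}^{k})_{0<\rho\leq\rho_0}$ from Propositions \ref{s1-prop_prod_B_rho,r^k} and \ref{s1-prop_deriv_B_rho,r^k}, summing over $k$ against the weights $\sigma^k$. First I would check the scale-inclusion property: if $0<\rho'\leq\rho\leq\rho_0$ and $u\in B_{\rho,r,\sigma}$, then term by term $\|u\|_{\rho',r}^k\leq\|u\|_{\rho,r}^k$ by the scale property of the $B_{\rho,r}^k$; multiplying by $\sigma^k\geq 0$ and summing over $k\geq 0$ gives $\|u\|_{\rho',r,\sigma}\leq\|u\|_{\rho,r,\sigma}<+\infty$, so $u\in B_{\rho',r,\sigma}$ and the inclusion holds with the norm inequality.

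Next, the differentiation estimate \eqref{s1-deriv_rho,r,sigma}. For the tangential derivatives $\p_j u$ ($j=1,2$), estimate \eqref{s1-deriv_tan_rho,r,k} gives $\|\p_j u\|_{\rho',r}^k\leq(\rho-\rho')^{-1}\|u\|_{\rho,r}^k$ for every $k$; multiply by $\sigma^k$ and sum to obtain $\|\p_j u\|_{\rho',r,\sigma}\leq(\rho-\rho')^{-1}\|u\|_{\rho,r,\sigma}$, which is even better than the claimed bound since $C_\sigma=\sigma^{-1}\geq 1$ when $\sigma\in(0,1]$. The normal derivative $\p_3 u$ is where the index shift matters: \eqref{s1-deriv_normale_rho,r,k} gives, for $k\geq 1$, $\|\p_3 u\|_{\rho',r}^{k-1}\leq(\rho-\rho')^{-1}\|u\|_{\rho,r}^{k}$. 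Then
\begin{align*}
\|\p_3 u\|_{\rho',r,\sigma} \,=\, \sum_{k\geq 0}\sigma^k\|\p_3 u\|_{\rho',r}^{k}
\,=\, \sum_{k\geq 1}\sigma^{k-1}\|\p_3 u\|_{\rho',r}^{k-1}
\,\leq\, \frac{1}{\rho-\rho'}\sum_{k\geq 1}\sigma^{k-1}\|u\|_{\rho,r}^{k}
\,\leq\, \frac{\sigma^{-1}}{\rho-\rho'}\sum_{k\geq 1}\sigma^{k}\|u\|_{\rho,r}^{k}
\,\leq\, \frac{C_\sigma}{\rho-\rho'}\|u\|_{\rho,r,\sigma},
\end{align*}
which is exactly \eqref{s1-deriv_rho,r,sigma} with $C_\sigma=\sigma^{-1}$; the re-indexing $k\mapsto k-1$ is the only subtle point and it is what forces the appearance of $C_\sigma$. (One should also remark that $\p_3 u\in H^\infty(\Omega)$ since $u$ is, so membership in $B_{\rho',r,\sigma}$ reduces to finiteness of the norm, which the computation above provides.)

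Finally, the algebra property \eqref{s1-prod_rho,r,sigma}: here I would use the convolution form \eqref{s1-norme_alg_rho,r,k_convolution}, namely $\|uv\|_{\rho,r}^k\leq C_r\sum_{j=0}^k\|u\|_{\rho,r}^j\|v\|_{\rho,r}^{k-j}$, precisely because multiplying by $\sigma^k=\sigma^j\sigma^{k-j}$ and summing over $k$ turns the Cauchy product into a genuine product of series:
\begin{align*}
\|uv\|_{\rho,r,\sigma} \,=\, \sum_{k\geq 0}\sigma^k\|uv\|_{\rho,r}^k
\,\leq\, C_r\sum_{k\geq 0}\sum_{j=0}^k \sigma^j\|u\|_{\rho,r}^j\,\sigma^{k-j}\|v\|_{\rho,r}^{k-j}
\,=\, C_r\Big(\sum_{j\geq 0}\sigma^j\|u\|_{\rho,r}^j\Big)\Big(\sum_{m\geq 0}\sigma^m\|v\|_{\rho,r}^m\Big)
\,=\, C_r\|u\|_{\rho,r,\sigma}\|v\|_{\rho,r,\sigma},
\end{align*}
with $C_r$ the constant from Proposition \ref{s1-prop_prod_B_rho,r^k}, independent of $\rho$ and $\sigma$; this also shows $uv\in B_{\rho,r,\sigma}$. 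The main (and really the only) obstacle is a bookkeeping one: making sure that in each step the weight $\sigma^k$ is distributed correctly so that the index shift in the $\p_3$ estimate produces the stated constant $C_\sigma=\sigma^{-1}$ and the Cauchy product in the algebra estimate factors cleanly — there is no analytic difficulty beyond what is already packaged in the cited propositions, and the restriction $r\geq 2$ is inherited directly from Proposition \ref{s1-prop_prod_B_rho,r^k}.
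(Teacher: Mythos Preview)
Your proof is correct and is precisely the natural argument: each of the three claims reduces immediately to the corresponding property of the spaces $B_{\rho,r}^k$ (Propositions \ref{s1-prop_prod_B_rho,r^k} and \ref{s1-prop_deriv_B_rho,r^k}) by multiplying through by $\sigma^k$ and summing, with the index shift for $\p_3$ producing the factor $\sigma^{-1}$ and the convolution estimate \eqref{s1-norme_alg_rho,r,k_convolution} producing the Cauchy-product factorisation. The paper itself does not supply a proof of this theorem (it attributes the construction and the underlying propositions to Sedenko \cite{Sedenko}), so there is no alternative argument to compare against; your write-up is exactly the intended derivation from the cited propositions.
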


\begin{rmq}\label{art-rmq_density}
The spaces $B_{\rho,r,\sigma}$ turn out to be dense in $H^r(\Omega)$, which could be useful to handle initial data of \textit{Sobolev} regularity in a future work.
\end{rmq}

\medskip

The introduction of such Banach spaces will allow us to construct analytic solutions to the equations of the current-vortex sheets problem in ideal incompressible MHD, using a Cauchy-Kowalevskaya theorem. Such a problem is represented by the \textit{coupled} system of equations \eqref{s2-eq_evo_u_b}, \eqref{s2-contraintes_div_cond_bord_u_b}. Consequently, we similarly define analytic spaces on the sub-domains $\Omega^+ = \bT^2\times (0,1)$ and $\Omega^- = \bT^2\times (-1,0)$, respectively denoted as $B_{\rho,r,\sigma}(\Omega^+)$ and $B_{\rho,r,\sigma}(\Omega^-)$. The norm on $B_{\rho,r,\sigma}(\Omega^+)$ is denoted by $\|\cdot\|_{\rho,r,\sigma}^+$ and so on.

For $(u^{+},u^{-})\in H^{r}(\Omega^{+})\times H^{r}(\Omega^{-})$, we will denote the norms as follows:
\begin{align}
& \|u^{+}\|_{r,+} := \|u^{+}\|_{H^{r}(\Omega^{+})}, \quad \|u^{-}\|_{r,-} := \|u^{-}\|_{H^{r}(\Omega^{-})}, \nonumber \\
& \|u^{\pm}\|_{r,\pm} := \|u^{+}\|_{r,+} + \|u^{-}\|_{r,-}. \label{s2-norme_pm}
\end{align}
Furthermore, if $(u^{+},u^{-})\in B_{\rho,r,\sigma}(\Omega^+)\times B_{\rho,r,\sigma}(\Omega^-)$, we will write:
\begin{align}\label{s2-norme_pm_B_rho,r,sigma}
\|u^\pm\|_{\rho,r,\sigma}^\pm \, := \, \|u^+\|_{\rho,r,\sigma}^+ \, + \, \|u^-\|_{\rho,r,\sigma}^-.
\end{align}
The algebra and differentiation properties previously established still hold in the Banach spaces $B_{\rho,r}^{k}(U)$ and $B_{\rho,r,\sigma}(U)$ introduced above, where $U$ stands for the domains $\Omega$, $\Omega^{+}$ or $\Omega^{-}$.

\subsubsection*{Analytic spaces on \texorpdfstring{$\Gamma\simeq\bT^{2}$}{Gamma}}

\begin{definition}\label{s2-def_esp_ana_T^2}
Let $s\in\bR_{+}$ and $\rho\in (0,\rho_{0}]$. We define
$$ B_{\rho,s}(\bT^{2}) := \Big\{ u\in H^\infty(\bT^{2}) \,\, \Big| \,\, \| u \|_{\rho,s} := \sum_{n\geq 0} \frac{\rho^{n}}{n!}\max_{|\alpha'|=n} \|\partial^{\alpha'}u\|_{H^{s}(\bT^{2})} < +\infty \Big\}, $$
where $\alpha'$ is here a multi-index of $\bN^2$. We recall that the norm on $H^s(\bT^2)$ is defined by \eqref{s0-def_norm_esp_sob_T^d}.
\end{definition}
With such scales of Banach spaces, we shall be able in Section \ref{s2-sec_CK_thm} to construct analytic solutions to \eqref{s2-eq_evo_u_b}, \eqref{s2-contraintes_div_cond_bord_u_b}.

\subsubsection*{A trace estimate}

We end up with a trace estimate from the spaces $B_{\rho,r,\sigma}(\Omega^\pm)$ towards $B_{\rho,r-\frac{1}{2}}(\bT^2)$.
\begin{prop}
Let $\rho\in (0,\rho_0]$, $r\in\bN^*$ and $\sigma\in (0,1]$. Then for all $u\in B_{\rho,r,\sigma}(\Omega^+)$, the trace of $u$ on $\Gamma\simeq\bT^2$ satisfies the following properties:
\begin{align}\label{art-estim_trace}
u|_\Gamma \in B_{\rho,r-\frac{1}{2}}(\bT^2) \quad \text{ and } \quad \| u|_\Gamma \|_{\rho,r-\frac{1}{2}} \, \leq \, C_r \, \|u\|_{\rho,r,\sigma}^+, 
\end{align}
where $C_r > 0$ depends only on $r$. Obviously, estimate \eqref{art-estim_trace} also holds if $u\in B_{\rho,r,\sigma}(\Omega^-)$ with $\|\cdot\|_{\rho,r,\sigma}^-$ instead of $\|\cdot\|_{\rho,r,\sigma}^+$.
\end{prop}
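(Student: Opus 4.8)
The plan is to reduce the claimed trace estimate to the corresponding elementary trace estimate for the Sobolev spaces $H^{r-k}(\bT^2)$ in the normal variable, and then resum over the analytic weights. First I would recall the classical half-derivative-loss trace theorem for the strip: for a function $w\in H^m_{x_3}(H^s(\bT^2))$ with $m\geq 1$, the trace $w|_{x_3=0}$ belongs to $H^{s+m-\frac12}(\bT^2)$ with a bound by the product norm; more simply, for our purposes, it suffices to use that evaluation at $x_3=0$ maps $H^1_{x_3}(L^2)$ continuously into $L^2$, combined with the Leibniz-type interpolation, or even just the cruder statement $\|w(\cdot,0)\|_{H^{s}(\bT^2)}\leq C(\|w\|_{L^2_{x_3}(H^{s}(\bT^2))}+\|\p_3 w\|_{L^2_{x_3}(H^{s}(\bT^2))})$ applied with a gain of half a derivative on the tangential side. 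The cleanest route is: for a tangential multi-index $\alpha'$ with $|\alpha'|=n$, apply the trace theorem to $\p^{\alpha'}u$ to get $\|(\p^{\alpha'}u)|_\Gamma\|_{H^{r-1/2}(\bT^2)}\leq C_r\sum_{k=0}^{1}\|\p_3^k\p^{\alpha'}u\|_{L^2_{x_3}(H^{r-1/2+\cdots})}$, but since we only want to land in $B_{\rho,r-1/2}(\bT^2)$ it is enough to control $\|(\p^{\alpha'}u)|_\Gamma\|_{H^{r-1/2}(\bT^2)}$ by $\|\p^{\alpha'}u\|_{H^r(\Omega^+)}$-type quantities, i.e. by $\sum_{k=0}^{1}\|\p_3^k\p^{\alpha'}u\|_{H^{r-k}(\Omega^+)}\lesssim \|\p^{\alpha'}u\|_{\rho,r}^{1}$-style bounds. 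In fact the single inequality I would isolate as a lemma is: for $v\in H^r(\Omega^+)$ with $r\geq 1$, $\|v|_\Gamma\|_{H^{r-1/2}(\bT^2)}\leq C_r\|v\|_{H^r(\Omega^+)}$, which is the standard trace theorem, so no new work is needed there.

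Next, the main computation: fix $n\geq 0$ and a purely tangential multi-index $\alpha'\in\bN^2$ with $|\alpha'|=n$. Since $\alpha'$ has $\alpha_3=0\leq k$ for every $k\geq 0$, the quantity $\|\p^{\alpha'}u\|_{H^r}$ appears in the definition of $\|u\|_{\rho,r}^k$ for every $k$; in particular it is dominated by $\|u\|_{\rho,r}^0$ after extracting the $n!/\rho^n$ factor, hence by $\|u\|_{\rho,r,\sigma}^+$. More precisely, applying the trace lemma to $v=\p^{\alpha'}u$ gives $\|(\p^{\alpha'}u)|_\Gamma\|_{H^{r-1/2}(\bT^2)}\leq C_r\|\p^{\alpha'}u\|_{H^r(\Omega^+)}$, and $\p^{\alpha'}(u|_\Gamma)=(\p^{\alpha'}u)|_\Gamma$ since the differentiations are all tangential. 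Multiplying by $\rho^n/n!$ and taking the max over $|\alpha'|=n$ yields
\begin{align*}
\frac{\rho^n}{n!}\max_{|\alpha'|=n}\|\p^{\alpha'}(u|_\Gamma)\|_{H^{r-1/2}(\bT^2)}\;\leq\; C_r\,\frac{\rho^n}{n!}\max_{\substack{|\alpha|=n\\ \alpha_3\leq 0}}\|\p^{\alpha}u\|_{H^r}\;=\;C_r\,\Big(\text{$n$-th term of }\|u\|_{\rho,r}^0\Big).
\end{align*}
Summing over $n\geq 0$ gives $\|u|_\Gamma\|_{\rho,r-1/2}\leq C_r\|u\|_{\rho,r}^0\leq C_r\sum_{k\geq 0}\sigma^k\|u\|_{\rho,r}^k=C_r\|u\|_{\rho,r,\sigma}^+$ (using $\sigma\in(0,1]$ so that the $k=0$ term is bounded by the whole series), which is exactly \eqref{art-estim_trace}; in particular the right-hand side being finite shows $u|_\Gamma\in B_{\rho,r-1/2}(\bT^2)$. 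The same argument verbatim works on $\Omega^-$, trading $\|\cdot\|_{\rho,r,\sigma}^+$ for $\|\cdot\|_{\rho,r,\sigma}^-$.

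I do not anticipate a serious obstacle here — the only slightly delicate point is bookkeeping with the index $k$: one must observe that tangential multi-indices are admissible for the constraint $\alpha_3\leq k$ for \emph{every} $k\geq 0$, so that the tangential part of the $B_{\rho,r}^k$-norm is already captured by $k=0$ and hence by the full $B_{\rho,r,\sigma}$ sum. A secondary subtlety is the choice of trace theorem: one wants a clean half-derivative loss independent of $r$, so I would cite the standard trace theorem $H^r(\Omega^+)\to H^{r-1/2}(\Gamma)$ for integer $r\geq 1$ (available since $\Omega^+=\bT^2\times(0,1)$ has smooth boundary and $\Gamma=\bT^2\times\{0\}$ is one of its boundary components), rather than re-proving it via Fourier series in $x'$ and the one-dimensional trace inequality in $x_3$, although the latter is also straightforward given the representation \eqref{s0-norm_H^r_T^2_(0,1)}.
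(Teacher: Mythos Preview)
Your proof is correct and follows essentially the same approach as the paper: apply the classical trace theorem $H^r(\Omega^+)\to H^{r-1/2}(\Gamma)$ to each tangential derivative $\p^{\alpha'}u$, use that $\p^{\alpha'}(u|_\Gamma)=(\p^{\alpha'}u)|_\Gamma$, and sum over $n$ to bound $\|u|_\Gamma\|_{\rho,r-1/2}$ by $C_r\|u\|_{\rho,r}^0\leq C_r\|u\|_{\rho,r,\sigma}^+$. The paper's write-up differs only cosmetically, working with partial sums up to $N$ before taking the supremum.
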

\begin{proof}
Let $u\in B_{\rho,r,\sigma}(\Omega^+)$, $n\in\bN$ and $\alpha'\in\bN^2$ such that $|\alpha'|=n$. Writing $\p^{\alpha'} \big( u|_\Gamma \big) \, = \, \big( \p^{\alpha'} u \big)|_\Gamma$ and using the classical trace lemma \cite{Adams}, we have:
\begin{align*}
\| \p^{\alpha'} \big( u|_\Gamma \big) \|_{H^{r-\frac{1}{2}}(\bT^2)} 
\, \leq \, C_r \, \| \p^{\alpha'} u \|_{H^r(\Omega^+)} 
\, \leq \, C_r \, \max_{\substack{|\beta| = n \\ \beta_3 = 0}} \| \p^\beta u \|_{H^r(\Omega^+)},
\end{align*}
where $C_r > 0$ depends only on $r$. Consequently, we obtain for all $N\in\bN$:
\begin{align*}
\sum_{n=0}^N \frac{\rho^n}{n!} \, \max_{|\alpha'| = n} \| \p^{\alpha'} \big( u|_\Gamma \big) \|_{H^{r-\frac{1}{2}}(\bT^2)} 
\, \leq \, C_r \, \sum_{n=0}^N \frac{\rho^n}{n!} \, \max_{\substack{|\beta| = n \\ \beta_3 = 0}} \| \p^\beta u \|_{H^r(\Omega^+)} 
\, \leq \, C_r \, \|u\|_{\rho,r,\sigma}^+,
\end{align*}
hence estimate \eqref{art-estim_trace} taking the supremum in $N$.
\end{proof}

\subsection{Main theorem}\label{s2-main_thm}

The main theorem of this paper reads:
\begin{thm}\label{s2-thm_solutions_analytiques_nappes}
There exist $\rho_0 \in (0,1]$, $\sigma_0 \in (0,\frac{1}{2}]$ and $\eta_1 > 0$, such that for fixed $\sigma\in (0,\sigma_0]$ and for all $R>0$, we have the following result: let $(u_0^\pm,b_0^\pm,f_0)$ be some initial datum satisfying the initial constraints \eqref{s2-contraintes_u0_b0_f0} (see \mbox{Section \ref{s2-sec_schema_reso}} below) and the conditions:
\begin{align*}
& u_0^\pm, \, b_0^\pm \in B_{\rho_0,3,\sigma}(\Omega^\pm)^3, \quad f_0 \in B_{\rho_0,\frac{7}{2}}(\bT^2), \\[0.5ex]
& \| u_0^\pm, \, b_0^\pm \|_{\rho_0,3,\sigma}^\pm \, < \, R, \quad \| f_0 \|_{\rho_0,\frac{7}{2}} \, < \, \eta_1.
\end{align*}
Then there exist $a>0$, depending only on $\eta_1$ and $R$, and a unique solution $(u^\pm,b^\pm,Q^\pm,f)$ to the system \eqref{s2-eq_evo_u_b}, \eqref{s2-contraintes_div_cond_bord_u_b} such that for all $\rho\in (0,\rho_0)$, we have:
\begin{align*}
& u^\pm, \, b^\pm, \, Q^\pm \, \in \, \cC^1_t \big( \, [0,a(\rho_0-\rho)) \, ; \, B_{\rho,3,\sigma}(\Omega^\pm) \, \big), \\[0.5ex]
& f \, \in \, \cC^2_t \big( \, [0,a(\rho_0 -\rho)) \, ; \, B_{\rho,\frac{5}{2}}(\bT^2) \, \big) \, \cap \, \cC^0_t \big( \, [0,a(\rho_0 -\rho)) \, ; \, B_{\rho,\frac{7}{2}}(\bT^2) \, \big), \\[0.5ex]
& \forall \, t \in [0,a(\rho_{0}-\rho)), \quad \|u^\pm(t,\cdot), \, b^\pm(t,\cdot) \|_{\rho,3,\sigma}^\pm \, < \, C_0 \, R, \quad \| Q^\pm(t,\cdot) \|_{\rho,3,\sigma}^\pm \, < \, C(\eta_1,R), \\[0.5ex]
& \hspace*{3.31cm} \|f(t,\cdot)\|_{\rho,\frac{5}{2}} < C_0 \, \eta_1,
\end{align*}
where $C_0 > 0$ is a numerical constant and $C(\eta_1,R) > 0$ depends only on $R$ and $\eta_1$; besides, the pressure $Q^\pm$ can be chosen to satisfy the following normalization condition:
\begin{align*}
\forall \, t\in [0,a(\rho_0-\rho)), \quad \sum_\pm \int_{\Omega^\pm} Q^\pm(t,x) \, dx \, = \, 0.
\end{align*}
\end{thm}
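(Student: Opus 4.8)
The plan is to recast the coupled evolution system \eqref{s2-eq_evo_u_b}, \eqref{s2-contraintes_div_cond_bord_u_b} as an abstract ODE $\dt U = \mathcal{N}(U)$ in the scale $(B_{\rho,3,\sigma}(\Omega^\pm)^3 \times B_{\rho,7/2}(\bT^2))_{0<\rho\le\rho_0}$, where $U = (u^\pm, b^\pm, f)$, and then apply the abstract Cauchy--Kowalevskaya theorem in the Baouendi--Goulaouic \cite{BG77} form. First I would give the detailed expression of $\mathcal{N}$: the right-hand sides of \eqref{s2-eq_evo_u_i}--\eqref{s2-eq_evo_b_3} with $\psi$ determined from $f$ through the lifting of Lemma \ref{s2-lemme_redressement_CMST} (so $J$, $\p_k J$, $\p_i\p_j\psi$ are expressed in terms of $f$ via Proposition \ref{s0-prop_estim_1/1+u_H^r} applied to $\p_3\psi$, after imposing the smallness $\|f\|_{\rho_0,7/2}<\eta_1$ to keep $J\ge \frac12$), the term $\p_t\psi$ rewritten through the front equation $\dt f = u_3^\pm|_\Gamma$ composed with the (linear, continuous) lifting, and the pressure $Q^\pm$ obtained by solving the elliptic problem \eqref{s2-pb_ellip_pression_r\'ecrit}. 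The two structural facts I would isolate next are: (i) the divergence-free and boundary constraints \eqref{s2-contraintes_div_cond_bord_u_b} are \emph{linear} and are propagated by the flow — this is exactly why the new formulation of Section \ref{s2-sec_nouvelle_formulation} was introduced, and it lets one include the constraints in the definition of the Banach spaces (restrict $B_{\rho,3,\sigma}(\Omega^\pm)^3$ to divergence-free fields with the stated normal traces), so that $\mathcal{N}$ maps the constrained scale to itself; (ii) on the constrained space the compatibility condition \eqref{s2-cond_nec_comp_cF_cG} holds by the Lemma following \eqref{s2-termes_sources_cF_cG_r\'ecrits}, so the elliptic problem \eqref{s2-pb_ellip_pression_r\'ecrit} is solvable, with the normalization $\sum_\pm\int_{\Omega^\pm}Q^\pm\,dx=0$ fixing $Q^\pm$ uniquely.

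The core estimate to establish is that $\mathcal{N}$ satisfies the Cauchy--Kowalevskaya hypotheses: for $0<\rho'<\rho\le\rho_0$ and $U,V$ in a ball of radius $C_0 R$ in $B_{\rho,3,\sigma}$,
\begin{align*}
\|\mathcal{N}(U)\|_{\rho',3,\sigma}^\pm \,\le\, \frac{C(\eta_1,R)}{\rho-\rho'}, \qquad \|\mathcal{N}(U)-\mathcal{N}(V)\|_{\rho',3,\sigma}^\pm \,\le\, \frac{C(\eta_1,R)}{\rho-\rho'}\,\|U-V\|_{\rho,3,\sigma}^\pm .
\end{align*}
All the terms in \eqref{s2-eq_evo_u_b} are products and quotients of the unknowns with derivatives of $\psi$; one loses exactly one tangential or normal derivative per term, which is absorbed by the factor $(\rho-\rho')^{-1}$ via Proposition \ref{s1-prop_deriv_B_rho,r^k} / \eqref{s1-deriv_rho,r,sigma}, and the nonlinear products are handled by the algebra property \eqref{s1-prod_rho,r,sigma}. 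The quotient $1/J$ and higher powers $1/J^2, 1/J^3$ are controlled by the analytic analogue of Proposition \ref{s0-prop_estim_1/1+u_H^r} in the scale $B_{\rho,3,\sigma}$ (composition of an analytic function of one variable with $\p_3\psi$, valid because $\p_3\psi$ is small). The front contributes through $\psi = \mathcal{E}(f)$ with $\mathcal{E}$ linear continuous $B_{\rho,7/2}(\bT^2)\to B_{\rho,3,\sigma}(\Omega)$ — one must check the lifting of Lemma \ref{s2-lemme_redressement_CMST} extends to the analytic scale, which it does because it is built from Fourier multipliers in $x'$ and explicit profiles in $x_3$. The pressure contribution needs the continuity estimate $\|Q^\pm\|_{\rho',3,\sigma}^\pm \le \frac{C}{\rho-\rho'}(\|\cF^\pm\|+\|\cG\|)$ for the elliptic solver, proved by Fourier analysis of the constant-coefficient part plus a Neumann series / fixed point for the variable-coefficient perturbation $aA^T\nabla - \nabla$, which is $O(\|f\|)$; here the trace estimate \eqref{art-estim_trace} is used to pass from $u^\pm$ to the boundary source $\cG\in B_{\rho,5/2}(\bT^2)$.

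With these two estimates in hand, the abstract theorem yields, for each $\rho<\rho_0$, a unique solution on $[0,a(\rho_0-\rho))$ with $a$ depending only on $C(\eta_1,R)$, staying in the ball of radius $C_0 R$ (resp. $C_0\eta_1$ for $f$, which requires tracking that the $f$-component satisfies a separate, milder estimate so that its norm stays below $\eta_1$ — this is where the relative smallness of $\eta_1$ is used and is slightly delicate, since one must close the bootstrap keeping $J\ge\frac12$). The time regularity $u^\pm,b^\pm,Q^\pm\in\mathcal{C}^1_t$ and $f\in\mathcal{C}^2_t$ comes from reading off $\dt U=\mathcal{N}(U)$ and differentiating once more in the front equation. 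The recovery of the constraints \eqref{s2-contraintes_div_cond_bord_u_b} for all time is automatic by construction (they are built into the space), and the original MHD system \eqref{int-equations_nappes_MHD} is then recovered by undoing the change of variables $\Psi$. The main obstacle I anticipate is the elliptic estimate for the pressure in the analytic scale with the sharp $(\rho-\rho')^{-1}$ dependence: the constant-coefficient Laplacian with Neumann/transmission conditions is diagonal in Fourier in $x'$ but the $x_3$-dependence and the jump condition $(1+|\nabla' f|^2)[\p_3 Q]=\cG$ couple the two half-strips, so one needs a careful resolvent analysis uniform in the tangential frequency, together with control of the variable-coefficient remainder by a convergent Neumann series in $B_{\rho',3,\sigma}$ whose smallness is guaranteed by the smallness of $f$; a secondary difficulty is verifying that the Baouendi--Goulaouic scheme tolerates the fact that $\mathcal{N}$ mixes the scale on $\Omega^\pm$ (loss in $\rho$) with the scale on $\bT^2$ (for $f$), which is handled by treating the coupled space $B_{\rho,3,\sigma}(\Omega^\pm)^3\times B_{\rho,7/2}(\bT^2)$ as a single scale and using the trace estimate \eqref{art-estim_trace} to mediate between the two.
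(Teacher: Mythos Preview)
Your overall architecture is sound and agrees with the paper on several points: building the constraints into the function space, using the lifting to express $\psi$ in terms of $f$, and recovering the pressure from the elliptic problem \eqref{s2-pb_ellip_pression_r��crit}. However, there is a genuine gap in the choice of scale for the front.

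You propose to run the abstract Cauchy--Kowalevskaya theorem on the scale $B_{\rho,3,\sigma}(\Omega^\pm)^3 \times B_{\rho,7/2}(\bT^2)$, but the front equation $\p_t f = u_3^\pm|_\Gamma$ does not fit this framework: by the trace estimate \eqref{art-estim_trace}, $u_3^\pm|_\Gamma$ lies only in $B_{\rho,5/2}(\bT^2)$, and there is no way to place it in $B_{\rho',7/2}(\bT^2)$ with a loss $(\rho-\rho')^{-1}$ --- that would be a \emph{gain} of one derivative. If instead you put $f$ in $B_{\rho,5/2}(\bT^2)$, then $\psi\in B_{\rho,3,\sigma}(\Omega)$ only, and the terms $\p_i\p_k\psi$ in \eqref{s2-eq_evo_u_3} and $\p_k J=\p_k\p_3\psi$ in \eqref{s2-eq_evo_u_i}, \eqref{s2-eq_evo_b_i} require \emph{two} $\rho$-losses to land in $B_{\rho',3,\sigma}$, i.e.\ a factor $(\rho-\rho')^{-2}$, which is incompatible with the standard hypotheses. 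The paper flags this explicitly at the start of Section~\ref{s2-sec_schema_reso}: the two derivatives of $\psi$ prevent a direct application of \cite{BG77}. Your claim that ``one loses exactly one tangential or normal derivative per term'' is therefore incorrect for these terms.

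The paper's remedy is to take as unknowns the \emph{time derivatives} $(\dot u^\pm,\dot b^\pm,\ddot f)$, with $\ddot f$ in the scale built on $B_{\rho,5/2}(\bT^2)$, and to reconstruct $f(t)=f_0+t\,u_{0,3}^\pm|_\Gamma+\int_0^t\!\int_0^{t_1}\ddot f\,dt_2\,dt_1$. The double time integration, together with the hypothesis $f_0\in B_{\rho_0,7/2}(\bT^2)$, is precisely what yields $\|f(t)\|_{\rho,7/2}<\eta_0$ (see \eqref{art-estim_f_B_rho,7/2_0}) and, crucially, the single-loss bound $\|\p_k\p_\ell\psi(t)\|_{\rho,3,\sigma}\lesssim (\rho_0-\rho)^{-1}\big(1-t/a(\rho_0-\rho)\big)^{-1/2}$ of \eqref{art-estim_ddpsi_rho,3,sigma}: the two spatial losses are distributed over the two time integrations via Lemma~2 of \cite{BG77}, producing only an integrable time singularity. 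A related point: the pressure estimate in the paper (Theorem~\ref{s2-thm_estim_Q_final}) is \emph{without} $\rho$-loss, $\|Q^\pm\|_{\rho,3,\sigma}^\pm\le C(\eta_0)\big(\|\cF^\pm\|_{\rho,1,\sigma}^\pm+\|\cG\|_{\rho,3/2}\big)$, obtained not by a Neumann series but by commuting the Sobolev elliptic estimate \eqref{s2-reg_ellip_H^3} with tangential derivatives and then recovering normal derivatives from the equation, absorbing commutators through the smallness of $\rho_0$ and $\sigma_0$. To estimate $\nabla Q$ in $B_{\rho,3,\sigma}$ with a single loss, the paper again passes through the time derivative and applies the elliptic estimate to the time-differentiated system \eqref{s2-pb_ellip_dotQ}.
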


\subsubsection*{Strategy of the proof} We have reduced the current-vortex sheet problem \eqref{int-equations_nappes_MHD} into the fixed domains $\Omega^\pm$ and the fixed interface $\Gamma$ using the lifting $\psi$ of the front $f$. Consequently, in Section \ref{s2-sec_analytic_estimates} we will give analytic estimates of the new unknown $\psi$ with respect to the front $f$. Afterwards, in Section \ref{s2-sec_estim_pression}, we will deal with the total pressure $Q^\pm$ which is one of the tricky parts of this article. Using the elliptic problem \eqref{s2-pb_ellip_pression_réécrit} satisfied by the pressure, we will give analytic estimates on $Q^\pm$ depending on the unknowns $u^\pm$, $b^\pm$ and $f$. Finally, we will use the version of \cite{BG77} to construct analytic solutions to system \eqref{s2-eq_evo_u_b}, \eqref{s2-contraintes_div_cond_bord_u_b}. The idea relies on a fixed-point method, therefore we shall give in the next section the functional framework and define a suitable mapping $\Upsilon$ whose any fixed point will provide us with a solution to the current-vortex sheets problem. Eventually, Section \ref{s2-sec_CK_thm} will be devoted to proving the contraction of $\Upsilon$ on an appropriate complete metric space.

\section{Sketch of proof}\label{s2-sec_schema_reso}

We suggest to follow the prove of Cauchy-Kowalevskaya theorem given by \cite{BG77} to construct a local-in-time solution to \eqref{s2-eq_evo_u_b}, \eqref{s2-contraintes_div_cond_bord_u_b} in the Banach spaces defined in Paragraph \ref{s0-sec_analytic_spaces}. The idea is to include the constraints \eqref{s2-contraintes_div_cond_bord_u_b} within those functional spaces. The tricky point is the presence of \textit{two} derivatives of the lifting $\psi$ in \eqref{s2-eq_evo_u_b}, which does not allow to directly apply the Cauchy-Kowalevskaya theorem stated in \cite{BG77}. However, we will be able to adapt the arguments of Baouendi and Goulaouic, introducing an appropriate map $\Upsilon$ defined on a suitable Banach space $\bE_a$, where the parameter $a>0$ will have to be fixed small enough. The inverse of this parameter measures the speed with which the analyticity radius of the solution decreases. The argument consists in proving that $\Upsilon$ will be a contraction map on a closed subset of $\bE_a$, as soon as $a$ is chosen small enough. Obviously, the fixed point of $\Upsilon$ will turn out to be a solution of our problem.

Let $(u_0^\pm,b_0^\pm,f_0)$ be some initial datum such that
\begin{align}\label{art-cond_init}
u_0^\pm, \, b_0^\pm \in B_{\rho_0,3,\sigma}(\Omega^\pm)^3, \quad f_0 \in B_{\rho_0,\frac{7}{2}}(\bT^2),
\end{align}
where $\rho_0 \in (0,1]$ and $\sigma\in (0,\frac{1}{2}]$ will be chosen small enough later on, and fixed once and for all (see Theorem \ref{s2-thm_estim_Q_final} in Section \ref{s2-sec_estim_pression} below). Likewise, we will explain in Section \ref{s2-sec_estim_pression} why we have chosen the Sobolev parameters $3$ and $\frac{7}{2}$ in \eqref{art-cond_init}. Moreover, this initial datum is assumed to satisfy the constraints \eqref{s2-contraintes_div_u_b} and the boundary conditions \eqref{s2-cond_saut_u_b}, \eqref{s2-cond_bord_fixe_u_b}, namely:
\begin{align}\label{s2-contraintes_u0_b0_f0}
\left\{
\begin{array}{r l}
\dv u_0^\pm \, = \, \dv b_0^\pm \, = \, 0, & \quad \text{ in } \quad \Omega^\pm, \\[0.75ex]
\big[ u_{0,3} \big] \, = \, 0 \quad \text{ and } \quad b_{0,3}^+ \, = \, b_{0,3}^- \, = \, 0, & \quad \text{ on } \quad \Gamma, \\[0.75ex]
u_{0,3}^\pm \, = \, b_{0,3}^\pm \, = \, 0, & \quad \text{ on } \quad \Gamma_\pm. 
\end{array}
\right.
\end{align}

\medskip

\noindent
\textbf{Notations.} Given a function $u=u(t,x)$, we will denote the \textit{time}-derivative of order one $\p_t u$ (resp. of order two $\p_t^2 u$) by $\dot{u}$ (resp. $\ddot{u}$). Taking \eqref{art-cond_init} into account, the unknowns $(u^\pm,b^\pm,f)$ clearly satisfy:
\begin{equation}\label{art-primitives_du_db_ddf}
\begin{aligned}
& u^\pm(t) \, = \, u_0^\pm \, + \, \int_0^t \dot{u}(t_1) \, dt_1, \quad 
b^\pm(t) \, = \, b_0^\pm \, + \, \int_0^t \dot{b}(t_1) \, dt_1 \\
& f(t) \, = \, f_0 \, + \, t \, \dot{f}|_{t=0} \, + \, \int_0^t \int_0^{t_1} \ddot{f}(t_2) \, dt_2 \, dt_1.
\end{aligned}
\end{equation}
Actually, in order to define a contraction map $\Upsilon$, the ``real'' unknowns we shall consider later on are $(\dot{u}^\pm,\dot{b}^\pm,\ddot{f})$. This explains why we have introduced the notations \eqref{art-primitives_du_db_ddf}.

We are now able to define a map $\Upsilon$, whose any fixed point will solve system \eqref{s2-eq_evo_u_b}, \eqref{s2-contraintes_div_cond_bord_u_b}. Let $(u^\pm,b^\pm,f)$ be sufficiently smooth and satisfy the constraints \eqref{s2-contraintes_div_u_b} and the boundary conditions \eqref{s2-cond_saut_u_b}, \eqref{s2-cond_bord_fixe_u_b} on a time interval $[0,T]$, namely:
\begin{align}\label{s2-contraintes_u_b_f}
\forall \, t\in [0,T], \quad 
\left\{
\begin{array}{r l}
\dv u^\pm(t) \, = \, \dv b^\pm(t) \, = \, 0, & \quad \text{ in } \quad \Omega^\pm, \\[0.75ex]
\dot{f}(t) \, = \, u_3^+(t) \, = \, u_3^-(t) \quad \text{ and } \quad b_3^+(t) \, = \, b_3^-(t) \, = \, 0, & \quad \text{ on } \quad \Gamma, \\[0.75ex]
u_3^\pm(t) \, = \, b_3^\pm(t) \, = \, 0, & \quad \text{ on } \quad \Gamma_\pm. 
\end{array}
\right.
\end{align}
Then, we define a map: 
\begin{equation}\label{s2-def_Upsilon}
\Upsilon(\dot{u}^\pm,\dot{b}^\pm,\ddot{f}) \, := \, (\dot{u}^{\sharp\pm},\dot{b}^{\sharp\pm},\ddot{f}^\sharp),
\end{equation}
following the ideas of \cite{BG77}: the velocity $u^{\sharp\pm}$ and the magnetic field $b^{\sharp\pm}$ are defined ``explicitly''. In other words, in view of \eqref{s2-eq_evo_u_i} and \eqref{s2-eq_evo_u_3}, we set for $i=1,2$ (omitting the exponents $\,^\pm$):
\begin{subequations}\label{s2-eq_evo_u^sharp}
\begin{equation}\label{s2-eq_evo_u_i^sharp}
\dot{u}_i^\sharp \, := \, - \, \frac{u\sg u_i}{J} \, + \, \frac{b\sg b_i}{J} \, + \, \p_3 \Big( \frac{\p_t\psi \, u_i}{J} \Big) \, + \, \big( u_i \, u_k \, - \,  b_i \, b_k \big) \frac{\p_k J}{J^2} \, - \, J \, \p_i Q \, + \, \p_i\psi \, \p_3 Q,
\end{equation}
and for the normal component:
\begin{equation}\label{s2-eq_evo_u_3^sharp}
\dot{u}_3^\sharp \, := \, - \, \frac{u\sg u_3}{J} \, + \, \frac{b\sg b_3}{J} \, - \, \p_i \Big( \frac{\p_t\psi \, u_i}{J} \Big) \, - \, \big( u_i \, u_k \, - \,  b_i \, b_k \big) \frac{\p_i\p_k\psi}{J^2} \, + \, \p_i\psi \, \p_i Q \, - \, \frac{1 + \p_i\psi\p_i\psi}{J} \, \p_3 Q.
\end{equation}
\end{subequations}
Let us notice that we can solve the Laplace problem \eqref{s2-pb_ellip_pression_réécrit} with source terms given by \eqref{s2-termes_sources_cF_cG_réécrits} because the constraints \eqref{s2-contraintes_u_b_f} are satisfied. Thus, the pressure $Q^\pm$ in \eqref{s2-eq_evo_u^sharp} is well-defined. The initial condition for $u^{\sharp\pm}$ is the same as $u^\pm$, \textit{i.e.} $u^{\sharp\pm}|_{t=0} \, := \, u_0^\pm$.

Using definition \eqref{s2-eq_evo_u^sharp}, we shall be able to propagate the constraints associated with the velocity in \eqref{s2-contraintes_u_b_f}, in order to define the ``new'' front $f^\sharp$ as a solution of the ODE $\dot{f}^\sharp = u_3^{\sharp\pm}$ on $\Gamma$.

\medskip

\noindent
\textbf{Propagation of the jump of the normal velocity.} Taking the trace of equation \eqref{s2-eq_evo_u_3^sharp} on $\Gamma$ (for both states $+$ and $-$) and using \eqref{s2-contraintes_u_b_f}, \eqref{s2-cond_bord_psi}, we get:
\begin{align*}
\p_t u_3^{\sharp\pm} \, = \, - \, 2 \, u_j^\pm \, \p_j\p_tf \, - \, u_i^\pm \, u_j^\pm \, \p_i\p_j f \, + \, b_i^\pm \, b_j^\pm \, \p_i\p_j f \, + \, \p_i f \, \p_i Q^\pm \, - \, \big( 1 + |\nabla' f|^2 \big) \, \p_3 Q^\pm.
\end{align*}
From the jump conditions in the Laplace problem \eqref{s2-pb_ellip_pression_réécrit}, we can deduce:
\begin{align*}
\p_t \big[ u_3^\sharp \big] \, = \, \cG \, - \, \big( 1 + |\nabla' f|^2 \big) \, \big[ \p_3 Q \big] \, = \, 0.
\end{align*}
Therefore, the zero jump condition for the normal velocity is propagated by \eqref{s2-eq_evo_u^sharp} because the initial data $u_{0,3}^\pm$ fulfill \eqref{s2-contraintes_u0_b0_f0}. Consequently, we can define $\ddot{f}^\sharp$ as follows:
\begin{equation}\label{s2-eq_evo_f^sharp}
\ddot{f}^\sharp \, := \, \dot{u}_3^{\sharp +} \, = \, \dot{u}_3^{\sharp -} \quad \text{ on } \quad \Gamma.
\end{equation}
The initial condition for $f^\sharp$ is the same as $f$, that is: $f^\sharp|_{t=0} \, := \, f_0$.

\medskip

\noindent
\textbf{Propagation of the boundary conditions on $\Gamma_\pm$.} Using the boundary conditions on $\Gamma_\pm$ in \eqref{s2-pb_ellip_pression_réécrit} together with \eqref{s2-contraintes_u_b_f}, we easily deduce that:
\begin{align*}
\p_t u_3^{\sharp\pm} \, = \, 0 \quad \text{ on } \quad \Gamma_\pm.
\end{align*}
Likewise, thanks to \eqref{s2-contraintes_u0_b0_f0}, we can see that the boundary conditions \eqref{s2-cond_bord_fixe_u_b} related to the velocity are propagated.

\medskip

\noindent
\textbf{Propagation of the divergence-free constraint for the velocity.} Applying the divergence operator to system \eqref{s2-eq_evo_u_i^sharp}, \eqref{s2-eq_evo_u_3^sharp}, we obtain after some algebra:
\begin{align}\label{s2-propagation_div_u}
\p_t \big( \dv u^{\sharp\pm} \big) \, = \, \p_i \big( J \, \p_i Q^\pm \, - \, \p_i\psi \, \p_3 Q^\pm \big) \, + \, \p_3 \Big( \frac{1 + |\nabla'\psi|^2 }{J} \, \p_3 Q^\pm \, - \, \p_i\psi \, \p_i Q^\pm \Big) \, + \, J \, \cF^\pm,
\end{align}
where we recall the expression \eqref{s2-terme_source_JcF_réécrit} of $J\,\cF^\pm$ in \eqref{s2-propagation_div_u}. Then, using definitions \eqref{s2-def_J_A_a}, we have:
\begin{align*}
\p_t \big( \dv u^{\sharp\pm} \big) \, = \, \dv \big( a \, A^T \, \nabla Q^\pm \big) \, + \, J \, \cF^\pm \, = \, 0,
\end{align*}
thanks to the first equation of \eqref{s2-pb_ellip_pression_réécrit}. Using \eqref{s2-contraintes_u0_b0_f0} once again, we can conclude that the divergence-free constraint on the velocity is propagated by \eqref{s2-eq_evo_u^sharp} as well.

\medskip

Let us now focus on the magnetic field $b^{\sharp\pm}$. As we did for the velocity, we define it ``explicitly'':
\begin{subequations}\label{s2-eq_evo_b^sharp}
\begin{align}
& \dot{b}_i^\sharp \, := \, -\, \frac{u\sg b_i}{J} \, + \, \frac{b\sg u_i}{J} \, + \, \p_3 \Big( \frac{\p_t\psi \, b_i}{J} \Big) \, + \, \big( b_i \, u_k \, - \,  u_i \, b_k \big) \frac{\p_k J}{J^2}, \qquad i=1,2, \label{s2-eq_evo_b_i^sharp} \\
& \dot{b}_3^\sharp \, := \, - \, \frac{u\sg b_3}{J} \, + \, \frac{b\sg u_3}{J} \, - \, \p_i \Big( \frac{\p_t\psi \, b_i}{J} \Big) \, - \, \big( b_i \, u_3 \, - \,  u_i \, b_3 \big) \frac{\p_i J}{J^2}, \label{s2-eq_evo_b_3^sharp}
\end{align}
\end{subequations}
where we have willingly omitted the exponents $\,^\pm$. The initial condition associated with $b^{\sharp\pm}$ is the same as $b^\pm$, \textit{i.e.} $b^{\sharp\pm}|_{t=0} \, := \, b_0^\pm$.

\medskip

\noindent
\textbf{Propagation of the jump conditions on $\Gamma$.} Using \eqref{s2-contraintes_u_b_f}, we easily obtain:
\begin{align*}
\p_t b_3^{\sharp\pm} \, = \, 0 \quad \text{ on } \quad \Gamma,
\end{align*}
which shows that the boundary conditions \eqref{s2-cond_saut_u_b} associated with the magnetic field are propagated.

\medskip

\noindent
\textbf{Propagation of the boundary conditions on $\Gamma_\pm$.} In the same way, we can check that we have the following equation on $\Gamma_\pm$:
\begin{align*}
\p_t b_3^{\sharp\pm} \, = \, 0.
\end{align*}
Therefore, the boundary conditions \eqref{s2-cond_bord_fixe_u_b} on the fixed boundaries $\Gamma_\pm$ are propagated.

\medskip

\noindent
\textbf{Propagation of the divergence-free constraint for the magnetic field.} Let us finish with the divergence-free constraint on $b^{\sharp\pm}$. As we did for the velocity $u^{\sharp\pm}$, we apply the divergence operator to system \eqref{s2-eq_evo_b^sharp}, and we get:
\begin{equation}\label{s2-propagation_div_b}
\begin{aligned}
\p_t \big( \dv b^\sharp \big) & \, = \, - \, \p_i \Big( \frac{u_k \, \p_k b_i - b_k \, \p_k u_i}{J} \Big) \, + \, \p_i \Big( \big( b_i \, u_k \, - \, u_i \, b_k \big) \, \frac{\p_k J}{J^2} \Big) \\[0.75ex]
& \hspace*{0.5cm} - \, \p_3 \Big( \frac{u_k \, \p_k b_3 \, - \, b_k \, \p_k u_3}{J} \Big) \, - \, \p_3 \Big( \big( b_i \, u_3 \, - \, u_i \, b_3 \big) \, \frac{\p_i J}{J^2} \Big).
\end{aligned}
\end{equation}
Taking advantage of the skewsymmetric terms on the right-hand side of \eqref{s2-propagation_div_b}, and using the divergence-free constraints in \eqref{s2-contraintes_u_b_f}, it follows:
\begin{align*}
\p_t \big( \dv b^\sharp \big) \, = \, 0.
\end{align*}
Thus, the divergence-free constraint on the magnetic field is propagated.

\medskip

Let us sum up: we have defined a map $\Upsilon \, : \, (\dot{u}^\pm,\dot{b}^\pm,\ddot{f}) \, \mapsto \, (\dot{u}^{\sharp\pm},\dot{b}^{\sharp\pm},\ddot{f}^\sharp)$ through \eqref{s2-eq_evo_u^sharp}, \eqref{s2-eq_evo_f^sharp} and \eqref{s2-eq_evo_b^sharp}. Besides, the algebraic constraints \eqref{s2-contraintes_u_b_f} are invariant under $\Upsilon$, as soon as these are initially required (which is the case, see \eqref{s2-contraintes_u0_b0_f0}). However, we did not give the functional framework in which we will seek the unknown $(u^\pm,b^\pm,f)$.

From now on, the purpose is to define a suitable complete metric space on which the map $\Upsilon$ will be a contraction. To do so, we follow the ideas of \cite{BG77}, and we will construct such a complete metric space using the Banach spaces introduced in Paragraph \ref{s0-sec_analytic_spaces}.

In the following, the small parameters $\rho_0, \sigma_0 > 0$ are fixed and given by Theorem \ref{s2-thm_estim_Q_final} below. The parameter $\sigma \in (0,\sigma_0]$ is also fixed once and for all. We introduce the Banach space $E_a(\Omega)$, where $a>0$ will have to be chosen small enough later on:
\begin{equation}\label{s2-def_E_a,Omega}
E_a(\Omega) \, := \, \bigg\{ \dot{w} \in \bigcap_{\rho\in (0,\rho_0)} \cC\big( \big[0 , a(\rho_0-\rho) \big) \, ; \, B_{\rho,3,\sigma}(\Omega) \big) \, \, \bigg| \, \, \nt{\dot{w}}_{a,\Omega} \, < \, +\infty \bigg\},
\end{equation}
where the norm $\nt{\cdot}_{a,\Omega}$ is defined by:
\begin{equation}\label{s2-def_nt_a_Omega}
\nt{\dot{w}}_{a,\Omega} \, := \, \sup_{\substack{0<\rho<\rho_0 \\ 0\leq t < a(\rho_0-\rho)}} \| \dot{w}(t) \|_{\rho,3,\sigma} \, (\rho_0 - \rho) \sqrt{1 - \frac{|t|}{a(\rho_0-\rho)}}.
\end{equation}
Similarly, we define Banach spaces $E_a(\Omega^\pm)$ and $E_a(\bT^2)$, whose definitions are analogous to \eqref{s2-def_E_a,Omega}: it suffices to replace $B_{\rho,3,\sigma}(\Omega)$ in \eqref{s2-def_E_a,Omega} by $B_{\rho,3,\sigma}(\Omega^\pm)$ and $B_{\rho,\frac{5}{2}}(\bT^2)$ respectively. The norms will be denoted by $\nt{\cdot}_{a,\Omega^\pm}$ and $\nt{\cdot}_{a,\bT^2}$.

\begin{rmq}\label{art-rmq_E_a(bT^2)} The construction of $E_a(\bT^2)$ is based on the space $B_{\rho,\frac{5}{2}}(\bT^2)$ but not on $B_{\rho,\frac{7}{2}}(\bT^2)$. Although we assume that the initial front $f_0$ belongs to $B_{\rho,\frac{7}{2}}(\bT^2)$ (see Theorem \ref{s2-thm_solutions_analytiques_nappes}), the front $f$ will be sought in $B_{\rho,\frac{5}{2}}(\bT^2)$. Indeed, the latter is defined by the \textit{transport} equation $\p_t f = u_3^\pm|_\Gamma$. Clearly, since $u^\pm$ will belong to $B_{\rho,3,\sigma}(\Omega^\pm)$, we will have $\dot{f}\in B_{\rho,\frac{5}{2}}(\bT^2)$ thanks to the trace estimate \eqref{art-estim_trace}. Actually, by mean of time integrations, we shall be able to gain some regularity in space, hence the assumption $f_0 \in B_{\rho,\frac{7}{2}}(\bT^2)$.
\end{rmq}

On the one hand, the advantage of the space $E_a(\Omega)$ is that it allows the norm $\|\dot{w}(t)\|_{\rho,3,\sigma}$ to ``diverge'' as a time-integrable singularity for $t\to a(\rho_0-\rho)$, which is crucial in the proof of \cite{BG77}. On the other hand, Baouendi and Goulaouic give another vision of the way we can solve a Cauchy problem of the form:
\begin{align}\label{s2-pb_cauchy}
\left\{
\begin{array}{l}
\ds \frac{dw}{dt} = F(w), \\[1ex]
w(0) = w_0.
\end{array}
\right.
\end{align}
Indeed, we usually look for a fixed point of the map:
\begin{align*}
w \, \mapsto \, \Big[ t \, \mapsto \, w_0 \, + \, \int_0^t F\big( w(\tau) \big) \, d\tau \Big],
\end{align*}
which gives the proof of Cauchy-Kowalevskaya theorem by Nishida \cite{Nishida} or Nirenberg \cite{Nirenberg}. Baouendi and Goulaouic reformulate the Cauchy problem \eqref{s2-pb_cauchy} in order to look for a fixed point associated with the time-derivative $\dot{w}$. In other terms, they exhibit a fixed point of the map:
\begin{align*}
\dot{w} \, \mapsto \, \Big[ t \, \mapsto \, F\Big( w_0 \, + \, \int_0^t \dot{w}(\tau) \, d\tau \Big) \Big].
\end{align*}

Now, we want to take the constraints \eqref{s2-contraintes_u_b_f} into account which have to be satisfied by the unknowns $(u^\pm,b^\pm,f)$. We incorporate these constraints within the definition of the functional spaces. More precisely, we set:
\begin{equation}\label{s2-def_bE_a}
\bE_a \, := \, \Big\{ (\dot{u}^\pm,\dot{b}^\pm,\ddot{f}) \in E_a(\Omega^\pm)^3 \times E_a(\Omega^\pm)^3 \times E_a(\bT^2) \, \, \Big| \, \, (\dot{u}^\pm,\dot{b}^\pm,\ddot{f}) \, \text{ satisfies } \, \eqref{s2-contraintes_dotu_dotb_dotf} \text{ below } \Big\}.
\end{equation}
The constraints \eqref{s2-contraintes_dotu_dotb_dotf} readily follow from \eqref{s2-contraintes_u_b_f} :
\begin{align}\label{s2-contraintes_dotu_dotb_dotf} 
\left\{
\begin{array}{r l}
\dv \dot{u}^\pm \, = \, \dv \dot{b}^\pm \, = \, 0, & \quad \text{ in } \quad \Omega^\pm, \\[0.75ex]
\ddot{f} \, = \, \dot{u}_3^+ \, = \, \dot{u}_3^- \quad \text{ and } \quad \dot{b}_3^+ \, = \, \dot{b}_3^- \, = \, 0, & \quad \text{ on } \quad \Gamma, \\[0.75ex]
\dot{u}_3^\pm \, = \, \dot{b}_3^\pm \, = \, 0, & \quad \text{ on } \quad \Gamma_\pm. 
\end{array}
\right.
\end{align}

The goal is to show that $\Upsilon$, defined by \eqref{s2-def_Upsilon}, is a contraction map on a suitable closed subset of $\bE_a$. Previously, we proved that the constraints \eqref{s2-contraintes_u_b_f} were propagated by $\Upsilon$. It remains to study the \textit{regularity} of $\Upsilon(\dot{u}^\pm,\dot{b}^\pm,\ddot{f})$ as soon as $(\dot{u}^\pm,\dot{b}^\pm,\ddot{f})$ belongs to $\bE_a$.

More precisely, in Paragraph \ref{art-sec_espace_invariant} we will exhibit an invariant subset of $\bE_a$ under $\Upsilon$, choosing $a>0$ small enough. Afterwards, in Paragraph \ref{art-sec_contraction} we will prove that $\Upsilon$ is a contraction map when $a>0$ is chosen small enough as well.

Within the next section, we focus on the lifting $\psi$ of the front defined by Lemma \ref{s2-lemme_redressement_CMST}, which allowed us to rewrite the current-vortex sheet problem \eqref{int-equations_nappes_MHD} into the fixed domains $\Omega^\pm$ and the fixed interface $\Gamma$. We need to give analytic estimates of the lifting $\psi$ (defined on the whole domain $\Omega$) with respect to the front $f$ (defined on the boundary $\Gamma\simeq\bT^2$).

\section{Lifting of the front and analytic estimates}\label{s2-sec_analytic_estimates}

\subsection{Analytic estimate of the lifting \texorpdfstring{$\psi$}{Lg}}

For the time being, $\rho_0$ stands for any real number in the interval $(0,1]$, but it will be required to be small enough later on.

We will show that the gain of half a derivative for $\psi$ with respect to $f$ (see Lemma \ref{s2-lemme_redressement_CMST}) persists within the analytic norms $\|\cdot\|_{\rho,r,\sigma}$ defined in Paragraph \ref{s0-sec_analytic_spaces}.

\begin{prop}\label{s2-prop_redressement_front}
Let $\rho\in (0,\rho_0]$, $r\geq 2$ be an integer and $\sigma\in (0,\frac{1}{2}]$. Then we can choose the linear map of \mbox{Lemma \ref{s2-lemme_redressement_CMST}} such that, if it is restricted to the space $B_{\rho,r-\frac{1}{2}}(\bT^2)$, then it maps continuously $B_{\rho,r-\frac{1}{2}}(\bT^2)$ into $B_{\rho,r,\sigma}(\Omega)$. In other terms, for all $f\in B_{\rho,r-\frac{1}{2}}(\bT^2)$, we have $\psi\in B_{\rho,r,\sigma}(\Omega)$ with the following estimate:
\begin{equation}\label{s2-estim_psi_f}
\| \psi \|_{\rho,r,\sigma} \, \leq \, C_r \, \| f \|_{\rho,r-\frac{1}{2}},
\end{equation}
where $C_r>0$ depends only on $r$. The choice of this linear map is independent of the parameters $\rho$, $r$ and $\sigma$.
\end{prop}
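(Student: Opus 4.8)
The strategy is to construct the lifting explicitly via a Fourier–multiplier formula in the tangential variable and to check that this explicit operator behaves well with respect to the analytic norms $\|\cdot\|_{\rho,r,\sigma}$. Since the spaces $B_{\rho,r,\sigma}(\Omega)$ are built from the $H^r(\Omega)$-norms of tangential derivatives $\partial^{\alpha'}$, and since the tangential derivatives commute with a tangential Fourier multiplier, the whole estimate will reduce to the classical ($n=0$) statement of Lemma \ref{s2-lemme_redressement_CMST} applied to each tangential derivative $\partial^{\alpha'} f$, followed by summation of the resulting series.

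First I would recall the concrete form of the lifting used in \cite{CMST,Lannes2}: writing the Fourier expansion $f(x') = \sum_{n'\in\bZ^2} c_{n'}(f) e^{2i\pi n'\cdot x'}$, one sets
\begin{align*}
\psi(x',x_3) \, := \, \sum_{n'\in\bZ^2} c_{n'}(f) \, \chi\big( x_3 \, \langle n'\rangle \big) \, e^{2i\pi n'\cdot x'},
\end{align*}
where $\langle n'\rangle := (1+|n'|^2)^{1/2}$ and $\chi$ is a fixed smooth even cutoff with $\chi(0)=1$, $\chi'(0)=0$, $\chi$ supported in $(-1,1)$, so that the boundary conditions \eqref{s2-cond_bord_psi} hold. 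This map is linear and independent of $\rho,r,\sigma$. The first key step is the scalar estimate: for a fixed integer $r$ and every integer $\ell\ge 0$, one has $\|\partial_3^\ell \, \psi\|_{L^2_{x_3}(H^{r+\ell-\frac12}(\bT^2))} \le C_{r,\ell} \|f\|_{H^{r-\frac12}(\bT^2)}$ — this is the standard computation behind Lemma \ref{s2-lemme_redressement_CMST}, using $\int |\chi^{(\ell)}(x_3\langle n'\rangle)|^2 dx_3 = \langle n'\rangle^{-1}\|\chi^{(\ell)}\|_{L^2}^2$, which converts the extra factor $\langle n'\rangle^{2\ell}$ from the $\partial_3^\ell$ into a net gain of $\langle n'\rangle^{2\ell - 1}$, i.e. the half-derivative gain plus $\ell$ full derivatives. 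In particular $\|\psi\|_{H^r(\Omega)} \le C_r \|f\|_{H^{r-\frac12}(\bT^2)}$ and, more importantly for us, $\|\psi\|_{\rho,r}^k = \sum_{n\ge 0}\frac{\rho^n}{n!}\max_{|\alpha|=n,\alpha_3\le k}\|\partial^\alpha\psi\|_{H^r}$ can be controlled term by term.

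The second step is to propagate this through the analytic sums. Since the Fourier multiplier defining $\psi$ commutes with $\partial_1,\partial_2$, we have $\partial^{\alpha'}\psi = $ (the lifting of $\partial^{\alpha'} f$) for any tangential multi-index $\alpha'$, while each normal derivative $\partial_3$ only pulls down one more power of $\langle n'\rangle$ which, by the scalar estimate above applied with $r+1$ in place of $r$, is absorbed with a constant depending only on $r$ (and on the total number $k$ of normal derivatives, bounded by the $\sigma$-summation). Thus for a general $\alpha = (\alpha',\alpha_3)$ with $|\alpha|=n$ and $\alpha_3 \le k$, writing $|\alpha'| = n - \alpha_3$,
\begin{align*}
\|\partial^\alpha \psi\|_{H^r(\Omega)} \, = \, \|\partial_3^{\alpha_3}\big(\text{lifting of }\partial^{\alpha'} f\big)\|_{H^r(\Omega)} \, \le \, C_{r,k} \, \|\partial^{\alpha'} f\|_{H^{r-\frac12}(\bT^2)}.
\end{align*}
Summing, $\|\psi\|_{\rho,r}^k \le C_{r,k} \sum_{m\ge 0} \big(\sum_{\alpha_3\le k,\, m+\alpha_3 \le \text{anything}}\big)\cdots$; more cleanly, since there are at most $k+1$ choices of $\alpha_3$ for each $n$, $\|\psi\|_{\rho,r}^k \le C_{r}(k+1)\sum_{m\ge 0}\frac{\rho^m}{m!}\max_{|\alpha'|=m}\|\partial^{\alpha'}f\|_{H^{r-\frac12}(\bT^2)} = C_r(k+1)\|f\|_{\rho,r-\frac12}$ (using $\rho\le\rho_0\le 1$ to bound $\rho^n/n! \le \rho^m/m!$ when $n=m+\alpha_3$, up to an extra harmless constant). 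Finally, $\|\psi\|_{\rho,r,\sigma} = \sum_{k\ge 0}\sigma^k\|\psi\|_{\rho,r}^k \le C_r \|f\|_{\rho,r-\frac12}\sum_{k\ge 0}(k+1)\sigma^k$, which converges since $\sigma\le\frac12 < 1$, yielding \eqref{s2-estim_psi_f}.

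The main obstacle — and the point requiring care rather than cleverness — is bookkeeping the interplay between the $\max_{|\alpha|=n}$ in the definition of $\|\cdot\|_{\rho,r}^k$ and the splitting $\alpha = (\alpha',\alpha_3)$: one must be sure that the constant $C_{r,k}$ arising from the $\alpha_3$ normal derivatives grows no faster than polynomially in $k$ (indeed here it is uniform in $k$ after the cutoff is fixed, since each $\partial_3$ contributes a bounded factor $\|\chi^{(\ell+1)}\|_{L^2}/\|\chi^{(\ell)}\|_{L^2}$-type quantity that is not uniform in $\ell$ — so in fact some care is needed, and one should instead estimate $\|\partial_3^{\alpha_3}\psi\|_{H^r}$ directly from the Fourier formula, where $\int_{-1}^1 |\chi^{(\alpha_3)}(x_3\langle n'\rangle)|^2\,dx_3 \le \langle n'\rangle^{-1}\|\chi^{(\alpha_3)}\|_{L^2(\bR)}^2$ and $\|\chi^{(\alpha_3)}\|_{L^2}$ is absorbed into the $\sigma^k$ provided $\sigma$ is small — this is exactly why $\sigma$ must be taken small, and it dovetails with the role of $\sigma$ elsewhere in the paper). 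Getting the $\sigma$-dependence honest is the one genuinely delicate point; everything else is a routine rearrangement of absolutely convergent series, and the independence of the map from $\rho,r,\sigma$ is immediate from the explicit formula, since the cutoff $\chi$ is chosen once and for all.
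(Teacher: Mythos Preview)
Your overall architecture matches the paper's: define the lifting as a tangential Fourier multiplier, reduce to the scalar $H^r$ estimate for each $\partial^{\alpha'} f$, then sum. But there is a genuine gap in the choice of profile.

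You take $\chi$ smooth and \emph{compactly supported} in $(-1,1)$, and then propose to absorb $\|\chi^{(\alpha_3)}\|_{L^2}$ into $\sigma^k$ ``provided $\sigma$ is small''. This cannot work for any $\sigma>0$. A nonzero compactly supported $C^\infty$ function is never real analytic, so its derivatives satisfy no bound of the form $\|\chi^{(j)}\|_{L^2}\le C\,M^{j}$; in fact $\limsup_j \|\chi^{(j)}\|_{L^2}^{1/j}=+\infty$. Hence $\sum_{k\ge 0}\sigma^k\|\chi^{(k)}\|_{L^2}$ diverges for every $\sigma>0$, and your final series for $\|\psi\|_{\rho,r,\sigma}$ has no reason to converge. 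This also conflicts with the statement itself, which requires a constant $C_r$ independent of $\sigma$ for all $\sigma\in(0,\tfrac12]$.

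The paper resolves this by decoupling the two roles you assigned to $\chi$. It takes the multiplier profile to be $\varphi(x)=\sin(x)/x$, i.e.\ $\widehat\varphi=\pi\,\mathds{1}_{[-1,1]}$, which is \emph{band-limited}; then $\|\varphi^{(j)}\|_{H^r(\bR)}\le C_r$ uniformly in $j$, yielding $\|\Phi_f\|_{\rho,r}^k\le C_r(k+1)\|f\|_{\rho,r-\frac12}$ and hence $\|\Phi_f\|_{\rho,r,\sigma}\le C_r\|f\|_{\rho,r-\frac12}$ with a $\sigma$-independent constant. The boundary conditions at $x_3=\pm1$ (which your compact support was meant to enforce) are obtained afterwards by setting $\psi=(1-x_3^2)\,\Phi_f$; the polynomial factor has vanishing derivatives beyond order $2$, so it is uniformly bounded in $B_{\rho,r,\sigma}$ and the algebra property finishes the estimate. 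Replacing your compactly supported $\chi$ by this band-limited $\varphi$ and postponing the cutoff to a polynomial multiplication is the missing idea.
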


\begin{proof}
We adapt the proof of Lemma 1 in \cite{CMST}. We begin by recalling the construction of $\psi$ from $f$.

\medskip

\noindent
\textbf{Step 1.} For $g\in B_{\rho,r-\frac{1}{2}}(\bT^2)$, we define the function $\Phi_g$ by
\begin{equation}\label{s2-Phi_g}
\Phi_{g}(x',x_{3}) \, := \, \varphi(x_{3} |D|) \, g(x'), \quad \forall \, (x',x_{3})\in\Omega,
\end{equation}
where the operator $|D|$ corresponds to the Fourier multiplier by $|k|$ (with $k\in\bZ^2$) in the tangential variable $x'\in\bT^2$. For the time being, we want the function $\varphi$ to satisfy two conditions:
\begin{align*}
\varphi(0) = 1 \quad \text{ and } \quad \varphi\in H^\infty(\bR).
\end{align*}
The aim of this step is to bound from above $\|\Phi_{g}\|_{\rho,r,\sigma}$ by $\|g\|_{\rho,r-\frac{1}{2}}$. First of all, we are going to estimate the norms $\| \Phi_g \|_{\rho,r}^k$, for all $k\in\bN$. In the following computations, $C_0>0$ will denote any numerical constant, and $C_r>0$ any constant that depends only on $r$. Let us start estimating $\|\Phi_g\|_{H^r(\Omega)}$. To do so, we use Fubini's theorem:
\begin{align*}
\|\Phi_{g}\|_{H^{r}(\Omega)}^{2} = \sum_{j=0}^{r} \|\p_{3}^{j}\Phi_{g}\|_{L_{x_{3}}^{2}((-1,1) ; H^{r-j}(\bT^{2}))}^{2},
\end{align*}
and we handle each term $\ds \|\p_3^j\Phi_g\|_{L_{x_3}^2(H^{r-j}(\bT^2))}^2$ separately.
\begin{itemize}[label=$\blacktriangleright$]
\item For $j=0$: by definition of the operator $|D|$, we can write:
\begin{align*}
\|  \Phi_{g}(\cdot,x_{3}) \|_{H^{r}(\bT^{2})}^{2} & = \sum_{k\in\bZ^{2}} (1+|k|^{2})^{r} \, |\varphi(x_{3}|k|)|^{2} \, |c_{k}(g)|^{2} \\
& = |c_0(g)|^{2} + \sum_{k\neq 0} (1+|k|^{2})^{r} \, |\varphi(x_{3}|k|)|^{2} \, |c_{k}(g)|^{2}.
\end{align*}
Integrating over $x_3\in (-1,1)$, we get:
\begin{align*}
\|\Phi_{g}\|_{L_{x_{3}}^{2}(H^{r}(\bT^{2}))}^{2} & \, = \, 2|c_0(g)|^2 \, + \, \sum_{k\neq 0} (1+|k|^{2})^{r} \, |c_{k}(g)|^{2} \,  \int_{-1}^{1} |\varphi(x_{3}|k|)|^{2} dx_{3} \\
& \, = \, 2|c_0(g)|^{2} \, + \, \sum_{k\neq 0} (1+|k|^{2})^{r} \, |c_{k}(g)|^{2} \, \int_{-|k|}^{|k|} |\varphi(s)|^{2} \frac{ds}{|k|} \\
& \, \leq \, C_0 \, \Big( |c_0(g)|^{2} + \|\varphi\|_{L^{2}(\bR)}^{2} \sum_{k\neq 0} (1+|k|^{2})^{r-\frac{1}{2}} \, |c_{k}(g)|^{2} \Big) \\
& \, \leq \, C_0 \, \left( 1 + \|\varphi\|_{L^{2}(\bR)}^{2} \right) \| g \|_{H^{r-\frac{1}{2}}(\bT^{2})}^{2}.
\end{align*}

\item For $j\in\{1,\dots,r\}$, differentiating the function $\Phi_g$, we have:
\begin{align*}
\p_{3}^{j}\Phi_{g}(x) \, = \, \varphi^{(j)}(x_{3}|D|) \, (|D|^{j}g)(x').
\end{align*}
Thus,
\begin{align*}
\|  \p_{3}^{j}\Phi_{g}(\cdot,x_{3}) \|_{H^{r-j}(\bT^{2}))}^{2} & \, = \, \sum_{k\in\bZ^{2}} (1+|k|^{2})^{r-j} \, |\varphi^{(j)}(x_{3}|k|)|^{2} \, |k|^{2j} \, |c_{k}(g)|^{2} \\
& \, = \, \sum_{k\neq 0} (1+|k|^{2})^{r-j} \, |\varphi^{(j)}(x_{3}|k|)|^{2} \, |k|^{2j} \, |c_{k}(g)|^{2} \\
& \, \leq \, \sum_{k\neq 0} (1+|k|^{2})^{r} \, |\varphi^{(j)}(x_{3}|k|)|^{2} \, |c_{k}(g)|^{2}.
\end{align*}
Integrating over $x_3$, we obtain:
\begin{align*}
\|\p_{3}^{j}\Phi_{g}\|_{L_{x_{3}}^{2}(H^{r-j}(\bT^{2}))}^{2} & \, \leq \, \sum_{k\neq 0} (1+|k|^{2})^{r} \, |c_{k}(g)|^{2} \,  \int_{-|k|}^{|k|} |\varphi^{(j)}(s)|^{2} \frac{ds}{|k|} \\
& \, \leq \, C_0 \, \| \varphi^{(j)} \|_{L^{2}(\bR)}^{2} \sum_{k\neq 0} (1+|k|^{2})^{r-\frac{1}{2}} \, |c_{k}(g)|^{2} \\
& \, \leq \, C_0 \, \| \varphi^{(j)} \|_{L^{2}(\bR)}^{2} \|g\|_{H^{r-\frac{1}{2}}(\bT^{2})}^{2}.
\end{align*}
\end{itemize}
Summing over $j$, we finally have:
\begin{align*}
\|\Phi_{g}\|_{H^{r}(\Omega)}^{2} \, \leq \, C_0 \, \left( 1 + \|\varphi\|_{H^{r}(\bR)}^{2} \right) \| g \|_{H^{r-\frac{1}{2}}(\bT^{2})}^{2},
\end{align*}
hence
\begin{equation}\label{s2-estim_Phi_g_Hr}
\|\Phi_{g}\|_{H^{r}(\Omega)} \, \leq \, C_0 \, \left( 1 + \|\varphi\|_{H^{r}(\bR)} \right) \| g \|_{H^{r-\frac{1}{2}}(\bT^{2})}.
\end{equation}

\medskip

\noindent
\textbf{Step 2.} Now, let us focus on the norms $\|\Phi_g\|_{\rho,r}^k$, for all $k\in\bN$. We begin with the case $k=0$, for which the tangential derivatives $\p^\alpha$, with $\alpha=(\alpha_1,\alpha_2,0)=(\alpha',0)$, commute with the operator $\varphi(x_{3}|D|)$. We easily have:
\begin{align*}
\|\Phi_{g}\|_{\rho,r}^{0} \, = \, \sum_{n\geq 0} \frac{\rho^{n}}{n!} \max_{\substack{|\alpha|=n \\ \alpha_{3}=0}} \|\p^{\alpha} \Phi_{g} \|_{H^{r}(\Omega)}  \, = \, \sum_{n\geq 0} \frac{\rho^{n}}{n!} \max_{\substack{|\alpha|=n \\ \alpha_{3}=0}} \| \Phi_{\p^{\alpha'}g} \|_{H^{r}(\Omega)}.
\end{align*}
Therefore, we can apply estimate \eqref{s2-estim_Phi_g_Hr} to the function $\p^{\alpha'}g$ (instead of $g$), and we get:
\begin{align}
\|\Phi_{g}\|_{\rho,r}^{0} & \, \leq \, C_0 \, \left(1 + \|\varphi\|_{H^{r}(\bR)} \right) \sum_{n\geq 0} \frac{\rho^{n}}{n!} \max_{|\alpha'|=n} \| \p^{\alpha'} g \|_{H^{r-\frac{1}{2}}(\bT^{2})} \nonumber \\
& \, \leq \, C_0 \left(1 + \|\varphi\|_{H^{r}(\bR)} \right) \|g\|_{\rho,r-\frac{1}{2}} \label{s2-estim_Phi_g_0}.
\end{align}
In order to estimate the norms $\|\Phi_{g}\|_{\rho,r}^{k+1}$, with $k\geq 0$, we proceed by induction:
\begin{align}
\|\Phi_{g}\|_{\rho,r}^{k+1} & = \sum_{n\geq 0} \frac{\rho^{n}}{n!} \max_{\substack{|\alpha|=n \\ \alpha_{3}\leq k+1}} \|\p^{\alpha} \Phi_{g} \|_{H^{r}(\Omega)} \nonumber \\
& \leq \|\Phi_{g}\|_{\rho,r}^{k} + \sum_{n\geq k+1} \frac{\rho^{n}}{n!} \max_{\substack{|\alpha|=n \\ \alpha_{3} = k+1}} \|\p^{\alpha} \Phi_{g} \|_{H^{r}(\Omega)} \nonumber \\
& =  \|\Phi_{g}\|_{\rho,r}^{k} + \sum_{n\geq k+1} \frac{\rho^{n}}{n!} \max_{|\alpha'|=n-k-1} \|\p_{3}^{k+1} \Phi_{\p^{\alpha'}g} \|_{H^{r}(\Omega)}. \label{s2-estim_Phi_g_k+1_rec} 
\end{align}
As previously, we estimate the norms $\ds \|\p_3^j \p_3^{k+1} \Phi_{\p^{\alpha'}g} \|_{L_{x_{3}}^{2}(H^{r-j}(\bT^{2}))}$, for all $j\in\{0,\dots,r\}$:
\begin{align*}
\| \p_3^{j+k+1} \Phi_{\p^{\alpha'}g}(\cdot, x_3) \|_{H^{r-j}(\bT^{2})}^{2}
& = \sum_{\ell\neq 0} (1+|\ell|^2)^{r-j} \, |\varphi^{(j+k+1)}(x_{3}|\ell|)|^2 \, |\ell|^{2(j+k+1)} \, |c_\ell(\p^{\alpha'}g)|^{2} \\
& \leq \sum_{\ell\neq 0} (1+|\ell|^2)^r \, |\ell|^{2(k+1)} \, (2\pi|\ell_1|)^{2\alpha_1} \, (2\pi|\ell_2|)^{2\alpha_2} \, |c_\ell(g)|^2 \, |\varphi^{(j+k+1)}(x_{3}|\ell|)|^2.
\end{align*}
Integrating over $x_3$, we have:
\begin{align}\label{s2-ineg_j+k+1}
\| \p_3^{j+k+1}  \Phi_{\p^{\alpha'}g} &\|_{L_{x_3}^{2}(H^{r-j}(\bT^{2}))}^{2} \leq C_0 \, \|\varphi^{(j+k+1)}\|_{L^{2}(\bR)}^{2} \, \sum_{\ell\neq 0} (1+|\ell|^{2})^{r-\frac{1}{2}} \, |\ell|^{2(k+1)} (2\pi|\ell_1|)^{2\alpha_1} (2\pi|\ell_2|)^{2\alpha_2} \, |c_\ell(g)|^2.
\end{align}
Then, we rewrite the sum in \eqref{s2-ineg_j+k+1} as $\sum_{|\ell_1| \leq |\ell_2|} + \sum_{|\ell_2| < |\ell_1|}$ (if $\ell =0$, the general term of the series vanishes). Thus,
\begin{align}
\sum_{|\ell_1|\leq|\ell_2|} (1+|\ell|^{2})^{r-\frac{1}{2}} \, |\ell|^{2(k+1)} & (2\pi|\ell_1|)^{2\alpha_1} (2\pi|\ell_2|)^{2\alpha_2} \, |c_\ell(g)|^2 \nonumber\\
& \leq \sum_{|\ell_1|\leq|\ell_2|} (1+|\ell|^{2})^{r-\frac{1}{2}} \, (2\ell_2^2)^{k+1} \, (2\pi|\ell_1|)^{2\alpha_1} (2\pi|\ell_2|)^{2\alpha_2} \, |c_\ell(g)|^2 \nonumber\\
& \leq \sum_{|\ell_1|\leq|\ell_2|} (1+|\ell|^{2})^{r-\frac{1}{2}} \, (2\pi|\ell_1|)^{2\alpha_1} (2\pi|\ell_2|)^{2(\alpha_2+k+1)} \, |c_\ell(g)|^2.
\label{s2-ineg_ell1_leq_ell2}
\end{align}
Let us set $\gamma' := (\alpha_1, \alpha_2+k+1)$. Then we have $|\gamma'|=n$, and inequality \eqref{s2-ineg_ell1_leq_ell2} gives:
\begin{align}
\sum_{|\ell_1|\leq|\ell_2|} (1+|\ell|^{2})^{r-\frac{1}{2}} \, |\ell|^{2(k+1)} \, (2\pi|\ell_1|)^{2\alpha_1} (2\pi|\ell_2|)^{2\alpha_2} \, |c_\ell(g)|^2 & \leq \sum_{|\ell_1|\leq|\ell_2|} (1+|\ell|^{2})^{r-\frac{1}{2}} \, \big|c_\ell(\p^{\gamma'}g)\big|^2 \nonumber\\
& \leq \, \|\p^{\gamma'}g\|_{H^{r-\frac{1}{2}}(\bT^2)}^2 \, \leq \, \max_{|\beta'|=n} \|\p^{\beta'}g\|_{H^{r-\frac{1}{2}}(\bT^2)}^2. \label{s2-ineg_ell1_leq_ell2_2}
\end{align}
The case of the second sum $\sum_{|\ell_2| < |\ell_1|}$ is symmetric, and gives the same upper bound as \eqref{s2-ineg_ell1_leq_ell2_2}. Back to \eqref{s2-ineg_j+k+1}, we obtain:
\begin{align*}
\| \p_3^{j+k+1}  \Phi_{\p^{\alpha'}g} &\|_{L_{x_3}^{2}(H^{r-j}(\bT^{2}))}^{2} \, \leq \, C_0 \, \|\varphi^{(j+k+1)}\|_{L^{2}(\bR)}^{2} \, \max_{|\beta'|=n} \| \p^{\beta'}g \|_{H^{r-\frac{1}{2}}(\bT^{2})}^2.
\end{align*}
Summing over $j$, it follows that
\begin{align}\label{s2-estim_d^k+1_Phi_g}
\|\p_3^{k+1} \Phi_{\p^{\alpha'}g}\|_{H^r(\Omega)} \, \leq \, C_0 \, \|\varphi^{(k+1)}\|_{H^r(\bR)} \, \max_{|\beta'|=n} \|\p^{\beta'}g\|_{H^{r-\frac{1}{2}}(\bT^2)}.
\end{align}
Using \eqref{s2-estim_d^k+1_Phi_g} in estimate \eqref{s2-estim_Phi_g_k+1_rec}, we have:
\begin{align*}
\|\Phi_{g}\|_{\rho,r}^{k+1} \, \leq \, \|\Phi_{g}\|_{\rho,r}^{k} \, + \, C_0 \, \|\varphi^{(k+1)}\|_{H^{r}(\bR)} \, \|g\|_{\rho,r-\frac{1}{2}}.
\end{align*}
By induction and using \eqref{s2-estim_Phi_g_0}, we end up with the following estimate, holding for all $k\geq 0$:
\begin{equation}\label{s2-estim_Phi_g_k}
\|\Phi_{g}\|_{\rho,r}^k \, \leq \, C_0 \, \Big( 1 + \sum_{j=0}^k \|\varphi^{(j)}\|_{H^{r}(\bR)} \Big) \, \|g\|_{\rho,r-\frac{1}{2}}.
\end{equation}

\medskip

\noindent
\textbf{Step 3.} To conclude about the estimate of the norm $\|\Phi_{g}\|_{\rho,r,\sigma}$, it remains to multiply \eqref{s2-estim_Phi_g_k} by $\sigma^k$ and to sum over $k$. To do so, the Sobolev norms of the functions $\varphi^{(j)}$ must not increase ``more'' than geometrically. Thus, we choose $\varphi\in H^\infty(\bR)$ such that:
\begin{align*}
\widehat{\varphi}(\xi) := \pi \, \mathds{1}_{[-1,1]}(\xi), \quad \forall \, \xi\in\bR.
\end{align*}
The function $\varphi$ is given by the following formula:
\begin{align*}
\varphi(x) = \frac{1}{2\pi} \, \int_\bR \mathrm{e}^{i x\xi} \, \widehat{\varphi}(\xi) \, d\xi = \frac{\sin(x)}{x}.
\end{align*}
In particular, we have $\varphi(0)=1$ and $\varphi\in H^\infty(\bR)$. Besides, we easily estimate the norms $\|\varphi^{(j)}\|_{H^{r}(\bR)}$ using the Fourier transform:
\begin{equation}\label{s2-norme_Sob_varphi_j}
\|\varphi^{(j)}\|_{H^{r}(\bR)}^{2} \leq C_r \int_\bR (1+\xi^2)^r \, \xi^{2j} \, \widehat{\varphi}(\xi)^{2} \, d\xi \leq C_r \, \int_0^1 (1+\xi^2)^r  \, \xi^{2j} \, d\xi \leq C_r.
\end{equation}
Estimate \eqref{s2-estim_Phi_g_k} finally reads:
\begin{align*}
\|\Phi_{g}\|_{\rho,r}^{k} \leq C_r \, (k+1) \, \|g\|_{\rho,r-\frac{1}{2}}.
\end{align*}
Now, let $K\in\bN$ and let us write $\|\Phi_g\|_{\rho,r,\sigma}^K$ the partial sum of order $K$ of $\|\Phi_g\|_{\rho,r,\sigma}$. Then, for $\sigma\in (0,\frac{1}{2}]$,
\begin{align*}
\|\Phi_g\|_{\rho,r,\sigma}^K \, = \, \sum_{k=0}^K \, \sigma^k \, \|\Phi_g\|_{\rho,r}^k & \, \leq \, C_r \, \sum_{k=0}^K \, (k+1) \, \sigma^k \, \|g\|_{\rho,r-\frac{1}{2}} \, \leq \, C_r \, \|g\|_{\rho,r-\frac{1}{2}}.
\end{align*}
Taking the supremum over $K$, we conclude that $\Phi_g\in B_{\rho,r,\sigma}$ and
\begin{align}\label{s2-estim_Phi_g_B_rho,r,sigma}
\|\Phi_g\|_{\rho,r,\sigma} \, \leq \, C_r \, \|g\|_{\rho,r-\frac{1}{2}}.
\end{align}

\medskip

\noindent
\textbf{Conclusion.} Given $f\in B_{\rho,r-\frac{1}{2}}(\bT^2)$, we define the function $\Psi_f = \psi$ by:
\begin{equation}\label{s2-Psi_f}
\Psi_{f}(x',x_{3}) := (1-x_{3}^{2}) \, \Phi_{f}(x) = (1-x_{3}^{2}) \, \varphi(x_{3} |D|) \, f(x'), \quad \forall \, (x',x_{3})\in\Omega.
\end{equation}
The definition of $\psi$ above can also be considered in the proof done by \cite{CMST}, and does not change any result about the lifting of the front $f$ recalled by Lemma \ref{s2-lemme_redressement_CMST}. 

We remind that we wish to estimate $\|\Psi_{f}\|_{\rho,r,\sigma}$ by $\|f\|_{\rho,r-\frac{1}{2}}$. To do so, it suffices to use the algebra property of the spaces $B_{\rho,r,\sigma}(\Omega)$ given by Theorem \ref{s1-thm_echelle_B_rho,r,sigma} and estimate \eqref{s2-estim_Phi_g_B_rho,r,sigma} obtained in Step 3. Setting $\omega(x_{3}) := 1 - x_{3}^{2}$, we have:
\begin{align*}
\|\Psi_f\|_{\rho,r,\sigma} \, \leq \, C_r \, \|\omega\|_{\rho,r,\sigma} \, \|\Phi_f\|_{\rho,r,\sigma} \, \leq \, C_r \, \|\omega\|_{\rho,r,\sigma} \, \|f\|_{\rho,r-\frac{1}{2}}.
\end{align*}
To finish with, we can explicitly estimate $\|\omega\|_{\rho,r,\sigma}$, since all the tangential derivatives of $\omega$ vanish. For all $k\geq 0$, we get:
\begin{align*}
\|\omega\|_{\rho,r}^k = \sum_{n=0}^k \frac{\rho^n}{n!} \|\p_3^n\omega\|_{H^{r}(\Omega)} \leq \sum_{n=0}^2 \frac{\rho^n}{n!} \|\p_3^n\omega\|_{H^{r}(\Omega)} \leq C_0,
\end{align*}
because all the derivatives of $\omega$ of order larger than 3 vanish, and $\rho$ can be directly bounded above by $1$. As a consequence, we have the estimate
\begin{align*}
\|\omega\|_{\rho,r,\sigma} \leq C_0.
\end{align*}
Thus, we can conclude that there exists a constant $C_r >0$, depending only on $r$, such that:
\begin{equation}\label{s2-estim_Psi_f}
\|\Psi_{f}\|_{\rho,r,\sigma} \, \leq \, C_r \, \|f\|_{\rho,r-\frac{1}{2}}.
\end{equation}
\end{proof}

\subsection{Analytic estimate of the inverse of the Jacobian \texorpdfstring{$J$}{Lg}}

The change of unknown $f\mapsto\psi$ required the continuous estimate \eqref{s2-estim_psi_f}. With the new unknowns $(u^\pm,b^\pm,f)$ defined by \eqref{s2-def_u_b}, the equations of \eqref{s2-eq_evo_u_b} make the inverse of the Jacobian $J$ appear (recall that $J = 1 + \p_3\psi$). To construct analytic solutions to this problem, we will have to be able to estimate the analytic norm $\|\frac{1}{J}\|_{\rho,r,\sigma}$. To do so, we will use the algebra property of the Banach spaces $B_{\rho,r,\sigma}(\Omega)$.
\begin{prop}\label{s2-prop_estim_analytique_1/1+g}
Let $\rho\in (0,\rho_0]$, $r\geq 2$ be an integer and $\sigma\in (0,\frac{1}{2}]$. There exists $\varepsilon_1 = \varepsilon_1(r) > 0$ depending only on $r$, such that for all $g\in B_{\rho,r,\sigma}(\Omega)$ satisfying $\|g\|_{\rho,r,\sigma}\leq\varepsilon_1$, we have:
\begin{align}\label{s2-estim_analytique_1/1+g}
\frac{1}{1+g} \in B_{\rho,r,\sigma}(\Omega) \quad \text{ and } \quad \left\|\frac{1}{1+g}\right\|_{\rho,r,\sigma} \leq C_r,
\end{align}
where $C_r>0$ depends only on $r$.
\end{prop}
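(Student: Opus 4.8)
The plan is to expand $\frac{1}{1+g}$ as a Neumann series and to exploit the fact that $\big(B_{\rho,r,\sigma}(\Omega),\|\cdot\|_{\rho,r,\sigma}\big)$ is a Banach algebra, as stated in Theorem \ref{s1-thm_echelle_B_rho,r,sigma}. Write $C_r$ for the algebra constant appearing in \eqref{s1-prod_rho,r,sigma}, which by assumption depends only on $r$ (not on $\rho$ or $\sigma$). An immediate induction on $m\geq 1$ using \eqref{s1-prod_rho,r,sigma} shows that each power $g^m$ lies in $B_{\rho,r,\sigma}(\Omega)$ with $\|g^m\|_{\rho,r,\sigma}\leq C_r^{m-1}\|g\|_{\rho,r,\sigma}^m$.

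First I would fix $\varepsilon_1=\varepsilon_1(r)>0$ small enough that $C_r\varepsilon_1\leq\tfrac12$ and also $C_S\varepsilon_1\leq\tfrac12$, where $C_S=C_S(r)>0$ is the norm of the Sobolev embedding $H^r(\Omega)\hookrightarrow L^\infty(\Omega)$ (valid because $r\geq 2$ and $\Omega\subset\bR^3$); since $C_r$ and $C_S$ depend only on $r$ and on the fixed domain $\Omega$, so does $\varepsilon_1$. Assuming $\|g\|_{\rho,r,\sigma}\leq\varepsilon_1$, one gets $\sum_{m\geq 0}\|g^m\|_{\rho,r,\sigma}\leq 1+\varepsilon_1\sum_{m\geq 0}(C_r\varepsilon_1)^m\leq 1+2\varepsilon_1\leq 2$, so the partial sums $S_N:=\sum_{m=0}^N(-1)^m g^m$ form a Cauchy sequence in the Banach space $B_{\rho,r,\sigma}(\Omega)$ and converge there to some $h\in B_{\rho,r,\sigma}(\Omega)$ with $\|h\|_{\rho,r,\sigma}\leq 2$, which is the desired bound with $C_r:=2$.

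It then remains to identify $h$ with $\frac{1}{1+g}$. Because of the chain of inequalities $\|g\|_{H^r(\Omega)}\leq\|g\|_{\rho,r}^0\leq\|g\|_{\rho,r,\sigma}$ one has $\|g\|_{L^\infty(\Omega)}\leq C_S\|g\|_{H^r(\Omega)}\leq C_S\varepsilon_1\leq\tfrac12$, so $1+g\geq\tfrac12$ on $\Omega$ and $\frac{1}{1+g}$ is a well-defined element of $H^\infty(\Omega)$ (alternatively one can invoke Proposition \ref{s0-prop_estim_1/1+u_H^r} at each order). Since $S_N(1+g)=1-(-g)^{N+1}$ and $\big\|(-g)^{N+1}\big\|_{L^\infty(\Omega)}\leq 2^{-(N+1)}$, we get $S_N\to\frac{1}{1+g}$ uniformly on $\Omega$; on the other hand $S_N\to h$ in $B_{\rho,r,\sigma}(\Omega)$, hence in $H^r(\Omega)$ and in $L^2(\Omega)$. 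Comparing the two limits yields $h=\frac{1}{1+g}$ almost everywhere, which establishes \eqref{s2-estim_analytique_1/1+g}.

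There is no serious obstacle in this proof: it is a direct consequence of the Banach-algebra structure of $B_{\rho,r,\sigma}(\Omega)$. The only points requiring some care are (i) checking that $\varepsilon_1$ and the final constant depend only on $r$, which holds precisely because the algebra constant in \eqref{s1-prod_rho,r,sigma} is independent of $\rho$ and $\sigma$, and (ii) arranging that the smallness of $\|g\|_{\rho,r,\sigma}$ also controls $\|g\|_{L^\infty(\Omega)}$, so that the formal Neumann series genuinely represents $\frac{1}{1+g}$; this is where the hypothesis $r\geq 2$ enters, through the embedding $H^r(\Omega)\hookrightarrow L^\infty(\Omega)$ in dimension three.
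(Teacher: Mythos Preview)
Your proof is correct and follows essentially the same Neumann-series argument as the paper: choose $\varepsilon_1$ so that $C_r\varepsilon_1\leq\tfrac12$, use the algebra property \eqref{s1-prod_rho,r,sigma} to bound $\|g^m\|_{\rho,r,\sigma}\leq C_r^{m-1}\|g\|_{\rho,r,\sigma}^m$, and sum. One small slip: the $m=0$ term contributes $\|1\|_{\rho,r,\sigma}$, which is a numerical constant (bounded by $\tfrac{\sqrt{|\Omega|}}{1-\sigma}\leq 2\sqrt{2}$) rather than the number $1$; this does not affect the argument, only the value of the final constant. Your treatment of the identification step --- controlling $\|g\|_{L^\infty}$ via the Sobolev embedding and checking that the series really converges to $\tfrac{1}{1+g}$ --- is in fact more careful than the paper, which simply asserts the identity by completeness.
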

\begin{proof}
We start recalling the algebra property of the spaces $B_{\rho,r,\sigma}(\Omega)$. There exists a constant $C_r>0$ depending only on $r$, such that for all $u,v \in B_{\rho,r,\sigma}(\Omega)$, we have:
\begin{align*}
u \, v \in B_{\rho,r,\sigma}(\Omega) \quad \text{ and } \quad \|u \, v\|_{\rho,r,\sigma} \, \leq \, C_r \, \|u\|_{\rho,r,\sigma} \|v\|_{\rho,r,\sigma}.
\end{align*}
We define $\ds\varepsilon_1 := (2C_r)^{-1}$ and consider $g\in B_{\rho,r,\sigma}(\Omega)$ such that $\|g\|_{\rho,r,\sigma}\leq\varepsilon_1$. Then the series $\sum_{m\geq 0} (-1)^m g^m$ is absolutely convergent in the Banach space $B_{\rho,r,\sigma}(\Omega)$. Indeed,
\begin{align*}
\sum_{m\geq 0} \| (-1)^m g^m \|_{\rho,r,\sigma} \leq \|1\|_{\rho,r,\sigma} + \sum_{m\geq 1} C_r^{m-1} \|g\|_{\rho,r,\sigma}^m \leq \|1\|_{\rho,r,\sigma} + C_r^{-1}.
\end{align*}
Then, we estimate the norm of the constant function $1$ in $B_{\rho,r,\sigma}(\Omega)$ by a straightforward computation:
\begin{align*}
\|1\|_{\rho,r,\sigma} \leq C_0,
\end{align*}
where $C_0 >0$ is a numerical constant. Eventually, we deduce that
\begin{align*}
\sum_{m\geq 0} \| (-1)^m g^m \|_{\rho,r,\sigma} \leq C_r' < +\infty,
\end{align*}
where $C_r'>0$ depends only on $r$. The completeness of the spaces $B_{\rho,r,\sigma}(\Omega)$ allows to write the identity
\begin{align*}
\frac{1}{1+g} = \sum_{m\geq 0} (-1)^m g^m.
\end{align*}
Consequently, $\ds\frac{1}{1+g}$ belongs to the space $B_{\rho,r,\sigma}(\Omega)$, and satisfies the estimate:
\begin{align*}
\left\|\frac{1}{1+g}\right\|_{\rho,r,\sigma} \leq C_r'.
\end{align*}
\end{proof}

Using Proposition \ref{s2-prop_redressement_front}, we deduce a straightforward corollary about the analytic estimate of the inverse of the Jacobian $J$.
\begin{coro}\label{s2-coro_estim_1/J}
Let $\rho\in (0,\rho_0]$, $r\geq 2$ be an integer and $\sigma\in (0,\frac{1}{2}]$. Let also $f\in B_{\rho,r+\frac{1}{2}}(\bT^2)$. There exists a constant $\eta_0 = \eta_0(r) > 0$ such that:
\begin{equation}\label{s2-estim_1/J_rho,r,sigma}
\| f \|_{\rho,r+\frac{1}{2}} \leq \eta_0 \quad \Longrightarrow \quad \frac{1}{J}\in B_{\rho,r,\sigma}(\Omega) \quad \text{ and } \quad \left\|\frac{1}{J}\right\|_{\rho,r,\sigma} \leq M(\eta_0),
\end{equation}
where $M(\eta_0)>0$ depends only on $\eta_0$.
\end{coro}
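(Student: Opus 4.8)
The plan is to read off $\frac{1}{J}$ as $\frac{1}{1+\p_3\psi}$ and combine the lifting estimate of Proposition~\ref{s2-prop_redressement_front}, the differentiation estimate \eqref{s1-deriv_rho,r,k}, and the Neumann-series estimate of Proposition~\ref{s2-prop_estim_analytique_1/1+g}. Indeed, by \eqref{s2-def_J_A_a} and \eqref{s2-Psi_f} we have $J = 1 + \p_3\psi$ with $\psi = \Psi_f$, so $\frac{1}{J} = \frac{1}{1+g}$ with $g := \p_3\psi$, and the whole point is to check that a smallness assumption on $\|f\|_{\rho,r+\frac12}$ forces $\|g\|_{\rho,r,\sigma}$ to lie below the threshold $\varepsilon_1(r)$ supplied by Proposition~\ref{s2-prop_estim_analytique_1/1+g}.

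First I would apply Proposition~\ref{s2-prop_redressement_front} with $r$ replaced by $r+1$ (legitimate since $r\geq 2$ gives $r+1\geq 2$): as $f\in B_{\rho,(r+1)-\frac12}(\bT^2)=B_{\rho,r+\frac12}(\bT^2)$, this yields $\psi\in B_{\rho,r+1,\sigma}(\Omega)$ with $\|\psi\|_{\rho,r+1,\sigma}\leq C_{r+1}\,\|f\|_{\rho,r+\frac12}$. Next I would invoke the ``$r$-loss'' differentiation estimate \eqref{s1-deriv_rho,r,k} of Proposition~\ref{s1-prop_deriv_B_rho,r^k}, applied at Sobolev level $r+1$: for every $k\in\bN$ one has $\|\p_3\psi\|_{\rho,r}^{k}\leq\|\psi\|_{\rho,r+1}^{k}$, and multiplying by $\sigma^{k}$ and summing over $k$ gives $\|\p_3\psi\|_{\rho,r,\sigma}\leq\|\psi\|_{\rho,r+1,\sigma}\leq C_{r+1}\,\|f\|_{\rho,r+\frac12}$. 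I use \eqref{s1-deriv_rho,r,k} here rather than the ``$\rho$-loss'' estimates \eqref{s1-deriv_tan_rho,r,k}--\eqref{s1-deriv_normale_rho,r,k} precisely because the analyticity radius $\rho$ must be kept fixed in this corollary.

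It then remains to set $\eta_0=\eta_0(r):=\varepsilon_1(r)/C_{r+1}$, which depends only on $r$: when $\|f\|_{\rho,r+\frac12}\leq\eta_0$ we obtain $\|\p_3\psi\|_{\rho,r,\sigma}\leq\varepsilon_1(r)$, so Proposition~\ref{s2-prop_estim_analytique_1/1+g} applied to $g=\p_3\psi$ gives $\frac{1}{J}=\frac{1}{1+\p_3\psi}\in B_{\rho,r,\sigma}(\Omega)$ together with $\|\frac{1}{J}\|_{\rho,r,\sigma}\leq C_r$, and one takes $M(\eta_0):=C_r$. I do not expect a genuine obstacle in this statement: the entire content is bookkeeping on the regularity indices, arranging that one normal derivative of the lifting $\psi$ (which costs exactly one unit of Sobolev regularity through \eqref{s1-deriv_rho,r,k}) lands at level $r$, while the smallness hypothesis on $f$ is calibrated to meet the threshold of Proposition~\ref{s2-prop_estim_analytique_1/1+g}.
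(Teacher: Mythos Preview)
Your proof is correct and follows essentially the same route as the paper: apply Proposition~\ref{s2-prop_redressement_front} at level $r+1$ to control $\psi$ in $B_{\rho,r+1,\sigma}$, pass to $\p_3\psi$ in $B_{\rho,r,\sigma}$ via the trivial loss-of-regularity inequality, then set $\eta_0=\varepsilon_1/\widetilde{C}_r$ and invoke Proposition~\ref{s2-prop_estim_analytique_1/1+g} with $g=\p_3\psi$. The only difference is cosmetic --- you spell out the intermediate step $\|\p_3\psi\|_{\rho,r,\sigma}\leq\|\psi\|_{\rho,r+1,\sigma}$ through \eqref{s1-deriv_rho,r,k} summed over $k$, whereas the paper records it directly as the first inequality in \eqref{s2-estim_Psi_f_unif}.
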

\begin{proof}
Using Proposition \ref{s2-prop_redressement_front}, there exists a constant $\widetilde{C}_r > 0$ such that:
\begin{align}\label{s2-estim_Psi_f_unif}
\|\p_3\psi\|_{\rho,r,\sigma} \, \leq \, \|\psi\|_{\rho,r+1,\sigma} \, \leq \, \widetilde{C}_r \, \|f\|_{\rho,r+\frac{1}{2}}.
\end{align}
Let us set $\ds\eta_0 := \frac{\varepsilon_1}{\widetilde{C}_r}$, where $\varepsilon_1$ is given by Proposition \ref{s2-prop_estim_analytique_1/1+g}. Assuming $\ds \| f \|_{\rho,r+\frac{1}{2}} \leq \eta_0$ and using \eqref{s2-estim_Psi_f_unif}, we obtain the estimate $\|\p_3\psi\|_{\rho,r,\sigma} \leq \varepsilon_1$. Therefore, applying Proposition \ref{s2-prop_estim_analytique_1/1+g} to the function $g := \p_3\psi$, we achieve the proof of Corollary \ref{s2-coro_estim_1/J}.
\end{proof}

\section{Analytic estimate of the total pressure}\label{s2-sec_estim_pression}

In Paragraphs \ref{s2-redressement} and \ref{s2-sec_nouvelle_formulation}, we have seen that the total pressure $(Q^+,Q^-)$ can be implicitly expressed as a function of the velocity, the magnetic field and the front through the resolution of a coupled elliptic problem (we refer to both equivalent systems \eqref{s2-pb_ellip_pression} and \eqref{s2-pb_ellip_pression_réécrit}). Consequently, we will need to get analytic estimates for $Q^\pm$ depending on the analytic norms of $(u^\pm,b^\pm,f)$. Within this section, we choose to work on the first version \eqref{s2-pb_ellip_pression}. 

For the time being, we assume that $\cF^\pm$ and $\cG$ in \eqref{s2-pb_ellip_pression} are \textit{any} source terms. We will study the specific case where $\cF^\pm$ and $\cG$ are given by \eqref{s2-terme_source_cF^pm} and \eqref{s2-terme_source_cG} later on (or by \eqref{s2-termes_sources_cF_cG_réécrits} if we focus on the second version \eqref{s2-pb_ellip_pression_réécrit} using the new unknowns $(u^\pm,b^\pm,f)$). We recall that $t\in [0,T]$ is a parameter in \eqref{s2-pb_ellip_pression}, so we can omit it in this section.

To begin with, we will admit the well-posedness of system \eqref{s2-pb_ellip_pression} in the Sobolev regularity scale. More precisely,
\begin{itemize}[label=$\sbt$]
\item if the source terms $\cF^\pm$ and $\cG$ satisfy the compatibility condition \eqref{s2-cond_nec_comp_cF_cG},
\item if the front $f$ satisfies the smallness condition $\|f\|_{H^{\frac{5}{2}}(\bT^2)} < \varepsilon_0$, for a suitable numerical constant $\varepsilon_0 > 0$,
\item if the source terms $\cF^\pm$ and $\cG$ belong to $H^1(\Omega^\pm)$ and $H^{\frac{3}{2}}(\bT^2)$ respectively,
\end{itemize}
then problem \eqref{s2-pb_ellip_pression} is well-posed in $H^3(\Omega^+)\times H^3(\Omega^-)$. In other terms, there exists a unique solution $(Q^+,Q^-)$ to \eqref{s2-pb_ellip_pression}, with zero mean on $\Omega$, and belonging to $H^3(\Omega^+)\times H^3(\Omega^-)$. Besides, it satisfies the following elliptic regularity estimate:
\begin{equation}\label{s2-reg_ellip_H^3}
\|Q^\pm\|_{3,\pm} \, \leq \, C_0 \, \Big( \|\cF^\pm\|_{1,\pm} + \|\cG\|_{H^{\frac{3}{2}}(\bT^2)} \Big),
\end{equation}
where $C_0>0$ is a constant depending only on $\varepsilon_0$. We do not detail the proof of the well-posedness of \eqref{s2-pb_ellip_pression}, a sketch of proof is given in \cite{CMST}. The main idea relies on classical tools of elliptic analysis, such as the Lax-Milgram theorem applied in the Hilbert space
\begin{align*}
\mathscr{H} \, := \, \Big\{ (u^+,u^-) \in H^1(\Omega^+) \times H^1(\Omega^-) \,\, \Big| \,\, [u] \, = \, 0 \,\, \text{ and } \,\, \sum_\pm \int_{\Omega^\pm} u^\pm \, dx \, = \, 0 \Big\}.
\end{align*}
Then, to get an estimate in $H^2(\Omega^\pm)$, we estimate the difference quotients of order 2 (see \cite{Evans}). Finally, we proceed by induction to gain one more derivative, estimating the appearing commutators properly (see \cite[p.268]{CMST}).

Furthermore, we admit that problem \eqref{s2-pb_ellip_pression} is still well-posed in $H^\infty(\Omega^+)\times H^\infty(\Omega^-)$. In other words, if the coefficients $A_{ji}$ and the source terms $\cF^\pm$ and $\cG$ are from now on in $H^\infty$, then there exists a unique zero mean solution $(Q^+,Q^-)$ to problem \eqref{s2-pb_ellip_pression} that belongs to $H^\infty$ (see \cite{Evans} for the general method). In the following, we shall require more than the $H^\infty$ regularity, since we will take an analytic front $f$ in the space $B_{\rho,\frac{7}{2}}(\bT^2)$. Therefore the lifting $\psi$ will be in $B_{\rho,4,\sigma}(\Omega)$ according to Proposition \ref{s2-prop_redressement_front}. Thus, the coefficients $A_{ji}$ will belong to $B_{\rho,3,\sigma}(\Omega)$. The source terms  $\cF^\pm$ and $\cG$ will be taken respectively in the analytic spaces $B_{\rho,1,\sigma}(\Omega^\pm)$ and $B_{\rho,\frac{3}{2}}(\bT^2)$. Working in such spaces remains consistent with estimate \eqref{s2-reg_ellip_H^3}. Intuitively, if the radius of analyticity $\rho$ and the parameter $\sigma$ go to zero, these spaces ``degenerate'' into the Sobolev spaces $H^3(\Omega)$ (for the coefficients $A_{ji}$), $H^1(\Omega^\pm)$ and $H^\frac{3}{2}(\bT^2)$ (for the source terms $\cF^\pm$ and $\cG$). The purpose of this section is to extend estimate \eqref{s2-reg_ellip_H^3} to the analytic spaces mentioned above.

We sum up the main result of this section about the pressure estimate in the following theorem.
\begin{thm}\label{s2-thm_estim_Q_final}
There exist $\eta_0 > 0$, $\rho_0 \in (0,1]$ and $\sigma_0\in (0,\frac{1}{2}]$ such that, for all $\rho\in (0,\rho_0]$ and $\sigma\in (0,\sigma_0]$, if we consider a front $f\in B_{\rho,\frac{7}{2}}(\bT^2)$ satisfying
\begin{align*}
\|f\|_{\rho,\frac{7}{2}} < \eta_0,
\end{align*}
and analytic source terms $\cF^\pm\in B_{\rho,1,\sigma}(\Omega^\pm)$ and $\cG\in B_{\rho,\frac{3}{2}}(\bT^2)$, then the unique solution $(Q^+,Q^-)$ to \eqref{s2-pb_ellip_pression} with zero mean on $\Omega$ belongs to the space $B_{\rho,3,\sigma}(\Omega^+)\times B_{\rho,3,\sigma}(\Omega^-)$. Moreover, it satisfies the following estimate:
\begin{align}\label{s2-estim_Q_final}
\|Q^\pm\|_{\rho,3,\sigma}^{\pm} \, \leq \, C(\eta_0) \, \Big( \|\cF^\pm\|_{\rho,1,\sigma}^\pm \, + \, \|\cG\|_{\rho,\frac{3}{2}} \Big),
\end{align}
where $C(\eta_0)>0$ is a constant depending only on $\eta_0$.
\end{thm}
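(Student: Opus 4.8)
The plan is to lift the Sobolev elliptic estimate \eqref{s2-reg_ellip_H^3} to the analytic scale in two movements — purely tangential differentiation of the elliptic problem \eqref{s2-pb_ellip_pression} first, then an algebraic reduction of the normal derivatives through the equation itself — and to sum the resulting series using the algebra and differentiation tools of Section~\ref{s0-sec_analytic_spaces}. As a preliminary I would record that, since $\|f\|_{\rho,\frac72} < \eta_0$, Proposition~\ref{s2-prop_redressement_front} gives $\psi\in B_{\rho,4,\sigma}(\Omega)$ with $\|\psi\|_{\rho,4,\sigma}\leq C\eta_0$, whence, combining \eqref{s1-deriv_rho,r,k}, the algebra property of Theorem~\ref{s1-thm_echelle_B_rho,r,sigma}, Corollary~\ref{s2-coro_estim_1/J} and Proposition~\ref{s2-prop_estim_analytique_1/1+g}, all the coefficients relevant to \eqref{s2-pb_ellip_pression} — the entries $A_{ji}$, the inverse $\frac1J$, and the coefficient $c_0:=\frac{1+|\nabla'\psi|^2}{J^2}$ of $\p_3^2$ in the expanded operator, which is bounded below by a fixed positive constant — lie in $B_{\rho,3,\sigma}(\Omega)$ with norms controlled by $\eta_0$, and their ``non-constant parts'' $A_{ji}-\delta_{ji}$, $\frac1J-1$, $c_0-1$ have norm $O(\eta_0)$. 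This is precisely what forces the Sobolev exponents $3$ and $\frac72$. Shrinking $\eta_0\leq\varepsilon_0$ also makes \eqref{s2-reg_ellip_H^3} itself available, with a constant depending only on $\eta_0$.

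For the tangential derivatives, I would observe that for a tangential multi-index $\alpha'$ the function $\p^{\alpha'}Q^\pm$ solves the same elliptic problem with the same (undifferentiated) coefficients but with $\cF^\pm,\cG$ replaced by $\p^{\alpha'}\cF^\pm+\cR^{\alpha'}_\pm$ and $\p^{\alpha'}\cG+\cS^{\alpha'}$, where the commutators $\cR^{\alpha'}_\pm,\cS^{\alpha'}$ are sums of products in which at least one derivative hits a coefficient (hence each term carries an $O(\eta_0)$ factor) and the remaining derivatives act on $Q^\pm$ with strictly fewer tangential differentiations than $\alpha'$. Because $Q^\pm\in H^\infty$ is assumed to exist, $\p^{\alpha'}Q^\pm$ is a genuine solution, its zero-mean normalization is inherited (automatically for $|\alpha'|\geq 1$ by periodicity, by hypothesis for $\alpha'=0$), and the compatibility condition \eqref{s2-cond_nec_comp_cF_cG} for the differentiated source holds automatically; hence \eqref{s2-reg_ellip_H^3} applies to $\p^{\alpha'}Q^\pm$. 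The commutators are estimated in $H^1(\Omega^\pm)$ and $H^{\frac32}(\bT^2)$ without loss of derivative, because any second-order derivative of $Q^\pm$ is controlled in $H^1$ by $\|Q^\pm\|_{H^3}$ irrespective of how many of those derivatives are normal; multiplying by $\frac{\rho^n}{n!}$, taking the maximum over $|\alpha'|=n$ and summing, the commutator contributions organize — through the convolution-type inequality \eqref{s1-norme_alg_rho,r,k_convolution} and the trace estimate \eqref{art-estim_trace} — into a Cauchy product of an $O(\eta_0)$ coefficient norm with $\|Q^\pm\|_{\rho,3}^0$. Absorbing this term for $\eta_0$ small gives
\[
\|Q^\pm\|_{\rho,3}^{0}\ \leq\ C(\eta_0)\bigl(\|\cF^\pm\|_{\rho,1}^{0}+\|\cG\|_{\rho,\frac32}\bigr),
\]
and this bound already controls every derivative $\p^{\lambda'}\p_3^{\lambda_3}Q^\pm$ with $\lambda_3\leq 3$, since such a quantity is dominated in $L^2$ by $\|\p^{\lambda'}Q^\pm\|_{H^3}$.

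It then remains to control $\|Q^\pm\|_{\rho,3}^{k}$ for $k\geq 1$, whose only components not already estimated are those with normal order at least $4$. For these I would solve the expanded equation algebraically for $\p_3^2 Q^\pm$, writing $\p_3^2 Q^\pm=\frac{1}{c_0}\bigl(-\cF^\pm-(\text{terms in }Q^\pm\text{ with at most one }\p_3)\bigr)$, where the terms carrying a $\p_3$ come with an $O(\eta_0)$ coefficient and the $O(1)$ terms are purely tangential (and already controlled). Differentiating this identity repeatedly in $x_3$ and invoking Proposition~\ref{s1-prop_deriv_B_rho,r^k}, one bounds the $\|Q^\pm\|_{\rho,3}^{k}$-part by $\|Q^\pm\|_{\rho,3}^{0},\dots,\|Q^\pm\|_{\rho,3}^{k-1}$, by source norms $\|\cF^\pm\|_{\rho,1}^{j}$, and by $O(\eta_0)\,\|Q^\pm\|_{\rho,3}^{k}$, all products being handled through \eqref{s1-norme_alg_rho,r,k_convolution} so that the binomial coefficients produced by the Leibniz rule in $x_3$ are absorbed and the constants remain uniform in $k$. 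Multiplying by $\sigma^k$ and summing turns the $\|Q^\pm\|_{\rho,3}^{\leq k-1}$ and source terms into $O(\sigma)\,\|Q^\pm\|_{\rho,3,\sigma}^{\pm}$ and $O(1)\,\bigl(\|\cF^\pm\|_{\rho,1,\sigma}^{\pm}+\|\cG\|_{\rho,\frac32}\bigr)$, and the $O(\eta_0)$ term into $O(\eta_0)\,\|Q^\pm\|_{\rho,3,\sigma}^{\pm}$; fixing first $\rho_0$, then $\sigma_0$, then $\eta_0$ small enough, all the $Q$-terms on the right are absorbed and \eqref{s2-estim_Q_final} follows, together with the zero-mean normalization.

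The main obstacle, characteristic of Cauchy–Kowalevskaya-type arguments, is to carry the four mechanisms simultaneously — the analytic weights $\frac{\rho^n}{n!}\sigma^k$, the elliptic gain of two derivatives, the variable-coefficient commutators, and the algebraic recursion in $\p_3$ — while ensuring that every term produced is either a genuine source term or carries a genuinely small prefactor, namely an $O(\eta_0)$ coming from the front or an $O(\sigma)$ coming from the count of admitted normal derivatives, that is available for absorption. In particular one must systematically use the tame, convolution-type product inequality \eqref{s1-norme_alg_rho,r,k_convolution} rather than the cruder \eqref{s1-norme_alg_rho,r,k}, so that no derivative is lost when the series are summed and the constants do not degenerate as $k\to\infty$, and one must fix the small parameters in the order $\rho_0$, then $\sigma_0$, then $\eta_0$ so that all the absorptions are valid at once.
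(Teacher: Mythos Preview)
Your two-step strategy matches the paper's exactly: tangential commutation with $\p^{\alpha'}$ followed by the Sobolev estimate \eqref{s2-reg_ellip_H^3} (giving Theorem~\ref{s2-thm_estim_Q_rho,3^0}), then an algebraic recursion through the identity \eqref{s2-eq_Q_zeta} for $\p_3^2 Q^\pm$, with all Leibniz sums organized into Cauchy products via \eqref{s1-norme_alg_rho,r,k_convolution}. The only discrepancy is in the absorption bookkeeping. In the tangential step the paper does \emph{not} absorb through the $O(\eta_0)$ smallness of $\tildA$; instead it uses that every commutator term has $|\beta'|\geq 1$ on the coefficient, shifts the index (trading one $H^2$ for $H^3$), and extracts a factor of $\rho$, so the absorption reads $\rho\,\|\tildA\|_{\rho,3}^0\leq \rho_0\,C(\eta_0)<\tfrac12$ --- this is precisely what fixes $\rho_0$ in \eqref{s2-def_rho_0}. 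In the normal step there is in fact no ``same-level'' $O(\eta_0)\,\|Q^\pm\|_{\rho,3}^k$ contribution: the recursion \eqref{s2-estim_Q_rho,3^k+1,pm,N} bounds the level-$(k{+}1)$ norm purely by convolutions $\sum_{j_1+j_2=k}\|\zeta\|_{\rho,2}^{j_1}\|Q^\pm\|_{\rho,3}^{j_2,\pm}$ at level $\leq k$, and after multiplying by $\sigma^{k+1}$ and summing the absorption is entirely through the gained factor of $\sigma$ (the coefficient $\|\zeta\|_{\rho,2,\sigma}$ being bounded by $C(\eta_0)$, not small). Consequently your stated order of fixing parameters is reversed: the paper fixes $\eta_0$ first (for \eqref{s2-reg_ellip_H^3} and Corollary~\ref{s2-coro_estim_1/J}), then $\rho_0=\rho_0(\eta_0)$, then $\sigma_0=\sigma_0(\eta_0)$.
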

\begin{rmq}
Within this section, we shall assume that the front $f$ belongs to $B_{\rho,\frac{7}{2}}(\bT^2)$, which could seem to disagree with the definition of $E_a(\bT^2)$ (see Remark \ref{art-rmq_E_a(bT^2)}). Actually, we shall need the coefficients $A_{ji}$ in \eqref{s2-pb_ellip_pression} to have the same regularity than the velocity and the magnetic field; in other words, we will want $A_{ji}$ to belong to $B_{\rho,3,\sigma}(\Omega)$. Therefore we shall assume that $f \in B_{\rho,\frac{7}{2}}(\bT^2)$ in virtue of Proposition \ref{s2-prop_redressement_front}. Nevertheless, this point will turn out not to be as tricky as it seems and we will see in Section \ref{s2-sec_CK_thm} how to estimate $f$ in $B_{\rho,\frac{7}{2}}(\bT^2)$ as soon as we assume that $\ddot{f}\in E_a(\bT^2)$ (we refer specifically to \eqref{art-estim_rho,7/2_rho,5/2}). We do not claim that this analytic regularity on the coefficients $A_{ji}$ is sharp, however it will be sufficient for our purpose.

We can also explain the assumption $f \in B_{\rho,\frac{7}{2}}(\bT^2)$ looking at the source term $\cG$ defined by \eqref{s2-terme_source_cG_réécrit}. The latter contains \textbf{two} derivatives of the front $f$, but the analytic estimate \eqref{s2-estim_Q_final} shows that we will need $\cG$ to be estimated in $B_{\rho,\frac{3}{2}}(\bT^2)$. Consequently, the front $f$ should belong to $B_{\rho,\frac{7}{2}}(\bT^2)$ and it seems quite difficult to get rid of this assumption.
\end{rmq}

\subsection{Estimate of the tangential derivatives}

In the following, the notation $f \lesssim g$ means that there exists a numerical constant $C_0>0$ (that does not depend on the functions $f$ and $g$, neither on other parameters like $\rho$ or $\sigma$), such that $f\,\leq\, C_0 \, g$.

Within this paragraph, we focus on estimating $Q^\pm$ in the space $B_{\rho,3}^0(\Omega^\pm)$. Thus, we wish to estimate the following quantity:
\begin{align*}
\|Q^\pm\|_{\rho,3}^{0,\pm} = \sum_{n\geq 0} \frac{\rho^n}{n!} \max_{|\alpha'|=n} \|\p^{\alpha'} Q^\pm\|_{3,\pm}.
\end{align*}

Using \eqref{s2-reg_ellip_H^3}, we shall be able to estimate the tangential derivatives $\p^{\alpha'}Q^\pm$ in $H^3(\Omega^\pm)$. Multi-indices $\alpha$ in $\bN^3$ will be noted $\alpha := (\alpha', \alpha_3)$, where $\alpha'\in\bN^2$. In the following, the notation $\alpha'$ will always stand for a multi-index of $\bN^2$, to refer to the tangential derivatives $\p_1$ and $\p_2$. For all $N\in\bN$, we will note $\|Q\|_{\rho,3}^{0,\pm,N}$ the partial sum of order $N$ of the norm $\|Q^\pm\|_{\rho,3}^{0,\pm}$. To estimate $\p^{\alpha'}Q^\pm$ in $H^3(\Omega^\pm)$, we shall commute problem \eqref{s2-pb_ellip_pression} with $\p^{\alpha'}$. Thus, estimate \eqref{s2-reg_ellip_H^3} will hold for $\p^{\alpha'}Q^\pm$, with new source terms $\cF'^\pm$ and $\cG'$. The latter contain commutators, composed of derivatives of $Q^\pm$ and coefficients of the matrix $A$. To estimate these commutators in a suitable way, we will have to take the flatness condition required on the front $f$ in the space $H^{\frac{5}{2}}(\bT^2)$ into account. Therefore the norm $\|\psi\|_{H^3(\Omega)}$ will also be small according to Lemma \ref{s2-lemme_redressement_CMST}. Consequently, if we write $A = I_3 - \tildA$, with
\begin{equation}\label{s2-def_tildA}
\tildA := \begin{pmatrix}
0 & 0 & 0 \\
0 & 0 & 0 \\
\ds\frac{\p_1\psi}{J} & \ds\frac{\p_2\psi}{J} & \ds\frac{\p_3\psi}{J}
\end{pmatrix},
\end{equation}
then $\|\tildA\|_{H^2(\Omega)}$ will also be a small quantity. It explains why during the commutators estimates below, we will have to distinguish the cases where low derivatives of $A$ appear, to take advantage of the flatness of the front $f$.

Now, let $\alpha'\in\bN^2$ such that $|\alpha'| = n \in \bN$. The case $n=0$ is a straightforward application of estimate \eqref{s2-reg_ellip_H^3}, which gives:
\begin{align}\label{s2-estim_Q_n=0}
\|Q^\pm\|_{3,\pm} \, \lesssim \, \|\cF^\pm\|_{1,\pm} + \|\cG\|_{H^\frac{3}{2}(\bT^2)} \, \lesssim \, \|\cF^\pm\|_{\rho,1}^{0,\pm} + \|\cG\|_{\rho,\frac{3}{2}}.
\end{align}
From now on, we assume that $n\geq 1$ and we set $Q'^\pm := \p^{\alpha'} Q^\pm$. Let us commute problem \eqref{s2-pb_ellip_pression} with $\p^{\alpha'}$:
\begin{equation}\label{s2-pb_ellip_pression_tang}
\left\{
\begin{array}{r l}
-A_{ji}\p_j (A_{ki}\p_k Q'^\pm) \, = \, \cF'^\pm, & \text{ in } [0,T]\times\Omega^\pm, \\[0.5ex]
\, [Q'^\pm] \, = \, 0, & \text{ on } [0,T]\times\Gamma, \\[0.5ex]
\, (1+|\nabla' f|^2)[\p_3 Q'] \, = \, \cG', & \text{ on } [0,T]\times\Gamma, \\[0.5ex]
\p_3 Q' \, = \, 0, & \text{ on } [0,T]\times\Gamma_\pm.
\end{array}
\right.
\end{equation}
The source term $\cF'^\pm$ is defined by
\begin{align*}
\cF'^\pm := \p^{\alpha'}\cF^\pm + \big[ \p^{\alpha'} \, ; \, A_{ji}\p_j (A_{ki}\p_k \cdot) \big] Q^\pm.
\end{align*}
Writing $A = I_3 - \tildA$, we can split $\cF'^\pm$ into four quantities as follows:
\begin{align}
\cF'^\pm & = \p^{\alpha'}\cF^\pm + \big[ \p^{\alpha'} \, ; \, \tildA_{ji}\p_j(\tildA_{ki}\p_k\cdot)\big]Q^\pm - \big[ \p^{\alpha'} \, ; \, \p_i\tildA_{ki}\p_k\cdot \big]Q^\pm - 2 \big[ \p^{\alpha'} \, ; \, \tildA_{ji}\p_{ji}\cdot \big]Q^\pm. \nonumber \\
& =: F_1 + F_2 - F_3 - 2F_4. \label{s2-def_F'} 
\end{align}
The boundary source term $\cG'$ is given by
\begin{align}
\cG' & := \p^{\alpha'}\cG - \big[ [\p^{\alpha'} \, ; \, (1+|\nabla' f|^2)\p_3] \, Q \big] \nonumber \\
& =: G_1 - G_2. \label{s2-def_G'}
\end{align}
Let us be careful: the exterior bracket in \eqref{s2-def_G'} stands for the jump across $\Gamma$, whereas the interior bracket corresponds to a commutator. Applying estimate \eqref{s2-reg_ellip_H^3} to the problem \eqref{s2-pb_ellip_pression_tang}, we have:
\begin{equation}\label{s2-estim_Q'_H^3}
\| Q'^\pm \|_{3,\pm} \lesssim \|\cF'\|_{1,\pm} + \|\cG'\|_{H^\frac{3}{2}(\bT^2)}.
\end{equation}
Thus, the partial sum of order $N$ of $\|Q^\pm\|_{\rho,3}^{0,\pm}$ satisfies:
\begin{equation}\label{s2-estim_som_part_Q'}
\|Q^\pm\|_{\rho,3}^{0,\pm,N} \lesssim \sum_{n=0}^N \frac{\rho^n}{n!} \max_{|\alpha'|=n} \|\cF'^\pm\|_{1,\pm} + \sum_{n=0}^N \frac{\rho^n}{n!} \max_{|\alpha'|=n} \|\cG'\|_{H^\frac{3}{2}(\bT^2)}.
\end{equation}

\paragraph*{\textbf{Estimate of $\cF'^\pm$.}}

$\,$

The case of $F_1$ is easy. Indeed, we have assumed that $\cF^\pm$ belongs to the space $B_{\rho,1,\sigma}(\Omega^\pm)$. In particular, the quantity $\|\cF^\pm\|_{\rho,1}^{0,\pm}$ is finite, and we can write:
\begin{align}\label{s2-estim_F1}
\sum_{n=0}^N \frac{\rho^n}{n!} \max_{|\alpha'|=n} \|F_1\|_{1,\pm} \, = \, \sum_{n=0}^N \frac{\rho^n}{n!} \max_{|\alpha'|=n} \|\p^{\alpha'}\cF^\pm\|_{1,\pm} \, \leq \, \|\cF^\pm\|_{\rho,1}^{0,\pm}.
\end{align}

Let us deal with the first commutator given by $\ds F_2 := \big[ \p^{\alpha'} \, ; \, \tildA_{ji}\p_j(\tildA_{ki}\p_k\cdot)\big]Q^\pm$. Expanding the latter expression, we have:
\begin{equation}\label{s2-def_F2}
F_2 = \sum_{\substack{\beta'+\gamma'+\delta'=\alpha' \\ |\beta'+\gamma'|\geq 1}} \frac{\alpha'!}{\beta'! \, \gamma'! \, \delta'!} \left( \p^{\beta'}\tildA_{ji} \, \p^{\gamma'}\p_j\tildA_{ki} \, \p^{\delta'}\p_k Q^\pm \, + \, \p^{\beta'}\tildA_{ji} \, \p^{\gamma'}\tildA_{ki} \, \p^{\delta'}\p_{jk}Q^\pm \right).
\end{equation}
Let us set  $\ds T_{01} := \p^{\beta'}\tildA_{ji} \, \p^{\gamma'}\p_j\tildA_{ki} \, \p^{\delta'}\p_k Q^\pm$ and $\ds T_{02} := \p^{\beta'}\tildA_{ji} \, \p^{\gamma'}\tildA_{ki} \, \p^{\delta'}\p_{jk}Q^\pm$. In order to estimate the terms $T_{01}$ and $T_{02}$ in $H^1(\Omega^\pm)$, we first estimate the $0-$order derivative and then the $1-$order derivatives in $L^2(\Omega^\pm)$. In the following, $\p^m$ denotes any $m-$order derivative, for all $m\geq 1$. All the following estimates are computed using Hölder's inequality and Sobolev embeddings recalled below ($U$ stands for any domain among $\Omega$, $\Omega^+$ or $\Omega^-$):
\begin{align*}
& H^1(U) \hookrightarrow L^p(U), \quad \forall \, 1\leq p\leq 6, \\
& H^2(U) \hookrightarrow L^\infty(U).
\end{align*}
We start with the $0-$order derivative:
\begin{align*}
\| T_{01} \|_{0,\pm} \, & = \, \| \p^{\beta'}\tildA_{ji} \, \p^{\gamma'}\p_j\tildA_{ki} \, \p^{\delta'}\p_k Q^\pm \|_{0,\pm} \\
& \lesssim \|\p^{\beta'}\tildA\|_{H^2} \, \|\p^{\gamma'}\tildA\|_{H^1} \, \| \p^{\delta'}Q^\pm \|_{3,\pm}, & \quad (\textit{Hölder's inequality in } L^\infty\times L^2\times L^\infty).
\end{align*}
In the norms, we will write $H^r$ to denote the space $H^r(\Omega)$ (and only for the domain $\Omega$). Likewise, we get:
\begin{align*}
\| T_{02} \|_{0,\pm} \, & \lesssim \, \|\p^{\beta'}\tildA\|_{H^2} \, \|\p^{\gamma'}\tildA\|_{H^2} \, \| \p^{\delta'}Q^\pm \|_{2,\pm}, & \quad (\textit{Hölder's inequality in } L^\infty\times L^\infty\times L^2).
\end{align*}
Then, we compute the $1-$order derivatives. We write $\ds \p^1 T_{01} = T_{11} + T_{12} + T_{13}$, with:
\begin{align*}
T_{11} = \p^{\beta'}\p^1\tildA_{ji} \, \p^{\gamma'}\p^1\tildA_{ki} \, \p^{\delta'}\p^1 Q^\pm, \quad 
T_{12} = \p^{\beta'}\tildA_{ji} \, \p^{\gamma'}\p^2\tildA_{ki} \, \p^{\delta'}\p^1 Q^\pm, \quad 
T_{13} = \p^{\beta'}\tildA_{ji} \, \p^{\gamma'}\p^1\tildA_{ki} \, \p^{\delta'}\p^2 Q^\pm.
\end{align*}
As done before, applying Hölder's inequality in $L^4\times L^4\times L^\infty$ for $T_{11}$, in $L^\infty\times L^2\times L^\infty$ for $T_{12}$ and in $L^\infty\times L^4\times L^4$ for $T_{13}$, we get:
\begin{equation}\label{s2-estim_T11_T12_T13}
\| T_{11} \|_{0,\pm}, \, \| T_{12} \|_{0,\pm}, \, \| T_{13} \|_{0,\pm} \, \lesssim \, \|\p^{\beta'}\tildA\|_{H^2} \, \|\p^{\gamma'}\tildA\|_{H^2} \, \| \p^{\delta'}Q^\pm \|_{3,\pm}.
\end{equation}
The case of $T_{02}$ is analogous. We write $\ds \p^1 T_{02} = T_{14} + T_{15} + T_{16}$, with:
\begin{align*}
T_{14} = \p^{\beta'}\p^1\tildA_{ji} \, \p^{\gamma'}\tildA_{ki} \, \p^{\delta'}\p^2 Q^\pm, \quad 
T_{15} = \p^{\beta'}\tildA_{ji} \, \p^{\gamma'}\p^1\tildA_{ki} \, \p^{\delta'}\p^2 Q^\pm, \quad 
T_{16} = \p^{\beta'}\tildA_{ji} \, \p^{\gamma'}\tildA_{ki} \, \p^{\delta'}\p^3 Q^\pm.
\end{align*}
Using Hölder's inequality in $L^4\times L^\infty\times L^4$ for $T_{14}$, in $L^\infty\times L^4\times L^4$ for $T_{15}$ and in $L^\infty\times L^\infty\times L^2$ for $T_{16}$, we also show that the terms $T_{14}$, $T_{15}$ and $T_{16}$ satisfy the estimate \eqref{s2-estim_T11_T12_T13}. To sum up, we can write:
\begin{equation}\label{s2-estim_T01+T02}
\| T_{01} + T_{02} \|_{1,\pm} \, \lesssim \, \, \|\p^{\beta'}\tildA\|_{H^2} \, \|\p^{\gamma'}\tildA\|_{H^2} \, \| \p^{\delta'}Q^\pm \|_{3,\pm}.
\end{equation}
From now on, we can estimate the commutator $F_2$ given by \eqref{s2-def_F2}. Applying \eqref{s2-estim_T01+T02}, we have:
\begin{align*}
\| F_2 \|_{1,\pm} & \lesssim \sum_{\substack{\beta' +\gamma' +\delta' = \alpha' \\ |\beta' +\gamma'| \geq 1}} \frac{\alpha'!}{\beta'! \, \gamma'! \, \delta'!} \, \|\p^{\beta'}\tildA\|_{H^2} \, \|\p^{\gamma'}\tildA\|_{H^2} \, \|\p^{\delta'} Q^\pm\|_{3,\pm}.
\end{align*}
Summing over the length of multi-indices (recall that $|\alpha'|=n$), we obtain:
\begin{equation}\label{s2-estim_F2_1}
\| F_2 \|_{1,\pm} \lesssim \sum_{\substack{i_1 + i_2 + i_3 = n \\ i_1 + i_2 \geq 1}} \Bigg( \sum_{\substack{\beta' + \gamma' + \delta' = \alpha' \\ |\beta'| = i_1 \\ |\gamma'| = i_2 \\ |\delta'| = i_3}} \frac{\alpha'!}{\beta'! \, \gamma'! \, \delta'!} \Bigg) \max_{\substack{\beta' + \gamma' + \delta' = \alpha' \\ |\beta'| = i_1 \\ |\gamma'| = i_2 \\ |\delta'| = i_3}} \Bigg( \|\p^{\beta'}\tildA\|_{H^2} \, \|\p^{\gamma'}\tildA\|_{H^2} \, \|\p^{\delta'} Q^\pm\|_{3,\pm} \Bigg).
\end{equation}
Next, we use the following identity to simplify the second sum in \eqref{s2-estim_F2_1}:
\begin{align*}
\sum_{\substack{\beta' + \gamma' + \delta' = \alpha' \\ |\beta'| = i_1, \, |\gamma'| = i_2, \, |\delta'| = i_3}} \frac{\alpha'!}{\beta'! \, \gamma'! \, \delta'!} \, = \, \frac{n!}{i_1!\, i_2!\, i_3!}.
\end{align*}
Then, bounding from above the maximum of the products by the product of the maximums, we get:
\begin{align*}
\| F_2 \|_{1,\pm} \lesssim \sum_{\substack{i_1 + i_2 + i_3 = n \\ i_1 + i_2 \geq 1}} \frac{n!}{i_1!\, i_2!\, i_3!} \, \max_{|\beta'|=i_1} \|\p^{\beta'}\tildA\|_{H^2} \, \max_{|\gamma'|=i_2} \|\p^{\gamma'}\tildA\|_{H^2} \, \max_{|\delta'|=i_3} \|\p^{\delta'} Q^\pm\|_{3,\pm}.
\end{align*}
Now we have to use that $i_3$ can not be larger than $n-1$. So, we rewrite the previous sum using a summation over the values of $i_1 + i_2$:
\begin{align}
\frac{1}{n!} \|F_2\|_{1,\pm} & \lesssim \sum_{i=1}^n \sum_{i_1+i_2=i} \frac{1}{i_1!} \, \max_{|\beta'|=i_1} \|\p^{\beta'}\tildA\|_{H^2} \, \frac{1}{i_2!} \, \max_{|\gamma'|=i_2} \|\p^{\gamma'}\tildA\|_{H^2} \, \frac{1}{(n-i)!} \, \max_{|\delta'|=n-i} \|\p^{\delta'} Q^\pm\|_{3,\pm}  \nonumber \\
& \lesssim \sum_{i=0}^{n-1} \sum_{i_1+i_2=i+1} \frac{1}{i_1!} \, \max_{|\beta'|=i_1} \|\p^{\beta'}\tildA\|_{H^2} \, \frac{1}{i_2!} \, \max_{|\gamma'|=i_2} \|\p^{\gamma'}\tildA\|_{H^2} \, \frac{1}{(n-1-i)!} \, \max_{|\delta'|=n-1-i} \|\p^{\delta'} Q^\pm\|_{3,\pm}, \label{s2-estim_F2_2}
\end{align}
where the sum in \eqref{s2-estim_F2_2} has been re-indexed over $i$. For the sake of clarity, we rewrite the sum $\sum_{i_1+i_2=i+1}$ using only one index $i_1 \in \{0, \dots, i+1\}$:
\begin{align}\label{s2-estim_F2_3}
\frac{1}{n!} \|F_2\|_{1,\pm} \lesssim \sum_{i=0}^{n-1} \sum_{i_1 = 0}^{i+1} \frac{1}{i_1!} \, \max_{|\beta'|=i_1} \|\p^{\beta'}\tildA\|_{H^2} \, \frac{1}{(i+1-i_1)!} & \, \max_{|\gamma'|=i+1-i_1} \|\p^{\gamma'}\tildA\|_{H^2} \\
&\times \frac{1}{(n-1-i)!} \, \max_{|\delta'|=n-1-i} \|\p^{\delta'} Q^\pm\|_{3,\pm}. \nonumber
\end{align}
We recall that we have to distinguish the cases where we can exhibit the norms $\|\tildA\|_{H^2}$ because of their smallness. So, in estimate  \eqref{s2-estim_F2_3}, we deal with the case $i=0$ for which we have the norm $\|\tildA\|_{H^2}$, and the case $i\geq 1$ which will give some analytic norms. The latter will be handled thereafter using the radius of analyticity $\rho>0$ as a small parameter. We rewrite the double sum in \eqref{s2-estim_F2_3} as $S_{i=0} + S_{i\geq 1}$. The term $S_{i=0}$ corresponds to the case $i=0$, and the term $S_{i\geq 1}$ to the case where we sum over $i\geq 1$.

\paragraph*{\textbf{$\blacktriangleright$ Treatment of $S_{i=0}$.}}

$\,$

\noindent
We directly have the following estimate:
\begin{align}
S_{i=0} & = 2\|\tildA\|_{H^2} \, \max_{|\gamma'|=1} \|\p^{\gamma'}\tildA\|_{H^2} \, \frac{1}{(n-1)!} \, \max_{|\delta'|=n-1} \|\p^{\delta'} Q^\pm\|_{3,\pm} \nonumber\\
& \lesssim \|\tildA\|_{H^2} \, \|\tildA\|_{H^3} \, \frac{1}{(n-1)!} \, \max_{|\delta'|=n-1} \|\p^{\delta'} Q^\pm\|_{3,\pm}. \label{s2-estim_F2_9}
\end{align}

\paragraph*{\textbf{$\blacktriangleright$ Treatment of $S_{i\geq 1}$.}}

\paragraph*{\textbf{Remark.}} If $n=1$, the term $S_{i\geq 1}$ does not appear in the right-hand side of \eqref{s2-estim_F2_3}. Consequently, the case $n=1$ is completely treated using estimate \eqref{s2-estim_F2_9}. So, we can assume in the following that $n\geq 2$.

\noindent
Re-indexing over $i$, we can write:
\begin{align*}
S_{i\geq 1} = \sum_{i=0}^{n-2} \sum_{i_1=0}^{i+2} \frac{1}{i_1!} \, \max_{|\beta'|=i_1} \|\p^{\beta'}\tildA\|_{H^2} \, \frac{1}{(i+2-i_1)!} \, \max_{|\gamma'|=i+2-i_1} \|\p^{\gamma'}\tildA\|_{H^2} \, \frac{1}{(n-2-i)!} \, \max_{|\delta'|=n-2-i} \|\p^{\delta'} Q^\pm\|_{3,\pm}.
\end{align*}
Once more, we isolate the case $i_1\in\{0, i+2\}$, which gives the norm $\|\tildA\|_{H^2}$. Thus we get:
\begin{align}
S_{i\geq 1} & = 2 \|\tildA\|_{H^2} \sum_{i=0}^{n-2} \frac{1}{(i+2)!} \, \max_{|\gamma'|=i+2} \|\p^{\gamma'}\tildA\|_{H^2} \, \frac{1}{(n-2-i)!} \, \max_{|\delta'|=n-2-i} \|\p^{\delta'} Q^\pm\|_{3,\pm} \label{s2-estim_F2_4}\\
& \quad + \sum_{i=0}^{n-2} \sum_{i_1=1}^{i+1}  \frac{1}{i_1!} \, \max_{|\beta'|=i_1} \|\p^{\beta'}\tildA\|_{H^2} \, \frac{1}{(i+2-i_1)!} \, \max_{|\gamma'|=i+2-i_1} \|\p^{\gamma'}\tildA\|_{H^2} \, \frac{1}{(n-2-i)!} \, \max_{|\delta'|=n-2-i} \|\p^{\delta'} Q^\pm\|_{3,\pm}. \label{s2-estim_F2_5}
\end{align}
We shall note $S_{i_1=0,i+2}$ (resp.  $S_{1\leq i_1\leq i+1}$) the right term of \eqref{s2-estim_F2_4} (resp. \eqref{s2-estim_F2_5}).
\begin{itemize}[label=$\blacklozenge$]
\item \textbf{Treatment of $S_{i_1=0,i+2}$.}

$\,$

\noindent
Using both following trivial estimates
\begin{align}\label{s2-ineg_perte_der_H2-H3}
\max_{|\gamma'|=i+2} \|\p^{\gamma'}\tildA\|_{H^2} \leq \max_{|\gamma'|=i+1} \|\p^{\gamma'}\tildA\|_{H^3} \quad \text{ and } \quad (i+2)!\geq (i+1)!,
\end{align}
and re-indexing over $i$, we obtain:
\begin{align}
S_{i_1=0,i+2} \lesssim \|\tildA\|_{H^2} \sum_{i=1}^{n-1} \frac{1}{i!} \, \max_{|\gamma'|=i} \|\p^{\gamma'}\tildA\|_{H^3} \, \frac{1}{(n-1-i)!} \, \max_{|\delta'|=n-1-i} \|\p^{\delta'} Q^\pm\|_{3,\pm}. \label{s2-estim_F2_6}
\end{align}
Eventually, estimate \eqref{s2-estim_F2_6} is rewritten as follows:
\begin{align}\label{s2-estim_F2_7}
S_{i_1=0,i+2} \lesssim \|\tildA\|_{H^2} \sum_{i_1+i_2 = n-1} \frac{1}{i_1!} \, \max_{|\gamma'|=i_1} \|\p^{\gamma'}\tildA\|_{H^3} \, \frac{1}{i_2!} \, \max_{|\delta'|=i_2} \|\p^{\delta'} Q^\pm\|_{3,\pm}.
\end{align}
This concludes the case of $S_{i_1=0,i+2}$.
\item \textbf{Treatment of $S_{1\leq i_1\leq i+1}$.}

$\,$

\noindent
Re-indexing over $i_1$, we have:
\begin{align*}
S_{1\leq i_1\leq i+1} = \sum_{i=0}^{n-2} \sum_{i_1=0}^i \frac{1}{(i_1+1)!} \, \max_{|\beta'|=i_1+1} \|\p^{\beta'}\tildA\|_{H^2} \, & \frac{1}{(i+1-i_1)!} \, \max_{|\gamma'|=i+1-i_1} \|\p^{\gamma'}\tildA\|_{H^2} \\
 \times & \frac{1}{(n-2-i)!} \, \max_{|\delta'|=n-2-i} \|\p^{\delta'} Q^\pm\|_{3,\pm}.
\end{align*}
Using the same kind of inequalities given by \eqref{s2-ineg_perte_der_H2-H3}, we deduce that
\begin{align*}
S_{1\leq i_1\leq i+1} \lesssim \sum_{i=0}^{n-2} \sum_{i_1=0}^i \frac{1}{i_1!} \, \max_{|\beta'|=i_1} \|\p^{\beta'}\tildA\|_{H^3} \, \frac{1}{(i-i_1)!} \, \max_{|\gamma'|=i-i_1} \|\p^{\gamma'}\tildA\|_{H^3} \, \frac{1}{(n-2-i)!} \, \max_{|\delta'|=n-2-i} \|\p^{\delta'} Q^\pm\|_{3,\pm}.
\end{align*}
Finally, rewriting the sum over $i_1$ with two indices $i_1$ and $i_2$ such that $i_1+i_2=i$, we end up with:
\begin{align}\label{s2-estim_F2_8}
S_{1\leq i_1\leq i+1} & \, \lesssim \, \sum_{i=0}^{n-2} \sum_{i_1+i_2=i} \frac{1}{i_1!} \, \max_{|\beta'|=i_1} \|\p^{\beta'}\tildA\|_{H^3} \, \frac{1}{i_2!} \, \max_{|\gamma'|=i_2} \|\p^{\gamma'}\tildA\|_{H^3} \, \frac{1}{(n-2-i)!} \, \max_{|\delta'|=n-2-i} \|\p^{\delta'} Q^\pm\|_{3,\pm} \nonumber\\
& \, \lesssim \, \sum_{i_1+i_2+i_3=n-2} \frac{1}{i_1!} \, \max_{|\beta'|=i_1} \|\p^{\beta'}\tildA\|_{H^3} \, \frac{1}{i_2!} \, \max_{|\gamma'|=i_2} \|\p^{\gamma'}\tildA\|_{H^3} \, \frac{1}{i_3!} \, \max_{|\delta'|=i_3} \|\p^{\delta'} Q^\pm\|_{3,\pm},
\end{align}
which concludes the estimate of $S_{1\leq i_1\leq i+1}$.
\end{itemize}
Let us come back to estimate \eqref{s2-estim_F2_3} of the commutator $F_2$. We use \eqref{s2-estim_F2_9} together with \eqref{s2-estim_F2_7} and \eqref{s2-estim_F2_8} to get, for $|\alpha'| = n \geq 2$:
\begin{align}
\frac{\rho^n}{n!} \max_{|\alpha'|=n} \|F_2\|_{1,\pm} & \, \lesssim \, \rho \, \|\tildA\|_{H^2} \, \|\tildA\|_{H^3} \, \frac{\rho^{n-1}}{(n-1)!} \, \max_{|\delta'| = n-1} \|\p^{\delta'} Q^\pm\|_{3,\pm} \label{s2-estim_F2_12}\\
& \quad + \rho\, \|\tildA\|_{H^2} \sum_{i_1+i_2=n-1} \frac{\rho^{i_1}}{i_1!} \, \max_{|\gamma'|=i_1} \|\p^{\gamma'}\tildA\|_{H^3} \, \frac{\rho^{i_2}}{i_2!} \, \max_{|\delta'|=i_2} \|\p^{\delta'} Q^\pm\|_{3,\pm} \label{s2-estim_F2_10}\\
& \quad + \rho^2 \, \sum_{i_1+i_2+i_3=n-2} \frac{\rho^{i_1}}{i_1!} \, \max_{|\beta'|=i_1} \|\p^{\gamma'}\tildA\|_{H^3} \, \frac{\rho^{i_2}}{i_2!} \, \max_{|\gamma'|=i_2} \|\p^{\gamma'}\tildA\|_{H^3} \, \frac{\rho^{i_3}}{i_3!} \, \max_{|\delta'|=i_3} \|\p^{\delta'} Q^\pm\|_{3,\pm}. \label{s2-estim_F2_11}
\end{align}
If $n=1$, we recall that we have the simpler estimate:
\begin{align*}
\frac{\rho^n}{n!} \max_{|\alpha'|=n} \|F_2\|_{1,\pm} & \, \lesssim \, \rho \, \|\tildA\|_{H^2} \, \|\tildA\|_{H^3} \, \frac{\rho^{n-1}}{(n-1)!} \, \max_{|\delta'| = n-1} \|\p^{\delta'} Q^\pm\|_{3,\pm},
\end{align*}
which turns out to be the same as \eqref{s2-estim_F2_12}. In the sums appearing in \eqref{s2-estim_F2_10} and \eqref{s2-estim_F2_11}, we recognize ``partial'' Cauchy products. Thus, summing over $n\in\{1,\dots,N\}$ and using the straightforward inequality $\|\tildA\|_{H^3}\leq\|\tildA\|_{\rho,3}^{0}$, we obtain:
\begin{align*}
\sum_{n=1}^N \frac{\rho^n}{n!} \max_{|\alpha'|=n} \|F_2\|_{1,\pm} \, \lesssim \, \rho \, \|\tildA\|_{H^2} \, \|\tildA\|_{\rho,3}^{0} \|Q^\pm\|_{\rho,3}^{0,\pm,N-1} & + \, \rho \, \|\tildA\|_{H^2} \, \|\tildA\|_{\rho,3}^{0,N-1} \|Q^\pm\|_{\rho,3}^{0,\pm,N-1} \\
&  + \rho^2 \, \|\tildA\|_{\rho,3}^{0,N-2} \, \|\tildA\|_{\rho,3}^{0,N-2} \, \|Q^\pm\|_{\rho,3}^{0,\pm,N-2}.
\end{align*}
Since the coefficients of $\tildA$ belong to the space $B_{\rho,3,\sigma}(\Omega)$, we can bound from above the partial sums  $\|\tildA\|_{\rho,3}^{0,M}$ by the finite quantity $\|\tildA\|_{\rho,3}^{0}$. Eventually, the commutator $F_2$ satisfies the estimate:
\begin{align}\label{s2-estim_commut_F2}
\sum_{n=1}^N \frac{\rho^n}{n!} \max_{|\alpha'|=n} \|F_2\|_{1,\pm} \, \lesssim \, \rho\, \|\tildA\|_{H^2} \, \|\tildA\|_{\rho,3}^0 \, \|Q^\pm\|_{\rho,3}^{0,\pm,N} \, + \, \left( \rho \, \|\tildA\|_{\rho,3}^0 \right)^2 \, \|Q^\pm\|_{\rho,3}^{0,\pm,N}.
\end{align}
This concludes the case of the first commutator $F_2$.

Let us move on to the second commutator $F_3$ given by (see \eqref{s2-def_F'}):
\begin{equation}\label{s2-def_F3}
F_3 := \big[ \p^{\alpha'} \, ; \, \p_i\tildA_{ki}\p_k\cdot \big] Q^\pm = \sum_{\substack{\beta' +\gamma' = \alpha' \\ |\beta'|\geq 1}} \frac{\alpha'!}{\beta'! \, \gamma'!} \, \p^{\beta'}\p_i\tildA_{ki} \, \p^{\gamma'}\p_k Q^\pm.
\end{equation}
We estimate $F_3$ with the same tools used for $F_2$. First of all, considering the derivatives of order 0 and 1, we show that
\begin{align}\label{s2-estim_F3_1}
\| \p^{\beta'}\p_i\tildA_{ki} \, \p^{\gamma'}\p_k Q^\pm \|_{1,\pm} \, \lesssim \, \|\p^{\beta'}\tildA\|_{H^2} \, \|\p^{\gamma'} Q^\pm\|_{3,\pm}.
\end{align}
It leads to the following estimate ($n\geq 1$):
\begin{align*}
\frac{1}{n!} \max_{|\alpha'|=n} \|F_3\|_{1,\pm} \, \lesssim \, \sum_{i_1=1}^n \frac{1}{i_1!} \, \max_{|\beta'|=i_1} \|\p^{\beta'}\tildA\|_{H^2} \, \frac{1}{(n-i_1)!} \, \max_{|\gamma'| = n-i_1} \|\p^{\gamma'} Q^\pm\|_{3,\pm}.
\end{align*}
Using similar inequalities as in \eqref{s2-ineg_perte_der_H2-H3}, we can write
\begin{align*}
\frac{1}{n!} \max_{|\alpha'|=n} \|F_3\|_{1,\pm} \, \lesssim \, \sum_{i_1 + i_2 = n-1} \frac{1}{i_1!} \, \max_{|\beta'|=i_1} \|\p^{\beta'}\tildA\|_{H^3} \, \frac{1}{i_2!} \, \max_{|\gamma'| = i_2} \|\p^{\gamma'} Q^\pm\|_{3,\pm}.
\end{align*}
Summing over $n\in\{1,\dots,N\}$, we end up with:
\begin{align}\label{s2-estim_commut_F3}
\sum_{n=1}^N \frac{\rho^n}{n!} \, \max_{|\alpha'|=n} \|F_3\|_{1,\pm} \, \lesssim \, \rho \, \|\tildA\|_{\rho,3}^0 \, \|Q^\pm\|_{\rho,3}^{0,\pm,N}.
\end{align}

Let us finish with the last commutator $F_4$ of $\cF'$:
\begin{align}\label{s2-def_F4}
F_4 := \big[ \p^{\alpha'} \, ; \, \tildA_{ji}\p_{ji}\cdot \big] Q^\pm = \sum_{\substack{\beta' + \gamma' = \alpha' \\ |\beta'|\geq 1}} \frac{\alpha!}{\beta'! \, \gamma'!} \, \p^{\beta'}\tildA_{ki} \, \p^{\gamma'}\p_{ki}Q^\pm.
\end{align}
In the same spirit as \eqref{s2-estim_F3_1}, we show that
\begin{align*}
\| \p^{\beta'}\tildA_{ki} \, \p^{\gamma'}\p_{ki}Q^\pm \|_{1,\pm} \, \lesssim \, \|\p^{\beta'}\tildA\|_{H^2} \, \|\p^{\gamma'} Q^\pm\|_{3,\pm}.
\end{align*}
This estimate is the same as \eqref{s2-estim_F3_1}. Consequently, we also conclude that
\begin{align}\label{s2-estim_commut_F4}
\sum_{n=1}^N \frac{\rho^n}{n!} \, \max_{|\alpha'|=n} \|F_4\|_{1,\pm} \, \lesssim \, \rho \, \|\tildA\|_{\rho,3}^0 \, \|Q^\pm\|_{\rho,3}^{0,\pm,N}.
\end{align}

This achieves the estimate of the source term $\cF'^\pm$. Let us sum up below the final estimate satisfied by $\cF'^\pm$, combining \eqref{s2-estim_F1}, \eqref{s2-estim_commut_F2}, \eqref{s2-estim_commut_F3} and \eqref{s2-estim_commut_F4}:
\begin{align}
\sum_{n=1}^N \frac{\rho^n}{n!} \max_{|\alpha'|=n} \| \cF'^\pm \|_{1,\pm} \, \lesssim \, \|\cF^\pm\|_{\rho,1}^0 \, & + \rho \, \|\tildA\|_{\rho,3}^0 \, \left( 1 + \|\tildA\|_{H^2} \right) \, \|Q^\pm\|_{\rho,3}^{0,\pm,N} \nonumber \\
& + \, \left( \rho \, \|\tildA\|_{\rho,3}^0 \right)^2 \, \|Q^\pm\|_{\rho,3}^{0,\pm,N}. \label{s2-estim_F'}
\end{align}
Now, we shall proceed in the same way to estimate the boundary source term $\cG'$.

\medskip

\paragraph*{\textbf{Estimate of $\cG'$.}}

$\,$

We recall that $\cG'$ is defined by \eqref{s2-def_G'}. The case of $G_1$ is identical to the interior source term $F_1$. We obtain the same type of estimate as \eqref{s2-estim_F1}, namely:
\begin{align}\label{s2-estim_G1}
\sum_{n=0}^N \frac{\rho^n}{n!} \max_{|\alpha'|=n} \|G_1\|_{H^\frac{3}{2}(\bT^2)} \, = \, \sum_{n=0}^N \frac{\rho^n}{n!} \max_{|\alpha'|=n} \|\p^{\alpha'}\cG\|_{H^\frac{3}{2}(\bT^2)} \, \leq \, \|\cG\|_{\rho,\frac{3}{2}}.
\end{align}

Now, we treat the commutator $G_2$. To do so, we use the continuity of the trace map from $H^2(\Omega^\pm)$ to $H^\frac{3}{2}(\bT^2)$ (see \eqref{art-estim_trace}), in order to eliminate the jump across $\Gamma$. Then, we also use Proposition \ref{s2-prop_redressement_front} to make $\psi$ appear in the estimates:
\begin{align*}
\| G_2 \|_{H^\frac{3}{2}(\bT^2)} = \left\|\left[\big[ \p^{\alpha'} \, ; \, (1+|\nabla' f|^2)\p_3 \big] Q \right]\right\|_{H^\frac{3}{2}(\bT^2)} \, \lesssim \, \left\| \big[ \p^{\alpha'} \, ; \, (1+\p_h\psi\p_h\psi)\p_3 \big] Q^\pm \right\|_{2,\pm},
\end{align*}
where we have adopted Einstein's summation convention over the index $h\in\{1,2\}$. We set
\begin{align*}
G_2^\pm := \big[ \p^{\alpha'} \, ; \, (1+\p_h\psi\p_h\psi)\p_3 \big] Q^\pm,
\end{align*}
and we expand it to obtain:
\begin{align*}
G_2^\pm \, = \, \sum_{\substack{\beta'+\gamma'+\delta'=\alpha' \\ |\beta'+\gamma'|\geq 1}} \frac{\alpha'!}{\beta'! \, \gamma'! \, \delta'!} \, \p^{\beta'}\p_h\psi \, \p^{\gamma'}\p_h\psi \, \p^{\delta'}\p_3 Q^\pm.
\end{align*}
Using the algebra property of the space $H^2(\Omega^\pm)$, we have
\begin{align*}
\|G_2 \|_{H^\frac{3}{2}(\bT^2)} \, \lesssim \, \sum_{\substack{\beta'+\gamma'+\delta'=\alpha' \\ |\beta'+\gamma'|\geq 1}} \frac{\alpha'!}{\beta'! \, \gamma'! \, \delta'!} \, \|\p^{\beta'}\psi\|_{H^3} \, \|\p^{\gamma'}\psi\|_{H^3} \, \|\p^{\delta'} Q^\pm\|_{3,\pm}.
\end{align*}
The computations are exactly the same as previously for the commutator $F_2$ defined by \eqref{s2-def_F2}. Here, $\psi$ plays the role of $\tildA$, whose tangential derivatives are estimated in $H^3$ instead of $H^2$ for $\tildA$. It remains consistent, because the definition of $\tildA$ only gives $1-$order derivatives of $\psi$. Thus the functions $\tildA$ and $\psi$ are estimated in the same Sobolev scale. We get the following estimate for $G_2$ (analogous to \eqref{s2-estim_commut_F2}):
\begin{align}\label{s2-estim_commut_G2}
\sum_{n=2}^N \frac{\rho^n}{n!} \max_{|\alpha'|=n} \| G_2 \|_{H^\frac{3}{2}(\bT^2)} \, \lesssim \, \rho \|\psi\|_{H^3} \, \|\psi\|_{\rho,4}^0 \, \|Q^\pm\|_{\rho,3}^{0,\pm,N} \, + \, \Big( \rho \, \|\psi\|_{\rho,4}^0 \Big)^2 \, 
 \|Q^\pm\|_{\rho,3}^{0,\pm,N}.
\end{align}
We notice that the norm $\|\psi\|_{H^3}$ as well as $\|\tildA\|_{H^2}$ will be small quantities, because of the flatness of the front $f$ in $H^\frac{5}{2}(\bT^2)$. Below, we sum up the final estimate of $\cG'$, combining \eqref{s2-estim_G1} together with \eqref{s2-estim_commut_G2}:
\begin{align}
\sum_{n=1}^N \frac{\rho^n}{n!} \max_{|\alpha'|=n} \|\cG'\|_{H^\frac{3}{2}(\bT^2)} \, \lesssim \, \|\cG\|_{\rho,\frac{3}{2}} \, + \, \rho \, \|\psi\|_{H^3} \, \|\psi\|_{\rho,4}^0 \, \|Q^\pm\|_{\rho,3}^{0,\pm,N} \, + \, 
\Big( \rho \, \|\psi\|_{\rho,4}^0 \Big)^2 \, \|Q^\pm\|_{\rho,3}^{0,\pm,N}. \label{s2-estim_G'}
\end{align}

\medskip

\paragraph*{\textbf{Estimate of $Q'^\pm$.}}

$\,$

Let us come back to estimate \eqref{s2-estim_som_part_Q'}. Using inequalities \eqref{s2-estim_F'} and \eqref{s2-estim_G'} holding for $n\geq 1$, and \eqref{s2-estim_Q_n=0} holding for $n=0$, we deduce that:
\begin{align}\label{s2-estim_Q_rho,3^0,N}
\|Q^\pm\|_{\rho,3}^{0,\pm,N} & \, \lesssim \, \|\cF^\pm\|_{\rho,1}^{0,\pm} \, + \, \|\cG\|_{\rho,\frac{3}{2}} \nonumber \\
& \quad + \, \rho \left( \big( 1 + \|\tildA\|_{H^2} \big) \|\tildA\|_{\rho,3}^0 \, + \, \|\psi\|_{H^3} \, \|\psi\|_{\rho,4}^0 \right) \|Q^\pm\|_{\rho,3}^{0,\pm,N} \\
& \quad + \, \rho^2 \left( \|\tildA\|_{\rho,3}^0 \, \|\tildA\|_{\rho,3}^0 \, + \, \|\psi\|_{\rho,4}^0 \, \|\psi\|_{\rho,4}^0 \right) \|Q^\pm\|_{\rho,3}^{0,\pm,N}. \nonumber
\end{align}

\medskip

\paragraph*{\textbf{Conclusion.}}

$\,$

Now we complete estimate \eqref{s2-estim_Q_rho,3^0,N} absorbing the analytic terms on the right-hand side. We consider $f\in B_{\rho,\frac{7}{2}}(\bT^2)$ satisfying the following smallness condition:
\begin{align}\label{s2-hyp_petitesse_front}
\|f\|_{\rho,\frac{7}{2}} < \eta_0, 
\end{align}
where the numerical constant $\eta_0>0$ is given by Corollary \ref{s2-coro_estim_1/J}. Decreasing $\eta_0$ if necessary, we can assume that we have the range $\frac{1}{2}\leq J\leq\frac{3}{2}$ and the interior regularity estimate \eqref{s2-reg_ellip_H^3} (see \cite[p.250, 271]{CMST}). From now on, $C_0>0$ will denote any numerical constant that may depend on $\eta_0$, but not on the parameters $\rho$ and $\sigma$. Using assumption \eqref{s2-hyp_petitesse_front}, we have the inequality $\ds\|J^{-1}\|_{H^2}\leq C_0$. Therefore the algebra property of $H^2(\Omega)$ together with Lemma \ref{s2-lemme_redressement_CMST} give:
\begin{align*}
\|\tildA\|_{H^2} \leq C_0.
\end{align*}
Then we estimate the analytic norms in the following way:
\begin{align*}
\|\tildA\|_{\rho,3}^0 \, = \, \|J^{-1} \nabla\psi \|_{\rho,3}^0 \, & \leq \, \|J^{-1} \nabla\psi \|_{\rho,3,\sigma} \\
& \leq \, C_0 \, \|J^{-1}\|_{\rho,3,\sigma} \, \|\psi\|_{\rho,4,\sigma} & \text{ (algebra property of } B_{\rho,3,\sigma}(\Omega) ) \\
& \leq \, C_0 \, \|J^{-1}\|_{\rho,3,\sigma} \, \|f\|_{\rho,\frac{7}{2}} & \text{ (Proposition \ref{s2-prop_redressement_front}) } \\
& \leq \, C_0. & \text{ (Corollary \ref{s2-coro_estim_1/J} and assumption \eqref{s2-hyp_petitesse_front}) }
\end{align*}
So, we have first:
\begin{align*}
\rho \left( \big( 1 + \|\tildA\|_{H^2} \big) \|\tildA\|_{\rho,3}^0 \, + \, \|\psi\|_{H^3} \, \|\psi\|_{\rho,4}^0 \right) \, \leq \, C_0 \, \rho.
\end{align*}
Then, we also get:
\begin{align*}
\rho^2 \left( \|\tildA\|_{\rho,3}^0 \, \|\tildA\|_{\rho,3}^0 \, + \, \|\psi\|_{\rho,4}^0 \, \|\psi\|_{\rho,4}^0 \right) \, \leq \, 
C_0 \, \rho^2.
\end{align*}
For instance, let us define
\begin{align}\label{s2-def_rho_0}
\rho_0 := \min\left\{1, \frac{1}{2 C_0}\right\} \in (0,1],
\end{align}
so that for all $\rho\in (0,\rho_0]$, we eventually get:
\begin{align}\label{s2-estim_Q_rho,3^0,N_absorbe}
\|Q^\pm\|_{\rho,3}^{0,\pm,N} \, \lesssim \, \|\cF^\pm\|_{\rho,1}^{0,\pm} + \|\cG\|_{\rho,\frac{3}{2}}.
\end{align}
Taking the supremum over $N$, we obtain $Q^\pm \in B_{\rho,3}^0(\Omega^\pm)$ and the same estimate as above for the norm $\ds\|Q^\pm\|_{\rho,3}^{0,\pm}$.

We summarize the main result of this paragraph in the following theorem.
\begin{thm}\label{s2-thm_estim_Q_rho,3^0}
There exists $\eta_0 > 0$ and $\rho_0 \in (0,1]$ such that, for all $\rho\in (0,\rho_0]$ and $\sigma\in (0,\frac{1}{2}]$, if we consider a front $f\in B_{\rho,\frac{7}{2}}(\bT^2)$ satisfying
\begin{align}\label{s2-hyp_f_petit_thm_Q_tang}
\|f\|_{\rho,\frac{7}{2}} < \eta_0,
\end{align}
and analytic source terms $\cF^\pm\in B_{\rho,1,\sigma}(\Omega^\pm)$ and $\cG\in B_{\rho,\frac{3}{2}}(\bT^2)$, then the unique solution $(Q^+,Q^-)$ to the elliptic problem \eqref{s2-pb_ellip_pression}, with zero mean on $\Omega$, belongs to the space $B_{\rho,3}^0(\Omega^\pm)$; furthermore, it satisfies the following estimate:
\begin{align}\label{s2-estim_Q_rho,3^0}
\|Q^\pm\|_{\rho,3}^{0,\pm} \, \leq \, C(\eta_0) \, \Big( \|\cF^\pm\|_{\rho,1}^{0,\pm} + \|\cG\|_{\rho,\frac{3}{2}} \Big),
\end{align}
where $C(\eta_0)>0$ is a constant depending only on $\eta_0$.
\end{thm}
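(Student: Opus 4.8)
The plan is to assemble into one statement the estimates obtained in the preceding paragraph; the argument is a commutator estimate on the elliptic problem \eqref{s2-pb_ellip_pression}, followed by an absorption argument that exploits the smallness of the analyticity radius $\rho$.

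First I would fix $\eta_0>0$ small enough, decreasing if necessary the constant provided by Corollary \ref{s2-coro_estim_1/J}, so that the smallness hypothesis \eqref{s2-hyp_f_petit_thm_Q_tang} forces, through Lemma \ref{s2-lemme_redressement_CMST} and Corollary \ref{s2-coro_estim_1/J}, both the Jacobian bound $\frac{1}{2}\le J\le\frac{3}{2}$ --- needed for the well-posedness of \eqref{s2-pb_ellip_pression} and for the elliptic regularity estimate \eqref{s2-reg_ellip_H^3} --- and the $H^3$-flatness of the lifting $\psi$. Using the algebra properties of $H^2(\Omega)$ and of $B_{\rho,3,\sigma}(\Omega)$ together with Proposition \ref{s2-prop_redressement_front}, this yields the uniform bounds $\|\tildA\|_{H^2}\le C_0$ and $\|\tildA\|_{\rho,3}^0\le C_0$, with $C_0$ depending only on $\eta_0$. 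Then, for each tangential multi-index $\alpha'$ with $|\alpha'|=n$, I would commute \eqref{s2-pb_ellip_pression} with $\p^{\alpha'}$, arriving at the problem \eqref{s2-pb_ellip_pression_tang} with source terms $\cF'^\pm=F_1+F_2-F_3-2F_4$ and $\cG'=G_1-G_2$ as in \eqref{s2-def_F'} and \eqref{s2-def_G'}, and apply \eqref{s2-reg_ellip_H^3} to obtain \eqref{s2-estim_Q'_H^3}.

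The substance of the proof is the control of the commutators. For each of the pieces $F_2,F_3,F_4$ and $G_2$, I would estimate the relevant $H^1(\Omega^\pm)$ or $H^{3/2}(\bT^2)$ norm by a sum of products of the form $\|\p^{\beta'}\tildA\|_{H^2}\,\|\p^{\gamma'}\tildA\|_{H^2}\,\|\p^{\delta'}Q^\pm\|_{3,\pm}$, using H\"older's inequality together with the Sobolev embeddings $H^1(U)\hookrightarrow L^p(U)$ ($1\le p\le 6$) and $H^2(U)\hookrightarrow L^\infty(U)$; then, multiplying by $\rho^n/n!$ and summing over $n\le N$, the multinomial coefficients reorganize into (partial) Cauchy products, producing the bounds \eqref{s2-estim_F'} for $\cF'^\pm$ and \eqref{s2-estim_G'} for $\cG'$, and analogously with $\psi$ in place of $\tildA$ for $G_2$. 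The only genuinely delicate point --- which I expect to be the main obstacle --- is the bookkeeping that guarantees the factors carrying \emph{no} tangential derivative of $\tildA$ (or $\psi$) always occur through the small norm $\|\tildA\|_{H^2}$ (or $\|\psi\|_{H^3}$), while all remaining factors are absorbed into analytic norms, each weighted by a positive power of $\rho$; this is exactly what makes the absorption step below work.

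Inserting \eqref{s2-estim_F'} and \eqref{s2-estim_G'} into \eqref{s2-estim_som_part_Q'}, and using \eqref{s2-estim_Q_n=0} for the term $n=0$, I obtain estimate \eqref{s2-estim_Q_rho,3^0,N}, of the form $\|Q^\pm\|_{\rho,3}^{0,\pm,N}\lesssim\|\cF^\pm\|_{\rho,1}^{0,\pm}+\|\cG\|_{\rho,\frac{3}{2}}+(C_0\rho+C_0\rho^2)\,\|Q^\pm\|_{\rho,3}^{0,\pm,N}$. Choosing $\rho_0$ as in \eqref{s2-def_rho_0}, namely $\rho_0:=\min\{1,(2C_0)^{-1}\}$, makes the coefficient of $\|Q^\pm\|_{\rho,3}^{0,\pm,N}$ on the right at most $\frac{1}{2}$ for every $\rho\in(0,\rho_0]$, so that this term is absorbed into the left-hand side, yielding \eqref{s2-estim_Q_rho,3^0,N_absorbe} uniformly in $N$. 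Letting $N\to\infty$ then gives $Q^\pm\in B_{\rho,3}^0(\Omega^\pm)$ together with the announced estimate \eqref{s2-estim_Q_rho,3^0}, with $C(\eta_0)$ depending only on $\eta_0$.
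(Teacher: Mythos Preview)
Your proposal is correct and follows essentially the same approach as the paper: commute \eqref{s2-pb_ellip_pression} with tangential derivatives, estimate the commutators $F_2,F_3,F_4,G_2$ via H\"older and Sobolev embeddings while carefully isolating the low-derivative factors $\|\tildA\|_{H^2}$ and $\|\psi\|_{H^3}$, reorganize into partial Cauchy products to produce powers of $\rho$, and absorb by choosing $\rho_0$ small. The ``delicate bookkeeping'' you flag is exactly the case distinction the paper performs (splitting off the indices $i=0$ and $i_1\in\{0,i+2\}$ in the treatment of $F_2$), so your outline matches the paper's argument in both structure and key idea.
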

Let us observe that we have already admitted that the elliptic problem \eqref{s2-pb_ellip_pression} has a solution $Q^\pm$ in $H^\infty(\Omega^\pm)$. Theorem \ref{s2-thm_estim_Q_rho,3^0} gives an additional information about the tangential derivatives of $Q^\pm$ when the source terms and the coefficients of the elliptic operator are in appropriate analytic spaces. The purpose of the next paragraph is to handle all the remaining normal derivatives.

\subsection{Estimate of the normal derivatives}

Within this paragraph, we still consider analytic source terms \mbox{$\cF^\pm\in B_{\rho,1,\sigma}(\Omega^\pm)$} and $\cG\in B_{\rho,\frac{3}{2}}(\bT^2)$, together with a front $f\in B_{\rho,\frac{7}{2}}(\bT^2)$ satisfying \eqref{s2-hyp_f_petit_thm_Q_tang}. From now on, the purpose is to show that the zero mean solution $Q^\pm$ of problem \eqref{s2-pb_ellip_pression} belongs to the space $B_{\rho,3,\sigma}(\Omega^\pm)$, for $\sigma >0$ small enough. To do so, we first have to handle all the normal derivatives in order to show that for all $k\geq 1$, we have $Q^\pm\in B_{\rho,3}^k(\Omega^\pm)$. Eventually, to prove that $Q^\pm\in B_{\rho,3,\sigma}(\Omega^\pm)$, it will remain to estimate the partial sums $\|Q^\pm\|_{\rho,3,\sigma}^{\pm,N}$.

We proceed by induction over $k\geq 0$. Let us assume that for all $j\in\{0,\dots,k\}$, we have $Q^\pm\in B_{\rho,3}^j(\Omega^\pm)$. First, we know that $Q^\pm\in H^\infty(\Omega^\pm)$, so that we have $Q^\pm\in H_{x_3}^{k+4}(H^\infty(\bT^2))$. It remains to prove that the norm $\|Q^\pm\|_{\rho,3}^{k+1,\pm}$ is a finite quantity. For all $N\geq 0$, we shall write $\|Q^\pm\|_{\rho,3}^{k+1,\pm,N}$ its partial sum of order $N$. Using the induction assumption, we split the partial sum as follows (we assume that $N\geq k+1$):
\begin{align}
\|Q^\pm\|_{\rho,3}^{k+1,\pm,N} & := \sum_{k=0}^N \frac{\rho^n}{n!} \, \max_{\substack{|\alpha|=n \\ \alpha_3\leq k+1}} \|\p^\alpha Q^\pm \|_{3,\pm} \nonumber \\
& \leq \|Q^\pm\|_{\rho,3}^{k,\pm} \, + \, \sum_{n=k+1}^N \frac{\rho^n}{n!} \, \max_{|\alpha'|=n-k-1} \|\p^{\alpha'}\p_3^{k+1} Q^\pm\|_{3,\pm}. \label{s2-ineg_rec_Q_rho,3^k+1,N}
\end{align}
In order to estimate the right sum in \eqref{s2-ineg_rec_Q_rho,3^k+1,N}, we rewrite the norm $\ds\|\p^{\alpha'}\p_3^{k+1} Q^\pm\|_{3,\pm}$ as follows:
\begin{align}\label{s2-norm_H^3_eclatee}
\|\p^{\alpha'}\p_3^{k+1} Q^\pm\|_{3,\pm}^2 = \|\p^{\alpha'}\p_3^{k+1} Q^\pm\|_{2,\pm}^2 \, + \, \sum_{\substack{|\beta|=3 \\ |\beta'|\geq 1}} \|\p^\beta\p^{\alpha'}\p_3^{k+1} Q^\pm\|_{0,\pm}^2 \, + \, \|\p^{\alpha'}\p_3^{k+4} Q^\pm\|_{0,\pm}^2.
\end{align}
Both first right terms of \eqref{s2-norm_H^3_eclatee} will allow to get the quantity $\|Q^\pm\|_{\rho,3}^{k,\pm}$. On the one hand, we begin by writing the estimate
\begin{align*}
\|\p^{\alpha'}\p_3^{k+1} Q^\pm\|_{2,\pm} \leq \|\p^{\alpha'}\p_3^{k} Q^\pm\|_{3,\pm} \leq \max_{|\alpha'|=n-k-1} \|\p^{\alpha'}\p_3^k Q^\pm\|_{3,\pm}.
\end{align*}
Thus summing over $n$ we have
\begin{align*}
\sum_{n=k+1}^N \frac{\rho^n}{n!} \, \max_{|\alpha'|=n-k-1} \|\p^{\alpha'}\p_3^{k+1} Q^\pm\|_{2,\pm} 
\leq \sum_{n=k+1}^N \frac{\rho^n}{n!} \, \max_{|\alpha'|=n-k-1} \|\p^{\alpha'}\p_3^k Q^\pm\|_{3,\pm} \leq \rho \, \|Q^\pm\|_{\rho,3}^{k,\pm}.
\end{align*}
On the other hand, for $|\beta|=3$ and $|\beta'|\geq 1$, the terms $\ds \|\p^\beta\p^{\alpha'}\p_3^{k+1} Q^\pm\|_{0,\pm}$ of \eqref{s2-norm_H^3_eclatee} give the same kind of estimate. Indeed, since $\beta$ is a multi-index of length 3, with at least one tangential  derivative $\bp$, we can write $\p^\beta = \bp\p^2$. Therefore we get:
\begin{align*}
\|\p^\beta\p^{\alpha'}\p_3^{k+1} Q^\pm\|_{0,\pm} = \|\bp\p^2 \, (\p^{\alpha'}\p_3^{k+1} Q^\pm)\|_{0,\pm} \leq \|\bp\p^{\alpha'}\p_3^k Q^\pm\|_{3,\pm} \leq \max_{|\alpha'|=n-k} \|\p^{\alpha'}\p_3^k Q^\pm\|_{3,\pm}.
\end{align*}
After summing over $n$, we obtain:
\begin{align*}
\sum_{\substack{|\beta|=3 \\ |\beta'|\geq 1}} \sum_{n=k+1}^N \frac{\rho^n}{n!} \max_{|\alpha'|=n-k-1} \|\p^\beta\p^{\alpha'}\p_3^{k+1} Q^\pm\|_{0,\pm} \, \lesssim \, \sum_{n=k+1}^N \frac{\rho^n}{n!} \, \max_{|\alpha'|=n-k} \|\p^{\alpha'}\p_3^k Q^\pm\|_{3,\pm} \, \lesssim \, \|Q^\pm\|_{\rho,3}^{k,\pm}.
\end{align*}
Back to \eqref{s2-ineg_rec_Q_rho,3^k+1,N}, we deduce that
\begin{align}\label{s2-ineg_rec_Q_rho,3^k+1,N_2}
\|Q^\pm\|_{\rho,3}^{k+1,\pm,N} \, \lesssim \, (1+\rho) \, \|Q^\pm\|_{\rho,3}^{k,\pm} \, + \, \sum_{n=k+1}^N \frac{\rho^n}{n!} \, \max_{|\alpha'|=n-k-1} \|\p^{\alpha'}\p_3^{k+4} Q^\pm\|_{0,\pm}.
\end{align}
The only tricky term to estimate is the one containing the most normal derivatives, namely $\ds \p^{\alpha'}\p_3^{k+4} Q^\pm$. We have to estimate the latter in $L^2(\Omega^\pm)$. So, we begin by using the following bound:
\begin{align*}
\|\p^{\alpha'}\p_3^{k+4} Q^\pm\|_{0,\pm} \leq \|\p^{\alpha'}\p_3^k \, (\p_3^2 Q^\pm)\|_{2,\pm}.
\end{align*}
This highlights exactly 2 normal derivatives of $Q^\pm$, that we will be able to ``eliminate'' using the equation satisfied by $Q^\pm$ in problem \eqref{s2-pb_ellip_pression}. Then, it will suffice to apply the operator $\p^{\alpha'}\p_3^k$ to this equation: it will exhibit at most $k$ normal derivatives. Consequently, using the induction assumption, we shall be able to use all the quantities $\|Q^\pm\|_{\rho,3}^{j,\pm}$, for $j\leq k$. The difference of exactly one normal derivative between the steps $k$ and $k+1$ will allow to gain one power of $\sigma$ later on, after summing over $k$. Eventually, this gain will turn out to be crucial to absorb some analytic norms, in order to complete the estimate of the pressure. This method is similar as the previous paragraph, when the computations of $\ds\|Q^\pm\|_{\rho,3}^{0,\pm}$ gave us some powers of $\rho$.

Expanding the first equation of \eqref{s2-pb_ellip_pression}, we get:
\begin{align}\label{s2-eq_Q}
\frac{1+|\nabla'\psi|^2}{J^2} \, \p_3^2 Q^\pm \, = \, - \Delta' Q^\pm \, + \, 2 \frac{\p_h\psi}{J} \, \p_h\p_3 Q^\pm \, - \, \cF^\pm \, - \, A_{ji} \, \p_j\tildA_{ki} \, \p_k Q^\pm.
\end{align}
Let us define
\begin{align}\label{s2-def_zeta}
\zeta := \frac{J^2}{1+|\nabla'\psi|^2},
\end{align}
so that we can rewrite \eqref{s2-eq_Q} as follows:
\begin{align}\label{s2-eq_Q_zeta}
\p_3^2 Q^\pm \, = \, - \zeta \, \Delta' Q^\pm \, + \, 2 \frac{\zeta}{J} \, \p_h\psi \, \p_h\p_3 Q^\pm \, - \, \zeta \, \cF^\pm \, - \, \zeta \, A \, \p^1\tildA \, \p^1 Q^\pm.
\end{align}
Let us set $\alpha := (\alpha',k)$ (remark that $|\alpha| = n-1$), and apply $\p^\alpha$ to equation \eqref{s2-eq_Q_zeta}:
\begin{align}
\p^\alpha \p_3^2 Q^\pm & \, = \, - \p^\alpha (\zeta \, \Delta' Q^\pm) \, + \, 2 \p^\alpha\left(\frac{\zeta}{J} \, \p_h\psi \, \p_h\p_3 Q^\pm\right) \, - \, \p^\alpha (\zeta \, \cF^\pm) \, - \, \p^\alpha \left(\zeta \, A \, \p^1\tildA \, \p^1 Q^\pm\right) \nonumber \\
& \, =: \, - T_1 + 2 T_2 - T_3 - T_4. \label{s2-eq_Q_zeta_d^alpha}
\end{align}
To finish with, it remains to estimate each term $T_i$ in $H^2(\Omega^\pm)$.

\medskip

\paragraph*{\textbf{$\blacktriangleright$ Treatment of $T_1$.}}

$\,$

\noindent
We have:
\begin{align*}
T_1 := \p^\alpha (\zeta \, \Delta' Q^\pm) = \sum_{\beta + \gamma = \alpha} \frac{\alpha!}{\beta! \, \gamma!} \, \p^\beta\zeta \, \p^\gamma\Delta' Q^\pm.
\end{align*}
As for the case of $\|Q^\pm\|_{\rho,3}^{0,\pm}$, we get:
\begin{align}\label{s2-estim_T1_1}
\| T_1 \|_{2,\pm} \, \lesssim \, \sum_{i_1 + i_2 = n-1} \frac{(n-1)!}{i_1! \, i_2!} \, \max_{\substack{\beta+\gamma=\alpha \\ |\beta|=i_1 \\ |\gamma|=i_2}} \Big( \|\p^\beta\zeta\|_{H^2} \, \|\p^\gamma\Delta' Q^\pm\|_{2,\pm} \Big).
\end{align}
Now, we have to use the fact that $\beta_3 + \gamma_3 = k$. Consequently, we can bound from above the right term of \eqref{s2-estim_T1_1} as follows:
\begin{align}\label{s2-estim_T1_2}
\| T_1 \|_{2,\pm} \, \lesssim \, \sum_{j_1+j_2=k} \, \sum_{i_1+i_2=n-1} \frac{(n-1)!}{i_1! \, i_2!} \, \max_{\substack{|\beta|=i_1 \\ \beta_3\leq j_1}} \|\p^\beta\zeta\|_{H^2} \, \max_{\substack{|\gamma|=i_2 \\ \gamma_3\leq j_2}} \|\p^\gamma\Delta' Q^\pm\|_{2,\pm}.
\end{align}
The estimate of $Q^\pm$ is reduced to $H^3(\Omega^\pm)$ using the following inequality:
\begin{align*}
\max_{\substack{|\gamma|=i_2 \\ \gamma_3\leq j_2}} \|\p^\gamma\Delta' Q^\pm\|_{2,\pm} \lesssim \max_{\substack{|\gamma|=i_2+1 \\ \gamma_3\leq j_2}} \|\p^\gamma Q^\pm\|_{3,\pm}.
\end{align*}
Then, re-indexing over $i_2$, we obtain:
\begin{align*}
\| T_1 \|_{2,\pm} \, \lesssim \, \sum_{j_1+j_2=k} \, \sum_{\substack{i_1+i_2=n \\ i_2\geq 1}} \frac{(n-1)!}{i_1! \, (i_2-1)!} \, \max_{\substack{|\beta|=i_1 \\ \beta_3\leq j_1}} \|\p^\beta\zeta\|_{H^2} \, \max_{\substack{|\gamma|=i_2 \\ \gamma_3\leq j_2}} \|\p^\gamma Q^\pm\|_{3,\pm}.
\end{align*}
Finally, multiplying by $\frac{\rho^n}{n!}$ and summing over $n\in\{k+1,\dots,N\}$, we have:
\begin{align*}
\sum_{n=k+1}^N \frac{\rho^n}{n!} \, \max_{|\alpha'|=n-k-1} \|T_1\|_{2,\pm} & \, \lesssim \, \sum_{j_1+j_2=k} \, \sum_{n=k+1}^N \,  \sum_{\substack{i_1+i_2=n \\ i_2\geq 1}} \frac{(n-1)! \, i_2}{n!} \, \frac{\rho^{i_1}}{i_1!} \, \max_{\substack{|\beta|=i_1 \\ \beta_3\leq j_1}} \|\p^\beta\zeta\|_{H^2} \, \frac{\rho^{i_2}}{i_2!} \, \max_{\substack{|\gamma|=i_2 \\ \gamma_3\leq j_2}} \|\p^\gamma Q^\pm\|_{3,\pm} \\
& \, \lesssim \, \sum_{j_1+j_2=k} \, \sum_{n=k+1}^N \,  \sum_{i_1+i_2=n} \frac{\rho^{i_1}}{i_1!} \, \max_{\substack{|\beta|=i_1 \\ \beta_3\leq j_1}} \|\p^\beta\zeta\|_{H^2} \, \frac{\rho^{i_2}}{i_2!} \, \max_{\substack{|\gamma|=i_2 \\ \gamma_3\leq j_2}} \|\p^\gamma Q^\pm\|_{3,\pm},
\end{align*}
where we have used the inequality $\frac{(n-1)! \, i_2}{n!} \leq 1$. To conclude, we recognize a ``partial'' Cauchy product with respect to $n$, and write:
\begin{align} \label{s2-estim_T1_3}
\sum_{n=k+1}^N \frac{\rho^n}{n!} \, \max_{|\alpha'|=n-k-1} \|T_1\|_{2,\pm}
\, \lesssim \, \sum_{j_1+j_2=k} \|\zeta\|_{\rho,2}^{j_1,N} \, \|Q^\pm\|_{\rho,3}^{j_2,\pm,N}
\, \lesssim \, \sum_{j_1+j_2=k} \|\zeta\|_{\rho,2}^{j_1} \, \|Q^\pm\|_{\rho,3}^{j_2,\pm},
\end{align}
which ends the case of $T_1$.

\medskip

\paragraph*{\textbf{$\blacktriangleright$ Treatment of $T_2$.}}

$\,$

\noindent
In the same way as $T_1$, we get:
\begin{align*}
T_2 := \p^\alpha\left(\frac{\zeta}{J} \, \p_h\psi \, \p_h\p_3 Q^\pm\right) = \sum_{\beta+\gamma=\alpha} \frac{\alpha!}{\beta! \, \gamma!} \, \p^\beta\left( \frac{\zeta}{J} \p_h\psi \right) \, \p^\gamma \p_h\p_3 Q^\pm.
\end{align*}
Then, the estimate of $\|T_2\|_{2,\pm}$ is identical to $T_1$, estimating $Q^\pm$ in $H^3(\Omega^\pm)$ \textit{via} the inequality
\begin{align*}
\max_{\substack{|\gamma|=i_2 \\ \gamma_3\leq j_2}} \|\p^\gamma \p_h\p_3 Q^\pm\|_{2,\pm} \, \leq \, \max_{\substack{|\gamma|=i_2+1 \\ \gamma_3\leq j_2}} \|\p^\gamma  Q^\pm\|_{3,\pm}.
\end{align*}
We conclude that
\begin{align}\label{s2-estim_T2}
\sum_{n=k+1}^N \frac{\rho^n}{n!} \, \max_{|\alpha'|=n-k-1} \|T_2\|_{2,\pm}
\, \lesssim \, \sum_{j_1+j_2=k} \left\|\frac{\zeta}{J}\p_h\psi\right\|_{\rho,2}^{j_1} \, \|Q^\pm\|_{\rho,3}^{j_2,\pm}.
\end{align}

\medskip

\paragraph*{\textbf{$\blacktriangleright$ Treatment of $T_3$.}}

$\,$

\noindent
By definition, we have
\begin{align*}
T_3 := \p^\alpha \left( \zeta \, \cF^\pm \right) = \sum_{\beta+\gamma=\alpha} \frac{\alpha!}{\beta! \, \gamma!} \, \p^\beta\zeta \, \p^\gamma\cF^\pm.
\end{align*}
As previously, we get:
\begin{align*}
\|T_3\|_{2,\pm} \, \lesssim \, \sum_{j_1+j_2=k} \, \sum_{i_1+i_2=n-1} \frac{(n-1)!}{i_1! \, i_2!} \, \max_{\substack{|\beta|=i_1 \\ \beta3\leq j_1}} \|\p^\beta\zeta\|_{H^2} \, \max_{\substack{|\gamma|=i_2 \\ \gamma_3\leq j_2}} \|\p^\gamma\cF^\pm\|_{2,\pm}.
\end{align*}
By assumption, we have $\cF^\pm\in B_{\rho,1,\sigma}(\Omega^\pm)$. As a consequence, we are reduced to the norm $\|\cdot\|_{1,\pm}$ using the following inequality:
\begin{align*}
\max_{\substack{|\gamma|=i_2 \\ \gamma_3\leq j_2}} \|\p^\gamma\cF^\pm\|_{2,\pm} \, \lesssim \, \max_{\substack{|\gamma|=i_2 \\ \gamma_3\leq j_2+1}} \|\p^\gamma\cF^\pm\|_{1,\pm} \, + \, \max_{\substack{|\gamma|=i_2+1 \\ \gamma_3\leq j_2+1}} \|\p^\gamma\cF^\pm\|_{1,\pm}.
\end{align*}
Next, the arguments are completely analogous to the cases of $T_1$ and $T_2$, and we end up with:
\begin{align}\label{s2-estim_T3}
\sum_{n=k+1}^N \frac{\rho^n}{n!} \, \max_{|\alpha'|=n-k-1} \|T_3\|_{2,\pm}
\, \lesssim \, (1+\rho) \, \sum_{j_1+j_2=k} \|\zeta\|_{\rho,2}^{j_1} \, \|\cF^\pm\|_{\rho,1}^{j_2+1,\pm}.
\end{align}

\medskip

\paragraph*{\textbf{$\blacktriangleright$ Treatment of $T_4$.}}

$\,$

\noindent
The case of $T_4$ is the same as $T_1$, but with an additional term to differentiate:
\begin{align*}
T_4 := \p^\alpha\left( \zeta \, A \, \p^1\tildA \, \p^1 Q^\pm \right) = \sum_{\beta+\gamma+\delta=\alpha} \frac{\alpha!}{\beta! \, \gamma! \, \delta!} \, \p^\beta (\zeta \, A) \, \p^\gamma\p^1\tildA \, \p^\delta\p^1 Q^\pm.
\end{align*}
We generalize estimate \eqref{s2-estim_T1_2} with three indices $i_1$, $i_2$ and $i_3$:
\begin{align*}
\|T_4\|_{2,\pm} \, \lesssim \, \sum_{j_1+j_2+j_3=k} \, \sum_{i_1+i_2+i_3=n-1} \frac{(n-1)!}{i_1! \, i_2! \, i_3!} \, \max_{\substack{|\beta|=i_1 \\ \beta_3\leq j_1}} \|\p^\beta (\zeta\, A)\|_{H^2} \, \max_{\substack{|\gamma|=i_2 \\ \gamma_3\leq j_2}} \|\p^\gamma\p^1\tildA\|_{H^2} \, \max_{\substack{|\delta|=i_3 \\ \beta_3\leq j_3}} \|\p^\delta\p^1 Q^\pm\|_{2,\pm}.
\end{align*}
Then, using both estimates
\begin{align*}
\|\p^\gamma\p^1\tildA\|_{H^2} \, \leq \, \|\p^\gamma\tildA\|_{H^3} \quad \text{ and } \quad \|\p^\delta\p^1 Q^\pm\|_{2,\pm} \, \leq \,
\|\p^\delta Q^\pm\|_{3,\pm},
\end{align*}
we can deduce that
\begin{align*}
\|T_4\|_{2,\pm} \, \lesssim \, \sum_{j_1+j_2+j_3=k} \, \sum_{i_1+i_2+i_3=n-1} \frac{(n-1)!}{i_1! \, i_2! \, i_3!} \, \max_{\substack{|\beta|=i_1 \\ \beta_3\leq j_1}} \|\p^\beta (\zeta\, A)\|_{H^2} \, \max_{\substack{|\gamma|=i_2 \\ \gamma_3\leq j_2}} \|\p^\gamma\tildA\|_{H^3} \, \max_{\substack{|\delta|=i_3 \\ \beta_3\leq j_3}} \|\p^\delta Q^\pm\|_{3,\pm}.
\end{align*}
Eventually, we obtain:
\begin{align}\label{s2-estim_T4}
\sum_{n=k+1}^N \frac{\rho^n}{n!} \, \max_{|\alpha'|=n-k-1} \|T_4\|_{2,\pm}
\, \lesssim \, \rho \, \sum_{j_1+j_2+j_3=k} \|\zeta \, A\|_{\rho,2}^{j_1} \, \|\tildA\|_{\rho,3}^{j_2} \, \|Q^\pm\|_{\rho,3}^{j_3,\pm}.
\end{align}

Back to estimate \eqref{s2-ineg_rec_Q_rho,3^k+1,N_2}, and using \eqref{s2-estim_T1_3}-\eqref{s2-estim_T4}, we have proved that
\begin{align}
\|Q^\pm\|_{\rho,3}^{k+1,\pm,N}  \, \lesssim \, (1+\rho) \, \|Q^\pm\|_{\rho,3}^{k,\pm} & + \sum_{j_1+j_2=k} \left( \|\zeta\|_{\rho,2}^{j_1} + \left\|\frac{\zeta}{J}\nabla'\psi\right\|_{\rho,2}^{j_1} \right) \|Q^\pm\|_{\rho,3}^{j_2,\pm} \nonumber\\
& + (1+\rho) \, \sum_{j_1+j_2=k} \|\zeta\|_{\rho,2}^{j_1} \, \|\cF^\pm\|_{\rho,1}^{j_2+1,\pm} \nonumber\\
& + \rho \sum_{j_1+j_2+j_3=k} \|\zeta \, A\|_{\rho,2}^{j_1} \, \|\tildA\|_{\rho,3}^{j_2} \, \|Q^\pm\|_{\rho,3}^{j_3,\pm}. \label{s2-estim_Q_rho,3^k+1,pm,N}
\end{align}
In particular, we can see that the norm $\|Q^\pm\|_{\rho,3}^{k+1,\pm,N}$ is uniformly bounded with respect to $N$. Thus, taking the supremum over $N$, we have $\|Q^\pm\|_{\rho,3}^{k+1,\pm} < +\infty$. By induction, we obtain $\ds Q^\pm\in \cap_{k\geq 0} B_{\rho,3}^{k}(\Omega^\pm)$, with estimate \eqref{s2-estim_Q_rho,3^k+1,pm,N} which also holds for $\|Q^\pm\|_{\rho,3}^{k+1,\pm}$. Multiplying by $\sigma^{k+1}$ and summing over $k\in\{0,\dots,K-1\}$ (where $K\geq 1$ is any integer), we end up with the following estimate for the partial sum $\ds\|Q^\pm\|_{\rho,3,\sigma}^{\pm,K}$ of the norm $\ds\|Q^\pm\|_{\rho,3,\sigma}^{\pm}$:
\begin{align}
\|Q^\pm\|_{\rho,3,\sigma}^{\pm,K} & \lesssim \|Q^\pm\|_{\rho,3}^{0,\pm} \, + \, (1+\rho) \, \|\zeta\|_{\rho,2,\sigma} \, \|\cF^\pm\|_{\rho,1,\sigma}^\pm \label{s2-estim_Q_rho,3,sigma^K_1} \\
& \quad + \sigma \left( \|\zeta\|_{\rho,2,\sigma} \, + \, \left\|\frac{\zeta}{J}\nabla'\psi\right\|_{\rho,2,\sigma} \, + \, \rho \, \|\zeta\|_{\rho,2,\sigma} \, \|A\|_{\rho,2,\sigma} \, \|\tildA\|_{\rho,3,\sigma} \right) \|Q^\pm\|_{\rho,3,\sigma}^{\pm,K}. \label{s2-estim_Q_rho,3,sigma^K_2}
\end{align}
Using \eqref{s2-estim_Q_rho,3^0} satisfied by $\ds\|Q^\pm\|_{\rho,3}^{0,\pm}$, and bounding from above $\rho$ by $\rho_0\leq 1$, we can estimate the right term of \eqref{s2-estim_Q_rho,3,sigma^K_1} by
\begin{align*}
C_0 \left( \|Q^\pm\|_{\rho,3}^{0,\pm} \, + \, \|\zeta\|_{\rho,2,\sigma} \, \|\cF^\pm\|_{\rho,1,\sigma}^\pm \right),
\end{align*}
where $C_0>0$ is a numerical constant. Estimate \eqref{s2-estim_Q_rho,3,sigma^K_1}-\eqref{s2-estim_Q_rho,3,sigma^K_2} is rewritten as follows:
\begin{align}
\|Q^\pm\|_{\rho,3,\sigma}^{\pm,K} & \, \leq \, C_1 \, \big( \|Q^\pm\|_{\rho,3}^{0,\pm} \, + \, \|\zeta\|_{\rho,2,\sigma} \, \|\cF^\pm\|_{\rho,1,\sigma}^\pm \big) \label{s2-estim_Q_rho,3,sigma^K_4} \\
& \quad + \sigma \, C_1 \, \left( \|\zeta\|_{\rho,2,\sigma} \, + \, \left\|\frac{\zeta}{J}\nabla'\psi\right\|_{\rho,2,\sigma} \, + \, \rho \, \|\zeta\|_{\rho,2,\sigma} \, \|A\|_{\rho,2,\sigma} \, \|\tildA\|_{\rho,3,\sigma} \right) \|Q^\pm\|_{\rho,3,\sigma}^{\pm,K}, \label{s2-estim_Q_rho,3,sigma^K_3}
\end{align}
where $C_1>0$ is a numerical constant. From now on, it remains to absorb the analytic terms in \eqref{s2-estim_Q_rho,3,sigma^K_3}, in order to complete the estimate of $\ds\|Q^\pm\|_{\rho,3,\sigma}^{\pm,K}$.

\medskip

\paragraph*{\textbf{Absorption of the analytic terms.}}

$\,$

\medskip

\paragraph*{\textbf{$\blacktriangleright$ Treatment of $\ds\|\zeta\|_{\rho,2,\sigma}$.}}

$\,$

\noindent
We proceed in the same way as $\ds\|Q^\pm\|_{\rho,3}^{0,\pm}$. We use the gain of one power of $\sigma$ in order to absorb the analytic terms in \eqref{s2-estim_Q_rho,3,sigma^K_3}, choosing $\sigma$ small enough. Let $f\in B_{\rho,\frac{7}{2}}(\bT^2)$ still satisfy assumption \eqref{s2-hyp_f_petit_thm_Q_tang} of Theorem \ref{s2-thm_estim_Q_rho,3^0}. As a consequence, we obtain both following estimates (we have just rewritten \eqref{s2-estim_1/J_rho,r,sigma} and \eqref{s2-estim_Psi_f_unif}):
\begin{align}
& \|\nabla\psi\|_{\rho,3,\sigma} \leq \|\psi\|_{\rho,4,\sigma} \leq C_0 \|f\|_{\rho,\frac{7}{2}} \leq C_0, \label{s2-estim_redressement_rappel}\\
& \left\|\frac{1}{J}\right\|_{\rho,3,\sigma} \leq C_0. \label{s2-estim_1/J_rappel}
\end{align}
Here, and from now on, $C_0>0$ will stand for any numerical constant. We notice that both estimates \eqref{s2-estim_redressement_rappel} and \eqref{s2-estim_1/J_rappel} hold for all $\rho\in (0,\rho_0]$ and $\sigma\in (0,\frac{1}{2}]$. Using the algebra property of $B_{\rho,2,\sigma}(\Omega)$, we have:
\begin{align*}
\|\zeta\|_{\rho,2,\sigma} \, \leq \, C_0 \, \left\|\frac{1}{1+|\nabla'\psi|^2}\right\|_{\rho,2,\sigma} \, \|J\|_{\rho,2,\sigma}^2.
\end{align*}
On the one hand, using \eqref{s2-estim_redressement_rappel}, we can write:
\begin{align*}
\|J\|_{\rho,2,\sigma} \, = \, \|1 + \p_3\psi\|_{\rho,2,\sigma} \, \leq \, C_0 \, ( 1 + \|\psi\|_{\rho,3,\sigma}) \, \leq \, C_0.
\end{align*}
On the other hand, decreasing $\eta_0>0$ if necessary (see assumption \eqref{s2-hyp_f_petit_thm_Q_tang}), we get:
\begin{align*}
\left\|\frac{1}{1+|\nabla'\psi|^2}\right\|_{\rho,2,\sigma} \, \leq \, C_0.
\end{align*}
Indeed, it suffices to use Proposition \ref{s2-prop_estim_analytique_1/1+g} and to adapt the proof of Corollary \ref{s2-coro_estim_1/J}. Eventually, we can state that $\zeta$ is bounded in $B_{\rho,2,\sigma}(\Omega)$:
\begin{align}\label{s2-estim_zeta_rho,2,sigma}
\|\zeta\|_{\rho,2,\sigma} \, \leq \, C_0.
\end{align}

\medskip

\paragraph*{\textbf{$\blacktriangleright$ Treatment of $\ds\left\|\frac{\zeta}{J}\nabla'\psi\right\|_{\rho,2,\sigma}$ and $\rho \, \|A\|_{\rho,2,\sigma} \, \|\zeta\|_{\rho,2,\sigma} \, \|\tildA\|_{\rho,3,\sigma}$.}}

$\,$

\noindent
In the following, we do not detail the computations of these terms, since we use exactly the same method as before. It gives the same type of estimate as \eqref{s2-estim_zeta_rho,2,sigma}, namely:
\begin{align}\label{s2-estim_zeta_rho,2,sigma_bis}
\left\|\frac{\zeta}{J} \nabla'\psi\right\|_{\rho,2,\sigma} \, \leq \, C_0 \quad \text{ and } \quad \rho \, \|\zeta\|_{\rho,2,\sigma} \, \|A\|_{\rho,2,\sigma} \, \|\tildA\|_{\rho,3,\sigma} \, \leq \, C_0,
\end{align}
where we have used $\rho\leq\rho_0\leq 1$.

\medskip

\paragraph*{\textbf{$\blacktriangleright$ Limitation of the parameter $\sigma$.}}

$\,$

\noindent
From now on, we redefine $\sigma_0$ as follows:
\begin{align*}
\sigma_0 := \frac{1}{6 C_0 C_1},
\end{align*}
where $C_0>0$ is given by \eqref{s2-estim_zeta_rho,2,sigma}-\eqref{s2-estim_zeta_rho,2,sigma_bis} and $C_1$ by \eqref{s2-estim_Q_rho,3,sigma^K_3}. Without loss of generality, we can increase $C_0$ to still have $\sigma_0 \leq \frac{1}{2}$. Therefore all the previous estimates obtained with $\sigma\in (0,\frac{1}{2}]$ remain valid. For all $\rho\in (0,\rho_0]$ and $\sigma\in (0,\sigma_0]$, the right term of \eqref{s2-estim_Q_rho,3,sigma^K_3} is estimated as follows:
\begin{align*}
\sigma \, C_1 \, \left( \|\zeta\|_{\rho,2,\sigma} \, + \, \left\|\frac{\zeta}{J}\nabla'\psi\right\|_{\rho,2,\sigma} \, + \, \rho \, \|\zeta\|_{\rho,2,\sigma} \, \|A\|_{\rho,2,\sigma} \, \|\tildA\|_{\rho,3,\sigma} \right) \|Q^\pm\|_{\rho,3,\sigma}^{\pm,K} & \, \leq \, 3 \, C_0 \, C_1 \, \sigma \, \|Q^\pm\|_{\rho,3,\sigma}^{\pm,K} \\
& \leq \frac{1}{2} \|Q^\pm\|_{\rho,3,\sigma}^{\pm,K}.
\end{align*}
Consequently, we get the following estimate for the norm $\|Q^\pm\|_{\rho,3,\sigma}^{\pm,K}$:
\begin{align}
\|Q^\pm\|_{\rho,3,\sigma}^{\pm,K} & \, \leq \, C_1 \, \big( \|Q^\pm\|_{\rho,3}^{0,\pm} \, + \, \|\zeta\|_{\rho,2,\sigma} \, \|\cF^\pm\|_{\rho,1,\sigma}^\pm \big). \label{s2-estim_Q_rho,3,sigma^K_5}
\end{align}
To conclude, we use \eqref{s2-estim_zeta_rho,2,sigma} together with \eqref{s2-estim_Q_rho,3^0} and simplify \eqref{s2-estim_Q_rho,3,sigma^K_5} to end up with
\begin{align*}
\|Q^\pm\|_{\rho,3,\sigma}^{\pm,K} & \, \leq \, C_2 \, \Big( \|\cF^\pm\|_{\rho,1,\sigma}^\pm \, + \, \|\cG\|_{\rho,\frac{3}{2}} \Big), 
\end{align*}
where $C_2>0$ is a numerical constant. Taking the supremum over $K$, we deduce the same estimate satisfied by the norm $\|Q^\pm\|_{\rho,3,\sigma}^\pm$. Theorem \ref{s2-thm_estim_Q_final} is proved.

\section{Existence of analytic solutions to the current-vortex sheets problem}\label{s2-sec_CK_thm}

In the following, both parameters $\rho_0 \in (0,1]$ and $\sigma_0 \in (0,\frac{1}{2}]$ given by Theorem \ref{s2-thm_estim_Q_final}, as well as $\sigma\in (0,\sigma_0]$ are fixed once and for all. Nevertheless, $C_0>0$ will stand for any numerical constant that is free to change from one estimate to an other.

\subsection{Preliminary estimates}\label{s2-sec_prelim_estim}

Let $R>0$ and let $(u_0^\pm,b_0^\pm,f_0)$ be some initial datum satisfying \eqref{s2-contraintes_u0_b0_f0} and such that:
\begin{equation}\label{art-hyp_cond_init}
\begin{aligned}
& u_0^\pm, \, b_0^\pm \in B_{\rho_0,3,\sigma}(\Omega^\pm)^3, \quad f_0 \in B_{\rho_0,\frac{7}{2}}(\bT^2), \\[0.5ex]
& \| u_0^\pm, \, b_0^\pm \|_{\rho_0,3,\sigma}^\pm \, < \, R, \quad \| f_0 \|_{\rho_0,\frac{7}{2}} \, < \, \eta_1,
\end{aligned}
\end{equation}
where $\eta_1$ fulfills the smallness condition \eqref{art-hyp_eta_a_petits} below. We consider $(\dot{u}^\pm,\dot{b}^\pm,\ddot{f}) \in \bE_a$ (recall definition \eqref{s2-def_bE_a}), satisfying the following upper bounds:
\begin{subequations}\label{art-nt_du_db_ddf_R/2a}
\begin{align}\label{art-nt_du_db_R/2a}
\nt{\dot{u}^\pm}_{a,\Omega^\pm} \, \leq \, \frac{R}{2a} \quad \text{ and } \quad \nt{\dot{b}^\pm}_{a,\Omega^\pm} \, \leq \, \frac{R}{2a}.
\end{align}
We remind that $(u^\pm,b^\pm,f)$, defined by \eqref{art-primitives_du_db_ddf}, satisfy the constraints \eqref{s2-contraintes_u_b_f}. Consequently, using the equation $\dot{f} = u_3^\pm|_\Gamma$ together with the trace estimate \eqref{art-estim_trace}, we can state that there exists a numerical constant $\uC > 0$ such that:
\begin{align}\label{art-nt_ddf_uCR/2a}
\nt{\ddot{f}}_{a,\bT^2} \, \leq \, \frac{\uC \, R}{2a}.
\end{align}
\end{subequations}
From now on, we give some basic estimates that we will need in Paragraphs \ref{art-sec_espace_invariant} and \ref{art-sec_contraction} below. These estimates are a straightforward consequence of Lemmas 1 and 2 in \cite{BG77}.

\medskip

\noindent
\textbf{Estimate of $u^\pm(t)$ and $b^\pm(t)$ in $B_{\rho,3,\sigma}(\Omega^\pm)$.} Using \eqref{art-primitives_du_db_ddf}, \eqref{art-hyp_cond_init} and \eqref{art-nt_du_db_R/2a}, Lemma 1 of \cite{BG77} gives:
\begin{align}\label{art-estim_u^pm_b^pm_B_rho,3,sigma}
\forall \, t\in \big[0, a(\rho_0 - \rho)\big), \quad \| u^\pm(t), \, b^\pm(t) \|_{\rho,3,\sigma}^\pm \, < \, 2R.
\end{align}
Then, in order to handle the $1-$order derivatives of $u^\pm$ and $b^\pm$ in system \eqref{s2-eq_evo_u_b}, we use Lemma 2 of \cite{BG77}:
\begin{align}\label{art-estim_du^pm_db^pm_B_rho,3,sigma}
\forall \, k\in\{1,2,3\}, \quad \forall \, t\in \big[0, a(\rho_0 - \rho)\big), \quad \| \p_k u^\pm(t), \, \p_k b^\pm(t) \|_{\rho,3,\sigma}^\pm \, < \, \frac{C_0 \, R}{\rho_0 - \rho} \, \Big( 1 - \frac{t}{a (\rho_0 - \rho)} \Big)^{-\frac{1}{2}}.
\end{align}

\medskip

\noindent
\textbf{Estimate of $f(t)$ in $B_{\rho,\frac{7}{2}}(\bT^2)$.} Recalling Definition \ref{s2-def_esp_ana_T^2}, we can easily prove the following estimate:
\begin{align}\label{art-estim_rho,7/2_rho,5/2}
\| f \|_{\rho,\frac{7}{2}} \, \leq \, C_0 \, \big( \|f\|_{\rho,\frac{5}{2}} \, + \, \| \nabla' f \|_{\rho,\frac{5}{2}} \big),
\end{align}
where $C_0 > 0$ is a numerical constant. As a consequence, we can deduce that:
\begin{align}\label{art-estim_f_B_rho,7/2_0}
\forall \, t\in \big[0, a(\rho_0 - \rho)\big), \quad \| f(t) \|_{\rho,\frac{7}{2}} \, < \, C_1 \, \big( \eta_1 \, + \, a \, R \big),
\end{align}
where $C_1 > 0$ is a numerical constant.
\begin{proof}
Using the third lign of \eqref{art-primitives_du_db_ddf} and estimating each term in $B_{\rho,\frac{5}{2}}(\bT^2)$, Lemma 1 of \cite{BG77} allows us to write the estimate:
\begin{align*}
\forall \, t \in \big[0, a(\rho_0 - \rho)\big), \quad \| f(t) \|_{\rho,\frac{5}{2}} \, \leq \, \|f_0\|_{\rho,\frac{5}{2}} \, + \, \| t \, u_{0,3}^\pm|_\Gamma \|_{\rho,\frac{5}{2}} \, + \, C_0 \, a^2 \, \nt{\ddot{f}}_{a,\bT^2},
\end{align*}
where $C_0 > 0$ is a numerical constant. Then, from \eqref{art-hyp_cond_init} and \eqref{art-nt_ddf_uCR/2a} we get:
\begin{align}\label{art-estim_f_B_rho,7/2_1}
\forall \, t \in \big[0, a(\rho_0 - \rho)\big), \quad \| f(t) \|_{\rho,\frac{5}{2}} \, \leq \, C_0 \, \big( \eta_1 \, + \, a \, R \big).
\end{align}
We now focus on the second term in the right-hand side of \eqref{art-estim_rho,7/2_rho,5/2}. Differentiating the third lign of \eqref{art-primitives_du_db_ddf} with respect to $x_j$ ($j=1,2$) and using Lemma 2 of \cite{BG77} we obtain :
\begin{align*}
\forall \, t \in \big[0, a(\rho_0 - \rho)\big), \quad \| \p_j f(t) \|_{\rho,\frac{5}{2}} 
& \, \leq \, \| \p_j f_0 \|_{\rho,\frac{5}{2}} \, + \, \| t \, \p_j u_{0,3}^\pm|_\Gamma \|_{\rho,\frac{5}{2}} \, + \, C_0 \, a^2 \, \nt{\ddot{f}}_{a,\bT^2} \\
& \, \leq \, \| f_0 \|_{\rho,\frac{7}{2}} \, + \, a \, (\rho_0 - \rho) \, \frac{C_0}{\rho_0 - \rho} \, \| u_{0,3}^\pm|_\Gamma \|_{\rho_0,\frac{5}{2}} \, + \, C_0 \, a^2 \, \nt{\ddot{f}}_{a,\bT^2}.
\end{align*}
Here, let us remark that we have used the assumption $f_0 \in B_{\rho_0,\frac{7}{2}}(\bT^2)$. Thus, assumption \eqref{art-hyp_cond_init} leads to the final estimate:
\begin{align}\label{art-estim_f_B_rho,7/2_2}
\forall \, t \in \big[0, a(\rho_0 - \rho)\big), \quad \| \p_j f(t) \|_{\rho,\frac{5}{2}} \, \leq \, C_0 \, \big( \eta_1 \, + \, a \, R \big).
\end{align}
Gathering \eqref{art-estim_f_B_rho,7/2_1}, \eqref{art-estim_f_B_rho,7/2_2} together with \eqref{art-estim_rho,7/2_rho,5/2} we end up with the desired estimate \eqref{art-estim_f_B_rho,7/2_0}.
\end{proof}

 Now, taking $\eta_1 > 0$ and $a>0$ such that
\begin{align}\label{art-hyp_eta_a_petits}
\eta_1 < \frac{\eta_0}{2C_1} \quad \text{ and } \quad a < \frac{\eta_0}{2C_1R},
\end{align}
we can state that
\begin{align}\label{art-estim_f_B_rho,7/2}
\| f(t) \|_{\rho,\frac{7}{2}} < \eta_0.
\end{align}
Therefore we shall be able to apply Corollary \ref{s2-coro_estim_1/J} and Theorem \ref{s2-thm_estim_Q_final}.

\medskip

\noindent
\textbf{Estimate of the lifting $\psi(t)$ in $B_{\rho,3,\sigma}(\Omega)$.} As a straightforward consequence of \eqref{art-estim_f_B_rho,7/2} and Proposition \ref{s2-prop_redressement_front}, we can write both following estimates:
\begin{align}\label{art-estim_psi_rho,3,sigma}
\forall \, t\in \big[0, a(\rho_0 - \rho)\big), \quad \| \psi(t) \|_{\rho,3,\sigma} \, < \, C_0 \, \eta_0,
\end{align}
and for all $k\in\{1,2,3\}$, we have:
\begin{align}\label{art-estim_dpsi_rho,3,sigma}
\forall \, t\in \big[0, a(\rho_0 - \rho)\big), \quad \| \p_k\psi(t) \|_{\rho,3,\sigma} \, < \, C_0 \, \eta_0.
\end{align}
The tricky point comes out when we need to estimate \textbf{two} derivatives of $\psi$ (which appear in the equations of system \eqref{s2-eq_evo_u_b}). Such an estimate is not proved in \cite{BG77}, so we choose to give some details below. Nevertheless, we still follow the same ideas that can be found in the proof of Lemma 2 in \cite{BG77}. Given $k,\ell\in\{1,2,3\}$, the estimate of $\p_k\p_\ell\psi$ reads:
\begin{align}\label{art-estim_ddpsi_rho,3,sigma}
\forall \, t\in \big[0, a(\rho_0 - \rho)\big), \quad \| \p_k\p_\ell\psi(t) \|_{\rho,3,\sigma} \, < \, C_0 \, \eta_0 \, \frac{1}{\rho_0 - \rho} \, \Big( 1 - \frac{t}{a (\rho_0 - \rho)} \Big)^{-\frac{1}{2}}.
\end{align}
\begin{proof}
Starting from \eqref{art-primitives_du_db_ddf} and denoting by $\Psi$ the lifting operator defined by Lemma \ref{s2-lemme_redressement_CMST}, we have:
\begin{align}\label{art-expr_ddpsi(t)}
\p_k\p_\ell\psi(t) \, = \, \p_k\p_\ell \Psi(f_0) \, + \, t \, \p_k\p_\ell\Psi\big( u_{0,3}^\pm|_\Gamma \big) \, + \, \int_0^t \int_0^{t_1} \p_k\p_\ell\Psi(\ddot{f})(t_2) \, dt_2 \, dt_1.
\end{align}
The first term in the right-hand side of \eqref{art-expr_ddpsi(t)} is estimated as follows:
\begin{align}\label{art-estim_ddpsi(t)_1}
\| \p_k\p_\ell\Psi(f_0) \|_{\rho,3,\sigma} \, \leq \, \|\p_\ell\Psi(f_0)\|_{\rho,4,\sigma} \, \leq \, \frac{C_0}{\rho_0 - \rho} \| \Psi(f_0) \|_{\rho_0,3,\sigma} \, \leq \, \frac{C_0}{\rho_0 - \rho} \, \|f_0\|_{\rho_0,\frac{7}{2}} \, < \, \frac{C_0 \, \eta_0}{\rho_0 - \rho}. 
\end{align}
Likewise, the second term in the right-hand side of \eqref{art-expr_ddpsi(t)} gives:
\begin{align*}
\| t \, \p_k\p_\ell\Psi\big( u_{0,3}^\pm|_\Gamma \big) \|_{\rho,3,\sigma} \, \leq \, a(\rho_0-\rho) \, \frac{C_0}{\tildrho - \rho} \, \|\p_\ell\Psi\big( u_{0,3}^\pm|_\Gamma \big) \|_{\tildrho,3,\sigma} \, \leq \, C_0 \, a \, \frac{\rho_0 - \rho}{(\tildrho - \rho)(\rho_0 - \tildrho)} \| \Psi\big( u_{0,3}^\pm|_\Gamma \big) \|_{\rho_0,3,\sigma},
\end{align*}
for a suitable choice of $\tildrho$ such that $0 < \rho < \tildrho < \rho_0$. We thus set $\tildrho := \frac{\rho + \rho_0}{2}$ and we use Proposition \ref{s2-prop_redressement_front} together with \eqref{art-hyp_eta_a_petits} to end up with the following estimate:
\begin{align}\label{art-estim_ddpsi(t)_2}
\| t \, \p_k\p_\ell\Psi\big( u_{0,3}^\pm|_\Gamma \big) \|_{\rho,3,\sigma} \, < \, \frac{C_0 \, \eta_0}{\rho_0 - \rho}.
\end{align}
Let us finish with the double integral in \eqref{art-expr_ddpsi(t)}. Using twice the property \eqref{s1-deriv_rho,r,sigma} together with Proposition \ref{s2-prop_redressement_front}, we get:
\begin{align}\label{art-estim_ddpsi(t)_3}
\int_0^t \int_0^{t_1} \| \p_k\p_\ell\Psi(\ddot{f}) \|_{\rho,3,\sigma} \, dt_2 \, dt_1 \, \leq \, C_0 \, \int_0^t \int_0^{t_1} \frac{\| \ddot{f}(t_2) \|_{\tildrho(t_2),\frac{5}{2}}}{\big(\rho(t_2)-\rho\big) \, \big(\tildrho(t_2)-\rho(t_2)\big)} \, dt_2 \, dt_1,
\end{align} 
for a suitable choice of $\rho(t_2)$ and $\tildrho(t_2)$ satisfying $\rho < \rho(t_2) < \tildrho(t_2) < \rho_0$. In conformance with \cite{BG77}, we set
\begin{equation}\label{s2-def_rho(s)_tildrho(s)}
\forall \, s \in \big[0, a(\rho_0 - \rho)\big), \quad \rho(s) \, := \, \frac{1}{2} \, \Big( \rho_0 \, + \, \rho \, - \, \frac{s}{a} \Big) \quad \text{ and } \quad \tildrho(s) \, := \, \frac{1}{2} \, \Big( \rho_0 \, + \, \rho(s) \, - \, \frac{s}{a} \Big).
\end{equation}
Straightforward computations show that:
\begin{subequations}\label{s2-expr_diff_rho_tildrho}
\begin{align}
& \rho(s) \, - \, \rho \, = \, \frac{\rho_0 - \rho}{2} \, \bigg( 1 - \frac{s}{a(\rho_0-\rho)} \bigg), \label{s2-expr_rho(s)-rho} \\
& \rho_0 \, - \, \rho(s) \, = \, \frac{\rho_0 - \rho}{2} \, \bigg( 1 + \frac{s}{a(\rho_0-\rho)} \bigg), \label{s2-expr_rho0-rho(s)}
\end{align}
and also
\begin{align}
& \tildrho(s) \, - \, \rho(s) \, = \, \frac{\rho_0 - \rho}{4} \, \bigg( 1 - \frac{s}{a(\rho_0-\rho)} \bigg), \label{s2-expr_tildrho(s)-rho(s)} \\
& \rho_0 \, - \, \tildrho(s) \, = \, \frac{\rho_0 - \rho}{4} \, \bigg( 1 + \frac{3s}{a(\rho_0-\rho)} \bigg), \label{s2-expr_rho0-tildrho(s)}
\end{align}
\end{subequations}
so that we have the inequalities $\rho < \rho(s) < \tildrho(s) < \rho_0$ for all $s\in \big[0, a(\rho_0 - \rho)\big)$. Let us come back to \eqref{art-estim_ddpsi(t)_3}; using definition \eqref{s2-def_nt_a_Omega} and equalities \eqref{s2-expr_diff_rho_tildrho} we end up with the estimate:
\begin{align}\label{art-estim_ddpsi(t)_4}
\int_0^t \int_0^{t_1} \| \p_k\p_\ell\Psi(\ddot{f}) \|_{\rho,3,\sigma} \, dt_2 \, dt_1 \, \leq \, C_0 \, a^2 \, \nt{\ddot{f}}_{a,\bT^2} \, \frac{1}{\rho_0 - \rho} \, \Big( 1 - \frac{t}{a (\rho_0 - \rho)} \Big)^{-\frac{1}{2}}.
\end{align}
Gathering \eqref{art-estim_ddpsi(t)_1}, \eqref{art-estim_ddpsi(t)_2} and \eqref{art-estim_ddpsi(t)_4} we obtain the desired estimate \eqref{art-estim_ddpsi_rho,3,sigma}.
\end{proof}

From now on, we deal with the time-derivatives of the unknowns $(u^\pm,b^\pm,f)$.

\medskip

\noindent
\textbf{The velocity $u^\pm$ and the magnetic field $b^\pm$.} Both following estimates are trivial but will turn out to be crucial is Paragraphs \ref{art-sec_espace_invariant} and \ref{art-sec_contraction}. According to \eqref{s2-def_nt_a_Omega} and \eqref{art-nt_du_db_R/2a} we can write:
\begin{align}\label{art-estim_dotu_dotb_rho,3,sigma}
\forall \, t\in \big[0, a(\rho_0 - \rho)\big), \quad \| \dot{u}^\pm(t), \, \dot{b}^\pm(t) \|_{\rho,3,\sigma}^\pm \, < \, \frac{R}{2a(\rho_0-\rho)} \, \Big( 1 - \frac{t}{a (\rho_0 - \rho)} \Big)^{-\frac{1}{2}}.
\end{align}

\medskip

\noindent
\textbf{The front $f$.} Likewise, we have:
\begin{align}\label{art-estim_dotdotf_rho,5/2}
\forall \, t\in \big[0, a(\rho_0 - \rho)\big), \quad \| \ddot{f}(t) \|_{\rho,\frac{5}{2}} \, < \, \frac{C_0 \, R}{a(\rho_0-\rho)} \, \Big( 1 - \frac{t}{a (\rho_0 - \rho)} \Big)^{-\frac{1}{2}}.
\end{align}
Now, let us estimate the functions $\dot{f}$ and $\nabla'\dot{f}$. Starting from the identity
\begin{align*}
\dot{f}(t) \, = \, u_{0,3}^\pm|_\Gamma \, + \, \int_0^t \ddot{f}(t_1) \, dt_1,
\end{align*}
Lemmas 1 and 2 of \cite{BG77} allow us to write both following estimates:
\begin{align}
\forall \, t\in \big[0, a(\rho_0 - \rho)\big), \quad & \| \dot{f}(t) \|_{\rho,\frac{5}{2}} \, < \, C_0 \, R, \label{art-estim_dotf_rho,5/2} \\
& \| \nabla' \dot{f}(t) \|_{\rho,\frac{5}{2}} \, < \, \frac{C_0 \, R}{\rho_0 - \rho} \, \Big( 1 - \frac{t}{a (\rho_0 - \rho)} \Big)^{-\frac{1}{2}}.\label{art-estim_grad_dotf_rho,5/2}
\end{align}

\medskip

\noindent
\textbf{The lifting $\psi$.} By definition of the lifting operator $\Psi$ (see Proposition \ref{s2-prop_redressement_front}), we can state that
\begin{align*}
\p_t\psi \, = \, \Psi(\p_t f) \, = \, \Psi\big( u_3^\pm|_\Gamma \big).
\end{align*}
As a consequence, we get:
\begin{align}\label{art-estim_dotpsi_rho,3,sigma}
\forall \, t\in \big[0, a(\rho_0 - \rho)\big), \quad  \|\p_t\psi(t)\|_{\rho,3,\sigma} \, < \, C_0 \, R.
\end{align}
Then, using \eqref{art-primitives_du_db_ddf} and the linearity of $\Psi$ we can write:
\begin{align*}
\forall \, k\in\{1,2,3\}, \quad \p_k\Psi\big( u_3^\pm|_\Gamma \big)(t) \, = \, \p_k\Psi\big( u_{0,3}^\pm|_\Gamma \big) \, + \, \int_0^t \p_k \Psi\big( \dot{u}_3^\pm|_\Gamma \big)(t_1) \, dt_1.
\end{align*}
The same arguments as before allow to obtain:
\begin{align}\label{art-estim_d_dotpsi_rho,3,sigma}
\forall \, k\in\{1,2,3\}, \quad \forall \, t\in \big[0, a(\rho_0 - \rho)\big), \quad \| \p_k\p_t\psi(t) \|_{\rho,3,\sigma} \, < \, \frac{C_0 \, R}{\rho_0 - \rho} \, \Big( 1 - \frac{t}{a (\rho_0 - \rho)} \Big)^{-\frac{1}{2}}.
\end{align}

Eventually, we deal with some estimates we shall use in Paragraph \ref{art-sec_contraction} to prove the contraction of $\Upsilon$. To do so, we now consider two vectors $(\dot{u}^\pm,\dot{b}^\pm,\ddot{f})$ and $(\dot{\uu}^\pm,\dot{\ub}^\pm,\ddot{\uf})$ in $\bE_a$ satisfying \eqref{art-nt_du_db_ddf_R/2a} with the same initial datum \eqref{art-hyp_cond_init}.

\medskip

\noindent
\textbf{The velocity and the magnetic field.} Lemma 1 of \cite{BG77} gives the estimate:
\begin{align}\label{art-estim_u-uu_rho,3,sigma}
\forall \, t\in \big[0, a(\rho_0 - \rho)\big), \quad \| u^\pm(t) \, - \, \uu^\pm(t) \|_{\rho,3,\sigma}^\pm \, < \, C_0 \, a \, \nt{\dot{u}^\pm \, - \, \dot{\uu}^\pm}_{a,\Omega^\pm}.
\end{align}
Then, using Lemma 2 of \cite{BG77}, we have for all $k\in\{1,2,3\}$:
\begin{align}\label{art-estim_du-duu_rho,3,sigma}
\forall \, t\in \big[0, a(\rho_0 - \rho)\big), \quad 
\| \p_k ( u^\pm \, - \, \uu^\pm )(t) \|_{\rho,3,\sigma}^\pm \, < \, C_0 \, a \,  \nt{\dot{u}^\pm \, - \, \dot{\uu}^\pm}_{a,\Omega^\pm} \, \frac{1}{\rho_0-\rho} \, \Big( 1 - \frac{t}{a (\rho_0 - \rho)} \Big)^{-\frac{1}{2}}.
\end{align}
Obviously, the same estimates hold for the magnetic field as well.

\medskip

\noindent
\textbf{The front.} Likewise, we can write both following estimates:
\begin{align}
\forall \, t\in \big[0, a(\rho_0 - \rho)\big), \quad 
& \| (f \, - \, \uf)(t) \|_{\rho,\frac{5}{2}} \, < \, C_0 \, a^2 \, \nt{\ddot{f} \, - \, \ddot{\uf}}_{a,\bT^2}, \label{art-estim_f-uf_rho,5/2} \\
& \| ( \nabla' f - \nabla'\uf )(t) \|_{\rho,\frac{5}{2}} \, < \, C_0 \, a^2 \, \nt{\ddot{f} \, - \, \ddot{\uf}}_{a,\bT^2}. \label{art-estim_gradf-graduf_rho,5/2}
\end{align}

\medskip

\noindent
\textbf{The lifting of the front.} Once again, using the linearity of the lifting operator $\Psi$ and the same arguments we used to prove \eqref{art-estim_ddpsi_rho,3,sigma}, we obtain for all $k,\ell\in\{1,2,3\}$:
\begin{align}\label{art-estim_ddpsi-ddupsi_rho,3,sigma}
\forall \, t\in \big[0, a(\rho_0 - \rho)\big), \quad 
\| \p_k\p_\ell(\psi \, - \, \upsi) \|_{\rho,3,\sigma} \, < \, C_0 \, a^2 \, \nt{\ddot{f} \, - \, \ddot{\uf}}_{a,\bT^2} \, \frac{1}{\rho_0 - \rho} \, \Big( 1 - \frac{t}{a (\rho_0 - \rho)} \Big)^{-\frac{1}{2}}.
\end{align}

Let us finish with estimates associated with the time-derivatives of $(u^\pm,b^\pm,f)$ and $(\uu^\pm,\ub^\pm,\uf)$. Similarly to \eqref{art-estim_dotu_dotb_rho,3,sigma} we have:
\begin{align}\label{art-estim_dotu-dotuu_rho,3,sigma}
\forall \, t\in \big[0, a(\rho_0 - \rho)\big), \quad 
\| \dot{u}^\pm(t) \, - \, \dot{\uu}^\pm(t) \|_{\rho,3,\sigma}^\pm \, \leq \, \nt{ \dot{u}^\pm \, - \, \dot{\uu}^\pm}_{a,\Omega^\pm} \, \frac{1}{\rho_0 - \rho} \, \Big( 1 - \frac{t}{a (\rho_0 - \rho)} \Big)^{-\frac{1}{2}},
\end{align}
and so on for $\dot{b}^\pm \, - \, \dot{\ub}^\pm$ and $\ddot{f} \, - \, \ddot{\uf}$. For the time-derivatives $\dot{f}$ and $\dot{\uf}$, the following estimates hold:
\begin{align}
\forall \, t\in \big[0, a(\rho_0 - \rho)\big), \quad 
& \| (\dot{f} \, - \, \dot{\uf})(t) \|_{\rho,\frac{5}{2}} \, < \, C_0 \, a \, \nt{\ddot{f} \, - \, \ddot{\uf}}_{a,\bT^2}, \label{art-estim_dotf-dotuf_rho,5/2} \\
& \| \nabla' ( \dot{f} - \dot{\uf} )(t) \|_{\rho,\frac{5}{2}} \, < \, C_0 \, a \, \nt{\ddot{f} \, - \, \ddot{\uf}}_{a,\bT^2} \, \frac{1}{\rho_0 - \rho} \, \Big( 1 - \frac{t}{a (\rho_0 - \rho)} \Big)^{-\frac{1}{2}}. \label{art-estim_graddotf-graddotuf_rho,5/2}
\end{align}
Finally, the estimates related to the time-derivative of the lifting $\psi$ read as follows:
\begin{align}
\forall \, t\in \big[0, a(\rho_0 - \rho)\big), \quad 
& \| \p_t ( \psi \, - \, \upsi )(t) \|_{\rho,3,\sigma} \, < \, C_0 \, a \, \nt{\dot{u}^\pm \, - \, \dot{\uu}^\pm}_{a,\Omega^\pm}, \label{art-estim_dotpsi-dotupsi_rho,3,sigma} \\
\forall \, k\in\{1,2,3\}, \quad 
& \| \p_k\p_t ( \psi \, - \, \upsi )(t) \|_{\rho,3,\sigma} \, < \, C_0 \, a \, \nt{\dot{u}^\pm \, - \, \dot{\uu}^\pm}_{a,\Omega^\pm} \, \frac{1}{\rho_0 - \rho} \, \Big( 1 - \frac{t}{a (\rho_0 - \rho)} \Big)^{-\frac{1}{2}}. \label{art-estim_ddotpsi-ddotupsi_rho,3,sigma}
\end{align}

Equipped with such estimates, we shall be able to highlight in next paragraph a suitable invariant subset of $\bE_a$ (defined by \eqref{s2-def_bE_a}) under $\Upsilon$.

\subsection{Invariant subset under \texorpdfstring{$\Upsilon$}{Upsilon}}\label{art-sec_espace_invariant}

Within this paragraph, we still assume that the initial datum satisfies \eqref{art-hyp_cond_init}, where $\eta_1$ fulfills \eqref{art-hyp_eta_a_petits}. Besides, we still consider $(\dot{u}^\pm,\dot{b}^\pm,\ddot{f}) \in \bE_a$ satisfying the upper bounds \eqref{art-nt_du_db_ddf_R/2a} and we assume that the smallness condition \eqref{art-hyp_eta_a_petits} holds for $a>0$. On the one hand, we have already proved in Section \ref{s2-sec_schema_reso} that $\Upsilon(\dot{u}^\pm,\dot{b}^\pm,\ddot{f})$ satisfies the constraints \eqref{s2-contraintes_u_b_f}. On the other hand, the aim is now to obtain the same upper bounds \eqref{art-nt_du_db_ddf_R/2a} for $\Upsilon(\dot{u}^\pm,\dot{b}^\pm,\ddot{f})$. The result of this paragraph is summarized below.
\begin{prop}\label{art-prop_espace_invariant}
Let $R > 0$ and let $(u_0^\pm,b_0^\pm,f_0)$ be an initial datum satisfying \eqref{art-hyp_cond_init} (in which $\eta_1 > 0$ fulfills the smallness condition \eqref{art-hyp_eta_a_petits}). Moreover, let us assume that $(\dot{u}^\pm,\dot{b}^\pm,\ddot{f}) \in \bE_a$ and satisfies the upper bounds \eqref{art-nt_du_db_ddf_R/2a}. Then there exists a small constant $a>0$ depending on $\eta_0$ and $R$, such that $\Upsilon(\dot{u}^\pm,\dot{b}^\pm,\ddot{f})\in\bE_a$ and satisfies the same upper bounds \eqref{art-nt_du_db_ddf_R/2a}.
\end{prop}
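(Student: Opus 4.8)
The invariance of the algebraic constraints \eqref{s2-contraintes_u_b_f} under $\Upsilon$ was already established in Section \ref{s2-sec_schema_reso}, so the only thing left is to show that, for $a$ small enough, $\Upsilon(\dot u^\pm,\dot b^\pm,\ddot f)=(\dot u^{\sharp\pm},\dot b^{\sharp\pm},\ddot f^\sharp)$ lies in the three spaces $E_a(\Omega^\pm)$, $E_a(\Omega^\pm)$, $E_a(\bT^2)$ and satisfies the same singular‑weighted bounds \eqref{art-nt_du_db_ddf_R/2a}. The strategy is to estimate each term on the right‑hand side of \eqref{s2-eq_evo_u^sharp} and \eqref{s2-eq_evo_b^sharp} pointwise in $(t,\rho)$ and then multiply by the weight $(\rho_0-\rho)\sqrt{1-t/(a(\rho_0-\rho))}$, checking that the outcome is bounded by a constant that depends only on $R$ and $\eta_0$ (in particular not on $a$); a final smallness choice of $a$ then converts that constant into the bound $R/(2a)$.

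I would begin with the magnetic field, which is the simplest since no pressure occurs. Expanding the outer derivative in $\p_3\!\bigl(\tfrac{\p_t\psi\,b_i}{J}\bigr)$ so that no spatial derivative ever falls on a product, every term of \eqref{s2-eq_evo_b^sharp} becomes a product of ``bounded'' factors — $u^\pm,b^\pm,\psi,\p_t\psi$, controlled without singular factor by \eqref{art-estim_u^pm_b^pm_B_rho,3,sigma}, \eqref{art-estim_psi_rho,3,sigma}, \eqref{art-estim_dotpsi_rho,3,sigma}, and negative powers of $J$, bounded by $M(\eta_0)$ via Corollary \ref{s2-coro_estim_1/J} (applicable because \eqref{art-estim_f_B_rho,7/2} guarantees $\|f(t)\|_{\rho,7/2}<\eta_0$) — times exactly one factor carrying a single spatial derivative of $u^\pm,b^\pm$ or of $\psi$, namely $\p_k u^\pm,\p_k b^\pm,\p_k\p_\ell\psi,\p_k\p_t\psi$, each of which carries the singular factor $\tfrac1{\rho_0-\rho}\bigl(1-\tfrac t{a(\rho_0-\rho)}\bigr)^{-1/2}$ by \eqref{art-estim_du^pm_db^pm_B_rho,3,sigma}, \eqref{art-estim_ddpsi_rho,3,sigma}, \eqref{art-estim_d_dotpsi_rho,3,sigma}. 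By the algebra property \eqref{s1-prod_rho,r,sigma} of $B_{\rho,3,\sigma}(\Omega^\pm)$ one gets $\|\dot b^{\sharp\pm}(t)\|_{\rho,3,\sigma}^\pm\le \tfrac{C(R,\eta_0)}{\rho_0-\rho}\bigl(1-\tfrac t{a(\rho_0-\rho)}\bigr)^{-1/2}$, hence, after multiplying by the weight, $\nt{\dot b^{\sharp\pm}}_{a,\Omega^\pm}\le P(R,\eta_0)$ with $P$ independent of $a$.

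For the velocity, the non‑pressure terms of \eqref{s2-eq_evo_u^sharp} are handled exactly as above; the pressure terms are $-\,aA^{T}\nabla Q^{\pm}$. Here I would invoke Theorem \ref{s2-thm_estim_Q_final} for the elliptic problem \eqref{s2-pb_ellip_pression}: the source terms \eqref{s2-terme_source_cF^pm}, \eqref{s2-terme_source_cG} are estimated in $B_{\rho,1,\sigma}(\Omega^\pm)$, resp. $B_{\rho,3/2}(\bT^2)$, in terms of $u^\pm,b^\pm,\psi$ and their spatial derivatives — trading one regularity index per derivative at no radius cost via \eqref{s1-deriv_rho,r,k} — and of $f$ in $B_{\rho,7/2}(\bT^2)$, all of which are furnished by the preliminary estimates of Paragraph \ref{s2-sec_prelim_estim}; $\nabla Q^\pm$ is then controlled in $B_{\rho,3,\sigma}(\Omega^\pm)$ by applying the resulting bound at the intermediate radius $\tildrho(t)$ of \eqref{s2-def_rho(s)_tildrho(s)} and using the differentiation estimate \eqref{s1-deriv_rho,r,sigma}, precisely as \eqref{art-estim_ddpsi_rho,3,sigma} is derived from Proposition \ref{s2-prop_redressement_front}. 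Together with $\|aA^{T}\|_{\rho,3,\sigma}\le C(\eta_0)$ (again Corollary \ref{s2-coro_estim_1/J} and the algebra property), this yields $\nt{\dot u^{\sharp\pm}}_{a,\Omega^\pm}\le P(R,\eta_0)$; and since $\ddot f^\sharp=\dot u_3^{\sharp+}|_\Gamma=\dot u_3^{\sharp-}|_\Gamma$, the trace estimate \eqref{art-estim_trace} gives $\nt{\ddot f^\sharp}_{a,\bT^2}\le C_r\,P(R,\eta_0)$. Consequently all three components lie in the appropriate $E_a$‑spaces with norms bounded by one constant $\overline C(R,\eta_0)$, independent of $a$ (and absorbing $\uC$ and $C_r$). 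Decreasing $a$ so that $a\le R/(2\overline C(R,\eta_0))$, on top of the conditions \eqref{art-hyp_eta_a_petits}, gives the bounds \eqref{art-nt_du_db_ddf_R/2a} for $\Upsilon(\dot u^\pm,\dot b^\pm,\ddot f)$; combined with the invariance of the constraints, this proves $\Upsilon(\dot u^\pm,\dot b^\pm,\ddot f)\in\bE_a$, i.e.\ Proposition \ref{art-prop_espace_invariant}.

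The main obstacle is the pressure contribution $aA^{T}\nabla Q^\pm$: one has to show that $\nabla Q^\pm(t)$ obeys the \emph{same} singular bound as $\nabla u^\pm(t)$, which needs the full analytic elliptic estimate of Theorem \ref{s2-thm_estim_Q_final} (the technical heart of Section \ref{s2-sec_estim_pression}) together with a careful accounting of exactly which source‑term norms are free of the singular factor, so that the intermediate‑radius device of \eqref{s2-def_rho(s)_tildrho(s)} produces no worse than the $\bigl(1-t/(a(\rho_0-\rho))\bigr)^{-1/2}$ singularity. Keeping every bound $a$‑independent throughout is what makes the concluding smallness choice of $a$ effective.
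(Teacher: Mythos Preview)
Your treatment of the non-pressure terms is essentially that of the paper (the paper treats $u^{\sharp}$ first and declares $b^{\sharp}$ similar, you do the reverse, but the estimates are the same). The real issue is your handling of $\nabla Q^\pm$.

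You propose to bound $Q^\pm(t)$ in $B_{\tildrho(t),3,\sigma}$ via Theorem \ref{s2-thm_estim_Q_final} (with source terms free of the singular factor, which is correct and is exactly \eqref{art-estim_Q^pm_rho,3,sigma} in the paper) and then apply the differentiation estimate \eqref{s1-deriv_rho,r,sigma} at the intermediate radius $\tildrho(t)$. But this yields
\[
\|\nabla Q^\pm(t)\|_{\rho,3,\sigma}^\pm \;\le\; \frac{C_\sigma}{\tildrho(t)-\rho}\,C(\eta_0,R)
\;\le\; \frac{C(\eta_0,R)}{(\rho_0-\rho)}\Bigl(1-\tfrac{t}{a(\rho_0-\rho)}\Bigr)^{-1},
\]
since $\tildrho(t)-\rho=\tfrac{3}{4}(\rho_0-\rho)\bigl(1-\tfrac{t}{a(\rho_0-\rho)}\bigr)$ by \eqref{s2-expr_diff_rho_tildrho}. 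After multiplying by the weight $(\rho_0-\rho)\sqrt{1-t/(a(\rho_0-\rho))}$ you are left with $\bigl(1-t/(a(\rho_0-\rho))\bigr)^{-1/2}$, which is \emph{not} bounded as $t\to a(\rho_0-\rho)$: the resulting $\nt{\cdot}_{a,\Omega^\pm}$-norm is infinite, and no smallness of $a$ can fix this. Your appeal to ``precisely as \eqref{art-estim_ddpsi_rho,3,sigma} is derived'' misses that \eqref{art-estim_ddpsi_rho,3,sigma} is obtained from a \emph{time-integral} representation of $\p_k\p_\ell\psi$ (see \eqref{art-expr_ddpsi(t)}); it is the time integration, not the intermediate radius alone, that downgrades the exponent from $-1$ to $-1/2$.

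The paper's remedy is to write $\p_k Q^\pm(t)=\p_k Q_0^\pm+\int_0^t\p_k\dot Q^\pm(t_1)\,dt_1$ and invoke Lemma~2 of \cite{BG77}, which produces the correct $-1/2$ singularity multiplied by $a\,\nt{\dot Q^\pm}_{a,\Omega^\pm}$ (see \eqref{art-estim_dQ_rho,3,sigma_split}). This forces one to estimate $\dot Q^\pm$: the paper time-differentiates the elliptic problem \eqref{s2-pb_ellip_pression_r��crit} to obtain \eqref{s2-pb_ellip_dotQ} for $\dot Q^\pm$, with new source terms \eqref{art-def_tildcF^pm_tildcG} involving $\dot u^\pm,\dot b^\pm,\p_t\psi$ and commutators with $Q^\pm$, and applies Theorem \ref{s2-thm_estim_Q_final} again. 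The upshot is $\nt{\dot Q^\pm}_{a,\Omega^\pm}\le C(\eta_0,R)/a$ (estimate \eqref{art-estim_dotQ_rho,3,sigma_final}); the factor $1/a$ is harmless because it is cancelled by the $a$ in \eqref{art-estim_dQ_rho,3,sigma_split}, yielding the $a$-independent bound \eqref{art-estim_dQ_rho,3,sigma} you need. This time-differentiation of the pressure problem is the missing ingredient in your proposal.
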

\begin{proof}
Within this proof, $C_0 > 0$ will denote any numerical constant.

Let us recall the notation $\Upsilon(\dot{u}^\pm,\dot{b}^\pm,\ddot{f}) \, = \, (\dot{u}^{\sharp\pm},\dot{b}^{\sharp\pm},\ddot{f}^\sharp)$. In the following, we shall begin by handling the velocity $u^{\sharp\pm}$ defined by \eqref{s2-eq_evo_u^sharp}, and we shall not detail the case of $b^{\sharp\pm}$ which is similar. Let us treat the tangential components $u_i^{\sharp\pm}$, for $i=1,2$, defined by \eqref{s2-eq_evo_u_i^sharp}. Using the Leibniz rule to expand the normal derivative $\p_3$ in \eqref{s2-eq_evo_u_i^sharp}, we can split $u_i^\sharp$ into several parts\footnote{Here, we omit the exponents $\,^\pm$ for the sake of clarity and we still use Einstein's summation convention for repeated indices $k\in\{1,2,3\}$.}:
\begin{subequations}\label{art-expr_u_i^sharp_split}
\begin{align}
u_i^\sharp \, = \,
& - \, \frac{1}{J} \, u_k \, \p_k u_i \, + \, \frac{1}{J} \, b_k \, \p_k b_i \label{art-expr_u_i^sharp_1} \\
& + \frac{1}{J} \, \p_3\p_t\psi \, u_i \, + \, \frac{1}{J} \, \p_t\psi \, \p_3 u_i \, - \, \frac{\p_3 J}{J^2} \, \p_t\psi \, u_i \, + \, \frac{\p_k J}{J^2} \big( u_i \, u_k \, - \,  b_i \, b_k \big)  \label{art-expr_u_i^sharp_2} \\
& \, - \, J \, \p_i Q \, + \, \p_i\psi \, \p_3 Q. \label{art-expr_u_i^sharp_3}
\end{align}
\end{subequations}

\medskip

\noindent
\textbf{\begin{small}$\blacktriangleright$\end{small} Step 1: treatment of \eqref{art-expr_u_i^sharp_1}.} Using the algebra property of $B_{\rho,3,\sigma}(\Omega^\pm)$ and applying Corollary \ref{s2-coro_estim_1/J} together with the preliminary estimates \eqref{art-estim_u^pm_b^pm_B_rho,3,sigma}, \eqref{art-estim_du^pm_db^pm_B_rho,3,sigma} we obtain:
\begin{align}\label{art-estim_u_i^sharp_1}
\forall \, t\in \big[0,a(\rho_0-\rho)\big), \quad \Big\| \frac{1}{J} \, u_k^\pm \, \p_k u_i^\pm \Big\|_{\rho,3,\sigma}^\pm \, < \, C(\eta_0,R) \, \frac{1}{\rho_0-\rho} \, \Big( 1 - \frac{t}{a (\rho_0 - \rho)} \Big)^{-\frac{1}{2}},
\end{align}
where $C(\eta_0,R) > 0$ depends only on $\eta_0$ and $R$. The same estimate holds for the second term in \eqref{art-expr_u_i^sharp_1}.

\medskip

\noindent
\textbf{\begin{small}$\blacktriangleright$\end{small} Step 2: treatment of \eqref{art-expr_u_i^sharp_2}.} Let us handle the third term in \eqref{art-expr_u_i^sharp_2}, namely:
\begin{align*}
\frac{\p_3 J}{J^2} \, \p_t\psi \, u_i \, = \, \frac{\p_3^2\psi}{J^2} \, \p_t\psi \, u_i.
\end{align*}
This term seems to be the most ``dangerous'' one because it contains \textbf{two} spatial derivatives on the lifting $\psi$. However, using Corollary \ref{s2-coro_estim_1/J} together with estimates \eqref{art-estim_ddpsi_rho,3,sigma}, \eqref{art-estim_dotpsi_rho,3,sigma} and \eqref{art-estim_u^pm_b^pm_B_rho,3,sigma} we get the same upper bound as \eqref{art-estim_u_i^sharp_1}, that is:
\begin{align}\label{art-estim_u_i^sharp_2}
\forall \, t\in \big[0,a(\rho_0-\rho)\big), \quad \Big\| \frac{\p_3 J}{J^2} \, \p_t\psi \, u_i^\pm \Big\|_{\rho,3,\sigma}^\pm \, < \, C(\eta_0,R) \, \frac{1}{\rho_0-\rho} \, \Big( 1 - \frac{t}{a (\rho_0 - \rho)} \Big)^{-\frac{1}{2}},
\end{align}
where $C(\eta_0,R) > 0$ depends only on $\eta_0$ and $R$. All the remaining terms in \eqref{art-expr_u_i^sharp_2} provide the same estimate.

\medskip

\noindent
\textbf{\begin{small}$\blacktriangleright$\end{small} Step 3: treatment of \eqref{art-expr_u_i^sharp_3}.} This step turns out to be quite tricky because of the total pressure $Q^\pm$. Let us remind that thanks to \eqref{art-estim_f_B_rho,7/2} we can apply Theorem \ref{s2-thm_estim_Q_final} in order to estimate the pressure. The latter has the same regularity than $u^\pm$ and $b^\pm$ : it belongs to $B_{\rho,3,\sigma}(\Omega^\pm)$. Consequently, the $1-$order derivatives of $Q^\pm$ in \eqref{art-expr_u_i^sharp_3} turn out to be \textit{quasi-linear} terms, so we need to handle these terms in the same way as the velocity and the magnetic field. To do so, we still follow the ideas of \cite{BG77} writing
\begin{align*}
\p_k Q^\pm \, = \, \p_k Q_0^\pm \, + \, \int_0^t \p_k \dot{Q}^\pm(t_1) \, dt_1.
\end{align*}
Applying Lemmas 1 and 2 of \cite{BG77}, we get the estimate:
\begin{align}\label{art-estim_dQ_rho,3,sigma_split}
\forall \, t\in \big[0,a(\rho_0-\rho)\big), \quad 
\| \p_k Q^\pm(t) \|_{\rho,3,\sigma}^\pm \, \leq \, \frac{C_0}{\rho_0 - \rho} \, \| Q_0^\pm \|_{\rho_0,3,\sigma}^\pm \, + \, C_0 \, a \, \nt{\dot{Q}^\pm}_{a,\Omega^\pm} \, \frac{1}{\rho_0 - \rho} \, \Big( 1 - \frac{t}{a (\rho_0 - \rho)} \Big)^{-\frac{1}{2}}.
\end{align}
After some easy but numerous estimates and using \eqref{s2-estim_Q_final} (for $t=0$) together with \eqref{s2-termes_sources_cF_cG_réécrits} and \eqref{art-hyp_cond_init}, we can show that
\begin{align}\label{art-estim_Q_0_rho_0,3,sigma}
\| Q_0^\pm \|_{\rho_0,3,\sigma}^\pm \, < \, \frac{C(\eta_0,R)}{\rho_0 - \rho},
\end{align}
where $C(\eta_0,R) > 0$ depends only on $\eta_0$ and $R$. Afterwards, the main step consists in estimating $\nt{\dot{Q}^\pm}_{a,\Omega^\pm}$ in \eqref{art-estim_dQ_rho,3,sigma_split}, so we need to estimate $\dot{Q}^\pm$ in $B_{\rho,3,\sigma}(\Omega^\pm)$.

Let us apply the time-derivative operator to system \eqref{s2-pb_ellip_pression_réécrit}. Then we obtain:
\begin{equation}\label{s2-pb_ellip_dotQ}
\left\{
\begin{array}{r l}
- \, \dv \big( a \, A^T \, \nabla \dot{Q}^\pm \big) \, = \, J \, \tildcF^\pm, & \text{ in } [0,T]\times\Omega^\pm, \\[0.5ex]
\, [\dot{Q}] \, = \, 0, & \text{ on } [0,T]\times\Gamma, \\[0.5ex]
\big( 1 + |\nabla' f|^2 \big) \, \big[\p_3 \dot{Q}] \, = \, \tildcG, & \text{ on } [0,T]\times\Gamma, \\[0.5ex]
\p_3 \dot{Q}^\pm \, = \, 0, & \text{ on } [0,T]\times\Gamma_\pm,
\end{array}
\right.
\end{equation}
where the source terms $J\,\tildcF^\pm$ and $\tildcG$ are defined by:
\begin{subequations}\label{art-def_tildcF^pm_tildcG}
\begin{align}
J\,\tildcF^\pm & \, := \, \p_t\big( J \, \cF^\pm \big) \, + \, \dv\big( \p_t(aA^T) \, \nabla Q^\pm \big), \label{art-def_tildcF^pm} \\
\tildcG & \, := \, \p_t\cG \, - \, \p_t\big( |\nabla'f|^2 \big) \, \big[ \p_3 Q \big].
\end{align}
\end{subequations}
The matrix $aA^T$ reads as follows (we refer to \eqref{s2-def_J_A_a}):
\begin{align}\label{art-expr_aA^T}
aA^T \, = \, 
\begin{pmatrix}
J & 0 & -\p_1\psi \\
0 & J & -\p_2\psi \\
-\p_1\psi & -\p_2\psi & \frac{1 + |\nabla'\psi|^2}{J}
\end{pmatrix},
\end{align}
and its coefficients will be denoted by $\alpha_{k\ell}$. Now, using Theorem \ref{s2-thm_estim_Q_final} we get the estimate:
\begin{equation}\label{art-estim_dotQ_rho,3,sigma}
\begin{aligned}
\| \dot{Q}^\pm \|_{\rho,3,\sigma}^\pm \, \leq \, C(\eta_0) \, \Big( \big\| \p_t\cF^\pm \big\|_{\rho,1,\sigma}^\pm \, + \, \Big\| \frac{\p_t J}{J} \, \cF^\pm \Big\|_{\rho,1,\sigma}^\pm & \, + \, \Big\| \frac{1}{J} \, \p_k\big( \p_t\alpha_{k\ell} \, \p_\ell Q^\pm \big) \Big\|_{\rho,1,\sigma}^\pm \\
& \, + \, \big\| \p_t\cG \big\|_{\rho,\frac{3}{2}} \, + \, \big\| \p_t( |\nabla' f|^2 ) \, [\p_3 Q] \big\|_{\rho,\frac{3}{2}} \Big),
\end{aligned}
\end{equation}
where $C(\eta_0) > 0$ depends only on $\eta_0$. In \eqref{art-estim_dotQ_rho,3,sigma} we have used Einstein's summation convention for repeated indices $k,\ell\in\{1,2,3\}$. It remains to handle each quantity in the right-hand side of \eqref{art-estim_dotQ_rho,3,sigma} to complete the estimate of $\dot{Q}^\pm$ in $B_{\rho,3,\sigma}(\Omega^\pm)$. Although the computations turn out to be quite heavy because of the numerous terms in $\cF^\pm$ and $\cG$ (see \eqref{s2-termes_sources_cF_cG_réécrits}), we shall not give the full details. Nevertheless, the method we use to estimate each term is completely analogous from one term to an other, so we can allow ourself to shorten some parts in the proof.

The first two terms in the right-hand side of \eqref{art-estim_dotQ_rho,3,sigma} can actually be estimated in $B_{\rho,2,\sigma}(\Omega^\pm)$ because $\cF^\pm$ contains $1-$order derivatives of $u^\pm$ and $b^\pm$, and $2-$order derivatives of $\psi$. To begin with, let us treat for instance the first term in the right-hand side of \eqref{art-estim_dotQ_rho,3,sigma}.

\medskip

\noindent
\textbf{\begin{small}$\blacklozenge$\end{small} Sub-step 3.1: the first term in \eqref{art-estim_dotQ_rho,3,sigma}.} The expression of $\p_t\cF^\pm$ can be easily deduced from \eqref{s2-terme_source_JcF_réécrit}, but we do not write it down for the sake of clarity. For instance, let us treat both following terms which appear when we differentiate $\cF^\pm$ with respect to $t$. Likewise, all the other terms defining $\p_t\cF^\pm$ can be estimated using the preliminaries in \mbox{Paragraph \ref{s2-sec_prelim_estim}}.

Given $k,\ell\in\{1,2,3\}$, let us set
\begin{align*}
\cT_1^\pm \, := \, \frac{1}{J^2} \, \p_k\dot{u}_\ell^\pm \, \p_\ell u_k^\pm.
\end{align*}
We directly estimate $\cT_1$ in $B_{\rho,2,\sigma}(\Omega^\pm)$ as follows:
\begin{align*}
\| \cT_1^\pm \|_{\rho,2,\sigma}^\pm \, \leq \, C_0 \, \Big\| \frac{1}{J} \Big\|_{\rho,2,\sigma}^2 \, \big\| \dot{u}_\ell^\pm \big\|_{\rho,3,\sigma}^\pm \, \big\| u_k^\pm \big\|_{\rho,3,\sigma}^\pm.
\end{align*}
Then, using Corollary \ref{s2-coro_estim_1/J} together with \eqref{art-estim_dotu_dotb_rho,3,sigma} and \eqref{art-estim_u^pm_b^pm_B_rho,3,sigma}, we get:
\begin{align}\label{art-estim_cT_1}
\forall \, t\in \big[ 0, a(\rho_0-\rho) \big), \quad \| \cT_1^\pm \|_{\rho,2,\sigma}^\pm \, \leq \, C(\eta_0,R) \, \frac{1}{a \, (\rho_0 - \rho)} \, \Big( 1 - \frac{t}{a (\rho_0 - \rho)} \Big)^{-\frac{1}{2}}.
\end{align}
In the end, since the idea will consist in choosing $a$ small enough, the factor $\frac{1}{a}$ in \eqref{art-estim_cT_1} seems not to be appropriate. However, it turns out to be harmless because $a$ already appears in \eqref{art-estim_dQ_rho,3,sigma_split}.

We finish the treatment of $\p_t \cF^\pm$ estimating the following second term $\cT_2^\pm$ defined by
\begin{align*}
\cT_2^\pm \, := \, \frac{\p_k\p_t J}{J^3} \, u_i^\pm \, \p_i u_k^\pm,
\end{align*}
where $i\in\{1,2\}$ and $k\in\{1,2,3\}$. As previously, we have:
\begin{align*}
\| \cT_2^\pm \|_{\rho,2,\sigma}^\pm \, \leq \, C_0 \, \Big\| \frac{1}{J} \Big\|_{\rho,2,\sigma}^3 \, \big\| \p_3\p_t\psi \big\|_{\rho,3,\sigma} \, \big\| u_i^\pm \big\|_{\rho,2,\sigma}^\pm \, \big\| u_k^\pm \big\|_{\rho,3,\sigma}^\pm.
\end{align*}
Thus, using Corollary \ref{s2-coro_estim_1/J} and estimates \eqref{art-estim_d_dotpsi_rho,3,sigma}, \eqref{art-estim_u^pm_b^pm_B_rho,3,sigma}, we end up with:
\begin{align}\label{art-estim_cT_2}
\forall \, t\in \big[ 0, a(\rho_0-\rho) \big), \quad \| \cT_2^\pm \|_{\rho,2,\sigma}^\pm \, \leq \, C(\eta_0,R) \, \frac{1}{\rho_0 - \rho} \, \Big( 1 - \frac{t}{a (\rho_0 - \rho)} \Big)^{-\frac{1}{2}}.
\end{align}

Gathering \eqref{art-estim_cT_1} and \eqref{art-estim_cT_2} (in which we use the upper bound $1\leq\frac{1}{a}$), and proceeding in the same way with the remaining terms in $\p_t\cF^\pm$, we can conclude that the first term in the right-hand side of \eqref{art-estim_dotQ_rho,3,sigma} is estimated as follows:
\begin{align}\label{art-estim_dotQ_rho,3,sigma_step3.1}
\forall \, t\in \big[ 0, a(\rho_0 - \rho) \big), \quad 
\big\| \p_t\cF^\pm \big\|_{\rho,1,\sigma}^\pm 
\, \leq \, \big\| \p_t\cF^\pm \big\|_{\rho,2,\sigma}^\pm 
\, \leq \, C(\eta_0,R) \, \frac{1}{a \, (\rho_0 - \rho)} \, \Big( 1 - \frac{t}{a (\rho_0 - \rho)} \Big)^{-\frac{1}{2}},
\end{align}
where $C(\eta_0,R) > 0$ depends only on $\eta_0$ and $R$. The second term in the right-hand side of \eqref{art-estim_dotQ_rho,3,sigma} is completely analogous to $\p_t\cF^\pm$ and provides the same estimate as \eqref{art-estim_dotQ_rho,3,sigma_step3.1}, so we feel free to skip the details.

\medskip

\noindent
\textbf{\begin{small}$\blacklozenge$\end{small} Sub-step 3.2: the pressure term in \eqref{art-estim_dotQ_rho,3,sigma}.} Now, let us focus on the pressure term that we split as follows:
\begin{align}\label{art-def_cT_3_cT_4}
\frac{1}{J} \, \p_k\big( \p_t\alpha_{k\ell} \, \p_\ell Q^\pm \big) 
& \, = \, \frac{1}{J} \, \p_k\p_t\alpha_{k\ell} \, \p_\ell Q^\pm \, + \, \frac{1}{J} \, \p_t\alpha_{k\ell} \, \p_k\p_\ell Q^\pm \nonumber \\
& \, =: \, \cT_3^\pm \, + \, \cT_4^\pm.
\end{align}
Using \eqref{art-estim_dpsi_rho,3,sigma} and \eqref{art-estim_d_dotpsi_rho,3,sigma} we can easily estimate the coefficients $\p_t\alpha_{k\ell}$:
\begin{align}\label{art-estim_dot_alpha_kl}
\forall \, k,\ell\in\{1,2,3\}, \quad \forall \, t\in \big[ 0, a(\rho_0-\rho) \big), \quad \| \p_t \alpha_{k\ell}(t) \|_{\rho,3,\sigma} \, \leq \, C(\eta_0,R) \, \frac{1}{\rho_0 - \rho} \, \Big( 1 - \frac{t}{a (\rho_0 - \rho)} \Big)^{-\frac{1}{2}}.
\end{align}

In order to estimate $\cT_3^\pm$, we can use the trivial inequality $\|\cdot\|_{\rho,1,\sigma}^\pm \leq \|\cdot\|_{\rho,2,\sigma}^\pm$. Indeed, we get:
\begin{align*}
\| \cT_3^\pm \|_{\rho,1,\sigma}^\pm \, \leq \, C_0 \, \Big\| \frac{1}{J} \Big\|_{\rho,2,\sigma} \, \big\| \p_t\alpha_{kl} \big\|_{\rho,3,\sigma} \, \big\| Q^\pm \|_{\rho,3,\sigma}^\pm.
\end{align*}
Thus, using Corollary \ref{s2-coro_estim_1/J} together with \eqref{art-estim_dot_alpha_kl}, we have:
\begin{align*}
\forall \, t\in \big[ 0, a(\rho_0-\rho) \big), \quad
\| \cT_3^\pm \|_{\rho,1,\sigma}^\pm \, \leq \, C(\eta_0,R) \, \frac{1}{\rho_0 - \rho} \, \Big( 1 - \frac{t}{a (\rho_0 - \rho)} \Big)^{-\frac{1}{2}} \big\| Q^\pm(t) \|_{\rho,3,\sigma}^\pm.
\end{align*}
It remains to estimate $Q^\pm(t)$ in $B_{\rho,3,\sigma}(\Omega^\pm)$. Using Theorem \ref{s2-thm_estim_Q_final}, definitions \eqref{s2-termes_sources_cF_cG_réécrits} and estimates stated in \mbox{Paragraph \ref{s2-sec_prelim_estim}}, we can easily deduce the following bound on $Q^\pm(t)$:
\begin{align}\label{art-estim_Q^pm_rho,3,sigma}
\forall \, t\in \big[ 0, a(\rho_0-\rho) \big), \quad \| Q^\pm(t) \|_{\rho,3,\sigma}^\pm \, \leq \, C(\eta_0,R),
\end{align}
where $C(\eta_0,R) > 0$ depends only on $\eta_0$ and $R$. It is important to notice that the singularity $\big( 1 - \frac{t}{a (\rho_0 - \rho)} \big)^{-\frac{1}{2}}$ does not appear in \eqref{art-estim_Q^pm_rho,3,sigma} because the source terms $J\,\cF^\pm$ and $\cG$ in \eqref{s2-pb_ellip_pression_réécrit} only contain $1-$order derivatives of $(u^\pm, b^\pm)$ and $2-$order derivatives of $\psi$. Therefore, since $\cF^\pm$ and $\cG$ are estimated in $B_{\rho,1,\sigma}(\Omega^\pm)$ and $B_{\rho,\frac{3}{2}}(\bT^2)$ respectively, we only need to estimate $(u^\pm, b^\pm)$ in $B_{\rho,3,\sigma}(\Omega^\pm)$ and $\psi$ in $B_{\rho,4,\sigma}(\Omega)$. In virtue of \eqref{art-estim_u^pm_b^pm_B_rho,3,sigma}, \eqref{art-estim_psi_rho,3,sigma} and \eqref{art-estim_dpsi_rho,3,sigma} these functions are bounded, which explains estimate \eqref{art-estim_Q^pm_rho,3,sigma}. We end up with the following estimate of $\cT_3^\pm$:
\begin{align}\label{art-estim_cT_3}
\forall \, t\in \big[ 0, a(\rho_0-\rho) \big), \quad \| \cT_3^\pm \|_{\rho,1,\sigma}^\pm \, \leq \, C(\eta_0,R) \, \frac{1}{\rho_0 - \rho} \, \Big( 1 - \frac{t}{a (\rho_0 - \rho)} \Big)^{-\frac{1}{2}}.
\end{align}

The term $\cT_4^\pm$ is a bit more tricky, because it contains two derivatives of $Q^\pm$. Let us write $\p^m$ to denote any spatial derivative of order $m$ ; then it is easy to prove the following lemma.
\begin{lemme}\label{art-lemme_produit_H^1}
Let $u\in H^2(\Omega)$ and $v\in H^3(\Omega)$, then we have $u \, \p^2 v \in H^1(\Omega)$ with the estimate:
\begin{align*}
\| u \, \p^2 v \|_{H^1(\Omega)} \, \leq \, C_0 \, \|u\|_{H^2(\Omega)} \, \| v \|_{H^3(\Omega)},
\end{align*}
where $C_0 > 0$ is a numerical constant.
\end{lemme}
\noindent A consequence of Lemma \ref{art-lemme_produit_H^1} is the following estimate, whose proof is completely analogous to \eqref{s1-prod_rho,r,sigma}:
\begin{align}\label{art-estim_produit_B_rho,1,sigma}
\forall \, u\in B_{\rho,2,\sigma}(\Omega), \quad \forall \, v\in B_{\rho,3,\sigma}(\Omega), \quad
\| u \, \p^2 v \|_{\rho,1,\sigma} \, \leq \, C_0 \, \|u\|_{\rho,2,\sigma} \, \|v\|_{\rho,3,\sigma}.
\end{align}
Now, using \eqref{art-estim_produit_B_rho,1,sigma} we obtain:
\begin{align*}
\| \cT_4^\pm \|_{\rho,1,\sigma}^\pm \, \leq \, C_0 \, \Big\| \frac{1}{J} \Big\|_{\rho,2,\sigma} \, \big\|\p_t\alpha_{k\ell}\big\|_{\rho,2,\sigma} \, \big\| Q^\pm \big\|_{\rho,3,\sigma}^\pm.
\end{align*}
Eventually, gathering Corollary \ref{s2-coro_estim_1/J}, \eqref{art-estim_dot_alpha_kl} and \eqref{art-estim_Q^pm_rho,3,sigma}, we can conclude that
\begin{align}\label{art-estim_cT_4}
\forall \, t\in \big[ 0, a(\rho_0-\rho) \big), \quad \| \cT_4^\pm \|_{\rho,1,\sigma}^\pm \, \leq \, C(\eta_0,R),
\end{align}
which is ``better'' than \eqref{art-estim_cT_3} because the singularity $\big( 1 - \frac{t}{a (\rho_0 - \rho)} \big)^{-\frac{1}{2}}$ does not appear.

Using \eqref{art-estim_cT_3} and \eqref{art-estim_cT_4}, the pressure term in \eqref{art-estim_dotQ_rho,3,sigma} is finally estimated as follows:
\begin{align}\label{art-estim_dotQ_rho,3,sigma_step3.2}
\forall \, t\in \big[ 0, a(\rho_0 - \rho) \big), \quad 
\Big\| \frac{1}{J} \, \p_k\big( \p_t\alpha_{k\ell} \, \p_\ell Q^\pm \big)  \Big\|_{\rho,1,\sigma}^\pm 
\, \leq \, C(\eta_0,R) \, \frac{1}{\rho_0 - \rho} \, \Big( 1 - \frac{t}{a (\rho_0 - \rho)} \Big)^{-\frac{1}{2}},
\end{align}
where $C(\eta_0,R) > 0$ depends only on $\eta_0$ and $R$.

\medskip

\noindent
\textbf{\begin{small}$\blacklozenge$\end{small} Sub-step 3.3: the last two terms in \eqref{art-estim_dotQ_rho,3,sigma}.} The boundary terms $\p_t\cG$ and $\p_t( |\nabla' f|^2 ) \, [\p_3 Q]$ in \eqref{art-estim_dotQ_rho,3,sigma} can be handled using the same arguments as in \textbf{Sub-step 3.1} and \textbf{Sub-step 3.2}. Indeed, using the trace estimate \eqref{art-estim_trace} (with $r=2$), it suffices to estimate the terms defining $\p_t\cG$ in $B_{\rho,2,\sigma}(\Omega^\pm)$, which is exactly what we did previously. Consequently, we feel free to skip these details and we give the final estimate:
\begin{align}\label{art-estim_dotQ_rho,3,sigma_step3.3}
\big\| \p_t\cG \big\|_{\rho,\frac{3}{2}} \, + \, \big\| \p_t( |\nabla' f|^2 ) \, [\p_3 Q] \big\|_{\rho,\frac{3}{2}}
\, \leq \, C(\eta_0,R) \, \frac{1}{a\,(\rho_0-\rho)} \, \Big( 1 - \frac{t}{a (\rho_0 - \rho)} \Big)^{-\frac{1}{2}},
\end{align}
where $C(\eta_0,R) > 0$ depends only on $\eta_0$ and $R$.

Let us come back to estimate \eqref{art-estim_dotQ_rho,3,sigma} of $\dot{Q}^\pm$ ; gathering \eqref{art-estim_dotQ_rho,3,sigma_step3.1}, \eqref{art-estim_dotQ_rho,3,sigma_step3.2} and \eqref{art-estim_dotQ_rho,3,sigma_step3.3}, we end up with the following result:
\begin{equation}\label{art-estim_dotQ_rho,3,sigma_final}
\forall \, t\in \big[ 0,a(\rho_0-\rho) \big), \quad 
\| \dot{Q}^\pm(t) \|_{\rho,3,\sigma}^\pm \, \leq \, C(\eta_0,R) \, \frac{1}{a \, (\rho_0 - \rho)} \, \Big( 1 - \frac{t}{a (\rho_0 - \rho)} \Big)^{-\frac{1}{2}}.
\end{equation}

It allows us to conclude about estimate \eqref{art-estim_dQ_rho,3,sigma_split}: using \eqref{art-estim_Q_0_rho_0,3,sigma} and \eqref{art-estim_dotQ_rho,3,sigma_final} we finally obtain:
\begin{align}\label{art-estim_dQ_rho,3,sigma}
\forall \, t\in \big[0,a(\rho_0-\rho)\big), \quad 
\| \p_k Q^\pm(t) \|_{\rho,3,\sigma}^\pm \, \leq \, C(\eta_0,R) \, \frac{1}{\rho_0 - \rho} \, \Big( 1 - \frac{t}{a (\rho_0 - \rho)} \Big)^{-\frac{1}{2}},
\end{align}
where $C(\eta_0,R) > 0$ depends only on $\eta_0$ and $R$.

We are now able to achieve \textbf{Step 3}. Thanks to \eqref{art-estim_dpsi_rho,3,sigma} and \eqref{art-estim_dQ_rho,3,sigma}, the pressure terms in \eqref{art-expr_u_i^sharp_3} satisfy the following estimate:
\begin{align}\label{art-estim_u_i^sharp_3}
\forall \, t\in \big[0,a(\rho_0-\rho)\big), \quad \big\| J \, \p_i Q^\pm \, + \, \p_i\psi \, \p_3 Q^\pm \big\|_{\rho,3,\sigma}^\pm \, < \, C(\eta_0,R) \, \frac{1}{\rho_0-\rho} \, \Big( 1 - \frac{t}{a (\rho_0 - \rho)} \Big)^{-\frac{1}{2}}.
\end{align}

\medskip

\noindent
\textbf{Conclusion.} This concludes the case of the tangential components $u_i^{\sharp\pm}$ for $i=1,2$; the case of the normal component $u_3^{\sharp\pm}$ given by \eqref{s2-eq_evo_u_3} is analogous. Eventually, recalling definition \eqref{s2-def_nt_a_Omega} we have:
\begin{equation}\label{art-estim_nt_u_i^sharp,pm_final}
\nt{\dot{u}^{\sharp\pm}}_{a,\Omega^\pm} \, \leq \, C_2(\eta_0,R),
\end{equation}
where $C_2(\eta_0,R) > 0$ is a constant depending only on $\eta_0$ and $R$. The same estimate also holds for the magnetic field $b^{\sharp\pm}$. Besides, a straightforward consequence of \eqref{art-estim_nt_u_i^sharp,pm_final} is the following estimate of $\ddot{f}^\sharp$, where $\uC > 0$ is the same constant as in \eqref{art-nt_ddf_uCR/2a}:
\begin{align*}
\nt{\ddot{f}^\sharp}_{a,\bT^2} \, \leq \, \uC \, C_2(\eta_0,R).
\end{align*}
Recalling the first smallness condition \eqref{art-hyp_eta_a_petits}, we eventually choose $a>0$ such that:
\begin{align}\label{art-hyp_a_petit}
a \, \leq \, \min\Big\{ \frac{R}{4 C_2(\eta_0,R)} \, , \, \frac{\eta_0}{2C_1R} \Big\},
\end{align}
so that $(\dot{u}^{\sharp,\pm},\dot{b}^{\sharp\pm},\ddot{f}^\sharp)$ satisfies the upper bounds \eqref{art-nt_du_db_ddf_R/2a}; Proposition \ref{art-prop_espace_invariant} is proved.
\end{proof}

\subsection{Contraction of \texorpdfstring{$\Upsilon$}{Upsilon}}\label{art-sec_contraction}

Now we own a suitable invariant complete metric space under $\Upsilon$, the purpose of this paragraph is to prove that $\Upsilon$ is a contraction map (decreasing $a>0$ one more time if necessary). The main result of this paragraph is stated in the following proposition.
\begin{prop}\label{art-prop_Upsilon_contractante}
Let $R>0$ and let $(u_0^\pm,b_0^\pm,f_0)$ be an initial datum satisfying \eqref{art-hyp_cond_init} (where $\eta$ fulfills \eqref{art-hyp_eta_a_petits}). We consider $(\dot{u}^\pm,\dot{b}^\pm,\ddot{f})$ and $(\dot{\uu}^\pm,\dot{\ub}^\pm,\ddot{\uf})$ belonging to $\bE_a$ and satisfying the upper bounds \eqref{art-nt_du_db_ddf_R/2a}. Then there exists $C_3(\eta_0,R)>0$, depending only on $\eta_0$ and $R$, such that:
\begin{equation}\label{art-estim_prop_Upsilon_contractante}
\begin{aligned}
\nt{\dot{u}^{\sharp\pm} \, - \, \dot{\uu}^{\sharp\pm}}_{a,\Omega^\pm} & \, + \, \nt{\dot{b}^{\sharp\pm} \, - \, \dot{\ub}^{\sharp\pm}}_{a,\Omega^\pm} \, + \, \nt{\ddot{f}^\sharp \, - \, \ddot{\uf}^\sharp}_{a,\bT^2} \\
& \hspace*{2.2cm} \, \leq \, C_3(\eta_0,R) \, a \, \Big( \nt{\dot{u}^\pm \, - \, \dot{\uu}^\pm}_{a,\Omega^\pm} \, + \, \nt{\dot{b}^\pm \, - \, \dot{\ub}^\pm}_{a,\Omega^\pm} \, + \, \nt{\ddot{f} \, - \, \ddot{\uf}}_{a,\bT^2} \Big).
\end{aligned}
\end{equation}
\end{prop}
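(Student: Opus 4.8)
The plan is to estimate $\Upsilon(\dot{u}^\pm,\dot{b}^\pm,\ddot{f})-\Upsilon(\dot{\uu}^\pm,\dot{\ub}^\pm,\ddot{\uf})=(\dot{u}^{\sharp\pm}-\dot{\uu}^{\sharp\pm},\,\dot{b}^{\sharp\pm}-\dot{\ub}^{\sharp\pm},\,\ddot{f}^\sharp-\ddot{\uf}^\sharp)$ componentwise from the explicit formulas \eqref{s2-eq_evo_u^sharp}, \eqref{s2-eq_evo_b^sharp}, \eqref{s2-eq_evo_f^sharp}. Since both tuples belong to $\bE_a$, satisfy the upper bounds \eqref{art-nt_du_db_ddf_R/2a} and share the initial datum \eqref{art-hyp_cond_init}, all the preliminary estimates of \mbox{Paragraph \ref{s2-sec_prelim_estim}} apply to each of them; in particular the ``difference'' estimates \eqref{art-estim_u-uu_rho,3,sigma}--\eqref{art-estim_ddotpsi-ddotupsi_rho,3,sigma} hold, and each of them carries at least one power of $a$. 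First I would rewrite each of the finitely many summands on the right-hand side of \eqref{s2-eq_evo_u_i^sharp}, \eqref{s2-eq_evo_u_3^sharp} as a product of factors taken from the list $u^\pm$, $b^\pm$, $\p_k u^\pm$, $\p_k b^\pm$, $1/J$, $\p_k\psi$, $\p_k\p_\ell\psi$, $\p_t\psi$, $\p_k\p_t\psi$, $\p_k Q^\pm$, subtract the corresponding product built from the underlined quantities, and expand the difference by the telescoping identity $P_1\cdots P_m-\underline{P}_1\cdots\underline{P}_m=\sum_j\underline{P}_1\cdots\underline{P}_{j-1}(P_j-\underline{P}_j)P_{j+1}\cdots P_m$, so that each resulting term contains exactly one difference factor (the term $1/J-1/\uJ=(\uJ-J)/(J\uJ)$ being handled through Corollary \ref{s2-coro_estim_1/J}).

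Each such term is then estimated in $B_{\rho,3,\sigma}(\Omega^\pm)$ via the algebra property \eqref{s1-prod_rho,r,sigma}: the non-difference factors are bounded by a constant $C(\eta_0,R)$ times at most one singular weight $(1-t/(a(\rho_0-\rho)))^{-1/2}$ and one power of $(\rho_0-\rho)^{-1}$, whereas the difference factor is bounded by $C_0\,a$ (or $C_0\,a^2$) times, again, at most one such weight and one $(\rho_0-\rho)^{-1}$. Exactly as in \mbox{Paragraph \ref{art-sec_espace_invariant}}, in each term at most one factor actually loses a derivative, so the product is bounded by $C(\eta_0,R)\,a\,(\rho_0-\rho)^{-1}\,(1-t/(a(\rho_0-\rho)))^{-1/2}$ times the sum of the $\nt{\cdot}_a$-norms of $\dot{u}^\pm-\dot{\uu}^\pm$, $\dot{b}^\pm-\dot{\ub}^\pm$, $\ddot{f}-\ddot{\uf}$; multiplying by the weight $(\rho_0-\rho)\sqrt{1-|t|/(a(\rho_0-\rho))}$ of \eqref{s2-def_nt_a_Omega} cancels the $(\rho_0-\rho)^{-1}$ and the singularity, and taking the supremum over $(\rho,t)$ yields the claimed bound $C(\eta_0,R)\,a\,(\cdots)$. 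The magnetic-field equations \eqref{s2-eq_evo_b^sharp} contain no pressure and are treated identically (in fact more easily), and since $\ddot{f}^\sharp-\ddot{\uf}^\sharp=\dot{u}_3^{\sharp\pm}-\dot{\uu}_3^{\sharp\pm}$ on $\Gamma$, the $\bT^2$-part follows from the trace estimate \eqref{art-estim_trace} applied to the bound already obtained for the velocity.

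The main obstacle is the pressure contribution $-J\,\p_i Q^\pm+\p_i\psi\,\p_3 Q^\pm$ in \eqref{art-expr_u_i^sharp_3}, because $Q^\pm$ depends on $(u^\pm,b^\pm,f)$ nonlinearly through the elliptic solve. Here I would observe that $Q^\pm-\uQ^\pm$ solves the elliptic problem \eqref{s2-pb_ellip_pression} with the coefficients $A_{ji}$ associated with $f$ and with source terms equal to $J\cF^\pm-\uJ\,\ucF^\pm$ and $\cG-\ucG$, augmented by the commutator terms $\dv\big((aA^T-\ua\,\uA^T)\nabla\uQ^\pm\big)$ and $-(|\nabla'f|^2-|\nabla'\uf|^2)\,[\p_3\uQ]$ coming from the difference of the elliptic operators applied to $\uQ^\pm$. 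Using the explicit forms \eqref{s2-terme_source_cF^pm}, \eqref{s2-terme_source_cG}, the bounds on $\uQ^\pm$ obtained in Step~3 of the proof of Proposition \ref{art-prop_espace_invariant}, and the difference estimates, one checks that all these source terms are bounded in $B_{\rho,1,\sigma}(\Omega^\pm)$, resp. $B_{\rho,\frac{3}{2}}(\bT^2)$, by $C(\eta_0,R)\,a$ times a singular weight, so Theorem \ref{s2-thm_estim_Q_final} gives $\|Q^\pm-\uQ^\pm\|_{\rho,3,\sigma}^\pm\leq C(\eta_0,R)\,a\,(\cdots)$. For the first-order derivatives I would use that the two initial pressures coincide, $Q_0^\pm=\uQ_0^\pm$ (same data), whence $\p_k Q^\pm-\p_k\uQ^\pm=\int_0^t\p_k(\dot{Q}^\pm-\dot{\uQ}^\pm)\,dt_1$, and estimate $\dot{Q}^\pm-\dot{\uQ}^\pm$ by subtracting the two copies of \eqref{s2-pb_ellip_dotQ} and repeating Sub-steps~3.1--3.3 of the proof of Proposition \ref{art-prop_espace_invariant}, each term now carrying the extra $a$ furnished by the difference estimates while the time integral supplies a further power of $a$ that absorbs the $a^{-1}$-losses. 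Collecting the bounds for the velocity, the magnetic field and the front and summing yields \eqref{art-estim_prop_Upsilon_contractante}; choosing afterwards $a>0$ so small that $C_3(\eta_0,R)\,a<1$ then makes $\Upsilon$ a contraction on the complete metric subset of $\bE_a$ produced by Proposition \ref{art-prop_espace_invariant}.
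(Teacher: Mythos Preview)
Your proposal is correct and follows essentially the same route as the paper. The paper also telescopes each summand of \eqref{s2-eq_evo_u^sharp} to isolate exactly one difference factor (writing out explicitly $\sT_1=\sT_{11}+\sT_{12}+\sT_{13}$, etc.), sets up the elliptic problem for $P^\pm:=Q^\pm-\uQ^\pm$ with the commutator source terms you describe, and then exploits $P_0^\pm=0$ together with Lemma~2 of \cite{BG77} to bound $\p_k P^\pm$ via $\nt{\dot{P}^\pm}_{a,\Omega^\pm}$, the latter being estimated by applying Theorem~\ref{s2-thm_estim_Q_final} to the time-differentiated problem; your $a$-bookkeeping for $\dot Q^\pm-\dot{\uQ}^\pm$ is slightly imprecise (not every telescoped term carries an $a$ from the difference---those where the difference factor is $\dot u-\dot{\uu}$ carry none, but then they also carry no $a^{-1}$), yet the net balance is exactly as you state and matches \eqref{art-estim_dP^pm_nt}.
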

Consequently, using Proposition \ref{art-prop_Upsilon_contractante}, we can easily prove that $\Upsilon$ is a contraction map. For instance, assuming $a\leq (2C_3(\eta_0,R))^{-1}$, we can conclude that
\begin{align*}
\nt{\dot{u}^{\sharp\pm} \, - \, \dot{\uu}^{\sharp\pm}}_{a,\Omega^\pm} \, + \, \nt{\dot{b}^{\sharp\pm} \, - \, \dot{\ub}^{\sharp\pm}}_{a,\Omega^\pm} & \, + \, \nt{\ddot{f}^\sharp \, - \, \ddot{\uf}^\sharp}_{a,\bT^2} \\
& \, \leq \, \frac{1}{2} \, \Big( \nt{\dot{u}^\pm \, - \, \dot{\uu}^\pm}_{a,\Omega^\pm} \, + \, \nt{\dot{b}^\pm \, - \, \dot{\ub}^\pm}_{a,\Omega^\pm} \, + \, \nt{\ddot{f} \, - \, \ddot{\uf}}_{a,\bT^2} \Big).
\end{align*}
Therefore we are able to apply the classical Banach fixed-point theorem, which gives a unique fixed point of $\Upsilon$. Obviously, the latter is a solution of system \eqref{s2-eq_evo_u_b}, \eqref{s2-contraintes_div_cond_bord_u_b}: Theorem \ref{s2-thm_solutions_analytiques_nappes} is eventually proved.

Let us now prove Proposition \ref{art-prop_Upsilon_contractante}.
\begin{proof}
As we did in the proof of Proposition \ref{art-prop_espace_invariant}, we shall only treat the tangential components $u_i^{\sharp\pm} - \uu_i^{\sharp\pm}$, for $i=1,2$, whose definition is given by \eqref{s2-eq_evo_u_i^sharp}. Contrary to what has been done in the proof of Proposition \ref{art-prop_espace_invariant}, we will have to make the difference $\dot{u}^\pm - \dot{\uu}^\pm$ appears and so on for the magnetic field and the front. The ``disadvantage'' is that it leads to heavy computations, so we will feel free to skip some details. Afterwards, we shall estimate each term using the preliminary estimates given by \eqref{art-estim_u-uu_rho,3,sigma} until \eqref{art-estim_ddotpsi-ddotupsi_rho,3,sigma}.

We begin with the same expansion \eqref{art-expr_u_i^sharp_split} (with underscores for $\uu_i^\sharp$) and we handle each contribution in three steps again.

\medskip

\noindent
\textbf{\begin{small}$\blacktriangleright$\end{small} Step 1: the convective terms in \eqref{art-expr_u_i^sharp_1}.} We want to estimate the following term in $B_{\rho,3,\sigma}(\Omega^\pm)$:
\begin{align*}
\sT_1 \, := \, \frac{1}{J} \, u_k \, \p_k u_i \, - \, \frac{1}{\uJ} \, \uu_k \, \p_k \uu_i,
\end{align*}
where we have omitted the exponents $\,^\pm$. In order to exhibit the differences $u - \uu$ and $\psi - \upsi$, we rewrite $\mathscr{T}_1$ as follows:
\begin{align}
\sT_1 & \, = \, \frac{1}{J} \, u_k \, \p_k(u_i - \uu_i) \, + \, \frac{1}{J} \, \p_k\uu_i \, (u_k - \uu_k) \, - \, \frac{1}{J\,\uJ} \, \uu_k \, \p_k\uu_i \, \p_3(\psi - \upsi) \nonumber \\
& \, =: \, \sT_{11} \, + \, \sT_{12} \, + \, \sT_{13}. \label{art-decomp_sT_1}
\end{align}
Then, we only illustrate the case of $\sT_{11}$ since both other terms $\sT_{12}$ and $\sT_{13}$ are similar. Using Corollary \ref{s2-coro_estim_1/J} together with estimates \eqref{art-estim_u^pm_b^pm_B_rho,3,sigma} and \eqref{art-estim_du-duu_rho,3,sigma}, we obtain:
\begin{align*}
\forall \, t\in\big[0,a(\rho_0-\rho)\big), \quad
\big\| \sT_{11} \big\|_{\rho,3,\sigma}^\pm \, \leq \, C(\eta_0,R) \, a \, \Big( 1 - \frac{t}{a(\rho_0-\rho)} \Big)^{-\frac{1}{2}} \, \nt{\dot{u}^\pm \, - \, \dot{\uu}^\pm}_{a,\Omega^\pm},
\end{align*}
where $C(\eta_0,R) > 0$ depends only on $\eta_0$ and $R$. Proceeding similarly for $\sT_{12}$ and $\sT_{13}$, we thus have:
\begin{equation}\label{art-estim_sT_1}
\begin{aligned}
\forall \, t\in\big[0,a(\rho_0-\rho)\big), \quad
\big\| \sT_1 \big\|_{\rho,3,\sigma}^\pm \, \leq \, C(\eta_0,R) \, \frac{a}{\rho_0-\rho} \Big( 1 - & \frac{t}{a(\rho_0-\rho)} \Big)^{-\frac{1}{2}} \\
& \times \Big( \nt{\dot{u}^\pm \, - \, \dot{\uu}^\pm}_{a,\Omega^\pm} \, + \, \nt{\ddot{f} \, - \, \ddot{\uf}}_{a,\bT^2} \Big).
\end{aligned}
\end{equation}
The term associated with the magnetic field in \eqref{art-expr_u_i^sharp_1} is analogous and provides the same estimate as \eqref{art-estim_sT_1}, in which we replace $\dot{u}^\pm \, - \, \dot{\uu}^\pm$ by $\dot{b}^\pm \, - \, \dot{\ub}^\pm$.

\medskip

\noindent
\textbf{\begin{small}$\blacktriangleright$\end{small} Step 2: the terms in \eqref{art-expr_u_i^sharp_2}.} These terms are a bit more tricky because some of them contain \textbf{two} spatial derivatives of the lifting $\psi$. As we did in the proof of Proposition \ref{art-prop_espace_invariant}, we only deal with the third term in \eqref{art-expr_u_i^sharp_2}. Let us set:
\begin{align*}
\sT_2 \, := \, \frac{\p_3 J}{J^2} \, \p_t\psi \, u_i \, - \, \frac{\p_3 \uJ}{\uJ^2} \, \p_t\upsi \, \uu_i.
\end{align*}
We rewrite $\sT_2$ using the same method as before:
\begin{align}
\sT_2 & \, = \, \frac{1}{J^2} \, \p_t\psi \, u_i \, \p_3^2(\psi - \upsi) \, + \, \frac{\p_3\uJ}{J^2} \, \p_t\psi \, (u_i - \uu_i) \, + \, \frac{\p_3\uJ}{J^2} \, \uu_i \, \p_t(\psi - \upsi) \, - \, \frac{J + \uJ}{J^2\,\uJ^2} \, \p_3\uJ \, \p_t\upsi \, \uu_i \, \p_3(\psi - \upsi) \nonumber \\
& \, =: \, \sT_{21} \, + \, \sT_{22} \, + \, \sT_{23} \, + \, \sT_{24}. \label{art-decomp_sT_2}
\end{align}
Using Corollary \ref{s2-coro_estim_1/J} and estimates \eqref{art-estim_dotpsi_rho,3,sigma}, \eqref{art-estim_u^pm_b^pm_B_rho,3,sigma}, \eqref{art-estim_ddpsi-ddupsi_rho,3,sigma}, we get the following upper bound for $\sT_{21}$:
\begin{align*}
\forall \, t\in\big[0,a(\rho_0-\rho)\big), \quad
\big\| \sT_{21} \big\|_{\rho,3,\sigma}^\pm \, \leq \, C(\eta_0,R) \, \frac{a^2}{\rho_0-\rho} \, \Big( 1 - \frac{t}{a(\rho_0-\rho)} \Big)^{-\frac{1}{2}} \, \nt{\ddot{f} \, - \, \ddot{\uf}}_{a,\bT^2}.
\end{align*}
Proceeding similarly for $\sT_{22}, \sT_{23}$ and $\sT_{24}$, we end up with:
\begin{equation}\label{art-estim_sT_2}
\begin{aligned}
\forall \, t\in\big[0,a(\rho_0-\rho)\big), \quad
\big\| \sT_2 \big\|_{\rho,3,\sigma}^\pm \, \leq \, C(\eta_0,R) \, & \frac{a}{\rho_0-\rho} \, \Big( 1 - \frac{t}{a(\rho_0-\rho)} \Big)^{-\frac{1}{2}} \\
& \times \Big( \nt{\dot{u}^\pm \, - \, \dot{\uu}^\pm}_{a,\Omega^\pm} \, + \, \nt{\dot{b}^\pm \, - \, \dot{\ub}^\pm}_{a,\Omega^\pm} \, + \, \nt{\ddot{f} \, - \, \ddot{\uf}}_{a,\bT^2} \Big),
\end{aligned}
\end{equation}
where we have used the inequality $a^2 \leq a$ because we shall only need one power of $a$ eventually.

\medskip

\noindent
\textbf{\begin{small}$\blacktriangleright$\end{small} Step 3: the pressure terms in \eqref{art-expr_u_i^sharp_3}.} For instance, let us focus on the second term in \eqref{art-expr_u_i^sharp_3}. We set:
\begin{align*}
\sT_3 \, := \, \p_i\psi \, \p_3 Q \, - \, \p_i\upsi \, \p_3 \uQ,
\end{align*}
and we split it as follows:
\begin{align}
\sT_3 & \, = \, \p_i\psi \, \p_3(Q - \uQ) \, + \, \p_3\uQ \, \p_i(\psi - \upsi) \nonumber \\
& \, =: \, \sT_{31} \, + \, \sT_{32}. \label{art-decomp_sT_3}
\end{align}

\medskip

\noindent
\textbf{\begin{small}$\blacklozenge$\end{small} Sub-step 3.1: estimate of $\sT_{31}$.} In order to use a similar estimate as \eqref{s2-estim_Q_final}, we need to exhibit a suitable Laplace problem satisfied by $P^\pm \, := \, Q^\pm - \uQ^\pm$. Starting from system \eqref{s2-pb_ellip_pression} (with underscores for $\uQ^\pm$), we obtain a similar elliptic problem satisfied by $P^\pm$. The latter reads:
\begin{equation}\label{s2-pb_ellip_P^pm}
\left\{
\begin{array}{r l}
- \, A_{ji} \, \p_j\big( A_{ki} \, \p_k P^\pm \big) \, = \, \bF^\pm, & \text{ in } [0,T]\times\Omega^\pm, \\[0.5ex]
\, [P] \, = \, 0, & \text{ on } [0,T]\times\Gamma, \\[0.5ex]
\big( 1 + |\nabla' f|^2 \big) \, \big[\p_3 P] \, = \, \bG, & \text{ on } [0,T]\times\Gamma, \\[0.5ex]
\p_3 P^\pm \, = \, 0, & \text{ on } [0,T]\times\Gamma_\pm.
\end{array}
\right.
\end{equation}
Now, let us give the expressions of the source terms $\bF^\pm$ and $\bG$ in \eqref{s2-pb_ellip_P^pm}. To do so, we make a link between both elliptic problems satisfied by $Q^\pm$ and $\uQ^\pm$ writing
\begin{align}\label{s2-def_R}
A = \uA + R,
\end{align} 
where $R$ is a ``remainder'' term given by
\begin{align*}
R \, := \, A \, - \, \uA \, = \, 
\begin{pmatrix}
0 & 0 & 0 \\
0 & 0 & 0 \\
R_{31} & R_{32} & R_{33}
\end{pmatrix},
\end{align*}
and the coefficients $R_{3j}$ are defined as follows:
\begin{align*}
\forall \, j = 1,2,3, \quad 
R_{3j} \, := \, - \, \frac{1}{\uJ} \, \p_j \big( \psi - \upsi \big) \, + \, \frac{\p_j\psi}{J \, \uJ} \, \p_3 \big( \psi - \upsi \big).
\end{align*}
Plugging \eqref{s2-def_R} into the first equation of \eqref{s2-pb_ellip_P^pm}, we can express the source terms $\bF^\pm$:
\begin{subequations}\label{s2-def_termes_sources_bF_bG}
\begin{equation}\label{s2-def_terme_source_bF}
\bF^\pm \, := \, \cF^\pm \, - \, \ucF^\pm \, + \, A_{ji} \, \p_j\big( R_{3i} \, \p_3 Q^\pm \big) \, + \, R_{3i} \, \p_3 \big( \uA_{ki} \, \p_k Q^\pm \big).
\end{equation}
Afterwards, setting $g := f - \uf$ and writing $\nabla' f \, = \, \nabla' \uf \, + \, \nabla' g$ in the third equation of \eqref{s2-pb_ellip_P^pm}, the source term $\bG$ reads:
\begin{equation}\label{s2-def_terme_source_bG}
\bG \, := \, \cG \, - \, \ucG \, - \, \big(2 \, \nabla'f\cdot\nabla' g \, + \, |\nabla' g|^2 \big) \, \big[\, \p_3 \uQ \, \big].
\end{equation}
\end{subequations}

We recall that we need to estimate $\p_3 P^\pm$ in $B_{\rho,3,\sigma}(\Omega^\pm)$ (see \eqref{art-decomp_sT_3}). Since $(u^\pm,b^\pm,f)$ and $(\uu^\pm,\ub^\pm,\uf)$ have the same initial data, Lemma 2 of \cite{BG77} provides the following estimate:
\begin{align}\label{art-estim_dP^pm_rho,3,sigma}
\forall \, t\in\big[ 0, a(\rho_0-\rho)\big), \quad
\big\| \p_3 P^\pm(t) \big\| \, \leq \, C_0 \, a \, \nt{\dot{P}^\pm}_{a,\Omega^\pm} \, \frac{1}{\rho_0 - \rho} \, \Big( 1 - \frac{t}{a(\rho_0-\rho)} \Big)^{-\frac{1}{2}}.
\end{align}
Therefore, the key point is to estimate $\dot{P}^\pm$ in $B_{\rho,3,\sigma}(\Omega^\pm)$. The method is identical to the one we use in the proof of Proposition \ref{art-prop_espace_invariant} when we had to estimate $\| \dot{Q}^\pm \|_{\rho,3,\sigma}^\pm$ (see system \eqref{s2-pb_ellip_dotQ} and estimate \eqref{art-estim_dotQ_rho,3,sigma}). Thus, we get the same estimate as \eqref{art-estim_dotQ_rho,3,sigma} but with $P$ (resp. $\bF^\pm$, $\bG$) instead of $Q$ (resp. $\cF^\pm$, $\cG$). In order to conclude about this estimate, it remains to use definitions \eqref{s2-def_termes_sources_bF_bG} and to estimate each term using preliminaries in Paragraph \ref{s2-sec_prelim_estim}. The slight difference here is that we have to exhibit the differences $u^\pm - \uu^\pm$ and so on, as we did in \mbox{\textbf{Step 1}} and \mbox{\textbf{Step 2}} above. These computations generate (very) numerous terms to estimate; nevertheless, there is no additional difficulties compared to our previous work so we shall skip these details and give the final estimate on $\dot{P}^\pm$:
 \begin{align}\label{art-estim_dP^pm_nt}
\nt{\dot{P}^\pm}_{a,\Omega^\pm} \, \leq \, C(\eta_0,R) \, \Big( \nt{\dot{u}^\pm \, - \, \dot{\uu}^\pm}_{a,\Omega^\pm} \, + \, \nt{\dot{b}^\pm \, - \, \dot{\ub}^\pm}_{a,\Omega^\pm} \, + \, \nt{\ddot{f} \, - \, \ddot{\uf}}_{a,\bT^2} \Big),
\end{align}
where $C(\eta_0,R) > 0$ depends only on $\eta_0$ and $R$. Plugging \eqref{art-estim_dP^pm_nt} into \eqref{art-estim_dP^pm_rho,3,sigma}, we can conclude that for all $t\in\big[ 0, a(\rho_0-\rho)\big)$,
\begin{equation}\label{art-estim_dP^pm_rho,3,sigma_final}
\begin{aligned}
\big\| \p_3 P^\pm(t) \big\|_{\rho,3,\sigma}^\pm \, \leq \, C(\eta_0,R) \,  \frac{a}{\rho_0 - \rho} \, & \Big( 1 - \frac{t}{a(\rho_0-\rho)} \Big)^{-\frac{1}{2}} \\
& \times \Big( \nt{\dot{u}^\pm \, - \, \dot{\uu}^\pm}_{a,\Omega^\pm} \, + \, \nt{\dot{b}^\pm \, - \, \dot{\ub}^\pm}_{a,\Omega^\pm} \, + \, \nt{\ddot{f} \, - \, \ddot{\uf}}_{a,\bT^2} \Big).
\end{aligned}
\end{equation}
Let us achieve the case of $\sT_{31}$; thanks to \eqref{art-estim_dpsi_rho,3,sigma} and \eqref{art-estim_dP^pm_rho,3,sigma_final} we finally have:
\begin{equation}\label{art-estim_sT_31}
\begin{aligned}
\forall \, t\in\big[ 0, a(\rho_0-\rho)\big), \quad 
\big\| \sT_{31} \big\|_{\rho,3,\sigma}^\pm \, \leq \, C(\eta_0,R) \, & \frac{a}{\rho_0 - \rho} \, \Big( 1 - \frac{t}{a(\rho_0-\rho)} \Big)^{-\frac{1}{2}} \\
& \times \Big( \nt{\dot{u}^\pm \, - \, \dot{\uu}^\pm}_{a,\Omega^\pm} \, + \, \nt{\dot{b}^\pm \, - \, \dot{\ub}^\pm}_{a,\Omega^\pm} \, + \, \nt{\ddot{f} \, - \, \ddot{\uf}}_{a,\bT^2} \Big).
\end{aligned}
\end{equation}

\medskip

\noindent
\textbf{\begin{small}$\blacklozenge$\end{small} Sub-step 3.2: estimate of $\sT_{32}$.} The case of $\sT_{32}$ is straightforward. Indeed, using \eqref{art-estim_dQ_rho,3,sigma} together with \eqref{art-estim_gradf-graduf_rho,5/2} we end up with the same estimate as \eqref{art-estim_sT_31} for $\sT_{32}$.

\medskip

\noindent
\textbf{\begin{small}$\blacktriangleright$\end{small} Conclusion.} Gathering \eqref{art-estim_sT_1}, \eqref{art-estim_sT_2}, \eqref{art-estim_sT_31} and recalling definition \eqref{s2-def_nt_a_Omega}, we obtain the desired estimate \eqref{art-estim_prop_Upsilon_contractante}: Proposition \ref{art-prop_Upsilon_contractante} is eventually proved.
\end{proof}

\section{Conclusion and perspectives}

The analytic solution we obtain is defined at least until the time $a\rho_0 >0$. However, the quantity $a>0$ depends on $\eta_0$ and $R$, that is on the size of the initial data in the analytic scale. Consequently, the lifespan can become smaller as the initial data get bigger in the analytic spaces $B_{\rho_0,3,\sigma}(\Omega^\pm)$ and $B_{\rho_0,\frac{7}{2}}(\bT^2)$.

The \textit{a priori} estimate established in \cite{CMST} would be a starting point to prove, by a compactness argument, the existence and uniqueness of solutions to the current-vortex sheets problem, taking an initial data in some Sobolev spaces (typically $H^3(\Omega^\pm)$ for the velocity $u_0^\pm$ and the magnetic field $b_0^\pm$, and $H^\frac{7}{2}(\bT^2)$ for the front $f_0$).

To do so, we would approximate the Sobolev initial data $(u_0^\pm,b_0^\pm,f_0)$ by a sequence of analytic initial data  $\big(u_0^{\pm,n},b_0^{\pm,n},f_0^n\big)_{n\geq 0}$, using the density of the spaces $B_{\rho,r,\sigma}(\Omega^\pm)$ in $H^r(\Omega^\pm)$ (we refer to Remark \ref{art-rmq_density}). The unique solution $\big(u^{\pm, n},b^{\pm, n},Q^{\pm, n},f^n \big)$ associated with this initial datum  thus has a radius of analyticity $\rho^n = \rho^n(t)$, that possibly tends to 0 as $n$ goes to $+\infty$. To address such an issue, it would suffice to exhibit a lower bound on $\rho^n(t)$, that depends only on a Sobolev norm of the solution. Therefore, we could propagate the analyticity of the solutions to a time interval depending \textbf{only} on the Sobolev norm of the solutions. The latter does not blow up as $n$ goes to $+\infty$, and would allow to get a positive lower bound on the lifespan of the solutions.

For results about propagation of analyticity of solutions to the incompressible Euler equations, we can refer to \cite{BB,AM} and more recently to \cite{LO,KV-3,KV-2,KV-1}.

\medskip

\bibliographystyle{alpha}
\bibliography{biblio}

\begin{thebibliography}{CMST12}

\bibitem[AF03]{Adams}
R.~A. Adams and J.~J.~F. Fournier.
\newblock {\em Sobolev spaces}, volume 140 of {\em Pure and Applied Mathematics
  (Amsterdam)}.
\newblock Elsevier/Academic Press, Amsterdam, second edition, 2003.

\bibitem[AH03]{AH}
G.~Al{\`{\i}} and J.~K. Hunter.
\newblock Nonlinear surface waves on a tangential discontinuity in
  magnetohydrodynamics.
\newblock {\em Quart. Appl. Math.}, 61(3):451--474, 2003.

\bibitem[AM86]{AM}
S.~Alinhac and G.~M{\'e}tivier.
\newblock Propagation de l'analyticit\'e locale pour les solutions de
  l'\'equation d'{E}uler.
\newblock {\em Arch. Rational Mech. Anal.}, 92(4):287--296, 1986.

\bibitem[AM07]{Ambrose-Masmoudi}
D.~M. Ambrose and N.~Masmoudi.
\newblock Well-posedness of 3{D} vortex sheets with surface tension.
\newblock {\em Commun. Math. Sci.}, 5(2):391--430, 2007.

\bibitem[Axf62]{Axford}
W.~I. Axford.
\newblock Note on a problem of magnetohydrodynamic stability.
\newblock {\em Canadian Journal of Physics}, 40(5):654--656, 1962.

\bibitem[BB77]{BB}
C.~Bardos and S.~Benachour.
\newblock Domaine d'analycit\'e des solutions de l'\'equation d'{E}uler dans un
  ouvert de {$R^{n}$}.
\newblock {\em Ann. Scuola Norm. Sup. Pisa Cl. Sci. (4)}, 4(4):647--687, 1977.

\bibitem[BG78]{BG77}
M.~S. Baouendi and C.~Goulaouic.
\newblock Le th\'eor\`eme de {N}ishida pour le probl\`eme de {C}auchy abstrait
  par une m\'ethode de point fixe.
\newblock In {\em \'Equations aux d\'eriv\'ees partielles ({P}roc. {C}onf.,
  {S}aint-{J}ean-de-{M}onts, 1977)}, volume 660 of {\em Lecture Notes in
  Math.}, pages 1--8. Springer, Berlin, 1978.

\bibitem[BGS07]{Benzoni-Serre}
S.~Benzoni-Gavage and D.~Serre.
\newblock {\em Multidimensional hyperbolic partial differential equations}.
\newblock Oxford Mathematical Monographs. The Clarendon Press, Oxford
  University Press, Oxford, 2007.
\newblock First-order systems and applications.

\bibitem[Bre83]{Brezis}
H.~Brezis.
\newblock {\em Analyse fonctionnelle}.
\newblock Collection Math\'ematiques Appliqu\'ees pour la Ma\^\i trise.
  [Collection of Applied Mathematics for the Master's Degree]. Masson, Paris,
  1983.
\newblock Th{\'e}orie et applications. [Theory and applications].

\bibitem[BT02]{BT}
A.~Blokhin and Y.~Trakhinin.
\newblock Stability of strong discontinuities in fluids and {MHD}.
\newblock In {\em Handbook of mathematical fluid dynamics, {V}ol. {I}}, pages
  545--652. North-Holland, Amsterdam, 2002.

\bibitem[Cha61]{Chandra}
S.~Chandrasekhar.
\newblock {\em Hydrodynamic and hydromagnetic stability}.
\newblock The International Series of Monographs on Physics. Clarendon Press,
  Oxford, 1961.

\bibitem[CMST12]{CMST}
J.-F. Coulombel, A.~Morando, P.~Secchi, and P.~Trebeschi.
\newblock A priori estimates for 3{D} incompressible current-vortex sheets.
\newblock {\em Comm. Math. Phys.}, 311(1):247--275, 2012.

\bibitem[CW08]{Chen-Wang}
G.-Q. Chen and Y.-G. Wang.
\newblock Existence and stability of compressible current-vortex sheets in
  three-dimensional magnetohydrodynamics.
\newblock {\em Arch. Ration. Mech. Anal.}, 187(3):369--408, 2008.

\bibitem[Del91]{Delort}
J.-M. Delort.
\newblock Existence de nappes de tourbillon en dimension deux.
\newblock {\em J. Amer. Math. Soc.}, 4(3):553--586, 1991.

\bibitem[Eva98]{Evans}
L.~C. Evans.
\newblock {\em Partial differential equations}, volume~19 of {\em Graduate
  Studies in Mathematics}.
\newblock American Mathematical Society, Providence, RI, 1998.

\bibitem[KV09]{KV-3}
I.~Kukavica and V.~Vicol.
\newblock On the radius of analyticity of solutions to the three-dimensional
  {E}uler equations.
\newblock {\em Proc. Amer. Math. Soc.}, 137(2):669--677, 2009.

\bibitem[KV11a]{KV-1}
I.~Kukavica and V.~Vicol.
\newblock The domain of analyticity of solutions to the three-dimensional
  {E}uler equations in a half space.
\newblock {\em Discrete Contin. Dyn. Syst.}, 29(1):285--303, 2011.

\bibitem[KV11b]{KV-2}
I.~Kukavica and V.~Vicol.
\newblock On the analyticity and {G}evrey-class regularity up to the boundary
  for the {E}uler equations.
\newblock {\em Nonlinearity}, 24(3):765--796, 2011.

\bibitem[Lan05]{Lannes2}
D.~Lannes.
\newblock Well-posedness of the water-waves equations.
\newblock {\em J. Amer. Math. Soc.}, 18(3):605--654, 2005.

\bibitem[Lan13]{Lannes}
D.~Lannes.
\newblock {\em The water waves problem}, volume 188 of {\em Mathematical
  Surveys and Monographs}.
\newblock American Mathematical Society, Providence, RI, 2013.
\newblock Mathematical analysis and asymptotics.

\bibitem[Leb02]{Lebeau}
G.~Lebeau.
\newblock R\'egularit\'e du probl\`eme de {K}elvin-{H}elmholtz pour
  l'\'equation d'{E}uler 2d.
\newblock {\em ESAIM Control Optim. Calc. Var.}, 8:801--825, 2002.
\newblock A tribute to J. L. Lions.

\bibitem[LO97]{LO}
C.~D. Levermore and M.~Oliver.
\newblock Analyticity of solutions for a generalized {E}uler equation.
\newblock {\em J. Differential Equations}, 133(2):321--339, 1997.

\bibitem[Nir72]{Nirenberg}
L.~Nirenberg.
\newblock An abstract form of the nonlinear {C}auchy-{K}owalewski theorem.
\newblock {\em J. Differential Geometry}, 6:561--576, 1972.
\newblock Collection of articles dedicated to S. S. Chern and D. C. Spencer on
  their sixtieth birthdays.

\bibitem[Nis77]{Nishida}
T.~Nishida.
\newblock A note on a theorem of {N}irenberg.
\newblock {\em J. Differential Geom.}, 12(4):629--633 (1978), 1977.

\bibitem[Sed94]{Sedenko}
V.~I. Sedenko.
\newblock Solvability of initial-boundary value problems for the {E}uler
  equations of flows of an ideal incompressible nonhomogeneous fluid and an
  ideal barotropic fluid that are bounded by free surfaces.
\newblock {\em Mat. Sb.}, 185(11):57--78, 1994.

\bibitem[SSBF81]{SSBF}
C.~Sulem, P.-L. Sulem, C.~Bardos, and U.~Frisch.
\newblock Finite time analyticity for the two- and three-dimensional
  {K}elvin-{H}elmholtz instability.
\newblock {\em Comm. Math. Phys.}, 80(4):485--516, 1981.

\bibitem[SWZ15]{SWZ}
Y.~{Sun}, W.~{Wang}, and Z.~{Zhang}.
\newblock {Nonlinear stability of current-vortex sheet to the incompressible
  MHD equations}.
\newblock {\em ArXiv e-prints}, October 2015.

\bibitem[Syr53]{Syro}
S.~I. Syrovatski{\u\i}.
\newblock The stability of tangential discontinuities in a magnetohydrodynamic
  medium.
\newblock {\em Zurnal Eksper. Teoret. Fiz.}, 24:622--629, 1953.

\bibitem[Tra05]{Trakhinin}
Y.~Trakhinin.
\newblock On the existence of incompressible current-vortex sheets: study of a
  linearized free boundary value problem.
\newblock {\em Math. Methods Appl. Sci.}, 28(8):917--945, 2005.

\bibitem[Tra09]{Trakhinin_comp}
Y.~Trakhinin.
\newblock The existence of current-vortex sheets in ideal compressible
  magnetohydrodynamics.
\newblock {\em Arch. Ration. Mech. Anal.}, 191(2):245--310, 2009.

\bibitem[Tri10]{Triebel}
H.~Triebel.
\newblock {\em Theory of function spaces}.
\newblock Modern Birkh\"auser Classics. Birkh\"auser/Springer Basel AG, Basel,
  2010.
\newblock Reprint of 1983 edition [MR0730762], Also published in 1983 by
  Birkh{\"a}user Verlag [MR0781540].

\bibitem[Zui02]{Zuily}
C.~Zuily.
\newblock {\em \'El\'ements de distributions et d'\'equations aux d\'eriv\'ees
  partielles : cours et probl\`emes r\'esolus}.
\newblock Sciences sup. Dunod, 2002.

\end{thebibliography}
\end{document}